\documentclass[11pt,letterpaper]{amsart}
\usepackage{amsmath, amscd, amssymb}
\usepackage[enableskew,vcentermath]{youngtab}
\usepackage{ytableau}
\usepackage[mathscr]{eucal}
\usepackage{soul} 
\usepackage[all,cmtip]{xy}
\usepackage{tikz-cd}
\usepackage{xfrac}

\usepackage{geometry}
\geometry{letterpaper,left=2.5cm,right=2.5cm,top=2.5cm,bottom=2cm}

\usepackage{amsfonts}
\usepackage[backref]{hyperref}
\usepackage{graphicx}
\usepackage{cite}


\numberwithin{equation}{section}

\theoremstyle{definition}
\newtheorem{thm}{Theorem}[section]
\newtheorem{theorem}[thm]{Theorem}

\newtheorem{lemma}[thm]{Lemma}
\newtheorem{corollary}[thm]{Corollary}
\newtheorem{proposition}[thm]{Proposition}

\newtheorem*{SYZ}{ Strominger--Yau--Zaslow Mirror Symmetry}
\newtheorem*{TMS}{Topological Mirror Symmetry}

\newtheorem{remark}[thm]{Remark}

\newtheorem{definition}[thm]{Definition}
\newtheorem*{claim}{Claim}
\newtheorem{assumption}[thm]{Assumption}

\newtheorem{example}[thm]{Example}

\newtheorem{condition}{Condition}

\newtheorem{defn-thm}[thm]{Definition-Theorem}

\newtheorem{Notation}[thm]{Notation}

\newenvironment{observe}{\noindent\textcolor{blue}{\textit{Observation}}.}{\hfill \textcolor{blue}{$\blacktriangleleft$}\par}
\newtheorem*{theorem*}{Theorem}
\newtheorem*{proposition*}{Proposition}

\usepackage{todonotes}
\definecolor{pistachio}{rgb}{0.58, 0.77, 0.45}
\definecolor{eggshell}{rgb}{0.94, 0.92, 0.84}


\newcommand{\sA}{{\mathcal A}}

\newcommand{\sE}{{\mathcal E}}
\newcommand{\sF}{{\mathcal F}}

\newcommand{\sH}{{\mathcal H}}

\newcommand{\sK}{{\mathcal K}}
\newcommand{\sL}{{\mathcal L}}

\newcommand{\sN}{{\mathcal N}}
\newcommand{\sO}{{\mathcal O}}
\newcommand{\sP}{{\mathcal P}}
\newcommand{\sQ}{{\mathcal Q}}
\newcommand{\sR}{{\mathcal R}}

\newcommand{\sV}{{\mathcal V}}




\newcommand{\g}{{\mathfrak g}}

\newcommand{\gc}{\mathfrak{c}}
\newcommand{\gp}{\mathfrak{p}}

\newcommand{\gn}{\mathfrak{n}}
\newcommand{\gl}{\mathfrak{l}}

\newcommand{\D}{{\mathbb D}}

\newcommand{\N}{{\mathbb N}}

\newcommand{\Z}{{\mathbb Z}}

\newcommand{\GL}{\operatorname{GL}}
\newcommand{\SL}{\operatorname{SL}}
\newcommand{\Sp}{\operatorname{Sp}}
\newcommand{\PSp}{\operatorname{PSp}}
\newcommand{\SO}{\operatorname{SO}}
\newcommand{\OG}{\operatorname{OG}}
\newcommand{\Spin}{\operatorname{Spin}}

\newcommand{\Prym}{\operatorname{\mathbf{Prym}}}
\newcommand{\LG}{\operatorname{LG}}
\newcommand{\KL}{\operatorname{KL}}

\newcommand{\Ker}{\operatorname{Ker}}

\renewcommand{\Im}{\operatorname{Im}}
\newcommand{\Coker}{\operatorname{Coker}}
\newcommand{\Tr}{\operatorname{Tr}}
\newcommand{\Nm}{\operatorname{Nm}}
\newcommand{\Div}{\operatorname{Div}}
\newcommand{\Pic}{\operatorname{Pic}}
\newcommand{\Br}{\operatorname{Br}}
\newcommand{\Spec}{\operatorname{Spec}}

\newcommand{\Split}{\operatorname{Split}}

\newcommand{\Res}{\operatorname{Res}}
\newcommand{\Jac}{{\operatorname{Jac}}}
\newcommand{\id}{{\operatorname{id}}}

\newcommand{\codim}{{\operatorname{codim}}}

\newcommand{\Tot}{{\operatorname{Tot}}}

\newcommand{\Fil}{\operatorname{Fil}}

\newcommand{\ad}{\operatorname{ad}}
\newcommand{\End}{\operatorname{End}}
\newcommand{\Ad}{\operatorname{Ad}}

\newcommand{\btheorem}{\begin{theorem}}
\newcommand{\etheorem}{\end{theorem}}
\newcommand{\bproposition}{\begin{proposition}}
\newcommand{\eproposition}{\end{proposition}}
\newcommand{\bdefinition}{\begin{definition}}
\newcommand{\edefinition}{\end{definition}}
\newcommand{\bcorollary}{\begin{corollary}}
\newcommand{\ecorollary}{\end{corollary}}
\newcommand{\bproof}{\begin{proof}}
\newcommand{\eproof}{\end{proof}}
\newcommand{\bremark}{\begin{remark}}
\newcommand{\eremark}{\end{remark}}
\newcommand{\eexample}{\end{example}}
\newcommand{\bexample}{\begin{example}}
\newcommand{\elemma}{\end{lemma}}
\newcommand{\blemma}{\begin{lemma}}
\newcommand{\bobserve}{\begin{observe}}
\newcommand{\eobserve}{\end{observe}}

\newcommand{\ord}{\tx{ord}}
\renewcommand{\bar}{\overline}

\renewcommand{\phi}{\varphi}
\newcommand{\ee}{\end{eqnarray*}}
\newcommand{\be}{\begin{eqnarray*}}

\newcommand{\beq}{\begin{equation}}
\newcommand{\eeq}{\end{equation}}
\newcommand{\bd}{\begin{enumerate}}
\newcommand{\ed}{\end{enumerate}}
\newcommand{\bti}{\begin{tikzcd}}
\newcommand{\eti}{\end{tikzcd}}

\newcommand{\rar}[1]{\xrightarrow{#1}}
\renewcommand{\tilde}{\widetilde}

\renewcommand{\sf}[1]{\textsf{#1}}
\renewcommand{\bf}[1]{\mathbf{#1}}
\newcommand{\tx}[1]{\text{#1}}

\newcommand{\PGL}{\operatorname{PGL}}
\newcommand{\rk}{\operatorname{rk}}

\newtheorem{mainthm}{Theorem}


\title[Springer correspondence and mirror symmetries]{Springer correspondence and mirror symmetries for parabolic Hitchin systems}

\author{Bin Wang}
\address{Department of Mathematics, Chinese University of Hong Kong, New Territories, Hong Kong SAR.}
\email{binwang@math.cuhk.edu.hk}

\author{Xueqing Wen}
\address{Chongqing University of Technology, No. 69, Hongguang Avenue, Banan District, Chongqing, 400054, China.}
\email{wenxq@cqut.edu.cn}

\author{Yaoxiong Wen}
\address{School of Mathematics, Korea Institute for Advanced Study, Seoul 02455, Korea.}
\email{y.x.wen.math@gmail.com}

\date{}

\begin{document}

\begin{abstract}
     We establish the Strominger--Yau--Zaslow (SYZ) and topological mirror symmetries for parabolic Hitchin systems of types B and C, providing new insights into their geometric and topological structures. Unlike type A, these cases require a geometric reinterpretation of Springer duality, as the nilpotent orbits for types B and C lie in distinct Lie algebras. Moreover, in contrast to Hitchin's approach in the non-parabolic setting, analyzing the relationship between generic fibers in types B and C necessitates addressing changes in the partitions of Springer dual nilpotent orbits, which presents the central challenge of this work. To address this, we construct new moduli spaces of Higgs bundles associated with nilpotent orbit closures and examine their generic Hitchin fibers. In the Richardson case, we further explore their connection with the generic fibers of parabolic Hitchin systems. Along the way, we uncover deep connections between Springer duality, Kazhdan--Lusztig maps, and the singularities of spectral curves, culminating in a novel geometric interpretation of Lusztig's canonical quotient.
\end{abstract}

\maketitle
\tableofcontents

\section{Introduction}

\subsection{Motivation}
The geometric Langlands program has been a long-standing and rich subject in mathematics. Over the past two decades, this program has undergone significant reinterpretation from a physical perspective through the work of Gukov, Kapustin, and Witten \cite{KW07, GW08, GW10}, sparking considerable interest within the geometric community. In \cite{DP08}, a certain semi-classical limit of Kontsevich’s homological mirror symmetry conjecture \cite{Ko95} led to the equivalence
\begin{align*}
    \mathcal{D}^b( \mathrm{Coh}( \bf{Higgs}_{G}(\Sigma)) \sim  \mathcal{D}^b( \mathrm{Coh}( \bf{Higgs}_{{}^LG}(\Sigma)),
\end{align*}
between the derived categories of coherent sheaves on Hitchin systems over a Riemann surface $\Sigma$ for Langlands dual groups.

A ``topological shadow" of this equivalence, often referred to as \emph{topological mirror symmetry}, was first conjectured by Hausel and Thaddeus \cite[Conjecture 5.1]{HT03} for $G=\SL_n$, and ${}^L G=\PGL_n$. This conjecture has since been proven independently by Groechenig, Wyss, and Ziegler \cite{GWZ20}, as well as by Maulik and Shen \cite{MS21}. More precisely, let $\bf{Higgs}_{\SL_n, L} $ denote the moduli space of stable $\SL_n$-Higgs bundles $(E, \theta)$ over $\Sigma$, where $E$ is a rank $n$ vector bundle with an isomorphism $\mathrm{det}(E) \cong L$, and $\theta \in \mathrm{H}^0(\Sigma, \mathrm{End}(E) \otimes \omega_\Sigma )$ is trace-free. Assuming $d$ is coprime to $n$, the moduli space $ \bf{Higgs}_{\SL_n, L} $ is smooth. Meanwhile, $ \bf{Higgs}_{\PGL_n, L} =  \bf{Higgs}_{\SL_n, L} / \Gamma $, where $\Gamma = \mathrm{Jac}(\Sigma)[n]$ is the subgroup of $n$-th torsion points of Jacobian. The \emph{topological mirror symmetry} concerns the equality of (stringy) Hodge numbers:

\begin{TMS}[\cite{GWZ20, MS21}] \label{conjecture}
Assume $d = \deg L$ and $ d' = \deg L' $ are coprime to $n$, then
\[
h^{p,q}( \bf{Higgs}_{\SL_n, L}) = h^{p,q}_{st}( \bf{Higgs}_{\PGL_n, L'}, \alpha).
\]
Here, $\alpha$ is a natural unitary gerbe on $ \bf{Higgs}_{\PGL_n, L'}$ arising from the existence of the ``universal" family.
\end{TMS}

The proof by Groechenig, Wyss, and Ziegler relies on $p$-adic integration techniques, an approach we adopt in this paper. Alternatively, Maulik and Shen used a sheaf-theoretic method to obtain a more refined identification between the cohomology groups, where the Chen--Ruan cohomology \cite{CR04, Ru03} of $\bf{Higgs}_{\PGL_n, L'}$---a global finite quotient---is taken into account. Furthermore, they demonstrated that this identification preserves perverse filtrations, a key result with implications for the proof of the $P=W$ conjecture (see \cite{CMS1, CMS2, MS22, HMMS, MSY23}).

Despite differences in methodology, both approaches rely on a fundamental geometric property of the Langlands dual Hitchin systems:
\begin{SYZ}[\cite{HT03}]
    The two moduli spaces, along with their Hitchin maps, exhibit Strominger--Yau--Zaslow (SYZ) mirror symmetry, i.e., 
    \[
    \xymatrix{
     \bf{Higgs}_{\SL_n, L} \ar[rd]_{\check{h}} &  & \bf{Higgs}_{\PGL_n, L'} \ar[ld]^{\hat{h}} \\
     & A &
    }.
    \]
    Here, the generic fibers of $\check{h}$ and $\hat{h}$ are torsors over dual abelian varieties.
\end{SYZ}
\begin{remark}
    A more refined condition on the torsor structure, related to specific unitary gerbes, arises in practice but can often be canonically chosen.
\end{remark}

In \cite{GW08}, Gukov and Witten proposed a physical version of the geometric Langlands by introducing the surface operators, which, informally, allow Higgs fields to have simple poles. Motivated by their work, we consider Higgs fields with one fixed simple pole. This leads to the construction of moduli spaces of Higgs bundles (of types B and C) where the residue of the Higgs field lies in a ``good" resolution of a nilpotent orbit closure. We denote these spaces as $\bf{Higgs}_{G,\overline{\bf{O}}}$ and hypothesize that both the \emph{SYZ} and \emph{topological mirror symmetries} extend to these moduli spaces for Langlands dual groups.

A key feature in this generalization is the interaction between adjoint orbits in $\mathfrak{g}$ and ${}^L\mathfrak{g}$. Adjoint orbits, particularly nilpotent ones, are classical objects in representation theory and geometry. Any mirror symmetry phenomenon between $\bf{Higgs}_{G,\overline{\bf{O}}}$ and $\bf{Higgs}_{{}^LG,\overline{\bf{O}'}}$ must build upon a corresponding mirror symmetry between nilpotent orbits $\bf{O} \subset \mathfrak{g}$ and $\bf{O}' \subset {}^L\mathfrak{g}$. This serves as the starting point of our paper.

For type A complex Lie groups ($G=\rm SL_n$ and ${}^LG=\PGL_n$), it is well-known that the nilpotent orbits of $\rm SL_n$ and $\rm PGL_n$ coincide. Additionally, these nilpotent orbits are all Richardson, and their closures admit crepant resolutions (see \cite{CM93} for details). Consequently, both the topological and SYZ mirror symmetries can be formulated analogously, with proofs given in \cite{She18, SWW22, SWW22t, She24}.

For classical groups beyond type A, the most intriguing and nontrivial cases arise in types B and C, where $G=\rm SO_{2n+1}$ and ${}^LG = \rm Sp_{2n}$, as the nilpotent orbits belong to different Lie algebras. This paper focuses on these cases, and we use subscripts ${}_B$ or ${}_C$ to emphasize the type. For instance, $\mathbf{O}_B$ (resp. $\mathbf{O}_C$) denotes a nilpotent orbit of type B (resp. type C).

\subsection{Main results}

In this paper, we focus on the case of a single marked point $x$ for simplicity; the general case follows in a similar manner.

The nilpotent orbit closures, being highly singular, lead to similarly singular moduli spaces when the residue of the Higgs field is constrained to lie in them. However, for any nilpotent orbit $\bf{O}$, the well-known \emph{Jacobson--Morozov resolution} provides a desingularization:
$$
    G \times_{P_{JM}} \mathfrak{n}_{2} \longrightarrow \overline{\bf{O}},
$$
where $\mathfrak{n}_2\subset\mathfrak{g}$ is a subspace associated with an $\mathfrak{sl}_{2}$-triple. Using this resolution, we construct in Section~\ref{Sec:via_JM} new moduli spaces, denoted $\bf{Higgs}_{\overline{\bf{O}}}$, of dimension $(2g-2) \dim G + \dim \overline{\bf{O}}$. It is worth noting that $\bf{Higgs}_{\overline{\bf{O}}_B}$ has two connected components, denoted by $\bf{Higgs}_{\overline{\bf{O}}_B}^{+}$ and $\bf{Higgs}_{\overline{\bf{O}}_B}^{-}$.

To investigate the SYZ mirror symmetry, we begin by studying the Hitchin bases. Our first main result establishes a connection between the Hitchin bases for $\bf{Higgs}_{\overline{\bf{O}}_{B}}$ and $\bf{Higgs}_{\overline{\bf{O}}_{C}}$, yielding a new perspective on Springer duality for special nilpotent orbits:\footnote{See Definition \ref{def:Obar} for a combinatorial description of special orbits and Equation~\eqref{Springer_dual_map} for the Springer duality map.}

\begin{mainthm}[Theorem \ref{Thm:why special}] \label{Thm:intro why special}
    The following are equivalent:
    \begin{enumerate}
        \item The nilpotent orbits $\bf{O}_B$ and $\bf{O}_C$ are special and Springer dual.
        \item The Hitchin bases of $ \bf{Higgs}_{\overline{\bf{O}}_{B}}$ and $ \bf{Higgs}_{\overline{\bf{O}}_{C}}$, denoted as $ \bf{H}_{\overline{\bf{O}}_{B}}$ and $ \bf{H}_{\overline{\bf{O}}_{C}}$, are canonically isomorphic.
    \end{enumerate}
\end{mainthm}

For Springer dual special orbits $\overline{\bf{O}}_B$, $\overline{\bf{O}}_C$, let $\bf{H}$ denote the Hitchin base. The relationship between the moduli spaces can be summarized as follows:
\begin{equation*}
    \begin{tikzcd}
    \bf{Higgs}_{\overline{\bf{O}}_B} \ar[rd, swap, "h_{\overline{\bf{O}}_B}"]  & &  \bf{Higgs}_{ \overline{\bf{O}}_C} \ar[ld, "h_{\overline{\bf{O}}_C}"] \\
     &  \bf{H}  &
    \end{tikzcd}.
\end{equation*}
By Corollary \ref{half dimension}, we have the dimension of the Hitchin base given by
\[
\dim \bf{H}=\dfrac{1}{2}\dim \bf{Higgs}_{\overline{\bf{O}}_B}=\dfrac{1}{2}\dim \bf{Higgs}_{\overline{\bf{O}}_C}.
\]
At first glance, this suggests a natural mirror symmetry relationship (via TMS and SYZ) between $\bf{Higgs}_{\overline{\bf{O}}_B}$ and $\bf{Higgs}_{\overline{\bf{O}}_C}$. However, this assertion is \emph{WRONG} in general, as we will demonstrate in Corollary~\ref{intro_not dual}. The failure stems from a subtle connection to Lusztig's canonical quotient. To remedy this issue, it is necessary to consider certain generically finite ``covers". We will provide a detailed explanation in Proposition~\ref{prop:intro dual abelian}. In brief, these covers are constructed via the usual moduli spaces of parabolic Higgs bundles using the following generically finite map.

We say an orbit $\bf{O}_{R} \subset \g$ \emph{Richardson} if there exists a parabolic subgroup $P < G$ such that the image of the moment map (also called \emph{generalized Springer map})
\begin{align}\label{Ap:pol_R}
    T^*(G/P) \longrightarrow \overline{\bf{O}}_R,
\end{align}
is the closure of $\bf{O}_R$. The parabolic subgroup $P$ is then called a \emph{polarization} of the Richardson orbit $\bf{O}_R$. Note that all Richardson orbits are special. In type A, this map is always crepant, but in general, it is only generically finite.

Let $\bf{O}_{B, R}$ and $\bf{O}_{C, R}$ denote Springer dual Richardson orbits, with $P_{B}$ and $P_{C} $ denoting dual polarizations (see Definition~\ref{def:dual_pol}). We construct moduli spaces of parabolic Higgs bundles $\bf{Higgs}_{ P_{C}}$ and $\bf{Higgs}_{ P_{B}}$, where $\bf{Higgs}_{P_B}$ has two connected components: $ \bf{Higgs}_{P_B}^{+} 
\sqcup \bf{Higgs}_{ P_B}^{-}$. 

Let $\bf{H}_{P_{B}}$ and $\bf{H}_{P_{C}}$ denote the Hitchin bases of these spaces, as studied in \cite{BK18}. Combining this with Theorem \ref{Thm:intro why special}, we find that:
$$
\bf{H}_{P_{B}}=\bf{H}_{\overline{\bf{O}}_{B,R}}=\bf{H}_{\overline{\bf{O}}_{C,R}}=\bf{H}_{P_{C}}.
$$ 
We continue to denote the Hitchin base by $\bf{H}$. On an open subvariety $\bf{H}^{\text{KL}}$ of $\bf{H}$ (see Definition \ref{Def:H^KL}), we establish the following result:

\begin{mainthm}[Theorem \ref{thm:syz mirror}]\label{Thm:intro SYZ}
    The SYZ mirror symmetry holds for
    \[
    \begin{tikzcd}
        (\bf{Higgs}_{P_B},\alpha_B)\ar[rd, "h_{P_B}"']&&(\bf{Higgs}_{P_C},\alpha_C)\ar[ld, "h_{P_C}"]\\
        &\mathbf{H}
    \end{tikzcd}
    \]
    where $\alpha_B$ is trivial, as $\SO_{2n+1}$ is adjoint. More precisely:
    \begin{enumerate}
        \item $\bf{Higgs}_{P_C}|_{\bf{H}^{\text{KL}}}$ and $\bf{Higgs}_{P_B}^{+}|_{\bf{H}^{\text{KL}}}$ are trivial torsors over families (over $\bf{H}^{\text{KL}}$) of dual abelian varieties.
        \item $ \Split'(\bf{Higgs}_{P_C}|_{\bf{H}^{\text{KL}}},\alpha_C|_{\bf{H}^{\text{KL}}})\cong \bf{Higgs}_{P_B}^{-}|_{\bf{H}^{\text{KL}}} $.\footnote{Here, $\Split'$ refers to the induced torsor of the dual abelian scheme. See \cite[Definition 6.4]{GWZ20}.}
        \item $\Split'(\bf{Higgs}^{\pm}_{P_B}|_{\bf{H}^{\text{KL}}},\alpha_B|_{\bf{H}^{\text{KL}}})\cong \bf{Higgs}_{P_C}|_{\bf{H}^{\text{KL}}}$.\footnote{This again shows the close relation between rational points of parabolic Hitchin fibers and SYZ mirror symmetry.}
    \end{enumerate}  
\end{mainthm}

In contrast to the non-parabolic case, where Hitchin \cite{Hit07} showed a correspondence between the generic fibers of types B and C, establishing a relationship between $h_{P_B}^{-1}(a)$ and $h_{P_C}^{-1}(a)$ for $a \in \bf{H}^{\text{KL}}$ is more intricate due to the parabolic structure. This requires not only addressing the ``non-degeneracy” of bilinear pairings, as Hitchin did, but also identifying the residues of Higgs fields at marked points.

The identification of these residues is subtle and deeply connected to the combinatorial properties of special nilpotent orbits and the singularities of generic spectral curves. Specifically, it involves the residually nilpotent local Higgs bundles of types B and C, discussed in Section~\ref{Sec:local_level}. Furthermore, we reveal a surprising geometric interpretation of Lusztig's canonical quotient group, detailed in Theorem~\ref{Thm:intro local level}.

Finally, adhering to the philosophy of ``abstract dual Hitchin systems" proposed in \cite[\S 6]{GWZ20}, or rather a modified version ``weak abstract dual Hitchin systems" by Shen\cite{She18},  we prove the following \emph{topological mirror symmetry}:

\begin{mainthm}[Theorem~\ref{Thm:TMS}]\label{Thm:intro TMS}
    Under the Condition~\ref{cond:coprime}, the following topological mirror symmetry holds for Langlands dual parabolic Hitchin systems (with $\alpha_B$ omitted as it is trivial):
	\begin{equation*}
		E^{\alpha_C}(\bf{Higgs}_{P_C};u,v)=E(\bf{Higgs}_{P_C};u,v)= E_{\text{st}}(\bf{Higgs}_{P_B}^{-};u,v)
        =E_{\text{st}}(\bf{Higgs}_{P_B}^{+};u,v).
	\end{equation*}
    Here, $E_{\text{st}}$ is the stringy E-polynomial, a generating series of stringy Hodge numbers. Moreover, $\bf{Higgs}_{P_B}^{\pm}$ are treated as quotients of corresponding moduli spaces for (twisted) parabolic $\Spin_{2n+1}$-Higgs bundles. 
\end{mainthm}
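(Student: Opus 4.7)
The plan is to adapt the $p$-adic integration framework of Groechenig--Wyss--Ziegler to the parabolic B/C setting, using Theorem~\ref{Thm:intro SYZ} as the fiberwise input. Under Condition~\ref{cond:coprime}, I would first spread the pair $(\Sigma,x)$, the Hitchin base $\mathbf{H}$, the moduli spaces $\mathbf{Higgs}_{P_C}$ and $\mathbf{Higgs}_{P_B}^{\pm}$, the Hitchin maps, and the gerbe $\alpha_C$ over a finitely generated subring $R\subset\CC$. For a place $v$ of $R$ of good reduction with residue field $k_v$, the gerby Batyrev--Kontsevich--Wang--Yasuda formula rewrites the stringy (twisted) $E$-polynomials as $v$-adic integrals over the $\sO_v$-points of these moduli spaces. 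As in GWZ, by weight-polynomiality (inherited from the smooth Jacobson--Morozov resolution and the proper Hitchin fibration) it suffices to verify these identities of integrals for almost all $v$.

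Second, decompose each integral along the proper Hitchin map as
\[
E_{\mathrm{st}}^{\alpha_C}(\mathbf{Higgs}_{P_C}) \;\leftrightarrow\; \int_{\mathbf{H}(\sO_v)} f_C(a)\, da, \qquad E_{\mathrm{st}}(\mathbf{Higgs}_{P_B}^{\pm}) \;\leftrightarrow\; \int_{\mathbf{H}(\sO_v)} f_B^{\pm}(a)\, da,
\]
where $f_C$ and $f_B^{\pm}$ are the $\alpha_C$-weighted, respectively unweighted, count functions of the Hitchin fibers over $k_v$. Over $\mathbf{H}^{\mathrm{KL}}(\sO_v)$, Theorem~\ref{Thm:intro SYZ} identifies the fibers of $h_{P_C}$ and $h_{P_B}^{+}$ as torsors under dual abelian varieties with canonical gerbe $\alpha_C$, and identifies $\mathbf{Higgs}_{P_B}^{-}|_{\mathbf{H}^{\mathrm{KL}}}$ with $\Split'(\mathbf{Higgs}_{P_C}|_{\mathbf{H}^{\mathrm{KL}}},\alpha_C)$. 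Standard character orthogonality on the finite abelian groups of $k_v$-points of these dual abelian varieties then yields $f_C(a)=f_B^{-}(a)$ (weighted by $\alpha_C$) and $f_C(a)=f_B^{+}(a)$ (unweighted) for every $a\in\mathbf{H}^{\mathrm{KL}}(\sO_v)$, so the desired identities hold when the outer integral is restricted to $\mathbf{H}^{\mathrm{KL}}(\sO_v)$.

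The main obstacle is the complement $\mathbf{H}\setminus\mathbf{H}^{\mathrm{KL}}$, over which the spectral curves develop singularities whose Springer-dual matching between types B and C is no longer straightforward. This locus has positive codimension in $\mathbf{H}$, hence $v$-adic measure zero, but the integrands $f_{C}$ and $f_B^{\pm}$ can blow up there because of stringy contributions from orbifold strata and from singular (non-abelian) Hitchin fibers. To push the equality of integrals past this bad locus, I would combine three ingredients: (i) the geometric reinterpretation of Lusztig's canonical quotient furnished by Theorem~\ref{Thm:intro local level}, which tracks how the $\alpha_C$-gerbe and the $\pm$-splitting of $\mathbf{Higgs}_{P_B}$ are compatible across the non-generic locus through the local residual data at the marked point $x$; (ii) properness of the Hitchin maps together with a uniform volume bound on fibers coming from the generic abelian-torsor type; and (iii) an argument in the spirit of GWZ showing that $f_C$ and $f_B^{\pm}$ extend continuously (equivalently, are integrable) across the bad locus, so that the fiberwise equality on $\mathbf{H}^{\mathrm{KL}}(\sO_v)$ propagates to the full integral on $\mathbf{H}(\sO_v)$. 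The hardest step is expected to be (iii), since it requires a precise boundary analysis of the Hitchin fibration at the loci where the Kazhdan--Lusztig map and the Springer-dual singularity match degenerate.
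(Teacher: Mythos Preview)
Your overall strategy is correct and matches the paper: reduce to the GWZ $p$-adic integration framework, use the SYZ statement (Theorem~\ref{thm:syz mirror}) as the fiberwise input over $\mathbf{H}^{\text{KL}}$, and then argue that the bad locus does not spoil the equality of integrals. However, your handling of the bad locus is misdirected, and you are missing one lemma that the paper does need.

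The gap is in your step (iii). You frame the problem as one of integrability: the integrands $f_C,f_B^{\pm}$ might blow up over $\mathbf{H}\setminus\mathbf{H}^{\text{KL}}$, and you propose a boundary/continuity analysis to control them. This is not how the codimension issue enters in GWZ. In their abstract dual Hitchin system setup, the codimension $\ge 2$ hypothesis on the complement of the good locus is used \emph{only} to construct a global nowhere-vanishing top form on the total space by extending a fiberwise gauge form from the good locus (Hartogs). The paper observes that in the present situation one does not need this extension argument at all: $\mathbf{Higgs}_{P_C}$ and the $\Spin_{2n+1}$-cover of $\mathbf{Higgs}_{P_B}$ are symplectic, so their top symplectic powers are already global nowhere-vanishing top forms, and one checks directly that they restrict to translation-invariant forms on the torsor fibers over $\mathbf{H}^{\text{KL}}$. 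Once the volume forms exist globally, the complement $\mathbf{H}\setminus\mathbf{H}^{\text{KL}}$ has measure zero and contributes nothing, with no further analysis needed. In particular, neither Theorem~\ref{Thm:intro local level} nor any Lusztig canonical quotient input is used here; those results feed into the proof of Theorem~\ref{thm:syz mirror}, not into the passage from SYZ to TMS.

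You are also missing a genuinely necessary ingredient: for the orbifold measure on $\mathbf{Higgs}_{P_B}^{\pm}$ to descend from the $\Spin_{2n+1}$-cover, one must show that for each nontrivial $\gamma\in\Pic^0(\Sigma)[2]$ the fixed locus $(\mathbf{Higgs}_{\Spin_{2n+1},P_B'})^{\gamma}$ has codimension $\ge 2$. The paper establishes this (Lemma~\ref{lem:fixed point}) by forgetting the parabolic structure, passing to the endoscopic description on the $C$-side via the identification of Chevalley bases, and estimating the dimension of the Hitchin image coming from the \'etale double cover $\Sigma_\gamma\to\Sigma$. This is the one place where actual geometric work beyond Theorem~\ref{thm:syz mirror} is required, and your proposal does not address it.
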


We emphasize that we consider only the moduli of $\omega_{\Sigma}(x)$-valued strongly parabolic Higgs bundles, which naturally carry the symplectic forms $\omega_B$ and $\omega_C$ on the corresponding moduli spaces. These symplectic forms are used to construct gauge forms for the purposes of $p$-adic integration.

However, due to the presence of parabolic structures, our situation does not satisfy the “codimension 2” condition required in the definition of abstract dual Hitchin systems; see condition (c) of Definition 6.8 in \cite{GWZ20}. To compare the $p$-adic integrals, we must therefore find a method for comparing the corresponding gauge forms.

A similar issue arises in Shen’s work \cite{She24}, which studies the moduli of parabolic $\operatorname{SL}_n$ and $\operatorname{PGL}_n$ Higgs bundles. Since the moduli space of $\operatorname{PGL}_n$ Higgs bundles is a global quotient of that of $\operatorname{SL}_n$ Higgs bundles (potentially in different connected components). Shen demonstrates that the gauge forms he constructs are equivalent, enabling a meaningful comparison.

Our setting is different, as there is no morphism between $\mathbf{Higgs}_{P_B}$ and $\mathbf{Higgs}_{P_C}$. Nevertheless, we show that there exists a morphism between their open subvarieties:
\[
    \bf{Higgs}_{P_B}|_{\bf{H}^{\text{KL}}}\longrightarrow \bf{Higgs}_{P_C}|_{\bf{H}^{\text{KL}}}.
\]

Moreover, there are symplectic forms on these subvarieties that are compatible with this morphism and coincide with the restrictions of the natural symplectic forms $\omega_B$ and $\omega_C$, up to a constant scalar. This allows us to compare $\omega_B$ and $\omega_C$, and hence the associated gauge forms on both sides. See Section \ref{Sec:TMS} for further details.

\subsection{Idea of proof}

\subsubsection{Generic fibers of Hitchin maps}

The key to proving both the SYZ and topological mirror symmetries lies in understanding the generic fibers of the Hitchin maps, specifically their geometry and torsor structures.

\begin{proposition}[Proposition \ref{Prop:fiber_JM_moduli} and \ref{L_BC}]\label{prop:intro L_BC} \
Let $\bf{H}^{\text{KL}} \subset \bf{H}_{\overline{\bf{O}}_C}$ be as defined in Definition \ref{Def:H^KL}. For any $a\in \bf{H}^{\text{KL}}$, the following hold:
    \begin{enumerate}
        \item For any nilpotent orbit $\overline{\bf{O}}_C$ (not necessarily special), the fiber $h_{\overline{\bf{O}}_C}^{-1}(a)$ is naturally a torsor of abelian variety $\Prym_a:=\Prym(\overline{\Sigma}_a,\overline{\Sigma}_a/\sigma )$, where $\overline{\Sigma}_a$ is the normalization of the spectral curve $\Sigma_a$ and $\sigma$ is the involution. 
        \item If $\overline{\bf{O}}_B$ and $\overline{\bf{O}}_C$ are special and Springer dual, then for $a \in \bf{H}^{\text{KL}}$, there exists a finite map 
        $$
            L_{BC}: h_{\overline{\bf{O}}_B}^{-1}(a) \longrightarrow h_{\overline{\bf{O}}_C}^{-1}(a)
        $$ 
        of degree 
        \begin{equation}\label{eq.intro_l_BC}
            2^{2n(2g-2)+\beta(\bf{d}_B)-c(\bf{d}_B)-1}.\footnote{Here  $\beta(\bf{d}_B)$ and $c(\bf{d}_B)$  are calculated from partitions and Springer dual map (see Definition \ref{def:e(d_B),c(d_B)}). The analysis involves residually nilpotent local Higgs bundles.}
        \end{equation}
        Furthermore, $h_{\overline{\bf{O}}_B}^{-1}(a)$ has two connected components: $h_{\overline{\bf{O}}_B}^{-1}(a)^+$ and $h_{\overline{\bf{O}}_B}^{-1}(a)^-$. Each is a torsor over $\Prym_{\overline{\bf{O}}_B, a}$, a finite cover of $\Prym_a$ (defined in Proposition \ref{type B JM and different components}). A canonical point exists on $h_{\overline{\bf{O}}_B}^{-1}(a)^+$.
    \end{enumerate}
\end{proposition}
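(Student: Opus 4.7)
For part (1), the plan is to apply the BNR/spectral correspondence tailored to the symplectic (type C) setting. Over $a \in \bf{H}^{\text{KL}}$ the spectral curve $\Sigma_a \subset \Tot(\omega_\Sigma)$ is integral; let $\overline{\Sigma}_a \to \Sigma_a$ denote its normalization and $\sigma$ the involution induced by the symplectic pairing. I would show that a type-C Higgs bundle in $h_{\overline{\bf{O}}_C}^{-1}(a)$ corresponds, away from the marked point, to a line bundle $\mathcal{L}$ on $\overline{\Sigma}_a$ satisfying $\sigma^{*}\mathcal{L} \cong \mathcal{L}^{\vee} \otimes (\text{twist})$, while the Jacobson--Morozov constraint on the residue pins down the torsion-free extension across the singular fibre over the marked point. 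Once this identification is established and the discrete invariants (degree and twist) are fixed, the set of such $\mathcal{L}$ becomes a torsor under $\Prym_a$.

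For part (2), I would exploit the fact that Theorem~A identifies the Hitchin bases, so the type-B and type-C moduli spaces share the same underlying spectral cover $\overline{\Sigma}_a \to \Sigma$. The map $L_{BC}$ is then built fibrewise from the spectral correspondence: a type-B (orthogonal) spectral line bundle is converted to a type-C (symplectic) one by a prescribed local modification at the marked point together with a global twist. Its degree splits into a global factor measuring the discrepancy between the two Prym varieties---computed via Riemann--Hurwitz on $\overline{\Sigma}_a \to \overline{\Sigma}_a/\sigma$ and yielding $2^{2n(2g-2)}$---and a local factor at the marked point coming from the difference between the orthogonal and symplectic residue structures, which I expect to equal $2^{e(\bf{d}_B)-c(\bf{d}_B)-1}$ and to be read off from the partition data and the Springer correspondence. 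The two connected components of $h_{\overline{\bf{O}}_B}^{-1}(a)$ correspond to a $\pm 1$ discrete invariant (a discriminant/spinor norm) of the orthogonal form on the underlying spectral data; the canonical point on the $+$-component is obtained from the natural orthogonal structure on the spectral sheaf, analogous to the ``trivial'' symplectic choice on the type-C side.

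The hard step will be the local analysis at the marked point: establishing the precise correspondence between residually nilpotent local Higgs bundles of types B and C under Springer duality, and extracting the exact local degree $2^{e(\bf{d}_B)-c(\bf{d}_B)-1}$. My plan is to work in the formal neighborhood of the marked point, decompose $\overline{\Sigma}_a$ into branches indexed by the partition $\bf{d}_B$, and analyze orthogonal versus symplectic local structures branch-by-branch, so that $e(\bf{d}_B)$ counts the ``free'' sign choices while $c(\bf{d}_B)+1$ measures equivalences arising from Lusztig's canonical quotient. Once the local model is well understood, globalizing via the spectral correspondence and verifying compatibility of the torsor structure with $\Prym_{\overline{\bf{O}}_B, a}$ should reduce to a gluing argument along the marked point.
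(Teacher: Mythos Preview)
Your outline for part~(1) is broadly correct and matches the paper's approach, but one point is inverted: the Jacobson--Morozov filtration does not ``pin down'' the extension across the marked point. Rather, the key lemma (Proposition~\ref{type C generic} and Theorem~\ref{Thm.type C decomposition}) is that for type~C the partition $\bf{d}_C$ coincides generically with the degree partition $\bf{deg}_C$ of the irreducible factors of the characteristic polynomial, so the local Higgs bundle splits cleanly as $\oplus K_{C,i}$ and passes to the normalization $\bar{\Sigma}_a$ as a genuine line bundle. The JM filtration is then \emph{automatically} recovered from the ramification filtration $\mathfrak{m}_{f,i}^{\bullet}\sL$ on each branch; you do not impose it, you read it off.

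The real gap is in part~(2). Your plan to ``decompose $\overline{\Sigma}_a$ into branches indexed by the partition $\bf{d}_B$'' is precisely what fails and is the heart of the paper's difficulty. The branches of $\overline{\Sigma}_a$ are indexed by the irreducible factors of $\chi(\theta_B)$, and by Proposition~\ref{type B generic} these give $\bf{deg}_B = [{}^S\bf{d}_B,1]$---the \emph{Springer dual} partition with an extra~$1$---not $\bf{d}_B$ itself. So a type-B Higgs bundle does \emph{not} decompose as a direct sum of the $K_{B,i} = \Ker f_{B,i}(\theta_B)$; rather $\oplus K_{B,i} \hookrightarrow E_B$ has nontrivial torsion cokernel (Proposition~\ref{coker dimension}), and recovering $E_B$ from its image in the type-C fiber requires choosing an $\iota$-isotropic subspace $W$ in this cokernel $Q$ (Proposition~\ref{description of W}). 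This is the machinery of Section~\ref{Sec:Type_B_case}, and it is where the local degree $2^{e(\bf{d}_B)-c(\bf{d}_B)}$ actually comes from. A branch-by-branch analysis cannot see this, because the obstruction is exactly that the branches interact.

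Two smaller corrections. First, the map $L_{BC}$ is not built on spectral line bundles directly; following Hitchin, one passes from $E_B$ to the quotient $\bar{E}=E_B'/\Ker\theta_B$ and equips it with the new skew form $g_C(\bar u,\bar v)=g_B(\theta_B u,v)/t$, then performs explicit local modifications (Proposition~\ref{part 1.2}). Second, your degree factorization is misplaced: the paper obtains $2^{2n(2g-2)-1}$ from choices of isotropic lines at the ramification points in $Y=R_a\setminus\bar{\pi}_a^{-1}(x)$ and $2^{e(\bf{d}_B)-c(\bf{d}_B)}$ from the $\iota$-isotropic choices at $x$. The $-1$ is global (it is what produces the two connected components), not local, and is unrelated to Lusztig's canonical quotient; $c(\bf{d}_B)$ enters because each type-B2 block in the partition imposes one linear constraint among the local sign choices.
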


To define the map $L_{BC}$, we analyze the relationship between residually nilpotent local Higgs bundles, explained in Theorem~\ref{Thm:intro local level}. However, the degree of $L_{BC}$ reveals the following obstructions:

\begin{corollary}[Corollary \ref{Cor.Not_dual}] \label{intro_not dual}
    Let $\overline{\bf{O}}_B$ and $\overline{\bf{O}}_C$ be special and Springer dual. If $c(\bf{d}_B)\neq 0$, then the connected components $h_{\overline{\bf{O}}_B}^{-1}(a)^{\pm}$ and $h_{\overline{\bf{O}}_C}^{-1}(a)$ are torsors of abelian varieties which are \emph{NOT} dual to each other. In particular, \emph{SYZ} mirror symmetry fails in this case.
\end{corollary}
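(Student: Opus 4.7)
The plan is to deduce the non-duality as a direct degree-count consequence of Proposition~\ref{prop:intro L_BC}. First I would observe that, since $h_{\overline{\mathbf{O}}_B}^{-1}(a)$ consists of two connected components, each a torsor over $\Prym_{\overline{\mathbf{O}}_B,a}$, and $L_{BC}$ is equivariant for the respective abelian group actions, its restriction to a single component $h_{\overline{\mathbf{O}}_B}^{-1}(a)^{+}$ is a finite cover of $h_{\overline{\mathbf{O}}_C}^{-1}(a)$ of degree $2^{2n(2g-2)+e(\mathbf{d}_B)-c(\mathbf{d}_B)-2}$ (half of the total degree in \eqref{eq.intro_l_BC}). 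Fixing identifications with the abelian group structures using the canonical base point on $h_{\overline{\mathbf{O}}_B}^{-1}(a)^{+}$ from Proposition~\ref{prop:intro L_BC}(2), this restricted cover descends to an isogeny $\pi:\Prym_{\overline{\mathbf{O}}_B,a}\to\Prym_a$ of the same degree.

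Next I would argue by contradiction: suppose $\Prym_{\overline{\mathbf{O}}_B,a}$ and $\Prym_a$ were dual abelian varieties. Composing $\pi$ with the duality isomorphism $\Prym_a^{\vee}\cong \Prym_{\overline{\mathbf{O}}_B,a}$ yields a polarization $\lambda:\Prym_a^{\vee}\to \Prym_a$ of degree $2^{2n(2g-2)+e(\mathbf{d}_B)-c(\mathbf{d}_B)-2}$. On the other hand, $\Prym_a$ carries a canonical Prym polarization $\lambda_0$ coming from its description as a Prym variety of the double cover of the normalized spectral curve $\overline{\Sigma}_a$; a direct computation---parallel to the classical Hitchin calculation and the Richardson-case argument underlying Theorem~\ref{Thm:intro SYZ}---would show that $\lambda_0$ has degree $2^{2n(2g-2)+e(\mathbf{d}_B)-2}$, with no contribution from $c(\mathbf{d}_B)$. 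Since the Langlands-compatible polarization realizing any putative SYZ duality must coincide with $\lambda_0$ up to Weil-pairing equivalence, the discrepancy by the factor $2^{c(\mathbf{d}_B)}\ne 1$ yields the contradiction.

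The main obstacle is the computation of the intrinsic degree of $\lambda_0$ and the verification that it is independent of $c(\mathbf{d}_B)$. Geometrically, this is the assertion that the singularities of the spectral curve $\Sigma_a$ counted by $c(\mathbf{d}_B)$ are precisely the ones resolved when passing to $\overline{\Sigma}_a$: they therefore disappear from the Prym construction on the type C side, while remaining visible as the extra cover data encoded in $\pi$ on the type B side. Handling this cleanly will require the combinatorial description of the Kazhdan--Lusztig map cutting out $\mathbf{H}^{\mathrm{KL}}$ together with the analysis of residually nilpotent local Higgs bundles referenced in Section~\ref{Sec:local_level}.
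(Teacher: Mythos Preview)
Your core idea---the degree mismatch between the cover $\Prym_{\overline{\mathbf{O}}_B,a}\to\Prym_a$ and the canonical map $\Prym_a^{\vee}\to\Prym_a$---is exactly what the paper uses, but the paper's argument is far more direct than yours. The paper simply observes that the natural map $\Prym_a^{\vee}\to\Prym_a$ (coming from the restricted principal polarization on the Jacobian) has degree $2^{2n(2g-2)+e(\mathbf{d}_B)-2}$, while $\Prym_{\overline{\mathbf{O}}_B,a}\to\Prym_a$ has degree $2^{2n(2g-2)+e(\mathbf{d}_B)-c(\mathbf{d}_B)-2}$; since both sit as intermediate covers in the factorization of $[2]$ on $\Prym_a$, a degree discrepancy immediately rules out $\Prym_{\overline{\mathbf{O}}_B,a}\cong\Prym_a^{\vee}$. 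No contradiction argument, no auxiliary polarization $\lambda$, no appeal to ``Langlands-compatibility up to Weil-pairing equivalence'' is needed.

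Two specific issues with your version. First, your step ``composing $\pi$ with the duality isomorphism yields a polarization $\lambda$'' is not justified: an isogeny $A^{\vee}\to A$ obtained this way need not be symmetric or come from an ample line bundle, so you cannot invoke uniqueness properties of polarizations without further work. Second, you significantly overestimate the ``main obstacle''. The degree of the canonical Prym polarization is a classical fact (the paper's Proposition~\ref{prop:two connected components}): it is $2^{2N-2}$ where $2N$ is the number of $\sigma$-fixed points on $\overline{\Sigma}_a$, and counting these points only requires knowing which parts of $\mathbf{d}_C$ are even (giving $e(\mathbf{d}_B)$ points over the marked point) plus the ramification away from $x$ (giving $2n(2g-2)$ points). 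None of the Kazhdan--Lusztig or local Higgs bundle analysis is needed here; that machinery is used to compute the \emph{other} degree (the one involving $c(\mathbf{d}_B)$), which you already take as input from Proposition~\ref{prop:intro L_BC}.
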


The failure is caused by the term $c(\bf{d}_B)$ in \eqref{eq.intro_l_BC}. By Lemma \ref{lem:Lusztig's_quotient}, $2^{c(\bf{d}_B)}$ equals the order of Lusztig's canonical quotient, denoted by $\bar{A}(\bf{O}_B)$, which is a quotient of the component group ${A}(\bf{O}_B)$. This connection is not coincidental, as we will elaborate.

Let us now consider Springer dual Richardson orbits $\bf{O}_{B, R}$ and $\bf{O}_{C, R}$, and their dual polarizations $P_{B}$ and $P_C$. The four associated moduli spaces fit into the following commutative diagram:
\begin{equation}\label{dia:moduli spaces}
    \begin{tikzcd}
    \bf{Higgs}_{ P_B} \ar[rd, "h_{P_B}"]     &   & \bf{Higgs}_{ P_C} \ar[ld, "h_{P_C}", swap] \\
    \bf{Higgs}_{\overline{\bf{O}}_{B, R}} \ar[r, "h_{\overline{\bf{O}}_{B, R}}", swap]     &   \bf{H}   &  \bf{Higgs}_{\overline{\bf{O}}_{C, R}} \ar[l, "h_{\overline{\bf{O}}_{C, R}}"] 
    \end{tikzcd}.
\end{equation} 

As noted earlier, there is no direct relationship between the generic Hitchin fibers of $h_{P_B}$ and $h_{P_C}$. To find such a relationship, we first construct a canonical map between the generic fibers of $h_{\overline{\bf{O}}_{B/C, R}}$ and $h_{P_{B/C}}$, and then use the finite map $L_{BC}$. Denote the connected components of $h_{P_B}^{-1}(a)$ by $h_{P_B}^{-1}(a)^{\pm}$. The relations between the fibers can be summarized as follows:

\begin{proposition}[Corollary \ref{Cor.factor through} and  Theorem \ref{Thm:dual abelian}] \label{prop:intro dual abelian}
          For $a\in\bf{H^{\text{KL}}}$, the Hitchin fibers satisfy the following relations:
     \begin{equation*}  
         \begin{tikzcd}
              h_{P_B}^{-1}(a)^{\pm} \ar[d, "\nu_{P_B}"'] & h_{P_C}^{-1}(a)\ar[d,"\nu_{P_C}"] & \Prym_{P_B,a}\ar[d]& \Prym_{P_C,a}\ar[d]\\
              h_{\overline{\bf{O}}_{B, R}}^{-1}(a)^{\pm} \ar[ur]\ar[r,"L_{BC}"]& h_{\overline{\bf{O}}_{C, R}}^{-1}(a) & \Prym_{\overline{\bf{O}}_{B, R},a}\ar[ur]\ar[r]& \Prym_a
         \end{tikzcd}.  
     \end{equation*}
    The right-hand side describes the diagram of abelian varieties, while the left-hand side describes torsors over these varieties. Furthermore,
     \begin{enumerate}
         \item $L_{BC}$ factors through $h^{-1}_{P_C}(a)$.
         \item $\deg \nu_{P_B} \cdot \deg \nu_{P_C} = \# \bar{A}(\bf{O}_{B, R})$.
         \item $\Prym_{P_B,a}$ and $\Prym_{P_C,a}$ are dual abelian varieties.
     \end{enumerate} 
\end{proposition}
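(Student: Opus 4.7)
The plan is to work pointwise at a generic $a\in\bf{H}^{\text{KL}}$ and to exploit the fact that every fiber in the diagram is a torsor over an abelian variety built from the normalized spectral curve $\bar{\Sigma}_a$. The three claims then decompose cleanly into: (1) a lifting statement at the marked point, (2) a degree identity packaged by the torsor structures, and (3) a Poincar\'e-style duality refined by the parabolic data at $x$. I expect (3) to be the main obstacle, since it requires matching the two finite enlargements by Cartier duality rather than just by numerical isogeny.

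For (1), I would construct a canonical lift $\tilde L_{BC}\colon h_{\overline{\bf{O}}_{B,R}}^{-1}(a)^{\pm}\to h_{P_C}^{-1}(a)$ and verify $\nu_{P_C}\circ \tilde L_{BC}=L_{BC}$. Given a type B Higgs bundle with residue in $\overline{\bf{O}}_{B,R}$, its image under $L_{BC}$ is a type C Higgs bundle whose residue a priori lies only in $\overline{\bf{O}}_{C,R}$. The local analysis in Section~\ref{Sec:local_level} shows that the branches of $\bar{\Sigma}_a$ over $x$, matched via the Springer-dual correspondence $\bf{d}_B\leftrightarrow\bf{d}_C$, produce a \emph{canonical} flag at $x$ compatible with $P_C$. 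This flag is precisely the parabolic datum needed to lift to $\bf{Higgs}_{P_C}$; one then checks well-definedness in families and compatibility with stability, and the identity $\nu_{P_C}\circ\tilde L_{BC}=L_{BC}$ holds by construction since $\nu_{P_C}$ forgets exactly this flag.

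For (2), I would combine (1) with the torsor structures. The factorization $L_{BC}=\nu_{P_C}\circ\tilde L_{BC}$ yields $\deg L_{BC}=\deg\tilde L_{BC}\cdot\deg\nu_{P_C}$ at the level of the induced isogenies of Pryms. To introduce $\deg\nu_{P_B}$, I would apply the same canonical-flag mechanism from Section~\ref{Sec:local_level} on the B-side: the data in $h_{\overline{\bf{O}}_{B,R}}^{-1}(a)^{\pm}$ carries a residual ambiguity at $x$ resolved by a canonical $P_B$-flag, identifying $\deg\nu_{P_B}$ as an explicit branching factor controlled by $\bf{d}_B$. Plugging in the explicit value $\deg L_{BC}=2^{2n(2g-2)+e(\bf{d}_B)-c(\bf{d}_B)-1}$ from Proposition~\ref{L_BC} and the identification $2^{c(\bf{d}_B)}=\#\bar A(\bf{O}_{B,R})$ from Lemma~\ref{lem:Lusztig's_quotient}, the branching contributions of $\deg\nu_{P_B}$ and $\deg\nu_{P_C}$ absorb the factor $2^{2n(2g-2)+e(\bf{d}_B)-1}$, leaving $\deg\nu_{P_B}\cdot\deg\nu_{P_C}=\#\bar A(\bf{O}_{B,R})$.

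For (3), the hard step, I would build a non-degenerate bilinear pairing $\Prym_{P_B,a}\times\Prym_{P_C,a}\to\GG_m$ modelled on Hitchin's non-parabolic Langlands pairing. The strategy is to first recover Hitchin's non-parabolic duality between $\Prym_{\overline{\bf{O}}_{B,R},a}$ and $\Prym_a$ as a base case, realizing each as an isogenous quotient of $\mathrm{Pic}(\bar{\Sigma}_a)$ cut out by the spectral involution and dualized via the Poincar\'e/Weil pairing. The real content is then to extend this to the parabolic level by showing that the two finite enlargements $\Prym_{P_B,a}\twoheadrightarrow\Prym_{\overline{\bf{O}}_{B,R},a}$ and $\Prym_{P_C,a}\twoheadrightarrow\Prym_a$ are \emph{Cartier dual} as finite group schemes, not merely of matching order. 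I would compute the pairing on these kernels explicitly, using that they are parametrized by the $2$-torsion combinatorics at the branch points over $x$, and match them via the Springer-dual labelling of $\bf{d}_B$ and $\bf{d}_C$ on the branches. The numerical equality in (2) confirms the sizes match, and the Cartier duality then follows from a direct character-group computation; this combinatorial matching at the branch locus is where the main difficulty concentrates.
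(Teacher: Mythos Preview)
Your plan for (1) is essentially the paper's: the factorization through $h_{P_C}^{-1}(a)$ comes from the local observation (Corollary~\ref{why factor though}) that in the Richardson case the $\iota$-isotropic data $W$ already carries the $\OG(1,V_{C,j})$ choices, which is exactly the $P_C$-flag. Your phrase ``canonical flag'' is slightly misleading---the point is not that there is a unique flag, but that the $W$-data parametrizing the fibre of $L_{BC}$ \emph{is} the flag data.

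There are genuine gaps in (2) and (3).

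For (2), your arithmetic cannot close. The degree $\deg L_{BC}=2^{2n(2g-2)+e(\bf d_B)-c(\bf d_B)-1}$ contains the global factor $2^{2n(2g-2)-1}$ coming from the ramification points in $Y=R_a\setminus\bar\pi_a^{-1}(x)$, and neither $\nu_{P_B}$ nor $\nu_{P_C}$ touches those points: both are purely local at $x$, with degrees $2^{\#I(P_B)}$ and $2^{\#I(P_C)}$ respectively (Propositions~\ref{p.h_P_B}, \ref{p.h_P_C}). So there is no identity expressing $\deg\nu_{P_B}\cdot\deg\nu_{P_C}$ in terms of $\deg L_{BC}$ alone. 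The paper instead computes each degree separately and then invokes the seesaw identity $\#I(P_B)+\#I(P_C)=c(\bf d_{B,R})$ from \cite{FRW22} (Proposition~\ref{Prop:Seesaw_prop}), together with $\#\bar A(\bf O_{B,R})=2^{c(\bf d_{B,R})}$.

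For (3), your Weil-pairing/Cartier-duality strategy is the right one, but your ``base case'' is false: $\Prym_{\overline{\bf O}_{B,R},a}$ and $\Prym_a$ are \emph{not} dual when $c(\bf d_B)\neq 0$---this is precisely Corollary~\ref{Cor.Not_dual}, and is the whole reason one must pass to the parabolic enlargements. The paper's setup is that both $\Prym_{P_B,a}$ and $\Prym_{P_C,a}$ lie between $\Prym_a$ and $\Prym_a^\vee$, so by Lemma~\ref{lem:criterion of duality} one must show that the kernels $\Delta_B,\Delta_C\subset\ker\beta$ of $\Prym_a^\vee\to\Prym_{P_{B/C},a}$ are mutual annihilators for the Weil pairing on $\ker\beta$. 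This is then a direct combinatorial check using the explicit generators of $\cA(P_B)$ and $\cA(P_C)$ (Lemma~\ref{lem:A(P_B)}) and the pairing values $e(\sP_{s_i},\sP_{s_j})=-1$ for $i\neq j$ (Proposition~\ref{prop:weil pairing}).
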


\begin{remark} 
    For a Richardson orbit $\bf{O}_{B/C, R}$, different choices of polarizations $P_{B}$ and $P_{C}$ are possible. The degrees $\deg \nu_{P_{B/C}}$ may vary depending on the choice of polarization, but $\Prym_{P_B, a}$ and $\Prym_{P_C,a}$ remain dual to each other.    
\end{remark}

Following the strategies of \cite{DP12} and \cite{GWZ20}, we use the trivial $\mu_2$-gerbe $\alpha_B$ on $\bf{Higgs}_{P_B}^{\pm}$, as $\SO_{2n+1}$ is an adjoint group. On the other hand, we define $\alpha_C$ as the lifting gerbe of the universal (parabolic) $\PSp_{2n}$-Higgs bundle on $\bf{Higgs}_{ P_C}$. The existence of rational points (even over function fields of the Hitchin base) on $h^{-1}_{\overline{\bf O}_{C,R}}(a)$ shows that it is a trivial torsor. 

With this framework, and following the philosophy of ``abstract dual Hitchin systems'' from \cite[\S 6]{GWZ20}, we prove Theorem~\ref{Thm:intro SYZ} and Theorem~\ref{Thm:intro TMS} (see Section~\ref{Sec:SYZ} and~\ref{Sec:TMS} for more details).

\begin{remark}\label{rmk:rational pt KL singularities}
    An essential feature of parabolic Hitchin systems (or more general systems with special nilpotent orbits at marked points) is the construction of rational points on generic fibers over fields that are not necessarily algebraically closed (see Proposition Proposition \ref{prop:generic fiber is trivial torsor}). These rational points are critical for building the SYZ and topological mirror symmetries. Their existence is closely tied to the singularities of generic spectral curves or, more specifically, their normalizations. The normalizations are obtained through the local decomposition of characteristic polynomials, a process described by Spaltenstein \cite{Spa88} in the context of Kazhdan--Lusztig maps. This connection is why the good open subset of the Hitchin base is denoted by $\bf{H}^{\text{KL}}$.
\end{remark}

\subsubsection{Relation to Lusztig's canonical quotient}

Lusztig's canonical quotient plays a pivotal role in the classification of unipotent representations of finite groups of Lie type. Interestingly, it also plays a crucial role in Proposition~\ref{prop:intro dual abelian}. Specifically, the seesaw property in Proposition~\ref{prop:intro dual abelian} arises from the analysis of residually nilpotent local Higgs bundles, defined as follows.

In the following, let $\Bbbk$ denote an algebraically closed field with $\text{char}(\Bbbk)\ne 2$ unless otherwise specified. Let $\mathcal{O}=\Bbbk[\![t]\!]$ be the ring of formal power series, and $\sK=\Bbbk(\!(t)\!)$ be its fractional field. The corresponding (positive) loop groups and algebras are denoted as $LG, L^{+}G, L\mathfrak{g},L^{+}\mathfrak{g}$, where
\[
    LG(\Bbbk)=G(\sK),\quad L^{+}G(\Bbbk)=G(\sO),\quad L\mathfrak{g}(\Bbbk)=\mathfrak{g}(\sK),\quad L^{+}\mathfrak{g}(\Bbbk)=\mathfrak{g}(\sO).
\]
Additionally, let $L\mathfrak{g}^{\heartsuit}$ denote the set of topologically nilpotent and generically regular semisimple elements. 

Let $\widehat{P} \subset L^{+}G$ be a (parabolic type) parahoric subgroup, defined as the preimage of a parabolic subgroup $P < G$ under the reduction map $L^{+}G \rightarrow G$. Define the affine Grassmannian of $G$ as $\bf{Gr}:=LG/L^{+}G$, and the affine flag variety of $G$ as $\bf{Spal}_P = LG/\widehat{P}$. 

Recall from \cite[\S 0]{KL88} and \cite[\S 4.2]{SXY23} the following definitions:
\begin{definition}[Affine Springer/Spaltenstein fiber] \label{def:affine_Spaltenstein_fiber}
Let $\theta \in L\mathfrak{g}^{\heartsuit}$. The associated affine Springer fiber is defined as
\[
   \bf{Gr}_{\theta} := \{g L^{+}G \in \bf{Gr} \mid \Ad_{g^{-1}} \theta \in L^{+}\mathfrak{g} \}.
\]
The affine Spaltenstein fiber is
\[
    \textbf{Spal}_{\theta, P}:=\left\{ g\widehat{P}\in LG/\widehat{P} \mid \Ad_{g^{-1}}\theta\in\widehat{\gn}\right\},
\]
where $\widehat{\gn}$ is the topologically nilpotent radical of the parahoric subalgebra $\widehat{\gp}$. 
\end{definition}

Let $\theta \in L\mathfrak{g}^\heartsuit$. The affine Springer fiber admits a restriction map
\[
   \rm{ev}_{\theta} : \bf{Gr}_{\theta} \rightarrow [\sN/G],
\]
which sends $gL^{+}G$ to $\Ad_{g^{-1}}\theta$ $\mod t$. Here, $\sN$ is the nilpotent cone in $\g$. Following Yun's notations \cite{Yun21}, we define: 

\begin{definition}[residually nilpotent local Higgs bundles] \label{def:local Higgs bundle} 
For any nilpotent orbit $\bf{O} \subset \sN$, let  $\bf{Gr}_{\theta, \bf{O}}$ be the preimage of $\bf{O}/G$ under $\rm{ev}_{\theta}$. Elements of $\bf{Gr}_{\theta, \bf{O}}$ are called \emph{residually nilpotent local $G$-Higgs bundles} associated to $\theta$ and the nilpotent orbit $\bf{O}$. 
\end{definition}

Affine Springer fibers are intimately connected to the Hitchin fibers of Higgs bundles via the celebrated product formula of Ng{\^o}~\cite{Ngo10}. Analogously, affine Spaltenstein fibers are related to Hitchin fibers of parabolic Higgs bundles. 

Under the conditions of \eqref{dia:moduli spaces}, we analyze the local structure of Hitchin fibers, leading to Proposition~\ref{prop:intro dual abelian}. To distinguish between types B and C, we denote the fibers as $\bf{Gr}_{\theta_B, \bf{O}_{B}}$ and $\bf{Gr}_{\theta_C, \bf{O}_C}$.

\begin{mainthm}[Proposition \ref{description of W}, \ref{prop:mirror_position}, and Theorem \ref{Thm.gp_action}]\label{Thm:intro local level}
Let $\theta_B$ and $\theta_C$ be related as in \eqref{theta_B/C}. Let $\widehat{\mathbf{Gr}}_{\theta_C, \bf{O}_{C, R},}$ be a cover of affine Spaltenstein fiber, with $\deg \nu_{P_C}^\vee = \deg \nu_{P_C} $, constructed in \eqref{tilde_LH}. Then, 
    \begin{equation*}
        \begin{tikzcd}
        \widehat{\mathbf{Gr}}_{\theta_C, \bf{O}_{C, R}} \ar[d, "\nu_{P_C}^\vee"'] & \\
        \textbf{Spal}_{\theta_B, P_B} \ar[d, "\nu_{P_B}", swap] & \textbf{Spal}_{\theta_C, P_C} \ar[d, "\nu_{P_C}"]\\
        {\mathbf{Gr}}_{\theta_B, \bf{O}_{B, R}} \ar[r, "l_{BC}"] \ar[ru] & {\mathbf{Gr}}_{\theta_C, \bf{O}_{C, R}}
    \end{tikzcd}.
    \end{equation*}
Here, $l_{BC}$ is of degree $2^{\beta(\bf{d}_B) - c(\bf{d}_B)}$. The centralizer $Z_{\Sp_{2n}(\sK)}(\theta_{C})$ acts transitively on each set in the following diagram, with fibers of the maps having the following torsor structures: the fiber of $\nu_{P_C}$ is a $\sA(\theta_C)/\sA(P_C)$ torsor, the fiber of $l_{BC}$ is a $\sA(\theta_C)/\sA(W)$ torsor, the fiber of $\nu_{P_B}$ is a $\sA(W)/\sA(P_B)$ torsor, and the fiber of $\nu_{P_C}^\vee$ is a $\sA(P_B)$ torsor. See Section \ref{s.gp_action} for the notations of these groups. Moreover, 
\[
    (\sA(W)/\sA(P_B)) \times (\sA(\theta_C)/\sA(P_C)) \cong \bar{A}(\bf{O}_{C,R}),
\]
where $\bar{A}(\bf{O}_{C, R})$ is the Lusztig's canonical quotient.
\end{mainthm}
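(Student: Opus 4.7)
The plan is to reduce the statement to a combinatorial analysis of the local Higgs data coming from the Jacobson--Morozov/Spaltenstein description of the characteristic polynomial of $\theta_C$, and then read off the group actions from the resulting decomposition. First, I would fix $\theta_C \in \g_C(\sK)$ with nilpotent residue in $\bf{O}_{C,R}$ and invoke Spaltenstein's decomposition \cite{Spa88} to factor its spectral data into pieces indexed by the parts of the partition $\bf{d}_B$. Each local factor corresponds to a ramified cover of $\Spec\cO$ whose normalization contributes a $\Z/2$-ambiguity precisely when the corresponding part of $\bf{d}_B$ is even but paired (this is exactly the role played by $e(\bf{d}_B)$ and $c(\bf{d}_B)$ in Definitions~\ref{def:c(d_B)} and~\ref{def:e(d_B)}). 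I expect this to produce a canonical bijection between $\mathbf{LH}_{\bf{O}_{C,R},\theta_C}$ and the set of local line-bundle data on the normalization of the local spectral curve, and similarly for $\mathbf{LH}_{\bf{O}_{B,R},\theta_B}$ after applying the relation \eqref{theta_B/C}.

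Second, I would construct the three downward maps geometrically. The map $\nu_{P_C}$ is the local generalized Springer map: to a choice of Borel-like flag at the marked point lying in the affine Spaltenstein fiber one attaches its image in $\overline{\bf O}_{C,R}$. The fiber over a fixed local Higgs bundle is a torsor for the stabilizer of the local flag data in the local centralizer, which after unwinding equals $\cA(\theta_C)/\cA(P_C)$. The map $\nu_{P_B}$ is analogous on the $B$-side, and $\nu_{P_C}^\vee$ arises from pulling back the $P_B$-level structure through the correspondence $l_{BC}$ lifted to $\widehat{\mathbf{LH}}$. To compute $\deg l_{BC}$, I would count the number of ways to refine a type-$C$ local Higgs datum to a type-$B$ one compatible with the Jacobson--Morozov filtration; the Springer-dual combinatorics of \cite{Spa88, CM93} convert this count exactly into the exponent $e(\bf{d}_B)-c(\bf{d}_B)$.

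Third, I would check transitivity of the $Z_{\Sp_{2n}(\sK)}(\theta_C)$-action. Transitivity on $\mathbf{Spal}_{P_C,\theta_C}$ is the standard fact that the centralizer of a regular-semisimple-plus-nilpotent element in the loop group acts transitively on the affine Spaltenstein fiber (this is the Kazhdan--Lusztig style argument, valid over $\bf{H}^{\text{KL}}$). Transitivity on the other three sets follows from the same action through the equivariance of the maps $\nu_{P_B}, \nu_{P_C}, l_{BC}, \nu_{P_C}^\vee$. Stabilizer computations then identify each fiber with the claimed quotient of the subgroups $\cA(\theta_C), \cA(W), \cA(P_B), \cA(P_C)$ defined in Section~\ref{s.gp_action}.

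Finally, to establish the isomorphism with Lusztig's canonical quotient, I would combine two ingredients: the seesaw identity of Proposition~\ref{Prop:Seesaw_prop}, which gives $\deg\mu_{P_B}\cdot\deg\mu_{P_C}=\#\bar A(\bf{O}_{C,R})$ at the global level, and Lemma~\ref{lem:Lusztig's_quotient}, which expresses $\#\bar A(\bf{O}_{C,R})=2^{c(\bf{d}_B)}$. Since the degrees of $\nu_{P_B}$ and $\nu_{P_C}$ equal the local-level counterparts $\#\cA(W)/\cA(P_B)$ and $\#\cA(\theta_C)/\cA(P_C)$, the product has the correct order; one then promotes the counting match to a canonical group isomorphism by identifying both sides with the component group of the centralizer of the nilpotent residue, modulo the subgroup arising from the choice of polarization. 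The hardest step, in my view, will be this last canonical identification: the naive counting is forced by the seesaw, but extracting an actual group isomorphism (rather than merely an isomorphism of sets) requires exhibiting $\cA(W)/\cA(P_B)$ and $\cA(\theta_C)/\cA(P_C)$ as mutually transverse subgroups of $\bar A(\bf{O}_{C,R})$, which in turn forces one to track the precise way that the Jacobson--Morozov filtration on the $B$-side interacts with the Spaltenstein decomposition on the $C$-side.
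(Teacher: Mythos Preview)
Your high-level outline follows the paper's architecture, but you have misidentified where the real work lies, and your treatment of the central step is too vague to constitute a proof.

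The genuine difficulty is the degree computation $\deg l_{BC}=2^{e(\bf{d}_B)-c(\bf{d}_B)}$, which you dispatch in one sentence (``Springer-dual combinatorics convert this count exactly into the exponent''). The obstacle is that on the $B$-side the partition $\bf{d}_B$ of the nilpotent residue does \emph{not} coincide with the partition $\bf{deg}_B$ coming from the irreducible factorization of the characteristic polynomial: one has $\bf{deg}_B=[{}^S\bf{d}_B,1]$, and the collapse ${}^S\bf{d}_B\ne\bf{d}_B$ means the naive BNR/Spaltenstein splitting fails. The paper spends all of \S\ref{Sec:theta_direct_summand}--\S\ref{Sec:Type_B_case} on this: one must first show that the kernels $K_{B,i}$ are at worst $1$-degenerate (Proposition~\ref{coker dimension}), then reverse the process from the $C$-side to produce a quotient space $Q$ with a pairing, and finally classify the $\iota$-isotropic subspaces $W\subset Q$ (Proposition~\ref{description of W}) by a delicate rank analysis of $\Theta_W^l$ case-by-case in $l$. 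The exponent $e(\bf{d}_B)-c(\bf{d}_B)$ emerges only at the end of that argument, not from a direct combinatorial dictionary.

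Conversely, the parts you flag as hardest are in fact routine once the $l_{BC}$ analysis is in hand. Transitivity of $Z_{\Sp_{2n}(\sK)}(\theta_C)$ is not a ``Kazhdan--Lusztig style argument'': the paper identifies this centralizer concretely with $\{h\in\sK_f^\times\mid h\cdot\sigma^*h=1\}$ via the local BNR correspondence (Proposition~\ref{p.Stab_C}), after which transitivity on line bundles is obvious and transitivity on the other sets follows by equivariance. The stabilizers are read off from the explicit presentation of the component group as an elementary $2$-group indexed by even parts (Lemma~\ref{lem:aff_comp_gp}), together with the description of the Spaltenstein fibers in Propositions~\ref{p.Springer-fiber-typeB}--\ref{p.Springer-fiber-typeC}. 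Finally, the isomorphism $(\cA(W)/\cA(P_B))\times(\cA(\theta_C)/\cA(P_C))\cong\bar A(\bf{O}_{C,R})$ does not require exhibiting transverse subgroups of $\bar A$; since every group in sight is an explicit elementary $2$-group given by parity conditions on the index set $\bf{Even}$, the isomorphism is pure combinatorics (Lemma~\ref{lem:A(P_B)} plus the cited result from \cite{FRW22}), with the order already forced by the seesaw identity.
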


\begin{remark}\ 
    \begin{itemize}
        \item The construction of $l_{BC}$ applies to all special Springer dual orbits $\bf{O}_{B/C}$, not just Richardson orbits (see Theorem \ref{Thm.type B decomposition}).

        \item The above diagram can be regarded as the local counterpart of Proposition~\ref{prop:intro dual abelian}. Both results offer new geometric interpretations of Lusztig's canonical quotient.
        \end{itemize}
\end{remark}

\subsection{Plan of the paper}

In Section~\ref{Sec:local_level}, we study residually nilpotent local Higgs bundles and establish a connection between types B and C (Theorem~\ref{Thm.type B decomposition}).

In Section~\ref{Sec:seesaw_for_group_action}, we explore affine Spaltenstein fibers and uncover a geometric interpretation of Lusztig's canonical quotient (Theorem~\ref{Thm.gp_action}).

In Section~\ref{S.JM_moduli_space}, using Jacobson–Morozov resolutions, we construct moduli spaces of Higgs bundles associated to nilpotent orbit closures and relate Springer duality to spectral curve singularities (Theorem~\ref{Thm:why special}).

In Section~\ref{Sec:Par_BNR}, using the parabolic BNR correspondence, we analyze generic Hitchin fibers and find seesaw relations between parabolic Higgs bundles and newly constructed moduli spaces in the Richardson case. We prove that the generic Hitchin fibers are torsors over dual abelian varieties (Theorem~\ref{Thm:dual abelian}).

In Section~\ref{Sec:SYZ}, carefully analyzing the various torsor structures finally leads to the Strominger--Yau--Zaslow mirror symmetry.

In Section~\ref{Sec:TMS}, using the framework of ``abstract dual Hitchin systems" \cite{GWZ20}, we establish topological mirror symmetry.

\subsection*{Acknowledgements}

We would like to express our gratitude to Yongbin Ruan for suggesting this project and for his valuable feedback on the initial draft. Special thanks go to Hiraku Nakajima for suggesting the consideration of group actions, which shaped Section~\ref{s.gp_action}. We also thank Weiqiang He for insightful discussions on nilpotent orbits. Discussions with Xiaoyu Su on moduli spaces were also invaluable. 

We are grateful to Tam\'as Hausel, Mirko Mauri, Michael McBreen, Junliang Shen, and Qizheng Yin for their helpful suggestions. Part of this manuscript was written during the second and third authors' visit to the Institute for Advanced Study in Mathematics at Zhejiang University, and we sincerely appreciate the institute’s inspiring environment and support.

We also thank the anonymous referee for his comments on Section~7 and for pointing out a gap in our previous proof.

B. Wang conducted part of this work at the Steklov Institute of Mathematics (Moscow), supported by the Ministry of Science and Higher Education of the Russian Federation (agreement no. 075-15-2019-1614 ). He is currently supported by funding from the Department of Mathematics at the Chinese University of Hong Kong, the General Research Fund (Project code: 14307022), and the Early Career Scheme (Project code: 24307121) of Michael McBreen. X. Wen acknowledges support from the Chongqing Natural Science Foundation Innovation and Development Joint Fund (CSTB2023NSCQ-LZX0031) and the Chongqing University of Technology Research Startup Funding Project (2023ZDZ013). Y. Wen is supported by a KIAS Individual Grant (MG083902) at the Korea Institute for Advanced Study.

\section{Residually nilpotent local Higgs bundle} \label{Sec:local_level}

\subsection{$\theta_A$-direct summand and modification}\label{Sec:theta_direct_summand}

In this subsection, we address the $\GL_n$ case. The cases for types B and C will be discussed in the following subsection. Let $\bf{O}_A \subset \mathfrak{gl}_n $ be a nilpotent orbit, and let $(E,\theta) \in \mathbf{Gr}_{\theta,\bf{O}_A}$ represent a residually nilpotent local Higgs bundle. Consider the characteristic polynomial
$$
f(\lambda):=\chi_\theta(\lambda)=\lambda^n+a_1\lambda^{n-1}+\cdots + a_{n-1}\lambda+ a_n,
$$ 
with $a_i\in \mathcal{O}$. Then $(E,\theta)$ can be regarded as an $\sO_{f}$-module, where $\sO_f=\sO[\lambda]/f(\lambda)$. Assume that $\chi_{\theta}(\lambda)$ admits a factorization into irreducible factors in $\mathcal{O}[\lambda]$
\begin{align*}
    \chi_{\theta}(\lambda)=f_1(\lambda)\cdot \cdots \cdot f_k(\lambda).
\end{align*}
Let $e_i=\deg f_i(\lambda)$, and define $\bf{deg} = [e_1, \ldots, e_k]$, which forms a partition of $n$. 

Since $(E, \theta)$ is residually nilpotent, all coefficients $a_i$ are divisible by $t$. Let $\sP(n)$ denote the set of partitions of $n$. If the partition of $\bf{O}_A \subset \mathfrak{gl}_n$ is given by $\bf{d}=[d_1, \cdots, d_r] \in \sP(n)$, the $t$-order of $a_i$ is determined as follows:

\begin{proposition}[Theorem 4~\cite{SWW22}] \label{order of coe}
The coefficients $a_i$ in $\chi(\theta)$ satisfy the following inequality
\[
\text{ord}_t a_i\geq \min\{\ell \mid d_{\ell}\geq i\}.
\]
In particular, $\bf{d}\geq \bf{deg}$ as partitions of $n$.
\end{proposition}

\begin{remark}\label{KLmap}
    If $\theta$ is generic, then $\chi_\theta(\lambda)$ coincides with the Kazhdan--Lusztig map. Recall that Kazhdan--Lusztig map is an injective map (proven by Spaltenstein~\cite{Spa90} and Yun~\cite{Yun21}) from the set of nilpotent orbits to conjugacy classes of the Weyl group
    \[
        \KL: \sN \longrightarrow \text{Conj}(\sf{W}).
    \]   
    where $\sN$ is the set of nilpotent orbits. The Kazhdan--Lusztig map is defined as follows: for an element $e$ in the nilpotent orbit $\mathbf{O} \subset \g$, choose a generic lifting $\tilde{e} \in e+tL^{+}\mathfrak{g}$ that is regular semisimple in the loop group $\LG$. The centralizer $Z_{\LG}(\theta)$ is then a maximal torus in $\LG$. Kazhdan--Lusztig \cite{KL88} showed that the conjugacy classes of $Z_{\LG}(\theta)$ is independent of the choice of $\tilde{e}$, yielding a well-defined map:
    \begin{align*}
       \sN & \rightarrow \text{rational conjugacy classes of maximal torus of}\; \LG \\
       e   & \mapsto Z_{\LG}(\tilde{e}).
    \end{align*} 
    This map corresponds canonically to conjugacy classes of the Weyl group $\sf{W}$. In the next subsection, we will employ Spaltenstein's interpretation of Kazhdan--Lusztig maps for types B and C.
\end{remark}

From now on, let $K_i = \Ker f_i(\theta)$ for each irreducible factor $f_i(\lambda)$. Clearly, we have an injection $\oplus_{i=1}^kK_i\rightarrow E$. The main goal of this subsection is to determine the quotient of this injection under certain mild conditions.

\begin{definition}
    For a submodule $i_F: F\hookrightarrow E$, we say that $F$ is a \emph{$\theta$ direct summand} of $E$ if
    \begin{itemize}
        \item $F$ is $\theta$ invariant.
        \item There is an $\mathcal{O}$ morphism $s_F: E \rightarrow F $ such that $s_F$ is compatible with $\theta$, and $s_F\circ i_F = \id_F$.
    \end{itemize}  
\end{definition}

For a $\theta$-invariant saturated submodule $F$ of $E$ (i.e., $E/F$ is also torsion-free), denote by $\bar{\theta}$ the induced morphism on $E/F$, and let $\chi_{F}$ be the characteristic polynomial of $\theta|_{F}$. Choose an $\mathcal{O}$-linear basis of $F$ and extend it to a basis of $E$. Then, in this basis, $\theta$ has the matrix form: 
$$
\Theta= \begin{pmatrix}
            \Theta_F & \Theta_{12} \\
        0        & \bar{\Theta} 
\end{pmatrix}.
$$
\begin{lemma}\label{splitting lemma}
  
We put 
$$
\chi_F(\Theta)= \begin{pmatrix}
                             0 & M \\
                             0 & \chi_F(\bar{\Theta})
                            \end{pmatrix}.
$$
Suppose $\chi_F(\bar{\Theta})$ is invertible (over $\sK$). Then $F$ is a $\theta$ direct summand of $E$ if and only if $M\cdot \chi_F(\bar{\Theta})^{-1} $ is integral, i.e., its entries belong to $\mathcal{O}$.
\end{lemma}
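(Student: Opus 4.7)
The plan is to translate the existence of a $\theta$-direct summand structure on $F$ into a Sylvester-type matrix equation, and then use Cayley--Hamilton to recognize $M\cdot \chi_F(\bar\Theta)^{-1}$ as the unique solution.

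First, I would reformulate the problem. A $\theta$-compatible section $s_F\colon E\to F$ splitting $i_F$ is equivalent to giving a $\theta$-invariant complement $F'\subset E$ of $F$. Writing this complement in the chosen basis as the column span of $\begin{pmatrix} X\\ I\end{pmatrix}$ for some matrix $X$ with entries in $\mathcal{O}$, the change-of-basis matrix $U=\begin{pmatrix} I & X\\ 0 & I\end{pmatrix}$ conjugates $\Theta$ to block-diagonal form iff
\[
\Theta_F X - X\bar\Theta + \Theta_{12}=0.
\]
So the splitting problem becomes: does the Sylvester equation $\Theta_F X - X\bar\Theta = -\Theta_{12}$ admit an integral solution $X$?

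Next, I would connect this Sylvester equation to $M$ via Cayley--Hamilton. Writing $\chi_F(\lambda)=\sum_i c_i\lambda^i$ and computing $\Theta^k$ inductively, the upper-right block of $\Theta^k$ equals $N_k=\sum_{j=0}^{k-1}\Theta_F^{k-1-j}\Theta_{12}\bar\Theta^{j}$, so that $M=\sum_i c_i N_i$. Given any $X$ satisfying the Sylvester equation, an easy induction on $k$ yields
\[
\Theta_F^{k}X - X\bar\Theta^{k} = -N_k.
\]
Summing against the coefficients $c_i$ and using Cayley--Hamilton $\chi_F(\Theta_F)=0$ gives
\[
-X\,\chi_F(\bar\Theta)= -M, \qquad \text{i.e.}\qquad X\,\chi_F(\bar\Theta)=M.
\]
Since $\chi_F(\bar\Theta)$ is invertible over $\sK$, this forces $X=M\cdot\chi_F(\bar\Theta)^{-1}$ as the unique $\sK$-solution.

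To finish, I would run both directions. If $M\cdot\chi_F(\bar\Theta)^{-1}$ is integral, taking $X$ to be this matrix provides an integral solution of the Sylvester equation, hence an integral change of basis block-diagonalizing $\Theta$; the complement $F'$ thus obtained is $\theta$-invariant and saturated, producing the required $\theta$-compatible retraction $s_F$. Conversely, if $F$ is a $\theta$-direct summand, the associated integral $X$ solves the Sylvester equation, and by the uniqueness established above it must equal $M\cdot\chi_F(\bar\Theta)^{-1}$, so this matrix lies in $\mathcal{O}$. The only real subtlety is the uniqueness step, which rests on $\chi_F(\bar\Theta)$ being invertible over $\sK$ — this is precisely the hypothesis that $\Theta_F$ and $\bar\Theta$ have no common eigenvalues, making the Sylvester operator $X\mapsto \Theta_F X-X\bar\Theta$ injective on $\sK$-matrices; no deeper obstacle appears.
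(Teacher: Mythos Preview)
Your argument is correct. The reduction to the Sylvester equation $\Theta_F X - X\bar\Theta = -\Theta_{12}$ and the Cayley--Hamilton telescoping that pins down $X = M\cdot\chi_F(\bar\Theta)^{-1}$ are both clean. One small point worth making explicit in the forward direction: you only showed that \emph{any} Sylvester solution equals $M\cdot\chi_F(\bar\Theta)^{-1}$, not that this matrix solves the equation; but since the Sylvester operator is injective (hence bijective) on $\sK$-matrices, a $\sK$-solution exists, and your identity then identifies it, so the direction goes through.

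The paper takes a different, more coordinate-free route. Rather than seeking a complement and solving a Sylvester equation, it observes that $\chi_F(\theta)\colon E\to E$ kills $F$ and hence factors as $E\xrightarrow{\pi} E/F \xrightarrow{\phi} E$; in the chosen basis $\phi$ has matrix $\begin{pmatrix} M\\ \chi_F(\bar\Theta)\end{pmatrix}$. The candidate section is then simply $s=\phi\circ\chi_F(\bar\theta)^{-1}$, and integrality of $M\cdot\chi_F(\bar\Theta)^{-1}$ is exactly what makes this well-defined over $\sO$; conversely, any $\theta$-compatible section $s$ satisfies $\phi=s\circ\chi_F(\bar\theta)$, forcing $s=\phi\circ\chi_F(\bar\theta)^{-1}$. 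The two arguments land on the same formula (your $X$ is the top block of the paper's $s$), but the paper's phrasing in terms of the map $\phi$ is what gets reused later when constructing the codimension-one modifications $E'$ in Propositions~\ref{prop: first case, direct summand construction} and \ref{Prop.part_2}, where one needs to track when $\phi\circ\chi_F(\bar\theta)^{-1}$ is defined only on a submodule of $E/F$. Your Sylvester viewpoint, on the other hand, makes the uniqueness of the splitting transparent and connects the lemma to a classical linear-algebra picture.
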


\begin{proof}
    Consider the following commutative diagram:
    \[
    \begin{tikzcd}
        E\ar[rd,"\pi"]\ar[rr,"\chi_{F}(\theta)"]&& E\ar[rd,"\pi"]\\
        & E/F\ar[ur,"\phi"]\ar[rr,"\chi_{F}(\bar{\theta})"]&& E/F .
    \end{tikzcd}
    \]
    If $M\cdot \chi_F(\bar{\Theta})^{-1}$ is integral, then $\phi\circ \chi_{F}(\bar{\theta})^{-1} $ is well defined, making $F$ a $\theta$ direct summand of $E$.

    Conversely, if there exists a section $s:E/F\rightarrow E$ compatible with $\theta$, then $s$ ensures the following diagram
    \[
    \begin{tikzcd}
        E\ar[rd, shift left=0.50ex, "\pi"]\ar[rr,"\chi_{F}(\theta)"]&& E\ar[rd,shift left=0.50ex, "\pi"]\\
        & E/F\ar[ul, shift right=-0.55ex, "s"]\ar[ur, "\phi"]\ar[rr,"\chi_{F}(\bar{\theta})"]&& E/F\ar[ul, shift right=-0.55ex, "s"].
    \end{tikzcd}
    \]
    Since $\phi\circ \pi=\chi_{F}(\theta)$, then we have
    \[
    \phi=\chi_{F}(\theta)\circ s=s\circ \chi_{F}(\bar{\theta}),
    \]
    i.e., the whole diagram, including $s$, is commutative. In particular, we have $\phi\circ \chi_{F}(\bar{\theta})^{-1}=s$ is defined.
\end{proof}

Inductively, we can construct a basis of $E$ such that the matrix of $\theta$ is a block upper triangular matrix, with diagonal blocks corresponding to $R(f_i)$, the companion matrix of $f_i(\lambda)$.

\begin{lemma}\label{resultant}
   If $f_i(\lambda)$ and $f_j(\lambda)$ are Eisenstein polynomials with no common roots. Then 
   $$\text{ord}_t\text{det}f_i(R(f_j))\geq \text{min}\{e_i,\ e_j\}.$$  
   Equality holds if and only if $e_i=e_j$ and $\ord_{t}(a_{e_j,j}-a_{e_i,i})^{e_i}=e_i$. Moreover, if $e_i\geq e_j$, we have $t|f_i(R(f_j))$.
\end{lemma}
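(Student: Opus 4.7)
The plan is to identify $\det f_i(R(f_j))$ with a resultant and then analyze it via Newton polygons. Since $R(f_j)$ is the companion matrix of $f_j$, its eigenvalues over $\overline{\sK}$ are the roots $\beta_1,\dots,\beta_{e_j}$ of $f_j$, so
\[
\det f_i(R(f_j))\;=\;\prod_{k=1}^{e_j}f_i(\beta_k)\;=\;\prod_{k,l}(\beta_k-\alpha_l),
\]
where $\alpha_1,\dots,\alpha_{e_i}$ are the roots of $f_i$. Because $f_i$ and $f_j$ are Eisenstein, the Newton polygon gives $\ord_t(\alpha_l)=1/e_i$ and $\ord_t(\beta_k)=1/e_j$ under the unique extension of the normalized $t$-adic valuation to $\overline{\sK}$, and the hypothesis that $f_i,f_j$ share no roots ensures each factor $\beta_k-\alpha_l$ is nonzero.

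I would then split into cases. If $e_i\neq e_j$, say $e_i<e_j$, then $\ord_t(\beta_k)<\ord_t(\alpha_l)$ and the non-archimedean triangle inequality gives $\ord_t(\beta_k-\alpha_l)=1/e_j$ exactly; summing over the $e_ie_j$ factors yields $\ord_t\det f_i(R(f_j))=e_i=\min\{e_i,e_j\}$. If $e_i=e_j=e$ the two valuations coincide and one only obtains $\ord_t(\beta_k-\alpha_l)\geq 1/e$, hence $\ord_t\det f_i(R(f_j))\geq e=\min\{e_i,e_j\}$. For the equality condition in this latter case I would expand
\[
f_i(\beta_k)\;=\;f_i(\beta_k)-f_j(\beta_k)\;=\;\sum_{l=1}^{e}(a_{l,i}-a_{l,j})\beta_k^{e-l},
\]
using $f_j(\beta_k)=0$, and then note that for $1\leq l\leq e-1$ the term $(a_{l,i}-a_{l,j})\beta_k^{e-l}$ has $t$-order at least $1+(e-l)/e>1$, while the constant term $a_{e,i}-a_{e,j}$ has $t$-order $\geq 1$. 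Hence the leading-order contribution to $\prod_k f_i(\beta_k)$ depends only on the constant coefficients $a_{e_i,i}$ and $a_{e_j,j}$, and a direct comparison with the resultant $\Res(\lambda^{e_i}+a_{e_i,i},\lambda^{e_j}+a_{e_j,j})$ of the pure Eisenstein parts produces the stated polynomial combination $(a_{e_j,j})^{e_i}+(-1)^{e_ie_j}(a_{e_i,i})^{e_j}$.

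For the ``moreover'' clause I would argue term by term. Writing
\[
f_i(R(f_j))\;=\;R(f_j)^{e_i}+\sum_{l=1}^{e_i}a_{l,i}R(f_j)^{e_i-l},
\]
each summand with $l\geq 1$ lies in $t\cdot M_{e_j}(\cO)$ because $a_{l,i}\in(t)$ by the Eisenstein hypothesis on $f_i$. For the leading term, the Cayley--Hamilton identity $f_j(R(f_j))=0$ together with $a_{l,j}\in(t)$ gives $R(f_j)^{e_j}\equiv 0\pmod t$, so whenever $e_i\geq e_j$ we have $R(f_j)^{e_i}=R(f_j)^{e_j}\cdot R(f_j)^{e_i-e_j}\equiv 0\pmod t$, and hence $t\mid f_i(R(f_j))$. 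The main obstacle I anticipate is the precise identification of the leading coefficient in the equality condition when $e_i=e_j$: matching the Galois-symmetric product $\prod_k f_i(\beta_k)$ with the polynomial combination stated in the lemma, including the sign $(-1)^{e_ie_j}$, requires a careful leading-order computation that is cleanest when reduced to the resultant of two pure Eisenstein polynomials of the form $\lambda^e+c$.
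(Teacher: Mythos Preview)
Your proposal and the paper's proof share the same core idea: identify $\det f_i(R(f_j))$ with the resultant $\mathrm{res}(f_i,f_j)$. The paper is extremely terse beyond that point --- it simply asserts that ``by the definition of the resultant'' the inequality and the equality condition follow --- so your analysis of the product $\prod_{k,l}(\beta_k-\alpha_l)$ via Newton-polygon valuations of the roots is a natural and correct way to supply the missing details. Your Cayley--Hamilton argument for the ``moreover'' clause is also correct and in fact fills a gap, since the paper's proof does not address that clause at all. One byproduct of your more careful analysis: you show that equality in the first inequality \emph{always} holds when $e_i\neq e_j$, which does not match the literal ``only if'' direction of the lemma as stated; this is an imprecision in the paper's formulation (its applications only use the case $e_i=e_j$) rather than an error in your reasoning.
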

\begin{proof}
    We observe that
    \[
    \text{det}f_i(R(f_j))=\text{res}(f_i,f_j),
    \]
     where $\text{res}(f_i,f_j)$ is the resultant of the polynomials $f_i$ and $f_j$. Since both $f_i$ and $f_j$ are Eisenstein polynomials, the definition of the resultant gives
    \[
    \text{ord}_t\text{det}f_i(R(f_j))\ge\text{min}\{e_i,\ e_j\}.
    \] 
    The equality holds if and only if $e_i=e_j$ and $\ord_{t}(a_{e_j,j}-a_{e_i,i})^{e_i}=e_i=e_j$.    
\end{proof}

Throughout the paper, we adopt the following assumptions:
\begin{assumption}\label{main assumption on char}
\ 
\begin{itemize}
    \item [(1)] All $f_i(\lambda)$ are Eisenstein polynomials, i.e., the non-leading coefficients of $f_i(\lambda)$ have $t$-order $1$;
    \item [(2)] For $f_i$ and $f_j$ such that $i \neq j$ and $e_i\geq e_j$, we require $\text{ord}_t\text{det}f_i(R(f_j))= e_j$.
    \item[(3)] For any $i \geq 1$, $\sum_{j=1}^i(d_j-e_j)\leq 1$. 
\end{itemize}
\end{assumption}

\begin{definition}\label{def:Obar}
    Whenever Assumption \ref{main assumption on char} holds, define
    \[
    \bar{\sO}_f:=\prod \sO[\lambda]/(f_i(\lambda)).
    \]
    Here, $\bar{\sO}_f$ is the normalization of $\sO_f$, and its total fraction field is denoted by $\sK_{f}$.
\end{definition}

Although $\sum_{j=1}^i(d_j-e_j)\leq 1$, the submodules $K_i$ are not always $\theta$-direct summand of $E$. However, we have the following key result:

\begin{theorem}\label{Thm.sub_splitting}
    Let $\bf{d}$ and $\bf{deg}$ be as defined earlier. For $i\geq 1$, if $\sum_{j=1}^{i}(d_j-e_j)=0$ and $e_i=d_i$, then $K_i$ is a $\theta$-direct summand of $E$.
\end{theorem}

We start from the following technical lemmas leading to $\theta$-direct splitting.

\begin{lemma}\label{lem:intersection estimate}
    We have the following inequality
    \[
    \dim_{\Bbbk} K_i(0)\cap \Im(f_i(\theta)(0))\le \sum_{\{j \mid d_j\ge e_i, e_j\ge e_i\}}(d_j-e_j).
    \]
    Equality holds if and only if $\Im f_i(\bar{\theta})$ contains $t(E/K_i)$.
\end{lemma}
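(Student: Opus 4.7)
The plan is to reformulate the intersection dimension via a finite-length module $Q$, reduce to a length bound, and finish with a combinatorial identity. Since $f_1,\dots,f_k$ are pairwise coprime over $\sK$, after tensoring with $\sK$ the submodules $K_i$ and $\Im f_i(\theta) = \bigoplus_{j\neq i}K_j$ are disjoint, so torsion-freeness of $E$ gives $K_i\cap\Im f_i(\theta) = 0$ inside $E$. Consequently $Q := E/(K_i+\Im f_i(\theta))$ has finite length and is naturally isomorphic to $\Coker\bigl(f_i(\bar\theta):E/K_i\to E/K_i\bigr)$, where $f_i(\bar\theta)$ is injective because $f_i$ is coprime to $\chi_{\bar\theta} = \prod_{j\neq i}f_j$ over $\sK$.

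Computing dimensions inside $E(0)$: since $E/K_i$ is torsion-free, $K_i$ is saturated and $K_i(0)\subseteq E(0)$ has dimension $e_i$. The image of $\Im f_i(\theta)$ in $E(0)$ coincides with $f_i(\theta)(0)E(0)=\theta(0)^{e_i}E(0)$ (as $f_i$ is Eisenstein), of dimension $n-\sum_j\min(d_j,e_i)$. Noting that
\[
E(0)\big/\bigl(K_i(0)+\Im f_i(\theta)(0)\bigr) \;=\; E\big/\bigl(K_i+\Im f_i(\theta)+tE\bigr) \;=\; Q/tQ \;=\; Q(0),
\]
inclusion-exclusion gives
\[
\dim_\Bbbk K_i(0)\cap\Im f_i(\theta)(0) \;=\; e_i-\sum_j\min(d_j,e_i) + \dim_\Bbbk Q(0).
\]

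For $Q$ itself, $\dim_\Bbbk Q(0)\leq \text{length}(Q)$, with equality iff $tQ=0$, that is iff $\Im f_i(\bar\theta)\supseteq t(E/K_i)$. On the other hand, $\text{length}(Q) = \text{ord}_t\det f_i(\bar\theta) = \sum_{j\neq i}\text{ord}_t\text{res}(f_j,f_i) = \sum_{j\neq i}\min(e_i,e_j)$, using Lemma~\ref{resultant} and Assumption~\ref{main assumption on char}(2), together with antisymmetry of the resultant to handle the case $e_j>e_i$. Substituting yields
\[
\dim_\Bbbk K_i(0)\cap\Im f_i(\theta)(0) \;\leq\; \sum_j\bigl[\min(e_i,e_j)-\min(e_i,d_j)\bigr],
\]
with equality iff $\Im f_i(\bar\theta)\supseteq t(E/K_i)$.

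The remaining task is the combinatorial identity
\[
\sum_j\bigl[\min(e_i,e_j)-\min(e_i,d_j)\bigr] \;=\; \sum_{\{j\,:\,d_j\geq e_i,\,e_j\geq e_i\}}(d_j-e_j).
\]
The hypothesis $\sum_{j'\leq j}(d_{j'}-e_{j'})\leq 1$, combined with the dominance $\mathbf{d}\geq\mathbf{deg}$ (which forces non-negative partial sums), gives $|d_j-e_j|\leq 1$ pointwise. Writing $S_d = \{j:d_j>e_i\}$ and $S_e = \{j:e_j>e_i\}$: on $S_d\setminus S_e$ the unit bound forces $e_j = e_i$ and $d_j = e_i+1$, contributing $+1$ to both sides; on $S_e\setminus S_d$ it forces $d_j = e_i$, $e_j = e_i+1$, contributing $-1$; on $S_d\cap S_e$ both sides give $d_j-e_j$; and the remaining case $d_j=e_j=e_i$ contributes $0$. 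Matching piece by piece proves the identity and completes the proof. The delicate step is precisely this last cancellation: the pointwise bound $|d_j-e_j|\leq 1$, a partition-theoretic shadow of the Springer-dual ``collapse'', is what sharpens the $\min$-differences into the stated $\sum(d_j-e_j)$.
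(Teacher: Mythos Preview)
Your approach is essentially the paper's: both control the intersection via the finite-length module $Q=\Coker f_i(\bar\theta)$, use $\dim_\Bbbk Q(0)\le\operatorname{length}(Q)$ with equality iff $tQ=0$, and compute $\operatorname{length}(Q)=\sum_{j\ne i}\min(e_i,e_j)$ from the resultants. Everything through the bound
\[
\dim_\Bbbk K_i(0)\cap\Im f_i(\theta)(0)\ \le\ \sum_j\bigl[\min(e_i,e_j)-\min(e_i,d_j)\bigr]
\]
is correct (and after rewriting via $\sum_j(e_j-d_j)=0$ this is exactly the expression $\sum_{d_j\ge e_i}(d_j-e_i)-\sum_{e_j\ge e_i}(e_j-e_i)$ that the paper reaches).

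However, your piece-by-piece verification of the final identity is wrong. On $S_d\setminus S_e$ (so $d_j=e_i+1$, $e_j=e_i$) the left-hand term is $\min(e_i,e_i)-\min(e_i,e_i+1)=0$, not $+1$; on $S_e\setminus S_d$ the left-hand term is likewise $0$, not $-1$; on $S_d\cap S_e$ the left-hand term is $e_i-e_i=0$, not $d_j-e_j$; and ``the remaining case'' is not just $d_j=e_j=e_i$ but all $j$ with $d_j\le e_i$ and $e_j\le e_i$, where the left side gives $e_j-d_j$ rather than $0$. So the two sides do \emph{not} match termwise.

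The identity is nevertheless true. A clean fix: using $\sum_j(e_j-d_j)=0$, rewrite the left side as
\[
L=\sum_{d_j\ge e_i}(d_j-e_i)-\sum_{e_j\ge e_i}(e_j-e_i).
\]
Writing $R=\sum_{d_j\ge e_i,\,e_j\ge e_i}(d_j-e_j)$ one finds
\[
L-R=\sum_{d_j\ge e_i>e_j}(d_j-e_i)\ -\ \sum_{e_j\ge e_i>d_j}(e_j-e_i).
\]
In the first sum $d_j-e_j\ge 1$, so by the pointwise bound $|d_j-e_j|\le 1$ one gets $d_j=e_j+1$; combined with $d_j\ge e_i>e_j$ this forces $d_j=e_i$, and the term vanishes. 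The second sum vanishes symmetrically. Hence $L=R$.
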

\begin{proof}
Consider the following diagram
\[
0\rightarrow K_i(0)\rightarrow E(0)\xrightarrow{f_i(\theta)(0)}E(0).
\]
From this, we find
\[
\dim \Im(f_i(\theta)(0))= \sum_{d_j\ge e_i}(d_j-e_i).
\]
But we know that $\ord_t\det f_i(\bar{\theta})=\sum_{j\ne i,e_j\ge e_i} e_{i}+\sum_{e_j<e_i} e_j$, and $\rk E/K_i=\sum_{j\ne i}e_j$, then from the following diagram
\begin{equation}
    \begin{tikzcd}       E/K_i\ar[d]\ar[r,"f_i(\bar{\theta})"]\ar[r]&E/K_i\ar[r]\ar[d]&\mathcal{Q}\ar[r]\ar[d]&0\\  E/K_i(0)\ar[r,"f_i(\bar{\theta})(0)"]&E/K_i(0)\ar[r]&\mathcal{Q}\otimes_{\sO} \Bbbk \ar[r]&0    
    \end{tikzcd}.
\end{equation}
Since $\dim\sQ\ge \dim \sQ\otimes_{\sO} \Bbbk$,
\[
\dim \Im (f_i(\bar{\theta})(0))\ge \sum_{j\ne i}e_j-\ord_t\det f_i(\bar{\theta})=\sum_{e_j\ge e_i}(e_j-e_i).
\]
Then consider the following diagram
\[
\begin{tikzcd}
    0 \ar[r] & K_i(0) \ar[r] \ar[d, "0"]  &   E(0) \ar[r] \ar[d, "f_i(\theta)(0)"]  & (E/K_i)(0) \ar[r] \ar[d, "f_i(\bar{\theta})(0)"] & 0 \\
    0 \ar[r] & K_i(0) \ar[r]  &   E(0) \ar[r]  & (E/K_i)(0) \ar[r] & 0 
\end{tikzcd}.
\]
Hence
\[
\dim_{\Bbbk} K_i(0)\cap  \Im(f_i(\theta)(0))\le \sum_{d_j\ge e_i,e_j\ge e_i}(d_j-e_j).
\]
The equality holds if and only if $\dim\sQ=\dim \sQ\otimes_{\sO} \Bbbk $, if and only if $t(E/K_i)$ is contained in the image of $f_{i}(\bar{\theta})$.
\end{proof}

\begin{lemma}\label{lem: direct summand at 0}
    If $\dim K_i(0)\cap  \Im(f_i(\theta)(0))=0$, and $e_i=d_j$ for some $j$, then $K_i(0)$ is a $\theta(0)$-direct summand of $E(0)$.
\end{lemma}
Since we always have $\sum_{j=1}^i(d_j-e_j)\leq 1$ for any $i$, and $\bf{d}$ and $\bf{deg}$ are partition of $n$. Then for any $k$, $d_k - e_k = -1$, $0$, or $1$. So the possible choices are $j=i-1, i,i+1$.
\begin{proof}
    Recall that $\theta(0)$ has partition $\bf{d}$. Using the $\mathfrak{sl}_2$-triple, decompose $E(0)$ as 
    \[
    E(0)=\oplus_{d_j} V_{d_j}    
    \]
    where each $V_{d_j}$ is an irreducible $\mathfrak{sl}_2$-representation with highest weight $d_{j}-1$. 
    
    Since $f_i$ is Eisenstein, we can write
    \[
    K_i(0)=\{v, \theta(0)v,\ldots \theta(0)^{d_i-1} v \}.
    \]
    Here, $v\in \ker\theta(0)^{d_i}$. Since $e_i=d_j$, $K_i(0)\cap \Im (\theta^{e_i}(0))=0$, the coordinate of $v$ in the highest weight subspace of $\oplus_{d_{\ell}=e_i} V_{d_\ell}$ is non-zero. Hence 
    \[
    K_i(0)\rightarrow E(0)/\oplus_{d_\ell\ne e_i} V_{d_\ell}
    \]
    is injective. 
    
    Since $E(0)/\oplus_{d_\ell\ne e_i} V_{d_\ell}$ is a direct sum of irreducible representations of the $\mathfrak{sl}_2$-triple with highest weight $d_{\ell}-1$. Then we can see that the image of $K_i(0)$ is also such an irreducible representation, hence a $\theta(0)$-direct summand of $E(0)/\oplus_{d_\ell\ne e_i} V_{d_\ell}$. In particular, $K_i(0)$ is a $\theta(0)$-direct summand of $E(0)$.
\end{proof}

\begin{proof}[Proof of Theorem \ref{Thm.sub_splitting}]
By Lemma \ref{lem:intersection estimate}, the given conditions imply the following:
\begin{align}
    &\dim K_i(0)\cap \Im(f_i(\theta)(0))=0\label{eq:no intersection}\\
    &\dim\sQ=\dim \sQ\otimes_{\sO} \Bbbk \label{eq: can be invertible}
\end{align}
By Lemma \ref{lem: direct summand at 0}, the equality \eqref{eq:no intersection} implies that $K_i(0)$ is a $\theta(0)$-direct summand. 

Now consider the matrix as in Lemma \ref{splitting lemma}. Let $F=K_i$. Since $K_i(0)$ is a $\theta(0)$-direct summand, it means that we can choose $M$ such that $M/t$ is integral. 

Next, \eqref{eq: can be invertible} implies that the image of $f_i(\bar{\theta})$ contains $t(E/K_i)$. Thus, $tf_i(\bar{\theta})^{-1}$ is integral. By Lemma \ref{splitting lemma}, it follows that $K_i$ is a $\theta$-direct summand.
\end{proof}

An important special case:
\begin{corollary}\label{Cor.sub_splitting}
If $\bf{d}=\bf{deg}$, then under Assumption \ref{main assumption on char}, we have a canonical isomorphism 
$$
     E\cong \bigoplus_{i=1}^{k}K_i.
$$
\end{corollary}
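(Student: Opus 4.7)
The plan is to apply Theorem~\ref{Thm.sub_splitting} directly to every irreducible factor and then assemble the resulting retractions into a decomposition of $E$. Since $\bf{d}=\bf{deg}$ as partitions of $n$, we have $d_i = e_i$ and $\sum_{j=1}^{i}(d_j - e_j) = 0$ for every $i$, so the hypothesis $\sum_{j=1}^{i}(d_j-e_j)\le 1$ is verified and we are in the first bullet of Theorem~\ref{Thm.sub_splitting}. It furnishes, for each $i$, a $\theta$-equivariant $\sO$-linear retraction $s_i : E \twoheadrightarrow K_i$ splitting the inclusion $K_i \hookrightarrow E$.

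The main step is to show these retractions are mutually ``orthogonal'', that is, $s_i|_{K_j} = 0$ whenever $i \ne j$. Being $\theta$-equivariant, the image of $s_i|_{K_j}$ lies in $K_i$ and is killed by $f_j(\theta)$; but $\theta|_{K_i}$ has characteristic polynomial $f_i$, and Assumption~\ref{main assumption on char}(2) together with Lemma~\ref{resultant} implies that $f_i$ and $f_j$ share no common root over $\sK$. Consequently $f_j(\theta)|_{K_i}$ is invertible after inverting $t$, hence injective on the torsion-free module $K_i$, forcing $s_i|_{K_j}=0$. This is the one place where Assumption~\ref{main assumption on char} is genuinely used, and it is the only real obstacle in the argument.

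With orthogonality in hand, the combined map $s=(s_1,\dots,s_k):E\to\bigoplus_i K_i$ is a left inverse of the natural injection $\iota:\bigoplus_i K_i\hookrightarrow E$, so $\iota$ is split and $\operatorname{coker}\iota$ is an $\sO$-direct summand of $E$. Since $E\cong\sO^n$ is torsion-free while $\operatorname{coker}\iota$ is torsion (both $\bigoplus_i K_i$ and $E$ have rank $n=\sum_i e_i$, so $\iota$ is an isomorphism over $\sK$), it follows that $\operatorname{coker}\iota=0$, yielding the desired isomorphism $E\cong \bigoplus_{i=1}^k K_i$. Canonicity is automatic: each $K_i=\ker f_i(\theta)$ is intrinsically defined by $\theta$ and the factorization $\chi_\theta=\prod_i f_i$, and the isomorphism is the sum of the tautological inclusions.
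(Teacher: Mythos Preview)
Your proof is correct and follows the route the paper intends: the corollary is stated immediately after Theorem~\ref{Thm.sub_splitting} with no separate proof, so the paper simply invokes the first bullet of that theorem for every $i$. You have filled in the one step the paper leaves implicit, namely how to pass from ``each $K_i$ is a $\theta$-direct summand of $E$'' to the full decomposition $E\cong\bigoplus_i K_i$; your orthogonality argument (that $s_i|_{K_j}=0$ for $i\ne j$ because $f_j(\theta)|_{K_i}$ is injective) is exactly the right glue, and the rank count finishing the job is clean.

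One small remark on the justification that $f_i$ and $f_j$ share no root: you cite Lemma~\ref{resultant}, but that lemma \emph{assumes} no common roots. What you actually need is the identity $\det f_i(R(f_j))=\operatorname{res}(f_i,f_j)$ from its proof, together with Assumption~\ref{main assumption on char}(2), which forces this resultant to have finite $t$-order and hence to be nonzero. Equivalently, Assumption~\ref{main assumption on char}(1) makes each $f_i$ Eisenstein (hence irreducible), and (2) forces them to be pairwise distinct, so they are coprime in the UFD $\sO[\lambda]$. Either way the conclusion stands.
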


In the following, when we refer to characteristic polynomials, we implicitly assume that $G$ is a classical group and do not restate this explicitly.

\begin{definition} \label{Def:Char}
    We denote by $L\mathfrak{c}_{\mathbf{O}}$ the set of all the characteristic polynomials of residually nilpotent local principal $G$-Higgs bundle associated with the nilpotent orbit $\mathbf{O}$. This set is a subset of $\mathcal{O}[\lambda]$. A condition is said to be \emph{generic} in $L\mathfrak{c}_{\mathbf{O}}$ if it holds for polynomials in a Zariski open subset of $L\mathfrak{c}_{\mathbf{O}}$.
\end{definition}

Notice that $L\mathfrak{c}_{\bf{O}_A}$ is determined as in Proposition~\ref{order of coe}. With this understanding, let $\mathbf{Gr}_{\theta, \mathbf{O}}$ be as in Definition~\ref{def:local Higgs bundle}. Using these ideas, we present a new proof of the following result:

\begin{proposition}[Theorem 6~\cite{SWW22}] \label{Thm.type A decomposition}
    Let $G=\GL_n$ and $\bf{O}_A \subset \mathfrak{gl}_n$ with partition $\bf{d}_A$. For $\theta\in \End(\sK^{n})$ whose characteristic polynomial $f$ is generic in $L\mathfrak{c}_{\bf{O}_A}$, write the decomposition of $f$ as $f=\prod_{i=1}^kf_i$. Then, there is a set-theoretic isomorphism 
    \[
    \mathbf{Gr}_{\theta, \mathbf{O}_A}\cong     
       \{ (\sL\hookrightarrow \sK_f) \mid \sL \text{ is a  rank $1$ free module of} \;\bar{\sO}_f
    \}.
    \]
\end{proposition}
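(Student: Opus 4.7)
The plan is to leverage the canonical decomposition of Corollary \ref{Cor.sub_splitting} together with the identification of $\sK^n$ with $\sK_f$ as $\sK[\theta]$-modules.  After fixing a trivialization $\alpha$, the data $(E,\alpha,\theta) \in \bf{LH}_{\bf{O}_A,\theta}$ becomes an $\sO[\theta]$-stable $\sO$-lattice $E \subset \sK^n$ whose induced residual endomorphism lies in the orbit $\bf{O}_A$, and the goal is to match such lattices bijectively with rank-$1$ free $\bar{\sO}_f$-submodules of $\sK_f$.

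I first verify that for generic $f \in \bf{Char}_{\bf{O}_A}$, the partition $\bf{deg}$ of $f$ equals the partition $\bf{d}$ of $\bf{O}_A$.  Proposition \ref{order of coe} already forces $\bf{d} \geq \bf{deg}$, and for generic $f$ the $t$-orders of the coefficients $a_i$ achieve the lower bounds dictated by $\bf{d}$, so $\bf{d} = \bf{deg}$.  Moreover, Assumption \ref{main assumption on char} is generic in $\bf{Char}_{\bf{O}_A}$, by the proposition immediately preceding the statement.  In particular, each factor $f_i$ is Eisenstein, so $\sO_{f_i}$ is a DVR and $\bar{\sO}_f = \prod_i \sO_{f_i}$ is the normalization of $\sO_f$.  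Since the distinct irreducible factors $f_i$ multiply to $f$ of degree $n$, the minimal polynomial of $\theta$ equals its characteristic polynomial; hence $\sK^n$ is a cyclic $\sK[\theta]$-module, and we fix once and for all an identification $\sK^n \cong \sK_f$.

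Under this identification, Corollary \ref{Cor.sub_splitting} supplies a canonical $\theta$-stable decomposition $E = \bigoplus_{i=1}^k K_i$, with $K_i := \ker f_i(\theta)|_E = E \cap \sK_{f_i}$.  Each $K_i$ is an $\sO_{f_i}$-submodule of $\sK_{f_i}$; since $\sO_{f_i}$ is a DVR, every such nonzero submodule is free of rank $1$.  Combining these, $E$ is a rank-$1$ free $\bar{\sO}_f$-submodule $\sL \hookrightarrow \sK_f$, which defines the forward map.

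Conversely, any rank-$1$ free $\bar{\sO}_f$-submodule $\sL \subset \sK_f$ is automatically $\sO_f$-stable, is $\sO$-free of rank $n$ (since $\bar{\sO}_f$ is so as an $\sO$-module), and its residual endomorphism has Jordan type $\bf{deg} = \bf{d}$, hence lies in $\bf{O}_A$.  These two constructions are inverse to each other, yielding the claimed set-theoretic bijection.  The main technical obstacle is the canonical splitting of $E$, but this is already handled by Theorem \ref{Thm.sub_splitting} and Corollary \ref{Cor.sub_splitting}; once the splitting is in place, the parametrization by lattices in $\sK_f$ follows readily from the Eisenstein property of the $f_i$ together with the equality $\bf{d} = \bf{deg}$.
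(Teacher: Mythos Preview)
Your proposal is correct and follows essentially the same approach as the paper: fix an identification $\sK^n\cong\sK_f$ compatible with $\theta$ and $\lambda$, then invoke Corollary~\ref{Cor.sub_splitting} to obtain the canonical decomposition $E=\bigoplus_i K_i$ and read off the bijection with rank-$1$ free $\bar{\sO}_f$-submodules.  The paper's proof is a two-sentence sketch that outsources the verification of $\bf{d}=\bf{deg}$ (the hypothesis of Corollary~\ref{Cor.sub_splitting}) to \cite[\S4.2--4.3]{SWW22}, whereas you supply this and the inverse construction explicitly; the underlying argument is the same.
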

\begin{proof}
    By \cite[Proposition 4.3]{Spa88} and Lemma~\ref{resultant}, Assumption~\ref{main assumption on char} is generic in $L\mathfrak{c}_{\bf{O}_A}$. Furthermore, by \cite[\S 4.2,4.3]{SWW22}, we have  $\bf{d}_A=\bf{deg}$. Therefore, by Corollary \ref{Cor.sub_splitting}, every lattice in $  \mathbf{Gr}_{\mathbf{O}_A, \theta}$ can be viewed as rank-1 free module of $\bar{\sO}_f$. To construct the bijection, it suffices to fix an isomorphism $\sK_{f}\cong \sK^{n}$ (as $\sK$ vector spaces)  compatible with the action of $\lambda$ and $\theta$ respectively. Such an isomorphism certainly exists (although it is not unique). 
\end{proof}

\subsection{$\theta_{B/C}$-direct summand and modification}
Nilpotent orbits in $\mathfrak{so}_{2n+1}$ (type B) and $\mathfrak{sp}_{2n}$ (type C) correspond to partitions of $2n+1$ and $2n$, respectively. Specifically:
\begin{itemize}
	\item For $\mathfrak{so}_{2n+1}$, partitions belong to the set $\mathcal{P}_{1}(2n+1)$, where even parts occur with even multiplicity.
	\item For $\mathfrak{sp}_{2n}$, partitions belong to the set $\mathcal{P}_{-1}(2n)$, where odd parts occur with even multiplicity.
\end{itemize}
These partition sets are defined as
\begin{align*}
	\mathcal{P}_{\varepsilon}(N) = \big\{ [d_1, \ldots, d_N] \in \mathcal{P}_{}(N) \ \big{|} \ \sharp \{j \ | \ d_j =i \} \ \text{is even for all $i > 0$ with} \ (-1)^i= \varepsilon      \big\},
\end{align*}
where $\varepsilon=\pm 1$. For more details, see \cite{CM93}.

Among nilpotent orbits, some are distinguished as \emph{special orbits}.

\begin{definition}\label{def:special nilpotent}
    For a partition $\bf{d}=\left[ d_1, \ldots, d_N  \right]$, its dual partition $\bf{d}^t$ is defined by $d^t_i = \sharp \left\{ j \ \big{|} \ d_j \geq i \right\} $ for all $i > 0$. A partition of type B ($\bf{d} \in \mathcal{P}_{1}(2n+1)$) or type C ($\bf{d} \in \mathcal{P}_{-1}(2n)$) is called {\emph{special}} if its dual  partition $\bf{d}^t$ lies in the same set ($\mathcal{P}_{1}(2n+1)$ for type B or $\mathcal{P}_{-1}(2n)$ for type C). The corresponding nilpotent orbits are called \emph{special orbits}. 
    
    We denote the set of special partitions as 
    \[
        \mathcal{P}_{1}^{sp}(2n+1) \; \text{for type B}, \quad \mathcal{P}_{-1}^{sp}(2n) \; \text{for type C}.
    \]
    For further details, see \cite[Proposition 6.3.7]{CM93}.
\end{definition}

By Springer\cite[Theorem 6.10]{Spr}, there is an injective map from the irreducible representations of the Weyl group to irreducible equivariant local systems on nilpotent orbits. If a nilpotent orbit $\bf{O}$ with a trivial representation corresponds to a special representation of the Weyl group (see \cite{Lus79}), then $\bf{O}$ is special.  

Langlands dual groups share the same Weyl group, allowing for a bijection between their special partitions, known as the \emph{Springer duality map}:
\begin{align}
    S: \sP^{sp}_{-1}(2n)\to \sP^{sp}_{1}(2n +1). \label{Springer_dual_map}
\end{align}
We refer to $S(\bf{d})$ as the \emph{Springer dual} of $\bf{d}$, denoting it as ${}^S\bf{d}$. Similarly, we denote the Springer dual of a special orbit $\bf{O}$ as ${}^S\bf{O}$.

For a partition $\mathbf{d} = [d_1, \cdots, d_k]$, with $d_k \geq 1$, define 
\[
    \mathbf{d}^- = [d_1, \cdots, d_k-1], \quad \mathbf{d}^+ = [d_1+1, \cdots, d_k].
\]
The following result is from \cite[Chapter III]{Spa06} (see also \cite[Proposition 4.3]{KP89}).

\begin{proposition} \label{p.Springerdual}
The map $\mathbf{d} \mapsto (\mathbf{d}^+)_B$ provides a bijection $S: \sP^{sp}_{-1}(2n)\to \sP^{sp}_{1}(2n +1)$. Its inverse is given by $\mathbf{f} \mapsto (\mathbf{f}^-)_C$. Here, $(\bf{d})_{B/C}$ denotes the largest partition of type B or C that is smaller than $\bf{d}$ under the partial order, where 
\[
    \bf{d} =\left[ d_1, \ldots, d_N  \right] \geq \mathbf{f}= \left[ f_1, \ldots, f_N  \right] \Longleftrightarrow  \sum_{j=1}^k d_j \geq \sum_{j=1}^k f_j, \ \text{for all} \ 1 \leq k \leq N.
\]
\end{proposition}

\subsubsection{Type C}
For $G=\Sp_{2n}$, we fix a nondegenerate skew-symmetric two-form $g_C:\sK^{2n}\otimes \sK^{2n}\rightarrow \sK$. For $\theta_C: \sK^{2n}\rightarrow \sK^{2n}$ such that $g_C(\theta_C -,-)+g_C(-,\theta_C -)=0$. The set of residually nilpotent $\Sp_{2n}$-Higgs bundle associated with $\theta_C$ and a nilpotent orbit $\bf{O}_C$ (see Definition \ref{def:local Higgs bundle}) is given by

\[
{\mathbf{Gr}}_{\theta_C, \bf{O}_{C}}:=\left\{E_C \subset \sK^{2n}\ \left| \
\begin{aligned}
    & E_C \;\text{is a rank-}2n\; \text{lattice, such that} \\
    & g_C|_{E_C} \; \text{is perfect, with values in}\  \sO; \\
    & E_C \text{ is } \theta_C \text{ invariant}; \\
    & \theta_C(0) \in \bf{O}_C.
\end{aligned}\right.\right\}.
\]

Let $\bf{O}_C$ be a nilpotent orbit of type C with partition $\bf{d}_C=[d_1,\ldots,d_k]$. For generic $\theta_C$, consider  $(E_C, \theta_C) \in \mathbf{Gr}_{\theta_C, \bf{O}_C}$ and its characteristic polynomial:
\[
\chi_{\theta_C}(\lambda)=\det(\lambda-\theta_C) = \prod_if_{C,i},\quad f_{C,i}\in\sO[\lambda].
\]
Let $e_{C, i}=\deg f_{C,i}$.

\begin{proposition}\label{type C generic}
Assumption \ref{main assumption on char} and condition $\bf{d}_C=\bf{deg}_C$ are all generic in $L\mathfrak{c}_{\bf{O}_C}$. Furthermore: 
\begin{itemize}
	\item If $e_{C, i}$ is even, then it is self dual, i.e., $f_{C,i}(\lambda)=f_{C,i}(-\lambda)$.
	\item If $e_{C, i}$ is odd, then there exists a unique $f_{C,j}$ such that $f_{C,j}(\lambda)=-f_{C,i}(-\lambda)$.
\end{itemize}
\end{proposition}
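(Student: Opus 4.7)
The plan is to exploit two structural features of the type~C setting: evenness of $\chi(\theta_C)$, and the decomposition of its Newton polygon governed by $\eta_C$. Since $\theta_C\in\mathfrak{sp}_{2n}(\sK)$ is conjugate to $-\theta_C$ via the symplectic form $g_C$, the characteristic polynomial is invariant under $\lambda\mapsto -\lambda$, so I would begin by writing $\chi(\theta_C)(\lambda)=\tilde\chi(\lambda^2)$ for a degree-$n$ polynomial $\tilde\chi(\mu)=\mu^n+a_{C,2}\mu^{n-1}+\cdots+a_{C,2n}$. The set $\mathbf{Char}_{\bf{O}_C}$ is then an affine space cut out by the bounds $\ord_t a_{C,2i}\ge \eta_{C,2i}$, and genericity refers to a nonempty Zariski open condition on these coefficients.

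I would next handle the factorization through Newton polygons. On the open locus where all the order bounds are equalities, the Newton polygon of $\tilde\chi$ has segments whose slopes and lengths are determined by $\bf{d}_C$ and $\eta_C$. For each segment of slope $-p/q$ (in lowest terms) and length $kq$, the associated reduction polynomial over the residue field $\Bbbk$ has degree $k$; since $\Bbbk$ is algebraically closed, generically this reduction has simple roots, and a Hensel-style lifting produces $k$ irreducible $\sK$-factors of $\tilde\chi$, each of degree $q$. Pulling back through $\mu=\lambda^2$: a factor of $\tilde\chi$ whose root has integer $t$-valuation further splits as $\lambda^2-\mu_j=(\lambda-\sqrt{\mu_j})(\lambda+\sqrt{\mu_j})$, because units in $\sO=\Bbbk[\![t]\!]$ are squares over algebraically closed $\Bbbk$; whereas a factor whose root has half-integer valuation remains irreducible of degree $2q$, since $\sqrt{\mu_j}\notin \sK$. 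Matching these cases against the parts $d_{C,\ell}$ yields $\bf{deg}_C=\bf{d}_C$ generically, and simultaneously shows that each resulting $f_{C,i}$ is Eisenstein, verifying Assumption~\ref{main assumption on char}(1). Assumption~\ref{main assumption on char}(2) then follows from Lemma~\ref{resultant} combined with the elementary fact that equality of the resultant order is an open nonvanishing condition on the coefficients.

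The duality statement for factors comes directly from $\chi(\lambda)=\chi(-\lambda)$. Setting $\tilde f_{C,i}(\lambda):=(-1)^{e_i}f_{C,i}(-\lambda)$, which is again monic irreducible of degree $e_i$, unique factorization in $\sO[\lambda]$ gives an involution $\sigma(f_{C,i})=\tilde f_{C,i}$ on the multiset of factors. For $e_i$ odd, $\sigma(f_{C,i})=f_{C,i}$ would force $f_{C,i}$ to be an odd polynomial, hence divisible by $\lambda$ and therefore equal to $\lambda$; ruling this out for $e_i>1$, $f_{C,i}$ pairs with a distinct $f_{C,j}$ satisfying $f_{C,j}(\lambda)=-f_{C,i}(-\lambda)$. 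For $e_i$ even, any $\sK$-factor of $\chi=\tilde\chi(\lambda^2)$ arising from a single $\sK$-factor of $\tilde\chi$ is by construction a polynomial in $\lambda^2$, hence self-dual under $\lambda\mapsto-\lambda$; the Newton-polygon grouping from the previous paragraph ensures that generically each even-degree $f_{C,i}$ has precisely this form.

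The main obstacle I anticipate lies in the second paragraph: controlling the Newton-polygon descent compatibly with the even-polynomial structure, i.e., ensuring that the Hensel-type factorization of $\tilde\chi$ lifts through $\mu=\lambda^2$ so as to recover the partition $\bf{d}_C$ on the nose, rather than grouping adjacent segments or producing unexpected factors. This depends essentially on $\Bbbk$ being algebraically closed, which forces all units in $\sO^\times$ to be squares and hence causes integer-valuation $\mu$-roots to split cleanly in $\sK$; without this, even simple cases such as $\bf{d}_C=[2,2]$ would fail to exhibit the asserted matching $\bf{deg}_C=\bf{d}_C$.
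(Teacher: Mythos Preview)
Your argument is essentially correct and more self-contained than the paper's, which simply cites Spaltenstein \cite[Proposition 5.2]{Spa88} for the factorization structure, the equality $\bf{d}_C=\bf{deg}_C$, and the self-dual/pairing dichotomy, and then adds only one verification. Your route via the substitution $\mu=\lambda^2$ and Newton-polygon analysis of $\tilde\chi$ is in fact how Spaltenstein proceeds, so you are rederiving that result rather than taking a genuinely new path; what you gain is independence from the reference, what you lose is brevity.

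The one place where the paper does real work is precisely the step you treat most lightly. You claim Assumption~\ref{main assumption on char}(2) ``follows from Lemma~\ref{resultant} combined with the elementary fact that equality of the resultant order is an open nonvanishing condition on the coefficients.'' For a pair of factors with \emph{independent} coefficients that is fine, but when $e_i=e_j$ is odd and $f_{C,j}(\lambda)=-f_{C,i}(-\lambda)$, the coefficients of $f_{C,j}$ are determined by those of $f_{C,i}$, so nonemptiness of the open locus is not automatic and must be checked. The paper does this explicitly: since $f_{C,j}(R(f_{C,j}))=0$, one has $f_{C,i}(R(f_{C,j}))=(f_{C,i}-f_{C,j})(R(f_{C,j}))$, and because $e_i$ is odd the polynomial $f_{C,i}-f_{C,j}$ has constant term $2a_{e_i,i}$ of $t$-order exactly $1$; hence $(f_{C,i}(R(f_{C,j}))/t)(0)$ is invertible for generic $f_{C,i}$ and $\ord_t\det f_{C,i}(R(f_{C,j}))=e_j$. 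Equivalently, in the notation of Lemma~\ref{resultant}, the constraint $a_{e_j,j}=-a_{e_i,i}$ with $e_i=e_j$ odd gives $(a_{e_j,j})^{e_i}+(-1)^{e_ie_j}(a_{e_i,i})^{e_j}=-2a_{e_i,i}^{e_i}$, whose $t$-order is exactly $e_i$. You should insert this check; once done, your proof is complete.
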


\begin{proof}
    The results are established in \cite[Proposition 5.2]{Spa88}, except for verifying Assumption \ref{main assumption on char} (2) for $f_{C, i}$ and $f_{C, j}$ where $f_{C,j}(\lambda)=-f_{C,i}(-\lambda)$. Clearly, $f_{C, i}(R(f_{C, j}))/t$ is integral. To compute $\ord_t \det f_{C, i}(R(f_{C, j}))$, note that $f_{C, j}(R(f_{C, j}))=0$ and $f_{C,i}-f_{C,j}$ is a polynomial with constant term of $t$-order $1$ (since $e_{C,i}$ is odd). Thus, for general $f_{C,i}$, $(f_{C, i}(R(f_{C, j}))/t)(0)$ is invertible, and the $t$-order of $\det f_{C, i}(R(f_{C, j}))$ equals $e_i$.
\end{proof}

Let $E=\{\pm 1, \pm 2,\ldots,\pm n\}$, and let $\sf{W}_0$ be the permutation group of $E$. For $G=\Sp_{2n}, \SO_{2n+1}$, their Weyl group $\sf{W}$ can be identified as
\[
    \{w \in \sf{W}_0 \mid w(-i)=-i, 1\le i\le n\}.
\]
For $w\in \sf{W}$, we can associate a pair of partitions $(\alpha,\beta)$, where $|\alpha|+|\beta|=n$, as follows: Let $W$ be a $\langle w\rangle$-orbit, then $-W$ is also an orbit. If $W \ne -W$, then $\alpha$ gets one part $\alpha_i=|W|$. If $W=-W$, then $|W|$ is even and $\beta$ gets one part $\beta_i=\frac{|W|}{2}$.  The conjugacy classes of its Weyl group $\sf{W}$ are parametrized by all such pairs of partitions $(\alpha,\beta)$. For type C, the Kazhdan--Lusztig map coincides with $\KL_C(\bf{O}_C)=(\alpha_C, \beta_C)$, where
\begin{enumerate}
    \item[1.] $e_{C,i}$ is odd, $\alpha_C$ gets one part $\bf{\alpha}_{C,i}=e_{C,i}$.
    \item[2.] $e_{C,i}$ is even, then $\beta_C$ gets one part $\beta_{C,i}=\frac{e_{C,i}}{2}$.
\end{enumerate}

Combining Corollary~\ref{Cor.sub_splitting} and Proposition~\ref{type C generic}, similar as Proposition~\ref{Thm.type A decomposition}, we have the following result.

\begin{theorem}\label{Thm.type C decomposition}
    If $\chi(\theta_C)$ is generic in $L\mathfrak{c}_{\bf{O}_C}$, there is an isomorphism: 
    \[
    \mathbf{Gr}_{\theta_C, \mathbf{O}_C} \cong     
       \{ (\sL\hookrightarrow \sK_f, \sigma^*\sL\cong\sL^{\vee}) \mid \sL \text{ is a  rank $1$ free module of} \;\bar{\sO}_f.
       \}
    \]
    where $\sigma$ is the involution $\lambda\mapsto -\lambda$.
\end{theorem}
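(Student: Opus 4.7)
My plan is to reduce to the $\GL_n$ case and then keep track of the extra symplectic structure through the decomposition.

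\textbf{Step 1 (Reduction to the $\GL_n$ decomposition).} First I would note that by Proposition \ref{type C generic}, for a generic $\chi(\theta_C)\in \mathbf{Char}_{\bf{O}_C}$, Assumption \ref{main assumption on char} holds and $\bf{d}_C=\bf{deg}_C$. Forgetting the symplectic pairing, $E_C$ is a residually nilpotent local $\GL_{2n}$-Higgs bundle in $\mathbf{LH}_{\mathbf{O}_C,\theta_C}^{\GL}$, so by Corollary \ref{Cor.sub_splitting} (combined with Proposition \ref{Thm.type A decomposition}) we obtain a canonical decomposition $E_C\cong \bigoplus_{i=1}^k K_{C,i}$, where each $K_{C,i}=\ker f_{C,i}(\theta_C)$, and together $E_C$ corresponds to a rank-one free $\bar{\sO}_f$-submodule $\sL\hookrightarrow \sK_f$.

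\textbf{Step 2 (The involution $\sigma$ and its action on $\bar{\sO}_f$).} Since $\chi(\theta_C)$ is even in $\lambda$, the involution $\sigma:\lambda\mapsto -\lambda$ preserves the set of irreducible factors. By Proposition \ref{type C generic}, $\sigma$ fixes each even-degree $f_{C,i}$ and swaps the pairs of odd-degree factors with $f_{C,j}(\lambda)=-f_{C,i}(-\lambda)$. Thus $\sigma$ lifts to an $\sO$-algebra involution of $\bar{\sO}_f=\prod_i \sO[\lambda]/f_{C,i}(\lambda)$ and to a $\sigma$-semilinear automorphism of $\sK_f$. Correspondingly, on the Higgs side, the anti-self-adjointness identity $g_C(\theta_Cv,w)+g_C(v,\theta_Cw)=0$ implies that $g_C$ vanishes on $K_{C,i}\otimes K_{C,j}$ unless the polynomial annihilating the first factor equals the $\sigma$-image of the one annihilating the second, i.e.\ unless the pair $(i,j)$ is exchanged by $\sigma$ acting on indices. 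This is the only compatibility across distinct Kernel summands.

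\textbf{Step 3 (Translating the symplectic pairing).} With these vanishings, $g_C$ restricted to the nonzero pairings gives, summand-wise, an $\sO$-bilinear perfect pairing $K_{C,i}\otimes K_{C,\sigma(i)}\to \sO$ that is anti-self-adjoint with respect to multiplication by $\lambda$. Transporting via the identification $E_C\leftrightarrow \sL\subset \sK_f$ of Step 1, this is exactly the data of an $\bar{\sO}_f$-sesquilinear (with respect to $\sigma$) perfect pairing
\[
\sL\otimes_{\sO}\sigma^{\ast}\sL\longrightarrow \sO,
\]
which is equivalent to an isomorphism $\sigma^{\ast}\sL\cong \sL^{\vee}$ of $\bar{\sO}_f$-modules, where $\sL^{\vee}=\Hom_{\bar{\sO}_f}(\sL,\omega_{\bar{\sO}_f/\sO})$ is the dualizing module (the residue pairing identifies this with the $\sO$-linear dual on each factor). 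The skew-symmetry of $g_C$ matches the natural sign in the self-duality of $\omega_{\bar{\sO}_f/\sO}$ under $\sigma$, so no further sign condition is needed.

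\textbf{Step 4 (Inverse construction and bijection).} In the reverse direction, given $(\sL\hookrightarrow \sK_f,\ \sigma^{\ast}\sL\cong \sL^{\vee})$ I would recover $E_C$ as the $\sO$-module $\sL$ with $\theta_C$ acting by multiplication by $\lambda$, and define $g_C$ as the sum over $\sigma$-orbits of factors of the residue pairing induced by the given isomorphism $\sigma^{\ast}\sL\cong \sL^{\vee}$. Perfectness of this pairing is immediate from the isomorphism being one of $\bar{\sO}_f$-modules, and residual nilpotence holds because the embedding $\sL\subset \sK_f$ together with $\bf{d}_C=\bf{deg}_C$ forces $\theta_C(0)\in\bf{O}_C$.

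\textbf{Main obstacle.} The genuinely nontrivial step is Step 3, verifying that the skew-symmetric $\sO$-valued form $g_C$ on $\bigoplus K_{C,i}$ is precisely captured by a $\bar{\sO}_f$-module isomorphism $\sigma^{\ast}\sL\cong \sL^{\vee}$: one must check that the local compatibility between the residue trace pairing on the normalization $\bar{\sO}_f$ and the original $\sO$-valued symplectic form is perfect (not merely non-degenerate over $\sK$), which uses that each $f_{C,i}$ is Eisenstein and that the summands indexed by odd-degree pairs $(i,\sigma(i))$ with $i\neq \sigma(i)$ contribute hyperbolic blocks while the self-dual even-degree summands contribute intrinsically symplectic blocks. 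This bookkeeping of parity is the only place where the structural difference between $\GL_n$ and $\Sp_{2n}$ really intervenes.
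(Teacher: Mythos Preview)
Your proposal is correct and follows essentially the same approach as the paper: the paper's proof is a single sentence stating that the argument is ``similar with \ref{Thm.type A decomposition}'' (i.e., combine Proposition \ref{type C generic} and Corollary \ref{Cor.sub_splitting} to get the $\GL$-side decomposition, then carry the symplectic form through), and your Steps 1--4 are precisely a fleshed-out version of that outline. If anything, you have supplied more detail than the paper does---in particular your Step 3, identifying the perfect $\sO$-pairing with an $\bar{\sO}_f$-module isomorphism $\sigma^*\sL\cong\sL^\vee$ via the residue/trace form, is left entirely implicit in the paper.
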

\begin{remark}
    The right-hand side can be viewed as a local analog of Prym varieties associated with $(\bar{\sO}_f,\sigma)$.
\end{remark}

\subsubsection{Type B}
For $G=\SO_{2n+1}$, let $g_B: \sK^{2n+1} \otimes \sK^{2n+1} \to \sK$ be a fixed nondegenerate symmetric bilinear form. Consider $\theta_B: \sK^{2n+1} \to \sK^{2n+1}$ such that $\theta_B$ satisfies the compatibility condition $g_B(\theta_B(-), -) + g_B(-, \theta_B(-)) = 0$. Similarly, we have $\bf{Gr}_{\bf{O}_{B}, \theta_B}$ as in the type C case.

Let $\bf{d}_B = [d_{B,1}, \ldots, d_{B,r'}]$ denote the partition associated with the nilpotent orbit $\bf{O}_B$. The characteristic polynomial of $\theta_B$ is expressed as
\begin{align*}
    \chi_{\theta_B}(\lambda) = \lambda(\lambda^{2n}+a_{B,2}\lambda^{2n-2}+\cdots +a_{B,2n-2}\lambda^2+a_{B,2n}) 
\end{align*}
or equivalently, as the product of irreducible factors
\begin{align}
     \chi_{\theta_B}(\lambda) = \lambda\prod_{i=1}^{k}f_{B, i}(\lambda) \label{type B char decomposition},
\end{align}
where $f_{B,i}(\lambda) \in \sO[\lambda]$ are irreducible polynomials and $e_{B,i} = \deg f_{B,i}$. The partition $\bf{deg}_B=[e_{B, 1}, \ldots , e_{B, k}, 1]$ encodes the degrees of these factors. The analog of Proposition \ref{type C generic} does not hold in the $\SO_{2n+1}$ case, which causes the main difficulty. However, $\bf{deg}_B$ and the properties of $f_{B,i}$ can be determined as follows, which is due to \cite[Proposition 6.4]{Spa88}.

Motivated by Corollary~\ref{rough decomposition of E}, we classify partitions of type B into four distinct types based on their structure and properties:
\begin{itemize}
       \item B1: Partitions of the form $[a, a]$ where $a \equiv 1$.
       \item B1*: Partitions of the form $[b, b]$ where $b \equiv 0$.
       \item B2: Partitions of the form $[ a_1, b_1^2, \cdots , b_k^2, a_2 ]$, where $a_i \equiv 1$, $b_j \equiv 0$, and $a_1 > b_1 \geq \cdots \geq b_k > a_2$, with $k\geq 0$. 
       \item B3: Partitions of the form $[ a, b_1^2, \cdots , b_k^2 ]$, where $a \equiv 1$, $b_j \equiv 0$, and $a > b_1 \geq \cdots \geq b_k $, with $k\geq 0$.
\end{itemize}

\begin{lemma}\label{structure of special partition}
   A partition $\bf{d}_B = [d_{B,1}, \ldots, d_{B,r'}]$ of type B can be uniquely expressed as a concatenation of blocks of type B1, B1*, B2, and B3. Furthermore, the block of type B3 appears exactly once and is positioned at the end of the partition. 
\end{lemma}

With this notation, we can write $\bf{d}_B$ as $\bf{T}_B = [\bf{T}_1, \bf{T}_2,  \ldots, \bf{T}_s]$ where each $\bf{T}_i$ is either of Type B1, B1*, B2 or B3. Moreover, $\bf{d}_B$ is special if and only if there is no Type B1*. For Richardson orbits, we have
\begin{lemma} \label{lem:structure_of_Richardson_partition}
    Let $\bf{d}_{B} = [d_{B,1}, d_{B, 2}, \ldots] = [\bf{T}_{1},\bf{T}_{2}, \ldots, \bf{T}_{l-1}, \bf{T}_{l}, \ldots, \bf{T}_{k}, \bf{T}_{k+1}]$ be a partition of Richardson orbit $\bf{O}_{B,R}$. Then, there exists $l \geq 1$, such that for $1 \leq i \leq l-1$, $\bf{T}_{i}$ is either of type B1 or B2 of the form $[a_1, a_2]$, and $\bf{T}_{j}$, for $l \leq j \leq k$, is of type B2, and $\bf{T}_{k+1}$ is of type B3.
\end{lemma}
\begin{proof}
    It is known that Richardson orbits are special. From the finer structure of their partitions, see \cite[\S 2.3]{FRW24}, we conclude.
\end{proof}

Define a partition 
\[
{}^S \bf{T}_B = [{}^S\bf{T}_1, {}^S\bf{T}_2,  \ldots, {}^S\bf{T}_s] \in \sP_{-1}(2n)
\]
as follows
\[
   {}^S\bf{T}_i = \left \{ \begin{aligned}
                         &[a, a], &  \text{if } \bf{T}_i \text{ is of type B1;} \\
                         &[b, b], & \text{if } \bf{T}_i \text{ is of type B1*;} \\
                         &[a_1-1, b_1^2, \cdots , b_k^2, a_2+1], &\text{if } \bf{T}_i \text{ is of type B2;} \\
                         &[a-1, b_1^2, \cdots , b_k^2] & \text{if } \bf{T}_i \text{ is of type B3}.
                      \end{aligned}
             \right.
\]

\begin{proposition}\label{type B generic}
Let $\chi_{\theta_B}(\lambda)$ decomposed as in \eqref{type B char decomposition}. Then, the degree partition $\bf{deg}_B$ satisfies $\bf{deg}_B = [{}^S \bf{T}_B ,1]$, where ${}^S \bf{T}_B$ is the dual partition constructed above. More preciously, let $\bf{deg}_B=[e_{B,1}, e_{B,2}, \ldots]$. Then
\begin{enumerate}
    \item if $e_{B,j} \equiv 1$, there exists a unique $j'$ such that $f_{B,j}(-\lambda) = - f_{B,j'}(\lambda)$;
    \item if $e_{B,j} \equiv 0$ and does not appear in Type B1*, then $f_{B,j}(-\lambda)=f_{B,j}(\lambda)$;
    \item if $e_{B,j} \equiv 0$ and appears in Type B1*, then there exists a unique $j'$ such that $f_{B,j}(-\lambda) = f_{B,j'}(\lambda)$.
\end{enumerate}
Moreover, if $\bf{d}_B$ is special, then the Assumption \ref{main assumption on char} is generic in $L\mathfrak{c}_{\bf{O}_B}$. Additionally, the dual partition satisfies ${}^S \bf{d}_B={}^S \bf{T}_B$, as defined in \eqref{Springer_dual_map}.
\end{proposition}
\begin{proof}
    The results follow from \cite[Proposition 6.4]{Spa88}, except for verifying that when $\bf{d}_B$ is special, Assumption \ref{main assumption on char} (2) holds for $f_{B, i}$ and $f_{B, j}$ such that $f_{B,j}(\lambda)=-f_{B, i}(-\lambda)$. This verification proceeds in the same manner as in the proof of Proposition \ref{type C generic}.
\end{proof}

The Kazhdan--Lusztig map for type B, denoted $\KL_B(\bf{O}_B)=(\alpha_B, \beta_B)$, can be described as follows:
\begin{enumerate}
    \item[1.] For each odd $e_{B,j}$, $\alpha_B$ gains one part $\bf{\alpha}_{B,j}=e_{B,j}$.
    \item[2.] For each even $e_{B,i}$ corresponding to case (3) in Proposition~\ref{type B generic}, $\alpha_B$ gains one part $\alpha_{B,i} = e_{B,i}$.
    \item[3.] For each even $e_{B,i}$ corresponding to case (2) in the Proposition~\ref{type B generic}, $\beta_B$ gains one part $\beta_{B,i}=\frac{e_{B,i}}{2}$.
\end{enumerate}

\begin{proposition}\label{p.com_image_of_KL}
    The common images of Kazhdan--Lusztig maps of types B and C are those corresponding to Springer dual special orbits.
\end{proposition}
\begin{proof}
    First notice that if $\bf{O}_B$ is non-special, for generic $\theta_B$, case (3) in Proposition~\ref{type B generic} is non-empty which doesn't appear in $\Im \KL_C$. Thus, there exists no $\bf{O}_C$ such that $\KL_B(\bf{O}_B)=\KL_C(\bf{O}_C)$. For special orbits $\bf{O}_B$, $\bf{deg}_B = [{}^S\bf{d}_B, 1]$ here ${}^S\bf{d}_B$ is the Springer dual partition.
\end{proof}

\subsection{Local symmetries}\label{Sec:local mirror symmetry}

\begin{definition} \label{def:e(d_B),c(d_B)}
    Let $\bf{d}_B$ be a partition of type B with $\KL_B(\bf{O}_B)=(\alpha_B, \beta_B)$. Define:
    \begin{itemize}
    	\item $c(\bf{d}_B)  := \# \left\{  \text{type B2 in} \ \bf{T}_B \right\}$, where $\bf{T}_B$ represents the decomposition of $\bf{d}_B$ into Types B1, B1*, B2, and B3 (cf. Lemma \ref{structure of special partition}).
    	\item $\beta(\bf{d}_B) := \# \beta_B$, which equals $\# \left\{ d_{B,i} \equiv 0 \mid d_{B,i} \in \ \bf{deg}_B   \right\}$ when $\bf{d}_B$ is special.
    \end{itemize} 
\end{definition}

\begin{lemma} \label{lem:Lusztig's_quotient}
Let $\#\bar{A}(\bf{O}_B)$ denote the order of Lusztig's canonical quotient. Then,
    \begin{align*}
        \#\bar{A}(\bf{O}_B) = 2^{c(\bf{d}_B)}.
    \end{align*}
\end{lemma}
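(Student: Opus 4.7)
My plan is to apply the standard combinatorial formula for Lusztig's canonical quotient $\bar{A}(\bf{O})$ of a special nilpotent orbit in a classical Lie algebra to the block decomposition of $\bf{d}_B$ provided by Lemma~\ref{structure of special partition}.

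First, I would recall that for a special orbit $\bf{O}_B\subset\mathfrak{so}_{2n+1}$ the full component group is $A(\bf{O}_B)\cong(\Z/2)^{a-1}$, where $a$ is the number of distinct odd parts of $\bf{d}_B$, generated by the distinct odd parts modulo the single relation that their sum is trivial. Lusztig's canonical quotient $\bar{A}(\bf{O}_B)$ is obtained from $A(\bf{O}_B)$ by imposing additional relations identifying odd parts that are ``adjacent'' in the partition, meaning separated only by even parts of appropriate even multiplicity. The precise recipe is due to Lusztig and was reformulated for classical groups by Sommers (see e.g.\ Collingwood--McGovern).

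Second, I would analyze each block type. A B1 block $[\alpha,\alpha]$ contributes a single doubled odd part which is automatically killed in the elementary abelian quotient, so it contributes trivially to $\bar{A}$. The unique terminal B3 block $[\alpha,\beta_1^2,\ldots,\beta_k^2]$ contributes one odd part $\alpha$ which is precisely the one absorbed by the global parity relation in $A(\bf{O}_B)$, and hence also contributes trivially. A B2 block $[\alpha_1,\beta_1^2,\ldots,\beta_k^2,\alpha_2]$ contains two distinct odd parts separated only by even parts of even multiplicity; Lusztig's adjacency relation identifies the generators at $\alpha_1$ and $\alpha_2$, and the block contributes exactly one independent $\Z/2$ factor to $\bar{A}(\bf{O}_B)$. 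Summing over blocks yields $\bar{A}(\bf{O}_B)\cong(\Z/2)^{c(\bf{d}_B)}$, which proves the lemma. As a consistency check, if $s_1$ denotes the number of B1 blocks and $c=c(\bf{d}_B)$ the number of B2 blocks, then $a=s_1+2c+1$ and $|A(\bf{O}_B)|=2^{s_1+2c}$, and the kernel of $A(\bf{O}_B)\to\bar{A}(\bf{O}_B)$ has order $2^{s_1+c}$, with one generator from each B1 block and one from each B2 block.

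The main obstacle is technical rather than conceptual: I need to verify that the strict-inequality conditions in the definitions of the B1/B2/B3 blocks in Lemma~\ref{structure of special partition} (in particular $\alpha_1>\beta_1\geq\cdots\geq\beta_k>\alpha_2$ for B2 and the analogous constraint for B3) match exactly the maximal-cluster condition appearing in Lusztig's combinatorial formula. Concretely, I must confirm that two adjacent blocks are never merged by Lusztig's adjacency relations, so that each block contributes independently to $\bar{A}$. Once this identification is established, the proof reduces to the block-by-block enumeration above.
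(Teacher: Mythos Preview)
Your block-by-block plan reaches the right number, but the mechanism you describe is not correct, and the ``consistency check'' already signals the problem. In $A(\bf{O}_B)$ the generators are indexed by \emph{distinct} odd values, not by occurrences inside blocks; when the $\alpha_2$ of one B2 block equals the $\alpha_1$ of the next (which is allowed, since blocks only partition positions, not values), your formula $a=s_1+2c+1$ overcounts. Worse, your description of Lusztig's relation --- ``identify $e_{\alpha_1}$ with $e_{\alpha_2}$ in each B2 block'' --- gives the wrong $\bar A$. Take $\bf{d}_B=[5,3,3,2,2,1,1]$: the block decomposition is B2 $[5,3]$, B2 $[3,2,2,1]$, B3 $[1,0,\ldots]$, so $c(\bf{d}_B)=2$. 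The distinct odd values are $\{5,3,1\}$, hence $A(\bf{O}_B)\cong(\Z/2)^2$. Your relations $e_5=e_3$ and $e_3=e_1$, together with the global relation $e_5+e_3+e_1=0$, force $e_5=0$ and hence $\bar A$ trivial. But in fact $\bar A(\bf{O}_B)=(\Z/2)^2=A(\bf{O}_B)$, so there are \emph{no} extra Lusztig relations here. Thus the adjacency relation you invoke does not hold for B2 blocks in general.

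The paper avoids this by using a formula that is sensitive to \emph{positions} rather than values: by \cite[Proposition~2.2]{Won17}, one removes the even pairs $d_{2j}=d_{2j+1}$ and the odd pairs $d_{2j+1}=d_{2j+2}$, and if $2q+1$ parts remain then $\bar A(\bf{O}_B)=(\Z/2)^q$. Since each B1/B2 block starts at an odd index and ends at an even one, the removed even pairs are exactly the $\beta$-pairs inside B2 and B3 blocks, and the removed odd pairs are exactly the B1 blocks; what survives are the two odd endpoints of each B2 block and the single odd endpoint of B3, giving $2c(\bf{d}_B)+1$ parts and $q=c(\bf{d}_B)$. In the example above nothing odd is removed (the equal pairs $(3,3)$ at positions $(2,3)$ and $(1,1)$ at positions $(6,7)$ sit at even--odd positions, not odd--even), so five parts remain and $q=2$, consistent with $c(\bf{d}_B)=2$. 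If you want to salvage your approach, you must replace the value-indexed adjacency relation by a position-sensitive one; but at that point you are essentially recovering the Won17 procedure.
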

\begin{proof}
 
It is known that $\bar{A}(\bf{O}_B) = \Z_2^q$ for some $q$. By \cite[\S 5]{so01}, $q+1$ equals the number of ``corners" of the Young diagram of $\bf{d}_B$ that have both odd length and odd height. By analyzing the definitions of Types B1, B1*, B2, and B3, it follows that $q=c(\bf{d}_B)$.
\end{proof}

To simplify the discussion, we will denote $\theta_{B}|_{\sK},\theta_{C}|_{\sK}$, and similar elements simply as $\theta_B,\theta_C$ etc., when the context is clear. For a fixed $\theta_B$ , if $a_{B,2n}\neq 0$, then $\Ker \theta_B \cong \sK$ and we have an exact sequence: 
\[
    0\longrightarrow \Ker \theta_B \longrightarrow \sK^{2n+1} \longrightarrow \sK^{2n}\longrightarrow 0.
\]
We define a nondegenerate skew-symmetric two-form $g_C$ on the quotient $\sK^{2n}$ via 
\[
    g_C(\bar{u}, \bar{v})=g_B(\theta_B u, v)/t
\] 
and define $\theta_C: \sK^{2n}\rightarrow \sK^{2n}$ by 
\begin{align}\label{theta_B/C}
\theta_C(\bar{u})=\bar{\theta_B(u)}.
\end{align}

With these notations, our main theorem in this subsection is

\begin{theorem}\label{Thm.type B decomposition} 
    Let $\bf{O}_B$ be a special nilpotent orbit in $\mathfrak{so}_{2n+1}$ with partition $\bf{d}_B$. If $\chi_{\theta_B}$ is generic in $L\mathfrak{c}_{\bf{O}_B}$, then there exists a finite morphism 
    $$
    l_{BC}:\mathbf{Gr}_{\theta_B, \bf{O}_B}\rightarrow \mathbf{Gr}_{\theta_C, {}^S\bf{O}_B}
    $$ 
    with degree $2^{\beta(\bf{d}_B)-c(\bf{d}_B)}$.
\end{theorem}

Since the proof of Theorem \ref{Thm.type B decomposition} is lengthy, we outline the key steps below. The full proof follows in  \S\ref{subsubsec:B to C} and \S\ref{subsubsec:C to B}.
\begin{itemize}
    \item[\textbf{Part 1}] (\S\ref{subsubsec:B to C}) We construct $l_{BC}$ step-by-step, starting from a fixed $(E_B\subset \sK^{2n+1},\theta_B\in\End(E_B))\in \mathbf{Gr}_{\bf{O}_B, \theta_B}$.
\begin{itemize}\label{part 1}
\item[Step 1.1.] Let $K_{B, i}=\Ker f_{B, i}(\theta_B)$ for each $i \geq 1$, and let $K_{B, 0}=\Ker \theta_B$. Then
\begin{itemize}
    \item If $e_{B,i}$ is odd, $K_{B, i}$ is a $\theta_B$-direct summand of $E_B$;
    \item If $e_{B,i}$ is even, $K_{B, i}$ is $1$-degenerate (cf. Definition~\ref{Def:l-degeneracy}).
    \item If $\ord_{t}a_{B,2n}$ is even, $K_{B, 0}$ is a $\theta$-direct summand of $E_B$; otherwise it would be $1$-degenerate.
\end{itemize}
This leads to an exact sequence:
$$
0\longrightarrow \bigoplus_{i=0}^k K_{B,i} \longrightarrow E_B \longrightarrow R\longrightarrow 0,
$$  
where $R$ is supporting at $t=0$, and $\dim_{\Bbbk} R$ is determined by $\bf{d}_B$ (see Proposition \ref{coker dimension}).

\item[Step 1.2.] For each $i\geq 1$, consider $\theta_{B,i} = \theta_B |_{K_{B,i}}$. There exists a canonical $\theta_{B,i}$-invariant submodule $K_{C,i} \subset K_{B,i}$, such that the pairing 
\[
g_C(-,-) = g_B(\theta_B-,  -) / t
\] 
defines a skew-symmetric non-degenerate bilinear form on $\oplus_{i=1}^k K_{C,i}$. Define 
\[
l_{BC}\big((E_B\subset \sK^{2n+1})\big)=(\oplus_{i=1}^k K_{C,i}\subset \sK^{2n}).
\]
This defines an element in $\mathbf{Gr}_{\theta_C, {}^S\bf{O}_B}$. See Proposition \ref{part 1.2}.
\end{itemize}
\vspace{0.3cm}

\item[\textbf{Part 2}] (\S\ref{subsubsec:C to B}) We analyze the fiber of $l_{BC}$ for a fixed $(E_C\subset \sK^{2n})\in \mathbf{Gr}_{\theta_C,{}^S\bf{O}_B}$.
\begin{itemize}
\item[Step 2.1.] Using Theorem \ref{Thm.type C decomposition}, decompose $E_C\cong \oplus_{i=1}^k \Ker f_{C, i}(\theta_C)$. For $K_{C, i}=\Ker f_{C, i}(\theta_C)$, there is a canonical submodule $K_{B,i}$ in $t^{-1}K_{C,i}$, containing $K_{C,i}$, such that:
\begin{itemize}
	\item If $\deg f_{C, i}$ is odd, let $j$ satisfy $f_{C, j}(\lambda)=-f_{C,i}(-\lambda)$. Then, the pairing $$tg_C(\theta_C^{-1}-,-)$$ is a non-degenerate symmetric bilinear form on $K_{B,i}\oplus K_{B,j}$.
	\item If $\deg f_{C, i}$ is even, the pairing defines a $1-$degenerate symmetric bilinear form on $K_{B, i}$.
\end{itemize}
This step reverses step 1.2, and it will be done in Proposition \ref{part 2.1}.

\item[Step 2.2.] Define $K_{B, 0}=\mathcal{O}$ and a symmetric pairing on $K_{B,0}$ given by $a_{B, 2n}/t^{\lfloor \#(\bf{d}_C)/2 \rfloor}$. Combine $K_{B,0}$ with $\oplus_{i= 1}^kK_{B,i}$ to form a submodule of $\sK^{2n+1}$.

\item[Step 2.3.] We have an exact sequence 
$$
0\longrightarrow \oplus_{i= 0}^kK_{B,i}\longrightarrow \oplus_{i= 0}^kK_{B,i}^{\vee} \longrightarrow Q\longrightarrow 0
$$ 
given by the pairing on $\oplus_{i=0}^{k}K_{B, i}$ defined in Step 2.1 and Step 2.2. The fiber $l_{BC}^{-1}((E_C\subset \sK^{2n}))$ corresponds to certain $\oplus_{i=1}^k\theta_{B,i}^{\vee}$-invariant submodule of $\oplus_{i=0}^{k}K_{B,i}^{\vee}$, which corresponds to $\iota$-isotropic subspaces in $Q$, (cf. Definition \ref{def:iota_isotropic}).

\item[Step 2.4.] Under generic conditions, we show that there are precisely $2^{\beta(\bf{d}_B)-c(\bf{d}_B)}$ $\iota$-isotropic subspaces in $Q$. This is proved in Proposition~\ref{description of W} and establishes that the degree of $l_{BC}$ is $2^{\beta(\bf{d}_B)-c(\bf{d}_B)}$.
\end{itemize}
\end{itemize}

\subsubsection{From B-side to C-side}\label{subsubsec:B to C}
Firstly, we deal with Step 1.1. The relation between nondegenerate bilinear pairing and $\theta_B$ direct summands would be stated as

\begin{lemma}\label{direct summand being nondeg}
    Suppose $f_B(\lambda)$ is a factor of $\chi_{\theta_B}(\lambda)$ such that $f_B(\lambda)=f_B(-\lambda)$. Then, if $\Ker f_B(\theta_B)$ is a $\theta_B$-direct summand of $E_B$, the restriction of $g_B$ on $\Ker f_B(\theta_B)$ is nondegenerate. Conversely, for a $\theta_B$-invariant submodule $F \subseteq E_B$, if the restriction $g_B|_F$ is nondegenerate, then $F$ is a $\theta_B$-direct summand of $E_B$. 
\end{lemma}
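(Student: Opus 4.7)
The plan hinges on the infinitesimal-orthogonality identity for $\theta_B$. Since $g_B(\theta_B u,v)+g_B(u,\theta_B v)=0$, induction on degree gives
$$g_B\bigl(p(\theta_B)u,v\bigr)=g_B\bigl(u,p(-\theta_B)v\bigr)$$
for every $p\in\sO[\lambda]$. When $f_B(\lambda)=f_B(-\lambda)$, this specializes to the key symmetry $g_B(f_B(\theta_B)u,v)=g_B(u,f_B(\theta_B)v)$.

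For the first statement, I would set $K:=\Ker f_B(\theta_B)$ and use the $\theta_B$-direct summand hypothesis to fix a $\theta_B$-invariant complement $K'$, so that $E_B=K\oplus K'$ as $\theta_B$-modules. Because $K$ is exactly the kernel of $f_B(\theta_B)$, the restriction $f_B(\theta_B)|_{K'\otimes_{\sO}\sK}$ is an isomorphism of $\sK$-vector spaces. Given $u\in K$ with $g_B(u,K)=0$, I want to deduce $u=0$. For any $v'\in K'\otimes\sK$, setting $v:=f_B(\theta_B)v'$ and applying the symmetry yields $g_B(u,v)=g_B(f_B(\theta_B)u,v')=0$. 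Since $f_B(\theta_B)$ is surjective onto $K'\otimes\sK$, this forces $g_B(u,K'\otimes\sK)=0$, and combined with $g_B(u,K)=0$ it gives $u=0$ by the nondegeneracy of $g_B$ on $\sK^{2n+1}$.

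For the converse, I would form the orthogonal complement $F^{\perp}:=\{v\in E_B\mid g_B(v,F)=0\}$. Interpreting the hypothesis in the sense inherent to $\mathbf{LH}_{\bf{O}_B,\theta_B}$ (namely that $g_B|_F$ is a perfect $\sO$-pairing, i.e.\ the induced map $F\to F^{\vee}$ is an isomorphism), one obtains an orthogonal decomposition of lattices $E_B=F\oplus F^{\perp}$. It only remains to check that $F^{\perp}$ is $\theta_B$-stable, and this is immediate from the infinitesimal orthogonality: for $v\in F^{\perp}$ and $u\in F$, $g_B(u,\theta_B v)=-g_B(\theta_B u,v)=0$ since $\theta_B u\in F$. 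No step here looks serious; the only mild subtlety is confirming the convention that \emph{nondegenerate} means perfect at the integral level, after which the argument is essentially formal.
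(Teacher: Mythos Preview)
Your converse direction is correct and coincides with the paper's one-line argument. For the forward direction your computation is right and in fact proves more than you extract from it, but the conclusion you state is too weak.

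You frame the goal as: if $u\in K$ satisfies $g_B(u,K)=0$ then $u=0$. That establishes only that $K\to K^{\vee}$ is \emph{injective}, whereas ``nondegenerate'' in this paper means $0$-degenerate, i.e.\ the map $K\to K^{\vee}$ is an isomorphism (see the definition of $\ell$-degenerate immediately below the lemma). Injectivity alone does not give that. However, your computation $g_B(u,f_B(\theta_B)v')=g_B(f_B(\theta_B)u,v')=0$ uses nothing about $u$ beyond $u\in K$; together with the surjectivity of $f_B(\theta_B)$ on $K'\otimes\sK$ it shows $g_B(K,K')=0$ outright. From the orthogonal splitting $E_B=K\oplus K'$ and the perfectness of $g_B$ on $E_B$ one then concludes that $g_B|_K$ is perfect. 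So the fix is simply to reframe: conclude $K\perp K'$ first, then deduce perfectness.

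With that correction, your route and the paper's are the same in substance (both prove $K\perp K'$) but differ in mechanism. The paper uses the complementary factor $\bar f_B=\chi(\theta_B)/f_B$, the inclusion $K'\subset\Ker\bar f_B(\theta_B)$, and a B\'ezout-type identity $\alpha f_B+\beta\bar f_B=t^N$ to kill $g_B(u,v)$. You instead use that $f_B(\theta_B)$ is an automorphism of $K'\otimes\sK$, which avoids introducing $\bar f_B$ and the resultant-style argument; this is a bit more direct, at the cost of passing to $\sK$.
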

\begin{proof}
If $\Ker f_B(\theta_B)$ is a $\theta_B$-direct summand, then $E_B=\Ker f_B(\theta_B)\oplus F'$, where $F'$ is $\theta_B$-invariant. The irreducible factors of the characteristic polynomials of $\theta_B|_{F'}$ are coprime with $ f_B$. Since $f_B(\lambda)=f_B(-\lambda)$ and $g_B(\theta_B, -)+g_B(-,\theta_B)=0$, it follows that $g_B(\Ker f_B(\theta_B),F')=0$. Hence, the restriction of $g_B$ on $\Ker f_B(\theta_B)$ is nondegenerate.

Conversely, if $g_B|_F$ is non-degenerate, then its orthogonal complement provide its $\theta_B$-invariant complement, implying $F$ is a $\theta_B$-direct summand.
\end{proof}

As mentioned before, $K_{B, i}$ may not be a $\theta_B$-direct summand of $E_B$, then by Lemma \ref{direct summand being nondeg}, the restriction of $g_B$ on $K_{B, i}$ will be degenerate. To measure the degeneracy of the restriction of $g_B$, we give the following definition.

\begin{definition} \label{Def:l-degeneracy}
    Let $\sF$ be a free $\mathcal{O}$-module, and let $g: \sF \otimes \sF\rightarrow \mathcal{O}$ be a symmetric pairing. We say $g$ is \emph{$\ell$-degenerate} if the cokernel of the induced morphism $\sF \rightarrow \sF^{\vee}$ is a torsion module of dimension $\ell$. Thus, $g$ is non-degenerate if and only if it is $0$-degenerate. When the pairing $g$ is fixed, we may also say that $\mathcal{F}$ is $\ell$-degenerate if $g$ is $\ell$-degenerate.
\end{definition}

The following lemma about $\ell$-degenerate is easy. We state it here without proof.

\begin{lemma}\label{l deg lemma}
Let $\sF,g$ be as above. 
\begin{itemize}
    \item If $\{\alpha_i\}$ is an $\mathcal{O}$-linear basis  of $\sF$, then $g$ is $\ell$-degenerate if and only if  $\ord_t\det (g(\alpha_i, \alpha_j))$ is $\ell$.
    
    \item If $g$ is $\ell$-degenerate on $\sF$, and $\sF^{\prime}$ is a submodule of $\sF$ with $\dim_{\Bbbk}\sF/\sF'=\ell '$, then $g|_{\sF^{\prime}}$ is $(\ell+2\ell ')$-degenerate.

    \item If $g$ is $\ell$-degenerate on $\sF$, and $\sF=\sF_1\oplus \sF_2$, where $g(\sF_1, \sF_2)=0$ and the restriction of $g|_{\sF_i}$ is $m_i$-degenerate, $i=1, 2$, then $m_1+m_2=\ell$. 
\end{itemize}
\end{lemma}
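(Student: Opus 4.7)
The plan is to reduce each assertion to a short computation over the DVR $\mathcal{O}$, using Smith normal form and the snake lemma. First observe that all three statements presume $g$ is generically nondegenerate, i.e.\ the cokernel of $\phi_g\colon\sF\to\sF^{\vee}$, $u\mapsto g(u,-)$ is torsion; equivalently $\phi_g$ is itself injective. This is built into the definition of $\ell$\nobreakdash-degeneracy, since a finite $\Bbbk$-dimensional cokernel forces $\phi_g\otimes\sK$ to be an isomorphism.

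For the first bullet, in the basis $\{\alpha_i\}$ of $\sF$ and the dual basis of $\sF^{\vee}$, $\phi_g$ is represented by the matrix $G=(g(\alpha_i,\alpha_j))$. By Smith normal form over the DVR $\mathcal{O}$ one can write $G=U\operatorname{diag}(t^{a_1},\ldots,t^{a_r})V$ with $U,V\in\GL_r(\mathcal{O})$, so $\coker\phi_g\cong\bigoplus_i\mathcal{O}/t^{a_i}$ has $\Bbbk$-dimension $\sum a_i=\ord_t\det G$, giving the claim. For the third bullet, the orthogonality $g(\sF_1,\sF_2)=0$ means that in any basis adapted to $\sF=\sF_1\oplus\sF_2$ the matrix of $\phi_g$ is block diagonal with blocks representing $g|_{\sF_i}$, so $\coker\phi_g\cong\coker\phi_{g|_{\sF_1}}\oplus\coker\phi_{g|_{\sF_2}}$ and $\ell=m_1+m_2$ follows.

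The second bullet is the only one with a little content. Since $\sF/\sF'$ has finite $\Bbbk$-dimension $\ell'$, the modules $\sF$ and $\sF'$ have the same rank, and applying $\Hom(-,\mathcal{O})$ to $0\to\sF'\to\sF\to\sF/\sF'\to 0$ yields
\[
0\longrightarrow\sF^{\vee}\longrightarrow(\sF')^{\vee}\longrightarrow\operatorname{Ext}^1(\sF/\sF',\mathcal{O})\longrightarrow 0,
\]
and the last term has $\Bbbk$-dimension $\ell'$, as one checks on the summands $\mathcal{O}/t^{b}$ where $\operatorname{Ext}^1(\mathcal{O}/t^{b},\mathcal{O})\cong\mathcal{O}/t^{b}$. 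Now the map $\phi_{g'}\colon\sF'\to(\sF')^{\vee}$ factors as the composition of three injections $\sF'\hookrightarrow\sF\xrightarrow{\phi_g}\sF^{\vee}\hookrightarrow(\sF')^{\vee}$ with cokernels of $\Bbbk$-dimensions $\ell',\ell,\ell'$ respectively. A standard snake-type argument shows that if $f\colon A\hookrightarrow B$ and $h\colon B\hookrightarrow D$ are injections, then $\coker(h\circ f)$ is an extension of $\coker h$ by $\coker f$, so dimensions add. Applying this twice gives $\dim_{\Bbbk}\coker\phi_{g'}=\ell+2\ell'$, as required. No step here is a real obstacle; the only mildly subtle point is identifying $(\sF')^{\vee}/\sF^{\vee}$ with a module of $\Bbbk$-dimension $\ell'$, which is immediate from the $\operatorname{Ext}$-sequence above.
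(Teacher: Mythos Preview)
The paper does not give a proof of this lemma at all; it writes ``We state it here without proof'' immediately after the statement. Your argument is correct and complete. For the second bullet there is an even shorter route, almost certainly what the authors had in mind: using the first bullet, choose bases of $\sF'$ and $\sF$ so that the inclusion is given by a matrix $P$ with $\ord_t\det P=\ell'$ (Smith normal form again); then the Gram matrix of $g|_{\sF'}$ is $P^{\top}GP$, and $\ord_t\det(P^{\top}GP)=2\,\ord_t\det P+\ord_t\det G=\ell+2\ell'$. Your $\operatorname{Ext}$/snake argument is of course equivalent and has the virtue of making the exact sequence $0\to\sF^{\vee}\to(\sF')^{\vee}\to\operatorname{Ext}^1(\sF/\sF',\mathcal{O})\to 0$ explicit, which is conceptually clean.
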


In the following, we always assume that $\chi_{\theta_B}(\lambda)$ is generic in $L\mathfrak{c}_{ \bf{O}_B}$ and write the decomposition of $\chi_{\theta_B}(\lambda)$ as in (\ref{type B char decomposition}). For any $(E_B\subset \sK^{2n+1}) \in \mathbf{Gr}_{\theta_B, \bf{O}_B}$, we denote $K_{B, i}=\Ker f_{B, i}(\theta_B)$, for $i \neq 0$, and $K_{B, 0}=\Ker \theta_B$. 

\begin{proposition}\label{coker dimension}
With the above notations:
\begin{itemize}
    \item If $e_{B,i}$ is odd, then $K_{B, i}$ is a $\theta_B$-direct summand of $E_B$. Moreover, consider the unique $j$ with $f_{B,j}(\lambda)=-f_{B,i}(-\lambda)$, then the restriction of $g_B$ on $K_{B,i}\oplus K_{B,j}$ is nondegenerate.
    \item If $e_{B,i}$ is even, then $K_{B, i}$ is $1$-degenerate. 
    \item If  $\ord_t a_{B,2n}$ is even, $K_{B, 0}$ is a $\theta_B$-direct summand of $E_B$; otherwise, it is $1$-degenerate.
\end{itemize}
As a consequence, the following exact sequence holds:
\[
    0\longrightarrow \bigoplus_{i=0}^k K_{B,i} \longrightarrow E_B \longrightarrow R \longrightarrow 0,
\]
where $R$ is a torsion module support at $t=0$, and $\text{dim}_{\Bbbk}R=\lceil \beta(\bf{d}_B)/2 \rceil$.
\end{proposition}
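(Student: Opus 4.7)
The strategy is to decouple the statement into a combinatorial input governing which $K_{B,i}$ are $\theta_B$-direct summands of $E_B$ (via Theorem~\ref{Thm.sub_splitting}), a bilinear analysis producing the (non)degeneracy of $g_B$, and finally a dimension count for $R$ using Lemma~\ref{l deg lemma}. First I would verify the hypothesis of Theorem~\ref{Thm.sub_splitting} for $(\bf{d}_B,\bf{deg}_B)$: by Proposition~\ref{type B generic} we have $\bf{deg}_B=[{}^S\bf{d}_B,1]$, and Lemma~\ref{structure of special partition} decomposes $\bf{d}_B=[\bf{T}_1,\ldots,\bf{T}_s]$ with $\bf{T}_s$ of type B3. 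A block-by-block check shows the running difference $\sum_{j\le i}(d_{B,j}-e_{B,j})$ equals $0$ inside a B1 block, follows $1,1,\ldots,1,0$ inside a B2 block, and $1,1,\ldots,1$ inside the terminal B3 block, with the trailing $+1$ killed by the appended $\lambda$-factor; it always lies in $\{0,1\}$, so Theorem~\ref{Thm.sub_splitting} applies.

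The direct-summand dichotomy then follows: factors from a B1 block $[\alpha,\alpha]$ satisfy $d_j=e_j=\alpha$ with running sum $0$, so the corresponding $K_{B,i}$ (of odd degree $\alpha$) are $\theta_B$-direct summands of $E_B$; for factors from B2 or B3 blocks, either $d_j\ne e_j$ or the running sum equals $1$, so each such $K_{B,i}$ is only a $\theta_B$-direct summand of a codimension-$1$ invariant submodule $E_i'\subset E_B$. For the bilinear claim on the odd pair, letting $(i,j)$ be the unique pair with $f_{B,j}(\lambda)=-f_{B,i}(-\lambda)$, coprimality gives $K_{B,i}\oplus K_{B,j}=\Ker(f_{B,i}f_{B,j})(\theta_B)$ as a $\theta_B$-direct summand, and since $f_{B,i}f_{B,j}$ is self-dual, Lemma~\ref{direct summand being nondeg} yields nondegeneracy of $g_B$ there. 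For even $e_{B,i}$, the converse direction of Lemma~\ref{direct summand being nondeg} forces $g_B|_{K_{B,i}}$ to be degenerate, and Lemma~\ref{no-even-lemma} applied to the $\theta_B(0)$-cyclic space $K_{B,i}(0)$ (a single Jordan block $[e_{B,i}]$ because $f_{B,i}$ is Eisenstein) gives degeneracy $\ge 1$. For $K_{B,0}=\Ker\theta_B$, which is rank-$1$ with generator $u_0$, the degeneracy equals $\ord_t g_B(u_0,u_0)$, and relating this quantity to the constant term of $\chi(\theta_B)/\lambda$ shows it is $0$ or $1$ according as $\ord_t a_{B,2n}$ is even or odd; the converse of Lemma~\ref{direct summand being nondeg} then gives the $K_{B,0}$ claim.

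The expected main obstacle is sharpening ``$\ge 1$'' to equality in the even case. My plan is an explicit Gram-matrix calculation on $K_{B,i}\cong\sO[\lambda]/f_{B,i}$: with a cyclic generator $v$, the antisymmetry identity $g_B(\theta_B^a v,\theta_B^b v)=(-1)^a\psi_{a+b}$, with $\psi_j:=g_B(v,\theta_B^j v)$, forces $\psi_j=0$ for odd $j$; the Gram matrix then splits into odd-odd and even-even blocks whose determinants reduce modulo $t$ via the Eisenstein relation for $\theta_B^{e_{B,i}}v$, producing $\ord_t\det=1$. Once all component degeneracies are pinned down, the count of $\dim_\Bbbk R$ drops out mechanically: grouping the odd-degree dual pairs $(K_{B,i},K_{B,j})$ as nondegenerate blocks, the self-dual even $K_{B,i}$ as $1$-degenerate, and $K_{B,0}$ with degeneracy $\epsilon=\ord_t a_{B,2n}\bmod 2$, the identity $g_B(P(\theta_B)u,v)=g_B(u,P(-\theta_B)v)$ together with coprimality forces these pieces to be mutually $g_B$-orthogonal. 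Lemma~\ref{l deg lemma}(3) then gives $\deg(g_B|_{\bigoplus K_{B,i}})=e(\bf{d}_B)+\epsilon$, Lemma~\ref{l deg lemma}(2) equates this to $2\dim_\Bbbk R$, and a parity tally from Lemma~\ref{structure of special partition} (B2 contributes $2k+2$ even entries and $2k+2$ factors, the unique B3 contributes $2k+1$ of each, B1 contributes $0$ even entries and $2$ factors) forces both $e(\bf{d}_B)$ and $\ord_t a_{B,2n}$ to be odd, whence $\epsilon=1$ and $\dim_\Bbbk R=\lceil e(\bf{d}_B)/2\rceil$.
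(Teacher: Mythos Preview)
Your overall architecture matches the paper's: combinatorial verification of the running bound $\sum_{j\le i}(d_{B,j}-e_{B,j})\le 1$ via the B1/B2/B3 block structure, invocation of Theorem~\ref{Thm.sub_splitting} for the direct-summand dichotomy, Lemma~\ref{direct summand being nondeg} for nondegeneracy on odd pairs, and Lemma~\ref{l deg lemma} together with orthogonality of the $K_{B,i}$'s for the final $\dim_{\Bbbk}R$ count. The one substantive divergence is the even case, and there your Gram-matrix plan has a genuine gap.

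Your calculation does give something nontrivial: writing $f_{B,i}(\lambda)=\lambda^{2m}+c_2\lambda^{2m-2}+\cdots+c_{2m}$ (self-dual, hence only even powers), the odd block equals $-E\cdot N$ where $E$ is the even block and $N$ is the companion matrix of $\mu^m+c_2\mu^{m-1}+\cdots+c_{2m}$ (with $\mu=\theta_B^2$); hence $\det G=\pm c_{2m}(\det E)^2$ and $\ord_t\det G=1+2\,\ord_t\det E$ is \emph{odd}. But ``odd'' is not ``$=1$'': reducing $\det E$ modulo $t$ yields $\pm\psi_{e-2}(0)^{m}$, and nothing internal to $K_{B,i}$ forces $\psi_{e-2}(0)=g_B(0)(v(0),\theta_B(0)^{e-2}v(0))\ne 0$. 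That nonvanishing depends on how $K_{B,i}(0)$ sits inside $E_B(0)$ relative to the Jordan decomposition of $\theta_B(0)$ (whose partition is $\bf d_B$, not $\bf{deg}_B$). The paper closes the gap by a different route: since $K_{B,i}$ is a $\theta_B$-direct summand of a codimension-$1$ submodule $E_i$, Lemma~\ref{l deg lemma} gives degeneracy $\le 2$; if it were $2$ then the complementary summand $L_i\subset E_i$ would be nondegenerate, hence a $\theta_B$-direct summand of $E_B$ by the converse in Lemma~\ref{direct summand being nondeg}, which forces $K_{B,i}\subset L_i^{\perp}$ with torsion cokernel and contradicts the saturatedness of $K_{B,i}=\Ker f_{B,i}(\theta_B)$. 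You can repair your argument cheaply by combining the two viewpoints: your Gram identity gives \emph{odd} degeneracy, and the $\le 2$ bound (which you already have available from Theorem~\ref{Thm.sub_splitting} and Lemma~\ref{l deg lemma}) then pins it to $1$.

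A minor slip at the end: your block tally correctly shows that $e(\bf d_B)$ and $\ord_t a_{B,2n}$ always have the \emph{same} parity (each block contributes equal parities to ``even entries'' and ``number of factors''), which is exactly what you need for $2\dim_{\Bbbk}R=e(\bf d_B)+\epsilon$ to yield $\dim_{\Bbbk}R=\lceil e(\bf d_B)/2\rceil$. But they are not always odd: if the terminal B3 block is $[1]$ (so ${}^S\bf{T}=[0]$ is empty), it contributes $0$ to both counts, and then both totals are even (cf.\ $\bf d_B=[3,3,1]$, where $e(\bf d_B)=0$ and $\ord_t a_{B,2n}=2$). The statement of the proposition accordingly allows both parities for $\ord_t a_{B,2n}$.
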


\begin{proof}
    First, note that $\bf{deg}_B = [ {}^S\bf{d}_B, 1 ] $. For $e_{B,i}$ odd ($i\geq 1$), Lemma~\ref{structure of special partition}, Proposition~\ref{type B generic} and Theorem~\ref{Thm.sub_splitting} imply that  $K_{B,i}$ is a $\theta_B$-direct summand of $E_B$. We consider the unique $j$ such that $f_{B,j}(\lambda)=-f_{B,i}(-\lambda)$. Then $K_{B,j}$ is also a $\theta_B$-direct summand of $E_B$, and $K_{B,i}\oplus K_{B, j} = \Ker (f_{B,i}\cdot f_{B,j})(\theta_B)$. By Lemma~\ref{direct summand being nondeg}, the restriction of $g_B$ on $K_{B, i}\oplus K_{B,j}$ is nondegenerate.

    Since we only consider special orbits in type B, by ruling out all the $K_{B, i}\oplus K_{B,j}$ as in the above case, we can assume that $\operatorname{KL}(\bf{O}_B)$ is elliptic. Then by discussion in Section 9.1 of \cite{Yun21}, we see that if $e_{B, i}$ is even, then $g_B|_{K_{B,i}}$ can only be $1$-degenerate.

    To determine the restriction of $g$ on $K_0$, it suffices to compute $K_{B, 0}$ directly. We choose a basis of $E$ such that $g$ is given by the identity matrix. Then $\theta_B$ is represented by a skew-symmetric matrix $\Theta$ under the same basis. Now $K_{B, 0}= \Ker \theta$ is generated by the vector $v$ with $i$-th entry being the Pfaffian of the $i$-th leading principle minor of $\Theta$, but dividing the common $t$ order. Thus $g(v,v)=vv^{\top}$ has $t$ order $0$ or $1$ depending on $\ord_t a_{2n}$.
    
     The dimension of $R$ is determined by the restriction of $g$ on each $K_{B, i}$ and Lemma \ref{l deg lemma}.
\end{proof}

Now, we will consider Step 1.2 in Part 1 (\ref{part 1}).

\begin{proposition}\label{part 1.2}
    For each $K_{B, i}$ ($i\geq 1$), there exists a canonical submodule $tK_{B, i}\subset K_{C, i}\subset K_{B, i}$ such that for any $u, v \in \oplus_{i= 1}^kK_{C, i}$, we have $t \mid g_B(\theta_B u, v)$, and the pairing $g_C$ defined by $g_C(u, v)=g_B(\theta_Bu, v)/t$ is a nondegenerate skew-symmetric bilinear form on $\oplus_{i=1}^kK_{C, i}$.
\end{proposition}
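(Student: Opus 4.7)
My strategy is to give an explicit, canonical construction of $K_{C,i}$ using the ramified extension structure underlying Proposition~\ref{type B generic}, and then verify the three claims---$tK_{B,i}\subset K_{C,i}\subset K_{B,i}$, $t\mid g_B(\theta_Bu,v)$, and nondegeneracy of $g_C$---separately.

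By Proposition~\ref{type B generic} and the Eisenstein hypothesis on $f_{B,i}$, the pair $(K_{B,i},\theta_B|_{K_{B,i}})$ is canonically isomorphic to $(\mathcal{O}'_i,\lambda\,\cdot\,)$, where $\mathcal{O}'_i:=\mathcal{O}[\lambda]/f_{B,i}(\lambda)$ is the ring of integers of the totally ramified extension $\sK'_i/\sK$ of degree $e_{B,i}$, with uniformizer $\pi'_i=\lambda$ satisfying $\lambda^{e_{B,i}}=t\cdot(\text{unit in }\mathcal{O}'_i)$. I will take
\[
K_{C,i}\;:=\;\theta_B^{\lfloor(e_{B,i}-1)/2\rfloor}K_{B,i}\;\cong\;\lambda^{\lfloor(e_{B,i}-1)/2\rfloor}\mathcal{O}'_i,
\]
so that $tK_{B,i}\subset K_{C,i}\subset K_{B,i}$ is automatic from $\lfloor(e_{B,i}-1)/2\rfloor\le e_{B,i}$.

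For the divisibility $t\mid g_B(\theta_B u,v)$ on $\bigoplus_{i\ge 1}K_{C,i}$, cross terms for $u\in K_{C,i}$, $v\in K_{C,i'}$ with $i'\neq i$ and $i'\neq j$ (the dual partner with $f_{B,j}(\lambda)=-f_{B,i}(-\lambda)$) vanish outright by the coprimality argument in the proof of Lemma~\ref{direct summand being nondeg}. For the surviving same-block (even $e_{B,i}$) and paired-block (odd $e_{B,i}$) contributions, I would use the trace parameterization $g_B(u,v)=\operatorname{Tr}_{\sK'_i/\sK}(u\sigma(v)\delta)$ with $\operatorname{ord}_\lambda\delta=2-e_{B,i}$, a valuation forced by the degeneracy data of Proposition~\ref{coker dimension}. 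After substituting the explicit elements of $K_{C,i}$, the integrand $\lambda u\sigma(v)\delta$ lies in $t\cdot\mathcal{D}_{\sK'_i/\sK}^{-1}$, where $\mathcal{D}_{\sK'_i/\sK}=(\lambda^{e_{B,i}-1})$ is the different of $\mathcal{O}'_i/\mathcal{O}$; its trace therefore lies in $t\mathcal{O}$ by the defining property of the different. Skew-symmetry of $g_C$ follows at once from $g_B$ being symmetric together with $g_B(\theta_Bu,v)=-g_B(u,\theta_Bv)=-g_B(\theta_Bv,u)$.

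Nondegeneracy of $g_C$ on $\bigoplus K_{C,i}$ reduces to a blockwise discriminant calculation: on each block, $\operatorname{disc}(g_C)$ differs from $N_{\sK'_i/\sK}(\lambda\delta/t)\cdot\operatorname{disc}(\mathcal{O}'_i/\mathcal{O})$ by a unit, and the exponents $\operatorname{ord}_t N(\lambda\delta/t)=1-e_{B,i}$ and $\operatorname{ord}_t\operatorname{disc}(\mathcal{O}'_i/\mathcal{O})=e_{B,i}-1$ cancel exactly. The main technical obstacle is the even self-dual case, where $K_{B,i}$ is \emph{not} a $\theta_B$-direct summand and $g_B|_{K_{B,i}}$ is genuinely $1$-degenerate: the desired colength $e_{B,i}/2-1$ for $K_{C,i}$ is strictly smaller than the mod-$t$ radical of the skew form $g_B(\theta_B\cdot,\cdot)$, so the naive ``preimage of radical'' recipe overshoots and yields a degenerate $g_C$. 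Pinning down the exact valuation $\operatorname{ord}_\lambda\delta=2-e_{B,i}$ and verifying integrality and nondegeneracy simultaneously through the trace/different identities is the key technical step.
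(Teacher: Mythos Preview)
Your definition $K_{C,i}=\theta_B^{\lfloor(e_{B,i}-1)/2\rfloor}K_{B,i}=\lambda^{k_i}\mathcal{O}'_i$ coincides with the paper's (the paper phrases it as the preimage of $\Im\theta_{B,i}(0)^{k_i}\subset K_{B,i}(0)$, which is the same submodule). The route you take afterwards, however, is genuinely different. The paper does not invoke the trace parameterization or the different at all: for divisibility it picks the cyclic basis $\alpha_l=\theta_B^{l-1}\alpha_1$, writes $g_B(\theta_B\alpha_{n_1},\alpha_{n_2})=\pm g_B(\theta_B^{n_1+n_2-1}\alpha_1,\alpha_1)$, and observes that this lies in $t\mathcal{O}$ once $n_1+n_2-1\ge e_{B,i}$, with the single boundary case $n_1+n_2-1=e_{B,i}-1$ (even $e_{B,i}$) killed by the symmetry $g_B(\theta_B^{\mathrm{odd}}\alpha_1,\alpha_1)=0$. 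For nondegeneracy the paper uses only the $\ell$-degeneracy bookkeeping of Lemma~\ref{l deg lemma}: it tracks $\operatorname{ord}_t\det$ through the sublattice $K_{C,i}\subset K_{B,i}$, then through multiplication by $\theta_B$, then through division by $t$, and checks the orders cancel. Your trace/different approach is more structural and explains \emph{why} the numerology works (it is the duality $\mathcal{D}^{-1}=\{x:\Tr(x\mathcal{O}'_i)\subset\mathcal{O}\}$), at the cost of a setup step---justifying that $g_B|_{K_{B,i}}$ really is of the form $\Tr(u\sigma(v)\delta)$ with $\sigma(\delta)=\delta$---that the paper's bare-hands computation avoids.

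One bookkeeping slip to fix: your discriminant identity ``$\operatorname{disc}(g_C)\sim N(\lambda\delta/t)\cdot\operatorname{disc}(\mathcal{O}'_i/\mathcal{O})$'' omits the lattice-change factor coming from $K_{C,i}=\lambda^{k_i}\mathcal{O}'_i\subsetneq\mathcal{O}'_i$. With $\operatorname{ord}_\lambda\delta=2-e_{B,i}$ one gets $\operatorname{ord}_tN(\lambda\delta/t)=3-2e_{B,i}$, not $1-e_{B,i}$; the missing $t^{2k_i}=t^{e_{B,i}-2}$ from the index $[\mathcal{O}'_i:K_{C,i}]$ restores the cancellation. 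Equivalently, if you first transport $g_C$ to $\mathcal{O}'_i$ via multiplication by $\lambda^{-k_i}$, the effective multiplier becomes $\lambda^{2k_i+1}\delta/t$, whose norm does have $t$-order $1-e_{B,i}$. Either way the argument closes, but the formula as written does not match the exponent you quote.
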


\begin{proof}
    For each $i\geq 1$, let $k_i=\lfloor \dfrac{e_{B,i}-1}{2}\rfloor$ and define $K_{C,i}$ as the kernel of
    $$
    K_{B, i}\longrightarrow K_{B, i}(0)\longrightarrow K_{B, i}(0)/\Im \theta_{B,i}(0)^{k_i}.
    $$
    We aim to verify that $K_{C,i}$ satisfies the properties.
    
    Firstly, for those $K_{B, i}$ such that $e_{B, i}=2k_i+1$ being odd, we consider the $K_{B, j}$ such the restriction of $g_B$ on $K_{B, i}\oplus K_{B, j}$ is nondegenerate, as in the proof of Proposition \ref{coker dimension}. Now we choose a basis $\{\alpha_l \mid \alpha_l=\theta_B^{l-1}\alpha_1, 1\leq l \leq 2k_i+1\}$ for $K_{B, i}$, and $\{\tilde{\alpha}_l \mid \tilde{\alpha}_l=\theta_B^{l-1}\tilde{\alpha}_1, 1\leq l \leq 2k_i+1\}$ for $K_{B, j}$. Thus $K_{C, i}\oplus K_{C, j}$ is generated by 
    $$
\{t\alpha_1, \cdots, t\alpha_{k_i}, \alpha_{k_i+1}, \cdots , \alpha_{2k_i+1}, t\tilde{\alpha}_1, \cdots, t\tilde{\alpha}_{k_i}, \tilde{\alpha}_{k_i+1}, \cdots, \tilde{\alpha}_{2k_i+1}\}.
$$
For $\alpha_{n_1}$ and $\tilde{\alpha}_{n_2}$ with $n_1, n_2 \geq k_i+1$, we have $$g_B(\theta_B\alpha_{n_1}, \tilde{\alpha}_{n_2})=(-1)^{n_2-1}g_B(\theta_B^{n_1+n_2-1}\alpha_1, \tilde{\alpha}_1).$$ Notice that $n_1+n_2-1\geq 2k_i+1$ hence $t\mid g_B(\theta_B u, v)$ for any $u,v \in K_{C, i}\oplus K_{C, j}$. 

We see that the restriction of $g_B$ on $K_{C, i}\oplus K_{C, j}$ is $4k_i$-degenerate. Since $\ord_t \det \theta_B|_{K_{C, i}\oplus K_{C, j}}=2$, $g_C(-, -)=g_B(\theta_B-, -)/t$ defines a non-degenerate skew-symmetric bilinear form on $K_{C, i}\oplus K_{C, j}$.

Now let us consider $K_{B, i}$ such that $e_{B, i}=2k_i+2$ being even. Now we pick a basis $\{\alpha_l \mid \alpha_l=\theta_B^{l-1}\alpha_1, 1\leq l \leq 2k_i+2\}$ for $K_{B,i}$ and hence $K_{C,i}$ is generated by 
$$
\{t\alpha_1, \cdots, t\alpha_{k_i}, \alpha_{k_i+1}, \cdots, \alpha_{2k_i+2}\}.
$$
For $\alpha_{n_1}$ and $\alpha_{n_2}$ with $n_1, n_2 \geq k_i$, we have 
$$
g_B(\theta_{B} \alpha_{n_1},  \alpha_{n_2})=(-1)^{n_2-1}g_B(\theta_B^{n_1+n_2-1}\alpha_1, \alpha_1).
$$ 
Notice that $n_1+n_2-1\geq 2k_i+1$. When $n_1+n_2-1\geq 2k_i+2$, we have $t\mid g_B(\theta_B^{n_1+n_2-1}\alpha_1, \alpha_1)$ since $e_{B,i}=2k_i+2$ and when $n_1+n_2-1= 2k_i+1$, we have $g_B(\theta_B^{n_1+n_2-1}\alpha_1, \alpha_1)=0$ since $2k_i+1$ is odd. Thus $t\mid g_B(\theta_B u, v)$ for any $u, v\in K_{C,i}$.

By Proposition \ref{coker dimension}, the restriction of $g_B$ on $K_{B, i}$ is $1$-degenerate. Hence the pairing $g_B(-,-)$ is $(2k_i+1)$-degenerate on on $K_{C,i}$, which implies $g_B(\theta_B -, -)$ is $2+2k_i$ degenerate. Thus $g_C(-, -)=g_B(\theta_B-, -)/t$ defines a non-degenerate skew-symmetric bilinear form on $K_{C, i}$.
\end{proof}
Now, we conclude with the following proposition.

\begin{proposition}\label{part 1 coro}
     Let $\bf{O}_B$ be a special nilpotent orbit with partition $\bf{d}_B$. For $\theta_B$ such that $\chi_{\theta_B}(\lambda)$ is generic in $ L\mathfrak{c}_{\bf{O}_B}$, there exists a morphism 
    $$
    l_{BC}:\mathbf{Gr}_{\theta_B,\bf{O}_B}\rightarrow \mathbf{Gr}_{\theta_C,{}^S\bf{O}_B},
    $$
    which sends $(E_B\subset \sK^{2n+1})$ to $(\oplus_{i=1}^kK_{C, i}\subset \sK^{2n})$ as defined in Proposition \ref{part 1.2}.
\end{proposition}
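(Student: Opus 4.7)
The plan is to package the local constructions from Propositions \ref{coker dimension} and \ref{part 1.2} into a well-defined set-theoretic map, and then verify that its image lies in $\mathbf{LH}_{{}^S\bf{O}_B, \theta_C}$. Given $(E_B \subset \sK^{2n+1}) \in \mathbf{LH}_{\bf{O}_B, \theta_B}$, Proposition \ref{coker dimension} extracts the $\theta_B$-invariant sublattices $K_{B,i} = \Ker f_{B,i}(\theta_B)$ for $i \geq 1$, and Proposition \ref{part 1.2} refines each to a $\theta_B$-invariant sublattice $K_{C,i} \subset K_{B,i}$ of the same rank $e_{B,i}$, equipped with a perfect skew-symmetric $\sO$-valued pairing $g_C(u,v) = g_B(\theta_B u, v)/t$ on $\bigoplus_{i=1}^k K_{C,i}$. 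I would set $l_{BC}(E_B) := \bigoplus_{i=1}^k K_{C,i}$.

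The next step is to realize $l_{BC}(E_B)$ as a rank-$2n$ lattice in $\sK^{2n}$. Since each $f_{B,i}$ with $i \geq 1$ is Eisenstein with nonzero constant term in $\sK$, it is coprime to $\lambda$ over $\sK$, so $\bigoplus_{i\geq 1} K_{B,i}\otimes_{\sO}\sK$ is complementary to $\Ker\theta_B|_{\sK}$ inside $\sK^{2n+1}$. Thus the quotient map $\sK^{2n+1} \twoheadrightarrow \sK^{2n}$ embeds $l_{BC}(E_B)$ as a rank-$2n$ lattice on which $\theta_C$ agrees with the restriction of $\theta_B$. Combined with the invariance and pairing properties already recalled, this verifies all the defining conditions for $\mathbf{LH}_{{}^S\bf{O}_B, \theta_C}$ apart from the residue condition.

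The residue condition $\theta_C(0) \in {}^S\bf{O}_B$ is where I expect the main subtlety. The characteristic polynomial of $\theta_C$ on $l_{BC}(E_B)$ is $\prod_{i=1}^k f_{B,i}(\lambda) = \chi(\theta_B)/\lambda$, whose degree partition is precisely ${}^S\bf{d}_B$ by Proposition \ref{type B generic}. The bijection $\mathbf{Char}_{\bf{O}_B} \to \mathbf{Char}_{{}^S\bf{O}_B}$, $f \mapsto f/\lambda$, preserves genericity, so the polynomial sits in the generic locus of $\mathbf{Char}_{{}^S\bf{O}_B}$ where Proposition \ref{type C generic} applies and forces the partition and degree partition to coincide. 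The remaining technical verification is that the cyclic structure of $\theta_{B,i}(0)$ on $K_{B,i}(0)$ descends to a single Jordan block of size $e_{B,i}$ on $K_{C,i}(0)$; this can be read off from the explicit description $K_{C,i} = tK_{B,i} + \theta_{B,i}^{k_i}(K_{B,i})$ by picking a cyclic generator $v$ of $K_{B,i}$ and writing down the basis $\{tv, t\theta_{B,i}v, \ldots, t\theta_{B,i}^{k_i-1}v, \theta_{B,i}^{k_i}v, \ldots, \theta_{B,i}^{e_{B,i}-1}v\}$ of $K_{C,i}$, on which $\theta_{B,i}(0)$ acts as a single shift modulo $t$.
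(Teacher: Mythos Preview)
Your argument is correct and follows the paper's approach: the paper in fact offers no separate proof of this proposition, simply stating ``Now, we conclude with the following proposition'' after Proposition~\ref{part 1.2}. You supply the details the paper leaves implicit, in particular the verification that $\theta_C(0)$ lies in ${}^S\bf{O}_B$.

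One small imprecision in your last paragraph: in the basis $\{tv,\,t\theta v,\,\ldots,\,t\theta^{k_i-1}v,\,\theta^{k_i}v,\,\ldots,\,\theta^{e_{B,i}-1}v\}$ as written, $\theta_{B,i}(0)$ does \emph{not} act as a single shift, because $\theta(t\theta^{k_i-1}v)=t\cdot\theta^{k_i}v$ lies in $tK_{C,i}$ and hence vanishes modulo $t$. The correct cyclic generator of $K_{C,i}(0)$ is the image of $\theta^{k_i}v$ (not $tv$): applying $\theta(0)$ successively to $\overline{\theta^{k_i}v}$ runs through $\overline{\theta^{k_i+1}v},\ldots,\overline{\theta^{e_{B,i}-1}v}$, then (using that $f_{B,i}$ is Eisenstein with $\ord_t a_{e_{B,i}}=1$) lands on a nonzero combination of $\overline{tv},\ldots,\overline{t\theta^{k_i-1}v}$, and finally reaches $\overline{t\theta^{k_i-1}v}$ after $e_{B,i}-1$ steps. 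Alternatively and more cleanly, since $\sO[\lambda]/f_{B,i}$ is a DVR and $K_{C,i}$ is a $\theta$-invariant sublattice of the rank-one module $K_{B,i}$, it is itself free of rank one, so $\theta_C(0)$ on $K_{C,i}(0)$ is automatically a single Jordan block of size $e_{B,i}$.
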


\subsubsection{From C-side to B-side}\label{subsubsec:C to B}
In this following, we address Part 2: determining the fibers of the map $l_{BC}$. 

We fix $(E_C\subset \sK^{2n})\in \mathbf{Gr}_{\theta_C,{}^S\bf{O}_B}$. By Theorem~\ref{Thm.type C decomposition}, we have a natural decomposition: 
\[
    E_C\cong \oplus_{i=1}^kK_{C,i}.
\] 
We now reverse the process described in Proposition~\ref{part 1.2} to reconstruct $E_B$.
\begin{proposition}\label{part 2.1}
    For each $K_{C, i}$, there exists a canonical submodule $K_{B,i}$ in $t^{-1}K_{C,i}$, containing $K_{C,i}$, such that 
\begin{itemize}
    \item If $\deg f_{C, i}$ is odd, consider the $j$ such that $f_{C, j}(\lambda)=-f_{C,i}(-\lambda)$. Then the pairing $tg_C(\theta_C^{-1}-,-)$ is a well defined non-degenerate symmetric bilinear form on $K_{B,i}\oplus K_{B,j}$.
    \item If $\deg f_{C, i}$ is even, the pairing $tg_C(\theta_C^{-1}-,-)$ is a well-defined $1$-degenerate symmetric bilinear form on $K_{B, i}$.
\end{itemize}
\end{proposition}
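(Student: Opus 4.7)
The plan is to invert the construction of Proposition \ref{part 1.2} one irreducible factor at a time. Fix $i$ and set $k_i:=\lfloor(e_{C,i}-1)/2\rfloor$, so $e_{C,i}=2k_i+1$ (odd case) or $2k_i+2$ (even case). Since $f_{C,i}$ is Eisenstein, $\theta_C|_{K_{C,i}}$ is cyclic; I pick a generator $\beta_i$ and set $\beta_{i,l}:=\theta_C^{l-1}\beta_i$. Define
\[
K_{B,i}\ :=\ \operatorname{span}_{\mathcal{O}}\{t^{-1}\beta_{i,l}\}_{1\le l\le k_i}\cup\{\beta_{i,l}\}_{k_i<l\le e_{C,i}}\ \subset\ t^{-1}K_{C,i}.
\]
To show this is independent of the choice of $\beta_i$, I would identify $K_{B,i}/K_{C,i}$ with a subspace of $K_{C,i}(0)$ via the isomorphism $t^{-1}K_{C,i}/K_{C,i}\xrightarrow{\cdot t}K_{C,i}(0)$; the constructed $K_{B,i}/K_{C,i}$ corresponds exactly to $\ker\theta_C(0)^{k_i}$. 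Under Proposition \ref{type C generic} together with the generic equality $\bf{d}_C=\bf{deg}_C$, the restriction $\theta_C(0)|_{K_{C,i}(0)}$ is a single Jordan block of size $e_{C,i}$, and hence $\ker\theta_C(0)^{k_i}$ is an intrinsic $k_i$-dimensional subspace.

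Next I would verify symmetry of $h(u,v):=tg_C(\theta_C^{-1}u,v)$: the $g_C$-antisymmetry of $\theta_C$ gives $g_C(\theta_C^{-1}u,v)=-g_C(u,\theta_C^{-1}v)$, and combining this with the skew-symmetry of $g_C$ immediately yields $h(u,v)=h(v,u)$. For well-definedness, I would invoke the standard fact that $g_C(K_{C,i},K_{C,j})=0$ unless $f_{C,j}(\lambda)=-f_{C,i}(-\lambda)$, a coprimality argument using that $g_C$ couples $\ker f(\theta_C)$ with $\ker f(-\theta_C)$. So only the pair $(i,j)$ in the odd case, or $(i,i)$ in the even case, contributes; and since $\theta_C^{-1}\beta_{i,1}\in t^{-1}K_{C,i}$ (because the constant term of $f_{C,i}$ has $t$-order exactly one), a basis-level check confirms that $h$ takes integer values on the enlarged lattice.

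The final step is the degeneracy analysis via the Gram matrix of $h$ in the enlarged basis. In the odd case, $g_C|_{K_{C,i}\oplus K_{C,j}}$ is already non-degenerate, and the $t^{-1}$-enlargements on the first $k_i$ vectors of each side precisely compensate the single factor of $t$ picked up each time $\theta_C^{-1}$ wraps the bottom of a cyclic basis via the Eisenstein relation $f_{C,i}(\theta_C)\beta_i=0$; the result is a Gram matrix of unit determinant over $\mathcal{O}$. In the even case, $g_C|_{K_{C,i}}$ is non-degenerate, and analogous bookkeeping leaves exactly one factor of $t$ in the determinant, giving $1$-degeneracy by Lemma \ref{l deg lemma}. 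The main obstacle is precisely this bookkeeping in the even case: because $e_{C,i}$ is even and $f_{C,i}$ self-dual, the cyclic basis behaves asymmetrically under $\theta_C^{-1}$, and one must carefully track how the Eisenstein coefficients of $f_{C,i}$, each carrying a factor of $t$, interact with the $t^{-1}$'s in the enlarged basis to produce exactly one residual $t$ rather than zero or two.
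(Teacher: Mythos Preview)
Your overall strategy—invert Proposition \ref{part 1.2} factor by factor and then run a Gram-matrix degeneracy count—matches the paper exactly. However, your explicit lattice is wrong, and the error is not cosmetic.

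You set $\beta_{i,l}=\theta_C^{l-1}\beta_{i,1}$ for a cyclic generator $\beta_{i,1}$ of $K_{C,i}$ and then enlarge by $t^{-1}$ on the \emph{first} $k_i$ vectors. But $\theta_C(0)$ acts on $K_{C,i}(0)$ by $\beta_{i,l}(0)\mapsto\beta_{i,l+1}(0)$ (with $\beta_{i,e}(0)\mapsto 0$), so $\ker\theta_C(0)^{k_i}$ is spanned by the \emph{last} $k_i$ classes $\beta_{i,e-k_i+1}(0),\ldots,\beta_{i,e}(0)$, not the first. Your identification of $K_{B,i}/K_{C,i}$ with $\ker\theta_C(0)^{k_i}$ therefore fails: what you wrote down is a complement of $\Im\theta_C(0)^{k_i}$. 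Equivalently, the genuine inverse of part \ref{part 1.2} is $K_{B,i}=\theta_C^{-k_i}K_{C,i}$, which in your basis is $\operatorname{span}_{\mathcal O}\{\beta_{i,1},\ldots,\beta_{i,e-k_i},\,t^{-1}\beta_{i,e-k_i+1},\ldots,t^{-1}\beta_{i,e}\}$. The confusion comes from transporting the description ``$t\alpha_1,\ldots,t\alpha_{k_i},\alpha_{k_i+1},\ldots$'' from part \ref{part 1.2} verbatim: there $\alpha_l$ is a cyclic basis of $K_{B,i}$, and that list is \emph{not} a cyclic basis of $K_{C,i}$.

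This matters for two reasons. First, your $K_{B,i}$ is not $\theta_C$-stable: $\theta_C(t^{-1}\beta_{i,k_i})=t^{-1}\beta_{i,k_i+1}\notin K_{B,i}$, so you cannot later define $\theta_{B,i}:K_{B,i}\to K_{B,i}$. Second, the pairing $h=tg_C(\theta_C^{-1}-,-)$ is not $\mathcal O$-valued on your lattice. For instance (even case, $e=2k_i+2$), the Eisenstein relation gives $\theta_C^{-1}\beta_{i,1}=-a_e^{-1}\beta_{i,e}+(\text{lower})$ with $\operatorname{ord}_t a_e=1$, and $g_C(\beta_{i,e},\beta_{i,1})$ has $t$-order $0$ by nondegeneracy of $g_C$ on $K_{C,i}$; hence $h(t^{-1}\beta_{i,1},t^{-1}\beta_{i,1})=t^{-1}g_C(\theta_C^{-1}\beta_{i,1},\beta_{i,1})$ has $t$-order $-2$. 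So the ``basis-level check'' you propose would actually fail. Once you move the $t^{-1}$'s to the last $k_i$ vectors, your symmetry argument and your degeneracy bookkeeping via Lemma \ref{l deg lemma} go through exactly as in the paper.
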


\begin{proof}
    For $i\geq 1$, define $K_{B,i}$ as the kernel of 
    $$
    t^{-1}K_{C,i}\longrightarrow (t^{-1}K_{C,i})(0)/\Ker \theta_{C,i}(0)^{k_i}
    $$ 
    where $k_i=\lfloor \dfrac{e_{B,i}-1}{2}\rfloor$. Then the proof will be similar to that of Proposition~\ref{part 1.2}.
\end{proof}

\begin{remark}
    To reverse the process in Proposition~\ref{part 1.2}, it is quite natural to use the pairing $tg_C(\theta_C^{-1}-,-)$. By Theorem~\ref{Thm.type C decomposition}, we see that $t\theta_C^{-1}$ is well-defined over $\sO$.
\end{remark}

We simply set $K_{B,0}=\mathcal{O}$ and define a pairing on $K_{B,0}$ as: 
\[
    g(u,v)=(a_{C,2n}/t^{\lfloor \#(\bf{d}_C)/2 \rfloor})\cdot uv.
\] 
Now, we have
\begin{itemize}
    \item Morphisms $\theta_{B, i}: K_{B,i}\rightarrow K_{B,i}$ with $\chi(\theta_{B,i})=f_{C,i}(\lambda)=f_{B,i}(\lambda)$ for $i\geq 1$, and $\theta_{B, 0} = 0$. Thus, $f_{B, 0}:= \chi(\theta_{B, 0}) = \lambda$.
    \item Pairings defined on $\oplus_{i=0}^kK_{B,i}$ induce the following exact sequence: 
    $$
    0\longrightarrow \oplus_{i= 0}^kK_{B,i}\longrightarrow \oplus_{i= 0}^kK_{B,i}^{\vee} \longrightarrow Q\longrightarrow 0.
    $$
\end{itemize}

It is clear that submodules of $\oplus_{i= 0}^kK_{B,i}^{\vee} $ containing $\oplus_{i= 0}^kK_{B,i}$ correspond to subspaces of $Q$. For any $W\subset Q$, denote the submodule by $E_W$. Thus, the fiber $l_{BC}^{-1}((E_C\subset \sK^{2n}))$ consists of such kind of submodules $E_W$ satisfying:
 \begin{itemize}
     \item  There exists an induced nondegenerate pairing $g_W$ on $E_W$.
     \item  The induced morphism $\theta_W$ on $E_W$ mod t, i.e., $\theta_W(0)$, has a partition matching $\bf{d}_B$.
 \end{itemize}

\begin{definition}\label{def:iota_isotropic}
     For a subspace $W$ in $Q$, if the inverse image $E_W$ in $\oplus_{i= 0}^kK_{B,i}^{\vee}$ lies in $l_{BC}^{-1}((E_C\subset \sK^{2n}))$, then $W$ is called $\iota$-isotropic.
\end{definition}

\begin{remark}
    Lemma 9.9 of \cite{Yun21} describes the set of all possible $\iota$-isotropic subspaces. However, we require a more detailed description of $\iota$-isotropic subspaces (see Proposition \ref{description of W} below) to aid subsequent discussions.
\end{remark}

We now determine all $\iota$-isotropic subspaces in $Q$. Firstly, for $i$ such that $e_{B, i}$ is odd, let $j$ be the index such that there is a nondegenerate pairing on $K_{B, i}\oplus K_{B,j}$ (see Proposition~\ref{part 2.1}). By Proposition~\ref{coker dimension}, $K_{B, i}\oplus K_{B,j}$ is already a $\theta_W$-direct summand of $E_W$. If $\ord_t a_{B,2n}$ is even, then $K_{B,0}$ is also a $\theta_W$-direct summand of $E_W$.

For simplicity, we can remove all the $\theta_W$-direct summands, focusing on elliptic classes. Thus, we assume that special partition $\bf{d}_B$ satisfies: 
\begin{itemize}
    \item[$\diamondsuit$] There is no type B1 in $\bf{d}_B$, and if $\ord_t a_{B,2n}$ is even, then there is no type B3 in $\bf{d}_B$ (see Lemma \ref{structure of special partition} for the definition of the types.) Note: The $\bf{d}_B$ here may not represent a partition in type B, but we retain the notation for consistency. 
\end{itemize}

We mention that there is a nondegenerate pairing on $Q$ as follows: the pairing $g_{B,i}$ on $K_{B,i}$ induces a pairing $\oplus_{i=0}^{k}K_{B,i}^{\vee}\otimes \oplus_{i=0}^{k}K_{B,i}^{\vee}\rightarrow t^{-1}\mathcal{O}$. This yields a nondegenerate pairing $Q\otimes Q\rightarrow t^{-1}\mathcal{O}/\mathcal{O}\cong \Bbbk$. 

\begin{proposition}\label{isotropic}
     There exists an induced pairing on $E_W$ if and only if $W \subset Q$ is isotropic under the pairing of $Q$. Furthermore, the induced pairing is nondegenerate if and only if $W$ is maximal isotropic.
\end{proposition}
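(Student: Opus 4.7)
The idea is that everything reduces to the standard fact that for lattices in a $\sK$-vector space with a nondegenerate $\sK$-valued pairing, the lattice-duality $L \mapsto L^*$ is an involution and behaves well with respect to sub-quotients.

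Set $M := \bigoplus_{i=0}^k K_{B,i}$ and work inside $M_\sK := M \otimes_\sO \sK$, which is equipped with the nondegenerate $\sK$-bilinear extension of $g$. For any $\sO$-lattice $L \subset M_\sK$, define
\[
L^{*} := \{\, v \in M_\sK \mid g(v,L) \subset \sO \,\}.
\]
Since each $g_{B,i}$ is at most $1$-degenerate, we have $M \subset M^{*} = \bigoplus K_{B,i}^{\vee}$, and the extended pairing takes values in $t^{-1}\sO$ on $M^{*}$, whose reduction $t^{-1}\sO/\sO \cong \Bbbk$ recovers the pairing on $Q = M^*/M$. A standard double-dual calculation for free $\sO$-modules shows that $(L^*)^* = L$, and in particular every $\sO$-lattice $L$ with $M \subset L \subset M^*$ gives $L^*$ with $M \subset L^* \subset M^*$.

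For the first assertion, I would observe that the mixed pairing $g(M, M^*) \subset \sO$ by definition of $M^*$, so for $u, v \in E_W$ the value $g(u,v) \in t^{-1}\sO$ depends modulo $\sO$ only on the classes $\bar u, \bar v \in W \subset Q$, and equals the induced $Q$-pairing on $(\bar u, \bar v)$. Hence $g|_{E_W}$ takes values in $\sO$ if and only if $W$ is isotropic in $Q$.

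For the second assertion, the key computation is the identification
\[
E_W^{*} = E_{W^{\perp}},
\]
where $W^{\perp} \subset Q$ is the orthogonal under the pairing on $Q$. Indeed $v \in E_W^{*}$ iff $g(v, E_W) \subset \sO$ iff (using $E_W = M + \langle \text{lifts of } W\rangle$ and the fact that $g(-,M) \subset \sO$ on $M^*$) the class $\bar v \in Q$ pairs trivially with $W$, which is exactly $\bar v \in W^{\perp}$. Assuming $W$ is isotropic, the induced pairing on $E_W$ is nondegenerate iff the map $E_W \to E_W^{\vee}$ is an isomorphism, iff $E_W = E_W^{*}$; under the correspondence $L \leftrightarrow L/M$ this translates to $W = W^{\perp}$, i.e., $W$ is maximal isotropic.

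The only slightly subtle step is the identity $E_W^* = E_{W^{\perp}}$, which I expect to be the main technical point; once this is in place the rest is formal. The involutivity $(L^*)^* = L$ then guarantees that the correspondence $W \mapsto E_W$ matches orthogonal complementation in $Q$ with lattice-duality inside $M^*$, completing the proof.
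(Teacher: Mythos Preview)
Your proposal is correct and follows the same approach as the paper's proof. The paper argues identically that the pairing on $E_W$ takes values in $t^{-1}\sO$ and lands in $\sO$ precisely when $W$ is isotropic in $Q$, and then simply asserts ``it is easy to see $g_W$ is nondegenerate if and only if $W$ is maximal isotropic''; your explicit computation $E_W^{*} = E_{W^{\perp}}$ is exactly what fills in that step.
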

 
\begin{proof}
  $E_W$ is a submodule of $\oplus_{i=0}^{k}K_{B,i}^{\vee}$, so we have an induced pairing $g_W: E_W\otimes E_W\rightarrow t^{-1}\mathcal{O}$. Since $E_W$ is the inverse image of $W\subset Q$, then $g_W$ factor though $\mathcal{O}\subset t^{-1}\mathcal{O}$ if and only if the restriction of pairing on $Q$ to $W$ is zero, i.e., $W$ is isotropic. And it is easy to see $g_W$ is nondegenerate if and only if $W$ is maximal isotropic.
\end{proof}

Now we deal with another condition on $W$, i.e., the partition of $\theta_W(0)$ equals $\bf{d}_{B}$. By our assumption on $\bf{d}_B$, the space $Q$ decomposes as 
\[
    Q = \bigoplus_{i=0}^k Q_i = \bigoplus_{i=0}^k \Coker(K_{B, i}\rightarrow K_{B, i}^{\vee}).
\]
For $i\geq 1$, $\Phi_i$ is defined as multiplication by $(f_{B, i}(0)/t)|_{t=0}$ on $Q_i$. For $i=0$, we define $\Phi_0$ as $(a_{B, 2n}/t^{\operatorname{ord}_t(a_{B,2n})})|_{t=0}$. We use $\Phi$ to denote the direct sum of $\Phi_i$, which is a linear morphism on $Q$.

To determine the structure of $\iota$-isotropic subspaces in $Q$, we analyze the decomposition and filtration of $Q$, as well as the associated constraints on subspaces $W \subset Q$.

\begin{definition} \label{F geq leq i}
     Let $\bf{d}_B=[\bf{T}_{1}, \bf{T}_{2}, \ldots]$ satisfies $\diamondsuit$. Then 
     $$
     \bf{deg}_B=[e_{B, 1}, e_{B,2}. \ldots] = [{}^S\bf{T}_{1}, {}^S\bf{T}_{2}, \ldots].
     $$ 
     For each $i\in \mathbb{N}$, we define the ascending and descending filtrations on $Q$ and $W$:
     \begin{itemize}
         \item $F_{\geq i}=\oplus_{e_{B,j}\geq i}Q_j$;
         \item $F^{\leq i}=\oplus_{e_{B,j}\leq i}Q_j$; 
         \item $F_{=i}=F_{\geq i}\cap F^{\leq i}$; 
         \item We will use $\Phi_{=i}$ to denote the restriction of $\Phi$ on $F_{=i}$;
         \item $W_{\geq i}=W\cap F_{\geq i}$, $W^{\leq i}= W\cap F^{\leq i}$ and $W_{=i}=W\cap F_{=i}$;
         \item We use $\overline{W_{\geq i}}$, $\overline{W^{\leq i}}$ and $\overline{W_{= i}}$ to denote the projections of $W$ to $F_{\geq i}$, $F^{\leq i}$ and $F_{=i}$ respectively.
     \end{itemize}
\end{definition}

 We fix a basis for $K_{B,i}$ such that the matrix of $\theta_{B,i}$ is given by $R(f_{B,i}(\lambda))$ and hence the matrix of $\theta_{B,i}^{\vee}$ is given by $R(f_{B,i}(\lambda))^{\top}$. The matrix form of $\oplus_{i=0}^k\theta_{B,i}^{\vee}$ is given by $\Theta^{\vee}=\text{diag}\{R(f_{B,i}(\lambda))^{\top}\}_{0\leq i \leq k}$. Hence the matrix of $\theta_W$ is given by $\Theta_W=P_W^{-1}\Theta^{\vee} P_W$ for some $P_W\in \mathrm{Mat}(\mathcal{O})$ determined by $W$.

\begin{lemma}
   Using the basis we fixed before, we have a decomposition 
   $$
   (\oplus_{i=0}^{k}K_{B, i}^{\vee})(0)\cong \Im \oplus_{i=0}^k\theta^\vee_{B, i}(0) \oplus Q.
   $$ 
   And then $\Im (P_W(0))=\Im \oplus_{i=0}^k\theta^\vee_{B, i}(0)\oplus W$.
   
   On the other hand, we define $\mathcal{T}_i=\text{diag}\{1, \cdots , 1, t\}$ with size $e_{B,i}$ for each $0\leq i \leq k$ and let $\mathcal{T}=\text{diag}\{\mathcal{T}_0, \cdots , \mathcal{T}_{k}\}$. Then $\tilde{P_W^{-1}}=P_W^{-1}\mathcal{T}\in \mathrm{Mat}(\mathcal{O})$ and $\Ker (\tilde{P_W^{-1}}(0))=W$. Moreover, we put $\tilde{\Theta^{\vee}}=\mathcal{T}^{-1}\Theta^\vee$, and we have $\tilde{\Theta^{\vee}}(0)$ is invertible over $\mathcal{O}$.
\end{lemma}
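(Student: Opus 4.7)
The plan is to reduce every claim to a block-by-block computation, indexed by $i = 0, \ldots, k$, using two crucial features of the fixed basis: $f_{B,i}$ is Eisenstein (Proposition~\ref{type B generic}), so modulo $t$ the matrix $R(f_{B,i})^\top$ of $\theta^\vee_{B,i}$ reduces to a single nilpotent Jordan block; and, under assumption $\diamondsuit$, the pairing on each $K_{B,i}$ is exactly $1$-degenerate by Proposition~\ref{coker dimension}, so each $Q_i$ is one-dimensional. I will work inside a single block and take the direct sum at the end.

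For the first decomposition, the Eisenstein property forces $\theta^\vee_{B,i}(0)$ to act on $K^\vee_{B,i}(0)$ by $\bar e^{(i)}_j \mapsto \bar e^{(i)}_{j-1}$ for $j\geq 2$ and $\bar e^{(i)}_1\mapsto 0$, so $\Im \theta^\vee_{B,i}(0) = \langle \bar e^{(i)}_1,\ldots,\bar e^{(i)}_{e_{B,i}-1}\rangle$ and the class $\bar e^{(i)}_{e_{B,i}}$ furnishes a canonical lift of the generator of $Q_i$. Summing over $i$ yields the stated decomposition. For the second identity, I first argue that the image of the pairing map $A_i\colon K_{B,i}\to K_{B,i}^\vee$ reduces at $t=0$ to $\Im \theta^\vee_{B,i}(0)$: from $g_B(\theta_B\cdot,\cdot)+g_B(\cdot,\theta_B\cdot)=0$ one has $A_i\theta_{B,i}=-\theta^\vee_{B,i}A_i$, making $\Im A_i$ stable under $\theta^\vee_{B,i}$; at $t=0$ it becomes a codimension-one $\theta^\vee_{B,i}(0)$-stable subspace inside a cyclic nilpotent module, which forces equality with $\Im\theta^\vee_{B,i}(0)$. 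Since $E_W=(\bigoplus_i K_{B,i})+\text{(lifts of }W\text{)}$, reducing mod $t$ gives $\Im P_W(0)=\Im\theta^\vee(0)+\widetilde W$, and the sum is direct because $\widetilde W$ maps isomorphically onto $W\subset Q$ whereas $\Im\theta^\vee(0)$ maps to $0$ in $Q$.

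For the integrality and kernel of $\tilde{P_W^{-1}}=P_W^{-1}\mathcal{T}$, set $M:=\mathcal{T}(\bigoplus_i K^\vee_{B,i})\subset \bigoplus_i K^\vee_{B,i}$; concretely $M$ is generated by $\{e^{(i)}_j\}_{j<e_{B,i}}\cup \{te^{(i)}_{e_{B,i}}\}_i$. Each $e^{(i)}_j$ with $j<e_{B,i}$ lies in $\Im A_i\subset E_W$ by the computation above, and $te^{(i)}_{e_{B,i}}\in t(\bigoplus_i K^\vee_{B,i})\subset E_W$; so $M\subset E_W$. Tautologically $\tilde{P_W^{-1}}$ is then the matrix of the inclusion $M\hookrightarrow E_W$ in the chosen bases, hence integral. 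For the kernel at $t=0$, $\mathcal{T}(0)$ induces an isomorphism $(\bigoplus_i K^\vee_{B,i})(0)\xrightarrow{\sim} M/tM$, and $\Ker \tilde{P_W^{-1}}(0)$ corresponds under this iso to $(M\cap tE_W)/tM$. A direct calculation gives $M\cap tE_W=tM+\{\sum_i b_i\,(te^{(i)}_{e_{B,i}}) : \sum_i \bar b_i\,[e^{(i)}_{e_{B,i}}]\in W\}$, which matches $W$ viewed inside $(\bigoplus_i K^\vee_{B,i})(0)$ via the lifts $\bar e^{(i)}_{e_{B,i}}$ from Stage~1. Finally, invertibility of $\tilde\Theta^\vee(0)$ is block-diagonal: $\mathcal{T}_i^{-1}R(f_{B,i})^\top$ at $t=0$ sends $e^{(i)}_j\mapsto e^{(i)}_{j-1}$ for $j\geq 2$ and $e^{(i)}_1\mapsto -(c^{(i)}_0/t)|_{t=0}\cdot e^{(i)}_{e_{B,i}}$ with $c^{(i)}_0/t$ a unit by Eisenstein, i.e.\ a cyclic permutation, hence invertible.

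The main obstacle is not a single computation but the careful bookkeeping linking three distinct pieces of data: the fixed basis of $\bigoplus_i K_{B,i}^\vee$, the basis of $E_W$ implicit in $P_W$, and the identification of $Q$ as a specific subspace of $(\bigoplus_i K^\vee_{B,i})(0)$ via the natural lifts $\bar e^{(i)}_{e_{B,i}}$. Once that dictionary is pinned down, $\mathcal{T}$ is precisely the device interpolating between $\bigoplus_i K^\vee_{B,i}$ and $E_W$, and the four assertions are pleasant consequences of the Jordan-block structure mod $t$.
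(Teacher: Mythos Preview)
Your argument is correct and is precisely the blockwise verification the paper has in mind when it says the claims ``can be deduced by the choice of our basis easily.'' One minor correction: $\mathcal{T}_i^{-1}R(f_{B,i})^\top$ at $t=0$ is not literally a cyclic permutation---the last row carries all the entries $-\overline{a_j/t}$, not only the constant term---but its determinant is still $\pm\overline{a_{e_{B,i}}/t}\neq 0$ by the Eisenstein condition, so invertibility holds; and your inference ``$e^{(i)}_j\in\Im A_i$ for $j<e_{B,i}$'' from the mod-$t$ computation tacitly uses that $tQ_i=0$ (so the quotient $K^\vee_{B,i}\to Q_i$ factors through $K^\vee_{B,i}(0)$), which is immediate from $1$-degeneracy but worth stating.
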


\begin{proof}
    All the statements in this Lemma can be easily deduced by our choice of basis.
\end{proof}

We recall that $\bf{d}_{B}= \bf{T}_B=[\bf{T}_1, \ldots , \bf{T}_s]$ as in Lemma \ref{structure of special partition}. Here $\bf{T}_i$ is of either type B1, B2, or B3. We decompose $Q$ further with respect to the $\bf{T}_j$-types in $\bf{d}_B$ as: 
\begin{align*}
    Q=\oplus_{j=1}^sQ^{\bf{T}}_j, \quad Q^{\bf{T}}_j= 
    \begin{cases}
        \oplus_{e_{B,i}\in ^S\bf{T}_j}Q_i, \quad \text{for} \quad 1 \leq j < s, \\
        \oplus_{e_{B,i}\in ^S\bf{T}_s}Q_i \oplus Q_0 , \quad \text{for} \quad j=s.
    \end{cases}
\end{align*}

Our definitions here are slightly subtle, so we shall give an example to clarify all the notions above.

\begin{example} \label{exmaple_for_explanation}
Consider the partition $\bf{d}_{B}=[7,6,6,4,4,2,2,1,1]$, its Springer dual is given by $^S\bf{d}_{B}=[6,6,6,4,4,2,2,2]$. Then $\bf{deg}_B=[e_{B, 1}, \ldots ,e_{B,8}, e_{B,0}]= [6,6,6,4,4,2,2,2,1]$. By Proposition \ref{part 2.1} and the pair defined on $K_{B, 0}$, we have a $1$ dimensional space $Q_i$ for $1\leq i \leq 8$. So $Q=\oplus_{i=0}^8Q_i$ Then $F_{\geq 6}=F_{\geq 5}=Q_1\oplus Q_2 \oplus Q_3$, $F_{\geq 4}=\oplus_{i=1}^5Q_i$ and etc. Hence we have an ascending filtration as $$F_{\geq 6}\subset F_{\geq 4}\subset F_{\geq 2}\subset F_{\geq 1}=Q.$$ On the other hand, we have $F^{\leq 1}=Q_0$, $F^{\leq 2}=Q_6\oplus Q_{7}\oplus Q_8\oplus Q_0$, $F^{\leq 3}=F^{\leq 4}=\oplus_{i=4}^8Q_i\oplus Q_0$ and etc. Hence we have a descending filtration as $$Q=F^{\leq 6}\supset F^{\leq 4}\supset F^{\leq 2} \supset F^{\leq 1}.$$ Moreover, $F_{=6}=Q_1\oplus Q_2 \oplus Q_3$, $F_{=4}=Q_4\oplus Q_5 $ and etc. . $Q^{\bf{T}}_1=\oplus_{i=1}^8Q_i$ and $Q^{\bf{T}}_2= Q_0$.
\end{example}

Now we are going to determine the structure of $\iota$-isotropic subspaces.

\begin{lemma}\label{equation of W}
    Each $\iota$-isotropic subspace $W$ has a decomposition 
    \begin{align*}
    W=\bigoplus_{j=1}^s(W\cap Q^{\bf{T}}_j).
    \end{align*}
\end{lemma}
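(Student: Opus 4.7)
My plan is to prove the decomposition by combining two inputs: the isotropy condition from Proposition \ref{isotropic} together with the constraint that $\theta_W(0)$ has partition exactly $\bf{d}_B = [\bf{T}_1,\ldots,\bf{T}_s]$. The first input pins down how $W$ sits in $Q$ metrically, and the second pins down how $W$ is seen by the scalars $\Phi_i$ and by the residual endomorphism. The blocks $Q^{\bf{T}}_j$ are by construction collected from the parts of $\bf{d}_B$ that constitute a single type (B1, B2, or B3), and these types are distinguished by their parity pattern, so the proof hinges on showing that no element of $W$ can genuinely mix components across different types.

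The first step is to check that the quotient pairing on $Q$ is block-diagonal with respect to $Q = \bigoplus_j Q^{\bf{T}}_j$. Since the pairing on $\bigoplus_{i=0}^k K_{B,i}^{\vee}$ is itself the orthogonal direct sum of the block pairings on each $K_{B,i}^{\vee}$ from Proposition \ref{part 2.1} and the pairing on $K_{B,0}$, the induced pairing on $Q$ restricts to zero between any two distinct $Q^{\bf{T}}_j$ and $Q^{\bf{T}}_{j'}$. Thus each $Q^{\bf{T}}_j$ is a non-degenerate piece, and the isotropy of $W$ decouples across the $j$'s.

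The second, main, step is to exploit the partition requirement. Write $w = \sum_j w_j \in W$ with $w_j \in Q^{\bf{T}}_j$; I want each $w_j \in W$. Lift to $\bigoplus_i K_{B,i}^{\vee}$ and run the endomorphism $\theta_W(0)$ on $E_W(0)$. On the complement of $W$ the action is conjugate to $\bigoplus_i \theta_{B,i}^{\vee}(0)$, which is already block-diagonal along the $Q^{\bf{T}}_j$'s; the modifications produced by $W$ glue blocks together exactly along the directions that $W$ actually mixes. Because the constituents $f_{B,i}$ in distinct $Q^{\bf{T}}_j$ correspond to distinct type-patterns in $\bf{d}_B$, any genuine mixing would produce a Jordan block at $t=0$ whose size straddles the boundary between $\bf{T}_j$ and $\bf{T}_{j'}$, contradicting the assumption that the partition of $\theta_W(0)$ is literally $[\bf{T}_1,\ldots,\bf{T}_s]$. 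I would formalize this through the filtrations of Definition \ref{F geq leq i}: the expected dimensions of $W_{\geq i}$, $W^{\leq i}$, $W_{=i}$ are forced by the partition, and the equality of computed versus expected dimensions is possible only when $W$ splits along the $Q^{\bf{T}}_j$.

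The third step is to close the induction. Once the two-block case (handling a pair $(\bf{T}_j,\bf{T}_{j'})$) is settled, either by reducing to a $B_2$/$B_3$ local model or by the orthogonality plus Jordan-type argument just described, the full decomposition follows by successive splittings along the block decomposition of $Q$. The main obstacle I anticipate is in Step~2: carefully ruling out ``crossed'' Jordan blocks when two adjacent types share the same even ``doubled part'' $\beta^2$, i.e., when a B2 with tail $\beta_k$ is followed by a B2 or B3 beginning with $\beta_1$ of the same value. Here I expect to need a finer analysis using the scalars $\Phi_i$ attached to each $Q_i$: because the $f_{B,i}(0)/t|_{t=0}$ are generic, distinct types produce distinct eigenvalue patterns for $\theta_W(0)$ acting on the $Q$-part, and this genericity is precisely what forbids the crossed block and completes the decomposition.
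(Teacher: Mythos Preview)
Your proposal identifies the right ingredients---the partition constraint on $\theta_W(0)$ and the filtrations of Definition~\ref{F geq leq i}---but stops short of the actual computation that makes the argument work. The paper's proof consists almost entirely of a single long chain of equalities expressing $\operatorname{rk}(\Theta_W(0))^l$ explicitly in terms of $\dim(W\cap F^{\leq l-1})$ and $\dim\bigl(W\cap(F_{\geq l+1}\oplus\Phi_{=l}(\overline{W_{\geq l}}\cap\overline{W_{=l}}))\bigr)$. Once this formula is in hand, one chooses $l$ in the gap between two consecutive blocks (where no $e_{B,i}$ equals $l$ and the rank difference $\operatorname{rk}\Theta_W^l-\operatorname{rk}(\Theta^{\vee})^l$ vanishes), and the formula collapses to $\dim W=\dim(W\cap F^{\leq l-1})+\dim(W\cap F_{\geq l+1})$, forcing the splitting directly. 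Your intuition that ``mixing across blocks would create a straddling Jordan block'' is the right picture, but without the rank formula it is not a proof: the effect of $W$ on the Jordan type of $\theta_W(0)$ is not transparently ``one block gets longer,'' and only the explicit formula makes the effect visible and computable.

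Two further points. First, your Step~3 invokes the genericity of the scalars $\Phi_i$, but this lemma does not use genericity at all---the splitting follows purely from the rank identity and the partition requirement. Genericity enters only later, in Proposition~\ref{description of W}, to pin down $W$ \emph{within} each block. Relying on it here indicates a gap in seeing why the direct combinatorial argument suffices. Second, your worry about adjacent blocks sharing a doubled even part is based on a misreading: in a B2 block $[\alpha_1,\beta_1^2,\ldots,\beta_k^2,\alpha_2]$ the tail is the odd $\alpha_2$, not $\beta_k$, and after Springer dual the degrees $e_{B,i}$ in distinct blocks are strictly separated (the smallest $e$ in block $j$ is $\alpha_{2,j}+1$, while the largest in block $j{+}1$ is $\alpha_{1,j+1}-1\le\alpha_{2,j}-1$). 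So the ``crossed block'' obstruction you anticipate does not arise, and the argument is cleaner than you expect---but it still hinges on actually deriving the rank formula, which your outline does not supply.
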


\begin{proof}
        We calculate the ranks of $\Theta_W^i(0)$ for each $i$ to locate its partition. In order to make notations simpler, in this proof, for any matrix $A\in \mathrm{Mat}(\mathcal{O})$, we denote the matrix $A(0)\in \mathrm{Mat}(\Bbbk)$ also by $A$. We denote $m=\dim W$.
    
    Then for any $l\geq 1$, 
\begin{align*}
        \text{rk} (\Theta_W)^l =& \rk \tilde{P_W^{-1}} \tilde{\Theta^{\vee}} (\Theta^{\vee})^{l-1}P_W \\
                             =& \rk \tilde{\Theta^{\vee}} (\Theta^{\vee})^{l-1}P_W-\dim (\Im \tilde{\Theta^{\vee}} (\Theta^{\vee})^{l-1}P_W\cap \Ker \tilde{P_W^{-1}}) \\
                             =& \rk (\Theta^{\vee})^{l-1}P_W-\dim (\Im \tilde{\Phi} (\Theta^{\vee})^{l-1}P_W\cap W) \\
                             =& \rk P_W -\dim (\Im P_W\cap \Ker (\Theta^{\vee})^{l-1}) \\
                              &-\dim (\Im \tilde{\Theta^{\vee}} (\Theta^{\vee})^{l-1}P_W\cap W) \\
                             =& N-m-\dim \big((\Im (\Theta^{\vee}) \oplus W)\cap \Ker (\Theta^{\vee})^{l-1}\big) \\
                              &- \dim (\Im \tilde{\Theta^{\vee}} (\Theta^{\vee})^{l-1}P_W\cap W) \\
                             =& N-m-\dim \big(\Im (\Theta^{\vee})\cap \Ker (\Theta^{\vee})^{l-1}\big) -\dim(W\cap \Ker (\Theta^{\vee})^{l-1})\\
                              &-\dim (\Im \tilde{\Theta^{\vee}} (\Theta^{\vee})^{l-1}P_W\cap W) \\
                             =& N-m+\rk (\Theta^{\vee})^l-\rk (\Theta^{\vee}) -\dim(W\cap \Ker (\Theta^{\vee})^{l-1}) \\
                              &-\dim (\Im \tilde{\Theta^{\vee}} (\Theta^{\vee})^{l-1}P_W\cap W) \\
                             =& \rk (\Theta^{\vee})^l+m-\dim(W\cap \Ker (\Theta^{\vee})^{l-1}) \\
                              &-\dim (\Im \tilde{\Theta^{\vee}} (\Theta^{\vee})^{l-1}P_W\cap W) \\
                             =& \rk (\Theta^{\vee})^l+m-\dim(W\cap \Ker (\Theta^{\vee})^{l-1}) \\
                              &-\dim \big( \tilde{\Theta^{\vee}} (\Theta^{\vee})^{l-1}(\Im (\Theta^{\vee}) \oplus W)\cap W\big) \\
                             =& \rk (\Theta^{\vee})^l+m-\dim(W\cap \Ker (\Theta^{\vee})^{l-1}) \\
                              &-\dim \big(\tilde{\Theta^{\vee}} (\Im (\Theta^{\vee})^l \oplus (\Theta^{\vee})^{l-1}W)\cap W\big) \\
                             =& \rk (\Theta^{\vee})^l+m-\dim(W\cap \Ker (\Theta^{\vee})^{l-1}) \\
                              &-\dim \Big(\big(\tilde{\Theta^{\vee}} (\Im (\Theta^{\vee})^l) \oplus \tilde{\Theta^{\vee}} ((\Theta^{\vee})^{l-1}W)\big)\cap W\Big) \\
                             =& \rk (\Theta^{\vee})^l+\dim W-\dim(W\cap F^{\leq l-1}) \\
                              &-\dim \Big(W\cap \big(F_{\geq l+1}\oplus \Phi_{=l}(\overline{W_{\geq l}}\cap \overline{W_{=l}})\big)\Big).
\end{align*}
    Notice that the partition of $\Theta^{\vee}$ is $^S\bf{d}_B$ and we want the partition of $\Theta_W$ to be $\bf{d}_B$. So for those $l$ equal to the last part of some $\bf{T}_j$ minus $1$, we have 
    $$
    \dim W=\dim (W\cap F^{\leq l-1})+\dim (W\cap F_{\geq l+1}).
    $$ 
    This implies that $W=(W\cap F^{\leq l-1})\oplus (W\cap F_{\geq l+1})$. When we consider all the $l$'s, we will have our result.
\end{proof}

This lemma has a valuable consequence that will simplify our proof in the description of $W$ and give us a better understanding of the structure of $\theta_B$.

Since $\bf{deg}_B = [ {}^S \bf{T}_B, 1 ]$, here ${}^S\bf{T}_B = [{}^S\bf{T}_1, \ldots, {}^S\bf{T}_s]$. Suppose ${}^S\bf{T}_i = [\deg f_{B, i1}(\lambda), \deg f_{B, i2}(\lambda), \ldots ]$, let
\begin{align} \label{T_i}
    T_i(\lambda) = 
    \begin{cases}
        \prod_{j \geq 1} f_{B, ij}(\lambda), \quad \text{for} \quad  1 \leq I <s, \\
        \lambda\prod_{j \geq 1} f_{B, sj}(\lambda), \quad \text{for} \quad  i=s.
    \end{cases}
\end{align}
Then we have a decomposition $\chi (\theta_B)=T_1(\lambda) \cdots T_s(\lambda)$.

\begin{corollary}\label{rough decomposition of E}
    Using the above notations, the module $E_B$ naturally decomposes as $E_B \cong \bigoplus_{i=1}^s E_{B, i}$, where $E_{B, i} = \Ker T_i(\theta_B)$. Moreover, the restriction of $g$ on $E_{B, i}$ is nondegenerate, and we have exact sequences: 
    $$
    0\longrightarrow \bigoplus_{f_{B, j}\mid T_i}\Ker f_{B, j}(\theta_B) \longrightarrow E_{B, i} \longrightarrow Q^{\bf{T}}_i\longrightarrow 0
    $$ 
    with $\dim_{\Bbbk}Q^{\bf{T}}_i=\lceil \#\{\text{degree even factors of $T_i$}\}/2 \rceil$.
\end{corollary}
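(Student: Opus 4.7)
The strategy is to transfer the block decomposition of $W$ from Lemma~\ref{equation of W} to a block decomposition of $E_B=E_W$. First I would verify that the pairing on $Q$ respects the decomposition $Q=\oplus_j Q^{\bf{T}}_j$. By Proposition~\ref{type B generic} and Lemma~\ref{structure of special partition}, the factors dividing each $T_j$ (together with $\lambda$ when $j=s$, if applicable under $\diamondsuit$) are closed under the involution $f(\lambda)\mapsto(-1)^{\deg f}f(-\lambda)$: the even-degree $f_{B,i}$ are self-dual, while each odd-degree factor is paired with its $\sigma$-companion inside the same $\bf{T}_j$-block. The identity
$$
g_B\bigl(f_{B,i}(\theta_B)u,v\bigr)=g_B\bigl(u,f_{B,i}(-\theta_B)v\bigr)
$$
then forces $g_B(K_{B,i},K_{B,j})=0$ whenever $f_{B,i},f_{B,j}$ lie in distinct $T$-blocks, since $f_{B,i}(-\theta_B)|_{K_{B,j}}$ is invertible in that case. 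Passing to the induced nondegenerate pairing on $Q$, this yields an orthogonal decomposition $Q=\oplus_j Q^{\bf{T}}_j$.

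Next, I invoke Lemma~\ref{equation of W} to write $W=\oplus_j W_j$ with $W_j:=W\cap Q^{\bf{T}}_j$. Since the ambient decomposition
$$
\bigoplus_{i=0}^{k}K_{B,i}^{\vee}=\bigoplus_{j=1}^{s}\Bigl(\bigoplus_{f_{B,i}\mid T_j}K_{B,i}^{\vee}\Bigr)
$$
(with $K_{B,0}^{\vee}$ placed in the $j=s$ summand) is compatible with the decomposition of $Q$, the preimage $E_B=E_W$ splits accordingly as $E_B=\oplus_j E_{B,j}$, where $E_{B,j}$ is the preimage of $W_j$ in the $j$-th ambient summand. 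Every element of $E_{B,j}$ is annihilated by $T_j(\theta_B)$, and conversely any element of $\Ker T_j(\theta_B)\subset E_B$ projects to zero onto the other blocks (the $T_j$ are pairwise coprime over $\sK$), so $E_{B,j}=\Ker T_j(\theta_B)$, giving the asserted natural decomposition.

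The exact sequence is then the tautological preimage sequence
$$
0\longrightarrow \bigoplus_{f_{B,i}\mid T_j}K_{B,i}\longrightarrow E_{B,j}\longrightarrow W_j\longrightarrow 0,
$$
and we rename the cokernel $W_j$ as $Q^{\bf{T}}_j$ in the statement. Nondegeneracy of $g_B$ on $E_{B,j}$ follows from Proposition~\ref{isotropic}: because $g_B$ is nondegenerate on $E_B$, $W$ is maximal isotropic in $Q$, and the orthogonality of the $Q^{\bf{T}}_j$'s forces each $W_j$ to be maximal isotropic in $Q^{\bf{T}}_j$. For the dimension count, I would work type by type using Proposition~\ref{coker dimension}: a Type B2 block contributes $2k+2$ even-degree factors, giving $\dim Q^{\bf{T}}_j=2k+2$ and hence $\dim W_j=k+1=\lceil(2k+2)/2\rceil$; a Type B3 block (which, under $\diamondsuit$, only occurs at $j=s$ when $\ord_t a_{B,2n}$ is odd) contributes $2k+1$ even-degree factors plus one from $K_{B,0}^{\vee}/K_{B,0}$, giving $\dim Q^{\bf{T}}_s=2k+2$ and $\dim W_s=k+1=\lceil(2k+1)/2\rceil$, matching the stated formula.

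The main obstacle is the careful bookkeeping of how the orthogonality on $Q$ distributes across the three possible block-types, and in particular handling the asymmetric contribution of $K_{B,0}$ depending on the parity of $\ord_t a_{B,2n}$, which is precisely what dictates whether Type B3 appears under the $\diamondsuit$-assumption. Once the orthogonality of the $Q^{\bf{T}}_j$ is in hand, the rest follows formally from Lemma~\ref{equation of W} and Proposition~\ref{isotropic}.
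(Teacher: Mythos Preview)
Your argument is correct and is precisely a detailed unpacking of the paper's one-line proof, which just cites Proposition~\ref{coker dimension} and Lemma~\ref{equation of W}. You have also correctly spotted that the symbol $Q^{\bf{T}}_i$ in the exact sequence of the corollary is an abuse of notation: it is the cokernel $W_i=W\cap Q^{\bf{T}}_i$ (of half the dimension of the ambient $Q^{\bf{T}}_i$), and your block-wise application of Proposition~\ref{isotropic} to deduce the nondegeneracy from $W_i$ being maximal isotropic in $Q^{\bf{T}}_i$ is exactly the intended mechanism.
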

\begin{proof}
    It follows from Proposition \ref{coker dimension} and Lemma \ref{equation of W}.
\end{proof}

The following Lemma will be helpful in the later proof.

\begin{lemma}\label{solution of vector spaces}
   Let $V$ be a finite-dimensional vector space and $\Phi$ be an invertible linear operator on $V$. If we have subspaces $0\neq U\subset Z \subseteq V$ such that $\dim Z-\dim U=1$, $\Phi(U)\subseteq Z$ and there is no $\Phi$ invariant subspace in $U$ other that $0$. Then we have a vector $z\in Z$, unique up to scalar, such that $U= \langle \Phi^{-1} z, \cdots , \Phi^{-\dim U}z \rangle $ and $Z= \langle z,  \Phi^{-1} z, \cdots , \Phi^{-\dim U}z \rangle$.
\end{lemma}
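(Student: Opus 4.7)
The plan is to construct $z$ by first isolating a distinguished generator $u$ of the subspace
\[
W := U \cap \Phi^{-1}(U) \cap \Phi^{-2}(U) \cap \cdots \cap \Phi^{-(d-1)}(U) \subseteq U, \qquad d := \dim U,
\]
and then setting $z := \Phi^d(u)$. The two ingredients are a lower bound $\dim W \geq 1$ coming from the hypothesis $\Phi(U) \subseteq Z$, and a matching upper bound $\dim W \leq 1$ coming from the assumption that $U$ contains no nonzero $\Phi$-invariant subspace.

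For the lower bound, write $W_j := U \cap \Phi^{-1}(U) \cap \cdots \cap \Phi^{-j}(U)$, so $W_0 = U$ and $W = W_{d-1}$. For $v \in W_{j-1}$ one has $\Phi^{j-1} v \in U$ and hence $\Phi^j v \in \Phi(U) \subseteq Z$, so $v \mapsto \Phi^j v \bmod U$ gives a well-defined map $W_{j-1} \to Z/U$ into a one-dimensional quotient, with kernel exactly $W_j$. Thus $\dim W_j \geq \dim W_{j-1} - 1$, and iterating yields $\dim W \geq 1$.

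For the upper bound, fix a nonzero $u \in W$. The vectors $u, \Phi u, \ldots, \Phi^{d-1}u$ all lie in $U$, and if they were linearly dependent then taking the smallest $r$ with $\Phi^r u \in \langle u, \Phi u, \ldots, \Phi^{r-1}u \rangle$ would produce a nonzero $\Phi$-invariant subspace inside $U$, contradicting the hypothesis. So they form a basis of $U$, and in particular $\Phi^d u \notin U$ (else $U$ itself would be $\Phi$-invariant). Given any other $u' \in W$, expand $u' = \sum_{k=0}^{d-1} c_k \Phi^k u$ and apply $\Phi^j$ for $j=1,\ldots,d-1$; each membership $\Phi^j u' \in U$ forces the coefficient $c_{d-j}$ to vanish, because the only term not manifestly in $U$ is $c_{d-j}\Phi^d u$ and $\Phi^d u \notin U$. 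This successively kills $c_{d-1}, c_{d-2}, \ldots, c_1$, so $u' \in \langle u \rangle$ and $\dim W = 1$.

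With $u$ generating $W$, I would finally set $z := \Phi^d(u)$. Then $\Phi^{-k}z = \Phi^{d-k}u$ for $k=1,\ldots,d$ recovers the basis of $U$ constructed above, yielding $U = \langle \Phi^{-1}z,\ldots,\Phi^{-d}z \rangle$; and $z = \Phi(\Phi^{d-1}u) \in \Phi(U) \subseteq Z$ together with $z \notin U$ implies that $z, \Phi^{-1}z, \ldots, \Phi^{-d}z$ are $d+1$ independent vectors in $Z$, hence span it. Uniqueness of $z$ up to scalar reduces to the uniqueness of $u$ already proved, since any valid $z$ satisfies $\Phi^{-d}z \in W$. The crux of the argument is the equality $\dim W = 1$: the lower bound is a direct dimension count using $\Phi(U) \subseteq Z$ one step at a time, whereas the upper bound is exactly where the no-$\Phi$-invariant-subspace hypothesis enters essentially.
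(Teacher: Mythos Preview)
Your proof is correct and follows essentially the same approach as the paper's: both arguments analyze the descending chain of intersections of iterated $\Phi$-translates of $U$, show each step drops dimension by exactly one until reaching a one-dimensional space, and read off the cyclic generator from there. The only cosmetic differences are that you intersect with $\Phi^{-j}(U)$ rather than $\Phi^{j}(U)$ (so your distinguished vector $u$ ends up being $\Phi^{-d}z$ instead of $\Phi^{-1}z$), and you prove $\dim W\le 1$ by a direct coefficient-killing argument rather than by showing each step of the chain drops by at least one.
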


\begin{proof}
    We may assume $U\neq 0$; otherwise, it is trivial. We have the following chain of vector spaces: 
    $$
    U\supseteq U\cap \Phi U\supseteq \cdots \supseteq \cap_{i=0}^n\Phi^{i}U\supseteq \cdots .
    $$ 
    We claim that $\dim \cap_{i=0}^j\Phi^{i}U-\dim \cap_{i=0}^{j+1}\Phi^{i}U=1$ unless $\cap_{i=0}^j\Phi^{i}U=0$. Since $\Phi$ is invertible, $\dim \Phi U = \dim U$. Then $U=U\cap Z\supseteq U\cap \Phi U$ and $\dim Z-\dim \Phi U=1$ tell that $\dim U-\dim U\cap \Phi U\leq 1$, so $\dim U-\dim U\cap \Phi U = 1$. Otherwise, $U=U\cap \Phi U$, then $\Phi U=U$ contradicts to assumption. Consider 
    $$
    \Phi(\cap_{i=0}^{j}\Phi^{i}U)=\cap_{i=1}^{j+1}\Phi^{i}U\supseteq \cap_{i=0}^{j+1}\Phi^{i}U.
    $$ 
    Then if $\cap_{i=0}^j\Phi^{i}U=\cap_{i=0}^{j+1}\Phi^{i}U$, we see that $\cap_{i=0}^j\Phi^{i}U$ is a $\Phi$ invariant subspace of $U$, which must be $0$ by assumption. Thus $\dim \cap_{i=0}^j\Phi^{i}U-\dim \cap_{i=0}^{j+1}\Phi^{i}U\geq 1$ unless $\cap_{i=0}^j\Phi^{i}U=0$. Notice that $\cap_{i=0}^{j+1}\Phi^{i}U=U\cap \Phi \cap_{i=0}^{j}\Phi^{i}U$ and $\cap_{i=0}^{j}\Phi^{i}U=U\cap \Phi \cap_{i=0}^{j-1}\Phi^{i}U$ and $\dim \cap_{i=0}^j\Phi^{i-1}U-\dim \cap_{i=0}^{j}\Phi^{i}U= 1$ by induction. Hence $\dim \cap_{i=0}^j\Phi^{i}U-\dim \cap_{i=0}^{j+1}\Phi^{i}U\leq 1$. Then, we have proved our claim.
    
    Thus we have $\dim \cap_{i=1}^{\dim U-1}U=1$ and we take a nonzero vector $u$ in it. Then $u\in \Phi^i U$ tells that $\Phi^{-i}u\in U$ for $0\leq i \leq \dim U-1$. Notice that 
    $$
    \{u, \Phi^{-1} u, \cdots , \Phi^{1-\dim U}u\}
    $$ 
    is linearly independent. Otherwise, $U$ would have nontrivial $\Phi$ invariant subspace. So 
    \begin{align*}
        U &= \langle u, \Phi^{-1} u, \cdots , \Phi^{1-\dim U}u \rangle, \\
        \Phi U &= \langle \Phi u, u,  \Phi^{-1} u, \cdots , \Phi^{2-\dim U}u \rangle.
    \end{align*}
    Since $\Phi U\neq U$ so $Z=U+\Phi U$, i.e., $Z = \langle \Phi u, u,  \Phi^{-1} u, \cdots , \Phi^{1-\dim U}u \rangle$. Then taking $z=\Phi u$ we arrive at the conclusion.
\end{proof}

Let $ \bf{d}_B $ satisfies $\diamondsuit$. Let $\bf{deg}_B = [\bf{deg}_1, \bf{deg}_2, \ldots, \bf{deg}_N]$, here $\bf{deg}_i$ is a partition consisting of the degree of the irreducible factor in $T_i$, see (\ref{T_i}). Let $\bf{deg}_i = [e_{i,1}, e_{i,2}, \ldots, e_{i,2k_i}] = [r_{i,1}^{m_{i, 1}}, \cdots , r_{i, q_i}^{m_{i, q_i}}] $ such that $r_{i, 1} > \cdots > r_{i, q_i}$. Now we can give a detailed description of $W$ as follows.

\begin{proposition}\label{description of W}
    Assume that $\chi_{\theta_B}(\lambda)$ is generic in $L\mathfrak{c}_{\bf{O}_B}$.\footnote{This generic condition needs more description than the one in Theorem \ref{Thm.type B decomposition}, as we will see in the proof.} For each $\iota$-isotropic subspace $W\subseteq Q$, we have, for $i=1, \ldots, N$ and $j=1, \ldots, q_i$, vectors $w_{i,j}\in F_{=r_{i, j}}$, unique up to scalar, such that 
    \begin{align*}
        W= \bigoplus_{i=1}^N \langle & \Phi^{-1}w_{i, 1}, \cdots , \Phi^{-d_{i, 1}}w_{i, 1}, a_{i, 1}\Phi^{-d_{i, 1}-1}w_{i, 1}+b_{i, 2}w_{i, 2}, \\
           & \cdots , \\
           & \Phi^{-1}w_{i, j}, \cdots , \Phi^{-d_{i, j}}w_{i, j}, a_{i, j} \Phi^{-d_{i, j}-1}w_{i, j} + b_{i, j+1} w_{i, j+1}, \\ 
           & \cdots , \\
           & \Phi^{-1}w_{i, q_i}, \cdots , \Phi^{-d_{i, q}}w_{i, q_i}\rangle.\\ 
    \end{align*}
    Here all the $a_{i, j}$'s, $b_{i, j}$'s are nonzero and $d_{i,j}= \lfloor \frac{m_{i,j}}{2} \rfloor-1$. As a consequence, we have exactly $2^{\beta(\bf{d}_B) - c(\bf{d}_B)}$ many $\iota$-isotropic subspaces.
\end{proposition}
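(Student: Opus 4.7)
The plan is to decompose the problem along the block structure provided by Lemma~\ref{equation of W}, build $W$ level-by-level within each block using Lemma~\ref{solution of vector spaces}, and finally extract the enumeration from the isotropy constraints.

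\emph{Reduction to a single block.} By Lemma~\ref{equation of W} combined with Corollary~\ref{rough decomposition of E}, we have $W=\bigoplus_{i=1}^{s}(W\cap Q_{i}^{\bf{T}})$, and the pairing on $Q$ respects the direct-sum decomposition $Q=\bigoplus Q_{i}^{\bf{T}}$. Moreover, both isotropy (Proposition~\ref{isotropic}) and the partition-matching condition used in the proof of Lemma~\ref{equation of W} decouple across blocks. Thus it suffices to analyse a single block $Q_{i}^{\bf{T}}$, which under $\diamondsuit$ is of type B2 or (at most one) of type B3.

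\emph{Construction of the vectors $w_{i,j}$.} Fix such a block. Repeating the rank computation of Lemma~\ref{equation of W} at every intermediate level $r$ (not only where the partition $\bf{d}_B$ changes value) forces
\[
\dim(W\cap F_{\geq r_{i,j}})-\dim(W\cap F_{\geq r_{i,j+1}})=\lfloor m_{i,j}/2\rfloor,
\]
and shows that the image $\overline{W_{=r_{i,j}}}$ of $W\cap F_{\geq r_{i,j}}$ in $F_{=r_{i,j}}$ admits no nontrivial $\Phi_{=r_{i,j}}$-invariant subspace. We then apply Lemma~\ref{solution of vector spaces} with $\Phi=\Phi_{=r_{i,j}}^{-1}$ and $(U,Z)$ taken to be the pair measuring how $\overline{W_{\geq r_{i,j}}}$ sits inside $\overline{W_{=r_{i,j}}}$. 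This yields a vector $w_{i,j}\in F_{=r_{i,j}}$, unique up to scalar, such that the $\Phi^{-\ell}w_{i,j}$ together with a single extra generator combining $\Phi^{-d_{i,j}-1}w_{i,j}$ and $w_{i,j+1}$ span $W\cap F_{\geq r_{i,j}}$ modulo $F_{\geq r_{i,j+1}}$; the nonzero coefficients $a_{i,j}, b_{i,j+1}$ appear precisely in this extra generator.

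\emph{Counting via isotropy.} The isotropy of $W$ under the pairing on $Q$ (Proposition~\ref{isotropic}) translates into polynomial relations among the $w_{i,j}$ and the scalars $a_{i,j}, b_{i,j}$. After rescaling each $w_{i,j}$, these collapse to a single quadratic equation for every even part of $\bf{deg}_B$, except that inside each type B2 block the ``boundary pair'' $(\alpha_1-1,\alpha_2+1)$ of matched odd ends produces an extra scalar rigidity that pins down one of the choices---this is where the term $-c(\bf{d}_B)$ enters. For $\chi(\theta_B)$ generic in $\bf{Char}_{\bf{O}_B}$, each remaining quadratic has exactly two distinct solutions. Multiplying across all blocks yields the advertised count $2^{e(\bf{d}_B)-c(\bf{d}_B)}$.

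The main obstacle is the last paragraph: one must verify that the isotropy relations really do decouple, after the rescaling freedom is absorbed, into independent quadratic equations indexed by the even parts of $\bf{deg}_B$, and that each B2 block contributes exactly one rigidity through its boundary pair. This requires a careful bookkeeping of how $\Phi^{-\ell}w_{i,j}$ pairs with $\Phi^{-\ell'}w_{i,j'}$ under the induced nondegenerate pairing on $F_{=r_{i,j}}\oplus F_{=r_{i,j'}}$, together with a genericity argument showing that the resulting quadratic equations never degenerate.
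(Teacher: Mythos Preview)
Your proposal follows the same architecture as the paper's proof: block reduction via Lemma~\ref{equation of W}, rank computations to set up Lemma~\ref{solution of vector spaces}, and a final count via isotropy. However, there are two genuine gaps.

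First, the rank identity at the various levels $l$ does not by itself yield the clean dimension step you assert. In the paper's intermediate-level analysis (its ``Case II'', for $e_{2m}<l=e_i<e_1$), the rank identity leaves open the degenerate possibilities $\overline{(W_{\geq e_i})_{=e_i}}=W_{=e_i}$ and $\overline{W_{=e_i}}=\overline{(W_{\geq e_i})_{=e_i}}$; both must be excluded before Lemma~\ref{solution of vector spaces} can be applied with the correct $(U,Z)$. The paper rules out the second one by showing it would force a nonzero $\Phi$-invariant \emph{isotropic} subspace, which cannot exist under genericity. So isotropy already enters in establishing the structural form of $W$, not only at the final counting stage; your clean separation ``structure from ranks, then isotropy only for counting'' does not hold, and the argument must interleave the two. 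Relatedly, the values $d_{i,j}=\lfloor m_{i,j}/2\rfloor-1$ are not forced a priori by the rank computation: in the paper they are pinned down only at the very end, by playing the isotropy bound $d_j\leq\lfloor m_j/2\rfloor$ (sharpened to $\lfloor m_j/2\rfloor-1$ at interior levels, again using two extra vectors orthogonal to $W_{=r_j}$) against the total constraint $\dim W=\tfrac12\sum_j m_j$. Also, the absence of $\Phi$-invariant subspaces is a \emph{genericity} input (distinct eigenvalues of $\Phi$), not a consequence of the rank computation as you suggest.

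Second, your counting mechanism (``one quadratic per even part, minus one rigidity from the B2 boundary pair'') is not the paper's and is not substantiated. For a single B2 block the paper instead parameterizes the candidate $W$'s by $\prod_{j=1}^{q}\mathbb{P}^{m_j-1}\times\prod_{j=1}^{q-1}\mathbb{G}_m$, a space of dimension $\sum_j m_j-1$, and checks that the isotropy constraints (using $\langle v_1,\Phi v_2\rangle_Q=\langle \Phi v_1,v_2\rangle_Q$) impose exactly $\sum_j m_j-1$ quadratic equations; genericity of $\chi(\theta_B)$ then gives $2^{\sum_j m_j-1}$ solutions. The ``$-1$'' per block (your $-c(\bf d_B)$) appears because the parameter-space dimension is $\sum_j m_j-1$, not because a separate ``boundary rigidity'' eliminates one quadratic. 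If you want to keep your heuristic, you would have to make precise which quadratic is redundant and why, and this is exactly the ``careful bookkeeping'' you flag but do not carry out.
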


\begin{remark}\label{W is defined over general field}
    The description of $W$ in Proposition \ref{description of W} also works over a non-algebraically closed field, as long as the subspaces $F_{\geq i}$ and $F^{\leq i}$ are defined over the non-algebraically closed field.
\end{remark}

The proof of this Proposition is subtle, so a suggestion is to keep an example in mind. The simplest example to capture the essence of the proof is $\bf{d}_B=[7,6,6,4,4,2,2,1,1]$. In this case, $F_{\geq i}$ and $F^{\leq i}$ are given as in Example \ref{exmaple_for_explanation}.

\begin{proof}
    By Lemma \ref{equation of W}, we only need to determine each subspace $W\cap Q^{\bf{T}}_j$. From now on, we shall assume the partition $\bf{d}_B=[e_1+1, e_2, \ldots, e_{2m-1}, e_{2m}-1]$ is a partition of type B2. Then $ \bf{deg}_B = {}^S\bf{d}_B $ and we write $${}^S\bf{d}_B=[e_1, e_2, \ldots, e_{2k-1}, e_{2k}] = [r_1^{m_1}, \ldots, r_q^{m_q}].$$
    The type B3 case follows similarly.
    
    By Proposition \ref{isotropic}, an $\iota$-isotropic subspace must be isotropic. By taking $f$ generic in $L\mathfrak{c}_{ \bf{O}_B}$, we may assume that the eigenvalues of $\Phi$ are pairwise distinct. So any nonzero subspaces of $W$ can not be $\Phi$ invariant. 

    We want the partition of $\Theta_W=P_W^{-1}\Theta^{\vee} P_W$ to be the given $$\bf{d}_B=[e_1+1, e_2, \cdots , e_{2m-1}, e_{2m}-1],$$while the partition for $\Theta^{\vee}$ is ${}^S\bf{d}_B = [e_1, e_2, \cdots , e_{2m-1}, e_{2m}]$. Then $\rk \Theta_W^l-\rk (\Theta^{\vee})^l=1$ if $e_{2m}\leq l \leq e_1$ and $\rk \Theta_W^l-\rk (\Theta^{\vee})^l=0$ otherwise. As shown in the proof of Lemma \ref{equation of W}, for $e_{2m}\leq l \leq e_1$,
    \begin{align}
    \dim W-\dim W^{\leq l-1}-\dim \Big(W\cap \big(F_{\geq l+1}\oplus \Phi (\overline{W_{\geq l}}\cap \overline{W_{=l}})\big)\Big)=1. \label{key_identity}
    \end{align}
       
    \textbf{Case I.} We begin with $l=e_{2m}$ . In this case, equation (\ref{key_identity}) implies  
    $$
    \dim W-\dim \Big(W\cap \big(F_{\geq e_{2m}+1}\oplus \Phi (W_{=e_{2m}})\big)\Big)=1.
    $$ 
    Notice that $W = W_{\geq e_{2m}}$ and 
    \begin{align*}
     &\dim \Big(W\cap \big(F_{\geq e_{2m}+1}\oplus \Phi (W_{=e_{2m}})\big)\Big) \\
    =&\dim W_{\geq e_{2m}+1}+\dim \Phi (W_{=e_{2m}})\cap \overline{W_{=e_{2m}}}.
    \end{align*}
    Hence 
    $$\dim \overline{W_{=e_{2m}}}-\dim \Phi (W_{=e_{2m}})\cap \overline{W_{=e_{2m}}}=1.
    $$ 
    Then we take $l=e_{2m}+1$. Since $e_i \equiv 0$, there is no $e_i$ equals $e_{2m}+1$, then $\overline{W_{=e_{2m}+1}} = 0 $. From (\ref{key_identity}) we have $\dim \overline{W_{=e_{2m}}}-\dim W_{=e_{2m}}=1$. Hence $W_{=e_{2m}}\subseteq \overline{W_{=e_{2m}}}$, $\Phi (W_{=e_{2m}})\subseteq \overline{W_{=e_{2m}}}$ and $\Phi (W_{2m}) \neq W_{=e_{2m}} $. By Lemma \ref{solution of vector spaces}, we have a vector $w_{2m}\in F_{=e_{2m}}$, unique up to scalar, such that
    \begin{align*}
        W_{=e_{2m}} &= \langle \Phi^{-1}w_{2m} \cdots , \Phi^{-d_{2m}}w_{2m} \rangle, \\
        \overline{W_{=e_{2m}}} &= \langle w_{2m}, \Phi^{-1}w_{2m} \cdots , \Phi^{-d_{2m}}w_{2m} \rangle.
    \end{align*}
    Here $d_{2m}$ is the dimension of $W_{=e_{2m}}$, which will be determined later.
    
   \textbf{Case II.} When $e_{2m}<l=e_i<e_1$ for some $e_i$, the equation (\ref{key_identity}) becomes 
    $$
    \dim W-\dim W^{\leq e_i-1}-\dim \Big(W\cap \big(F_{\geq e_i+1}\oplus \Phi (\overline{W_{\geq e_i}}\cap \overline{W_{=e_i}})\big)\Big)=1,
    $$
    which can be simplified as follows
    $$
    \dim\overline{W_{\geq e_i}}-\dim W_{\geq e_i+1}-\dim\Phi (\overline{W_{\geq e_i}}\cap \overline{W_{=e_i}})\cap \overline{(W_{\geq e_i})_{=e_i}}=1.
    $$ 
    Here $\overline{(W_{\geq e_i})_{=e_i}}$ is the projection of $W_{\geq e_i}$ to $F_{=e_i}$. We have an exact sequence 
    $$
    0\longrightarrow \overline{W_{\geq e_i}}\cap F_{=e_i}\longrightarrow \overline{W_{\geq e_i}} \longrightarrow \overline{W_{\geq e_i+1}}\longrightarrow 0.
    $$ 
    Notice that $\overline{W_{\geq e_i}}\cap F_{=e_i}=\overline{W_{\geq e_i}}\cap \overline{W_{=e_i}}$. 
    
    Taking $l=e_i+1$, notice that $\overline{W_{e_i+1}}=0$, then from (\ref{key_identity}) we have 
    \begin{align}
        \dim \overline{W_{\geq e_i+1}}-\dim W_{\geq e_i+1}=1. \label{key_identity_2}
    \end{align}
    Hence we have $\dim \overline{W_{\geq e_i}}\cap \overline{W_{=e_i}}=\dim\Phi (\overline{W_{\geq e_i}}\cap \overline{W_{=e_i}})\cap \overline{(W_{\geq e_i})_{=e_i}}$ from the argument in the above $l=e_i$ case. Then 
    $$
    \overline{W_{\geq e_i}}\cap \overline{W_{=e_i}} \subseteq \Phi^{-1}\overline{(W_{\geq e_i})_{=e_i}}.
    $$ 
    Thus we obtain the following relation between $W_{=e_i}$ and $\overline{W_{=e_i}}$ 
    \begin{equation}\label{inclutions}
    W_{=e_i}\subseteq \overline{W_{\geq e_i}}\cap \overline{W_{=e_i}} \subseteq \Phi^{-1}\overline{(W_{\geq e_i})_{=e_i}}\subseteq \Phi^{-1}\overline{W_{=e_i}}.
    \end{equation}
    We will analyze the inclusions in (\ref{inclutions}) to see which parts are identities and which are proper inclusions.
    
    Firstly, we show that $\dim \overline{(W_{\geq e_i})_{=e_i}}-\dim W_{=e_i}=1$. We use the following equalities
    \begin{align}
    \begin{split}
        \dim W_{\geq e_i} & = \dim W_{\geq e_i+1} + \dim \overline{(W_{\geq e_i})_{=e_i}} \\
                          & = \dim W_{=e_i} + \dim \overline{(W_{\geq e_i})_{\geq e_i+1}}. \label{identity_3}
    \end{split}
    \end{align}
    Here $\overline{(W_{\geq e_i})_{\geq e_i+1}}$ is the projection of $W_{\geq e_i}$ to $F_{\geq e_i+1}$. So we have $W_{\geq e_i+1}\subseteq \overline{(W_{\geq e_i})_{\geq e_i+1}} \subseteq \overline{W_{\geq e_i+1}}$. Together with equations (\ref{key_identity_2}) and (\ref{identity_3}) we obtain 
    $$
    \dim \overline{(W_{\geq e_i})_{=e_i}}-\dim W_{=e_i}\leq 1.
    $$

    If $\dim \overline{(W_{\geq e_i})_{=e_i}} = \dim W_{=e_i}$, notice that $W_{=e_i}\subseteq \Phi^{-1}\overline{(W_{\geq e_i})_{=e_i}}$, so we have $\overline{(W_{\geq e_i})_{=e_i}}= W_{=e_i}$. Hence $W_{=e_i}$ is $\Phi$ invariant. By the generality of $\Phi$, this forces $\overline{(W_{\geq e_i})_{=e_i}}= W_{=e_i}=0$. We exclude the latter case using a dimension argument, which will be done in the part of the dimension discussion later in this proof. 
    
    Then $\dim \overline{(W_{\geq e_i})_{=e_i}}-\dim W_{=e_i}= 1$, together with $W_{=e_i}\subseteq \overline{(W_{\geq e_i})_{=e_i}}$ and $\Phi W_{=e_i}\subseteq \overline{(W_{\geq e_i})_{=e_i}}$, by Lemma \ref{solution of vector spaces}, we have a vector $w_{i}\in F_{=e_i}$, unique upto scalar, such that
    \begin{align*}
        W_{=e_i} &= \langle \Phi^{-1} w_{i}, \cdots , \Phi^{-d_{i}} w_{i} \rangle, \\
        \overline{(W_{\geq e_i})_{=e_i}} &= \langle w_{i}, \Phi^{-1} w_{i}, \cdots , \Phi^{-d_{i}} w_{i} \rangle.
    \end{align*}
    Here $d_i$ is the dimension of $W_{=e_{i}}$, which will be determined later. We remark here that $W_{=e_i}$ can be zero and in this case $\overline{(W_{\geq e_i})_{=e_i}} = \langle w_{i} \rangle$. Now we claim that 
    \begin{claim}
    $$
    \overline{W_{= e_i}} = \langle w_{i} \rangle +W_{=e_i}+ \langle \Phi^{-d_{i}-1} w_{i} \rangle 
    $$
    such that $w_{i} \notin \overline{W_{\geq e_i}}$, $w_{i}\in \overline{(W_{\geq e_i})_{=e_i}}$ and $\Phi^{-d_{i}-1} w_{i} \in \overline{W_{\geq e_i}}$, $\Phi^{-d_{i}-1} w_{i} \notin \overline{(W_{\geq e_i})_{=e_i}}$.
    \end{claim}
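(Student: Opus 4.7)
The plan is to establish the three assertions of the Claim by combining dimension counts extracted from \eqref{key_identity} at consecutive levels with the Vandermonde-type linear independence supplied by the generic distinct eigenvalues of $\Phi$ on $Q$, together with the $\Phi$-invariance argument of Lemma \ref{solution of vector spaces}.

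First I would show $\dim\overline{W_{=e_i}}=d_i+2$, i.e.\ exactly one dimension larger than $\overline{(W_{\geq e_i})_{=e_i}}$. Applying \eqref{key_identity} at $l=e_i+1$, where $F_{=e_i+1}=0$ since every $e_j$ is even but $e_i+1$ is odd, yields $\dim\overline{W_{\geq e_i+1}}-\dim W_{\geq e_i+1}=1$. Combining this with the short exact sequence
$$
0 \longrightarrow \overline{W_{\geq e_i}}\cap F_{=e_i} \longrightarrow \overline{W_{\geq e_i}} \longrightarrow \overline{W_{\geq e_i+1}} \longrightarrow 0,
$$
with the identity at $l=e_i$ already derived in Case II, and with the analogous sequence $0\to W\cap(F^{\leq e_i-1}\oplus F_{\geq e_i+1})\to W\to \overline{W_{=e_i}}\to 0$, I would pin down the dimensions on both sides and read off $\dim\overline{W_{=e_i}}-\dim\overline{(W_{\geq e_i})_{=e_i}}=1$. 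Applying Lemma \ref{solution of vector spaces} to the pair $\overline{(W_{\geq e_i})_{=e_i}}\subseteq \overline{W_{=e_i}}$, the unique-up-to-scalar generator of the quotient must be $\Phi^{-d_i-1}w_i$, and the Vandermonde independence of $\{w_i,\Phi^{-1}w_i,\ldots,\Phi^{-d_i-1}w_i\}$ guaranteed by the generic distinct eigenvalues of $\Phi$ on $F_{=e_i}$ then confirms part (a) of the Claim together with the non-containment $\Phi^{-d_i-1}w_i\notin\overline{(W_{\geq e_i})_{=e_i}}$ in part (c).

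For the remaining membership statements, $\Phi^{-d_i-1}w_i\in\overline{W_{\geq e_i}}$ will follow by showing that any witness $v\in W$ whose $F_{=e_i}$-component is $\Phi^{-d_i-1}w_i$ cannot have nonzero $F_{=e_j}$-components for any $e_j>e_i$: otherwise an appropriate combination with elements of $W_{\geq e_i}$ would enlarge $\overline{(W_{\geq e_i})_{=e_i}}$ beyond the established $\langle w_i,\Phi^{-1}w_i,\ldots,\Phi^{-d_i}w_i\rangle$. Hence the $F_{\geq e_i}$-projection of $v$ is $\Phi^{-d_i-1}w_i$ itself. For $w_i\notin\overline{W_{\geq e_i}}$, I would assume for contradiction a witness $v=w_i+v^{<}$ with $v^{<}\in F^{\leq e_i-1}$; subtracting the element $v'=w_i+v''\in W_{\geq e_i}$ supplied by $w_i\in\overline{(W_{\geq e_i})_{=e_i}}$ produces $v-v'=v^{<}-v''\in W$ with zero $F_{=e_i}$-component but nonzero parts on both sides of level $e_i$. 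Together with the $\Phi$-orbit $\{\Phi^{-k}w_i\}_{k=1}^{d_i}\subset W_{=e_i}$, invoking Lemma \ref{solution of vector spaces} would then yield a nontrivial $\Phi$-invariant subspace inside $W_{=e_i}$, contradicting the generic distinctness of the eigenvalues of $\Phi$.

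The main obstacle will be the dimension count: the three subspaces $\overline{W_{\geq e_i}}\cap F_{=e_i}$, $\overline{(W_{\geq e_i})_{=e_i}}$, and $\overline{W_{=e_i}}$ all sit inside $F_{=e_i}$ and intertwine through the pairing-induced orthogonality, so the simultaneous bookkeeping through \eqref{key_identity} at several adjacent levels combined with the $\Phi$-action structure demands care. Genericity of $\chi(\theta_B)$ in $\mathbf{Char}_{\bf{O}_B}$ is used essentially throughout, both to diagonalize $\Phi$ with distinct eigenvalues on each $F_{=e_i}$ and to rule out accidental coincidences among the $\Phi^{-k}w_i$.
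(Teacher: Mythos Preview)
Your outline has the right ingredients---dimension identities at adjacent levels and the $\Phi$-action---but it has two genuine gaps that the paper's proof fills differently.

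First, your dimension count cannot by itself pin down $\dim\overline{W_{=e_i}}-\dim\overline{(W_{\geq e_i})_{=e_i}}=1$. The identities at $l=e_i-1,\,e_i,\,e_i+1$ only yield $\dim\overline{W_{\geq e_i}}-\dim W_{\geq e_i}=1$ and $\dim\overline{W_{\geq e_i+1}}-\dim W_{\geq e_i+1}=1$, from which one gets the inequality $\leq 1$ but not equality. The paper excludes the case of equality to $0$ by an \emph{isotropy} argument: if $\overline{W_{=e_i}}=\overline{(W_{\geq e_i})_{=e_i}}$ then one deduces that $\overline{(W_{\geq e_i})_{=e_i}}$ is both $\Phi$-invariant and isotropic (the latter because $W$ is isotropic and one can pair $w_i$ with itself via two witnesses in $W$), and a nonzero $\Phi$-invariant isotropic subspace of $F_{=e_i}$ is impossible under the generic-eigenvalue assumption. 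You never invoke the isotropy of $W$, and without it the equality does not follow.

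Second, your proposed application of Lemma~\ref{solution of vector spaces} to the pair $\overline{(W_{\geq e_i})_{=e_i}}\subset \overline{W_{=e_i}}$ is unjustified: that lemma needs $\Phi(U)\subseteq Z$, and you have not shown $\Phi\bigl(\overline{(W_{\geq e_i})_{=e_i}}\bigr)\subseteq \overline{W_{=e_i}}$. The paper sidesteps this by proving instead that $\overline{W_{\geq e_i}}\cap\overline{W_{=e_i}}=\Phi^{-1}\overline{(W_{\geq e_i})_{=e_i}}$ (again via an isotropy contradiction ruling out the alternative), which immediately gives $\overline{W_{\geq e_i}}\cap\overline{W_{=e_i}}=W_{=e_i}+\langle\Phi^{-d_i-1}w_i\rangle$ and hence all four membership assertions at once. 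Your separate argument for $w_i\notin\overline{W_{\geq e_i}}$ is also flawed: the element $v-v'=v^{<}-v''$ you construct has zero $F_{=e_i}$-component, so it cannot produce a $\Phi$-invariant subspace inside $W_{=e_i}$ as you claim.
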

    
    \textbf{Case III.} With this claim in mind, if we consider the case $l=e_1$ further, we will have a description of $W$. For $l=e_1$, we have 
    $$
    \dim W- \dim (W\cap F^{\leq e_1-1})- \dim \Big(W\cap \big(F_{\geq e_{1}+1}\oplus \Phi(\overline{W_{\geq e_{1}}}\cap \overline{W_{=e_{1}}})\big)\Big)=1,
    $$ 
    which simplifies to 
    $$
    \dim \overline{W_{=e_1}}-\dim W_{=e_1}\cap \Phi(\overline{W_{=e_1}})=1.
    $$ 
    Similar as the case of $l=e_{2m}$, we have a vector $w_{1}$, unique up to scalar, such that $W_{=e_1} = \langle \Phi^{-1}w_{1}, \cdots , \Phi^{-d_{1}}w_{1} \rangle$ and $\overline{W_{=e_1}}=W_{=e_1}+ \langle \Phi^{-d_{1}-1}w_{1} \rangle$.
    
    Recall that $[e_1, e_2, \ldots, e_{2m}]=[r_1^{m_1}, \cdots , r_q^{m_q}]$ as partitions where $r_1 > \cdots > r_q$. Now it is easy to see that 
    \begin{align*}
        W=\langle& \Phi^{-1}w_{1}, \cdots , \Phi^{-d_{1}}w_{1}, a_{1}\Phi^{-d_{1}-1}w_{1}+b_{2}w_{2}, \\
           & \cdots , \\
           & \Phi^{-1}w_{j}, \cdots , \Phi^{-d_{j}}w_{j}, a_{j} \Phi^{-d_{j}-1}w_{j} + b_{j+1} w_{j+1}, \\ 
           & \cdots , \\
           & \Phi^{-1}w_{q}, \cdots , \Phi^{-d_{q}}w_{q}\rangle.\\ 
    \end{align*}Here all the $a_{j}$'s, $b_{j}$'s are nonzero.

    Now we need to prove the claim.    
    
    Recall that we have $\dim \overline{W_{\geq e_i+1}}-\dim W_{\geq e_i+1}=1$, then we have a vector $w_{\geq e_i+1}\in \overline{W_{\geq e_i+1}}\setminus W_{\geq e_i+1}$ such that 
    \begin{align}
        W_{\geq e_i}= W_{\geq e_i+1} + W_{= e_i} + \langle w_{\geq e_i+1} + w_{i} \rangle. \label{W>=e_i}
    \end{align} 
    Then we want to show that 
    $$
    \dim \overline{W_{=e_i}}-\dim \overline{(W_{\geq e_i})_{=e_i}}=1.
    $$
    Similarly, taking $l=e_i-1$ in equation (\ref{key_identity}), we obtain $\dim \overline{W_{\geq e_i}} -\dim W_{\geq e_i}=1$. So $\dim \overline{W_{=e_i}}-\dim \overline{(W_{\geq e_i})_{=e_i}}\leq 1$.
    
    If $\overline{W_{=e_i}}= \overline{(W_{\geq e_i})_{=e_i}}$, by the fact $\dim \overline{W_{\geq e_i}} -\dim W_{\geq e_i}=1$ we have $\dim \overline{W_{\geq e_i}}\cap F_{\geq e_i+1}-\dim W_{\geq e_i}\cap F_{\geq e_i+1}=1$. Notice that $W_{\geq e_i}\cap F_{\geq e_i+1}=W_{\geq e_i+1}$ and $\dim \overline{W_{\geq e_i+1}} -\dim W_{\geq e_i+1}=1$ so we have $\overline{W_{\geq e_i}}\cap F_{\geq e_i+1}=\overline{W_{\geq e_i+1}}$ and hence $w_{\geq e_i+1} \in \overline{W_{\geq e_i}}$. This means that 
    $$ 
    \overline{W_{\geq e_i}} = W_{\geq e_i+1} + W_{= e_i} + \langle w_{\geq e_i+1}, w_{i} \rangle.
    $$ 
    It can not happen. If so, we have 
    $$
    \overline{W_{\geq e_i}}\cap \overline{W_{=e_i}}=\overline{W_{\geq e_i}}\cap \overline{(W_{\geq e_i})_{=e_i}}=\overline{(W_{\geq e_i})_{i}}.
    $$ 
    Recall that the inclusion (\ref{inclutions}): $\overline{W_{\geq e_i}}\cap \overline{W_{=e_i}} \subseteq \Phi^{-1}\overline{(W_{\geq e_i})_{=e_i}}$. Thus $\overline{(W_{\geq e_i})_{=e_i}}=W_{=e_i}+ \langle w_{i} \rangle$ is $\Phi_{=e_i}$ invariant. $w_{i}\in \overline{W_{\geq e_i}}\setminus W_{\geq e_i}$ means that we have $w^{\leq e_i-1}\in F^{\leq e_i-1}$ such that $w_{=e_i}+w^{\leq e_i-1}\in W$. From (\ref{W>=e_i}) we have $w_{\geq e_i+1}+w_{i}\in W_{\geq e_i}\subseteq W$, since $W$ is isotropic, hence $w_{i}$ is also isotropic, which leds that $\overline{(W_{\geq e_i})_{=e_i}}$ is also isotropic. This is a contradiction since we have argued that $\overline{(W_{\geq e_i})_{=e_i}}$ is $\Phi$ invariant and $\overline{(W_{\geq e_i})_{=e_i}}\neq 0$. 
    
    So we have $\dim \overline{W_{=e_i}}-\dim \overline{(W_{\geq e_i})_{=e_i}}=1$. 
    
    Finally, we claim that 
    $$
    \overline{W_{\geq e_i}}\cap \overline{W_{=e_i}}=\Phi^{-1}\overline{(W_{\geq e_i})_{=e_i}}.
    $$ 
    Otherwise, we would have $W_{=e_i}=\overline{W_{\geq e_i}}\cap \overline{W_{=e_i}}$. By our discription of $W_{\geq e_i}$, i.e., (\ref{W>=e_i}), we see that $W_{=e_i}=W_{\geq e_i}\cap \overline{W_{\geq e_i}}$, so $\dim \overline{W_{\geq e_i+1}}-\dim \overline{(W_{\geq e_i})_{\geq e_i+1}}=1$, Thus $\overline{(W_{\geq e_i})_{\geq e_i+1}}=W_{\geq e_i+1}$. But from (\ref{W>=e_i}), we know $\overline{(W_{\geq e_i})_{\geq e_i+1}}\neq W_{\geq e_i+1}$. So $\overline{W_{\geq e_i}}\cap \overline{W_{=e_i}}=\Phi^{-1}\overline{(W_{\geq e_i})_{=e_i}}$, and (\ref{inclutions}) becomes: 
    \begin{equation}\label{real inclutions}
    W_{=e_i}\subset \overline{W_{\geq e_i}}\cap \overline{W_{=e_i}} = \Phi^{-1}\overline{(W_{\geq e_i})_{=e_i}}\subset \Phi^{-1}\overline{W_{=e_i}},
    \end{equation}
    where all the proper inclusions are codimension $1$ subspaces. 
    
    Then we have $\Phi^{-1}\overline{(W_{\geq e_i})_{=e_i}}\subseteq \overline{W_{= e_i}}$, $\overline{(W_{\geq e_i})_{=e_i}}\subseteq \overline{W_{= e_i}}$, and  $\dim \overline{W_{=e_i}}-\dim \overline{(W_{\geq e_i})_{=e_i}}=1$. If $\Phi^{-1}\overline{(W_{\geq e_i})_{=e_i}}=\overline{(W_{\geq e_i})_{=e_i}}$ then $w_{i}\in \Phi^{-1}\overline{(W_{\geq e_i})_{=e_i}}=\overline{W_{\geq e_i}}\cap \overline{W_{=e_i}}\subseteq \overline{W_{\geq e_i}}$. One can show that $\overline{(W_{\geq e_i})_{=e_i}}$ is isotropic, which is a contradiction.
    
    So $\Phi^{-1}\overline{(W_{\geq e_i})_{=e_i}}\neq \overline{(W_{\geq e_i})_{=e_i}}$. Recall that $\overline{(W_{\geq e_i})_{=e_i}} = \langle w_{i} \rangle + W_{=e_i}$, then $\Phi^{-1}\overline{(W_{\geq e_i})_{=e_i}}=W_{=e_i} + \langle \Phi^{-d_{i}-1} w_{i} \rangle$ and hence 
    $$
    \overline{W_{= e_i}} = \langle w_{i} \rangle +W_{=e_i}+ \langle \Phi^{-d_{i}-1} w_{i} \rangle 
    $$ 
    such that $w_{i} \notin \overline{W_{\geq e_i}}$, $w_{i}\in \overline{(W_{\geq e_i})_{=e_i}}$ and $\Phi^{-d_{i}-1} w_{i} \in \overline{W_{\geq e_i}}$, $\Phi^{-d_{i}-1} w_{i} \notin \overline{(W_{\geq e_i})_{=e_i}}$. Hence the claim follows.
    
    Now we discuss the dimensions $d_{j}$. With the form of $W$, we see that $\dim W=\sum_{j=1}^q(d_{j}+1)-1=\frac{1}{2}\sum_{j=1}^qm_j$. Notice that $W_{=r_j}$ is an isotropic subspace of $F_{=r_j}$, so $d_{j}\leq \lfloor \frac{m_j}{2} \rfloor$. For $1< j < q$, then we have two another two linearly independent vectors which are orthogonal to $W_{=r_j}$, this tells that $d_{j}\leq \lfloor \frac{m_j}{2} \rfloor-1$. Thus we must have equalities $d_{j}= \lfloor \frac{m_j}{2} \rfloor-1$ and these equalities match with $\dim W=\sum_{j=1}^q(d_{j}+1)-1=\frac{1}{2}\sum_{j=1}^qm_j$ since $m_1$ and $m_q$ are odd while the rest $m_j$'s are even.
    
    Recall that we remain to deal with the case $\overline{(W_{\geq e_i})_{=e_i}}=W_{=e_i}=0$. We also have $\dim \overline{W_{=e_i}}-\dim \overline{(W_{\geq e_i})_{=e_i}}\leq 1$ and hence $\dim \overline{W_{=e_i}}\leq 1$. So it is easy to see that $\dim W$ would not attach $\frac{1}{2}\sum_{i=1}^qm_i$ if $\overline{(W_{\geq e_i})_{=e_i}}=W_{=e_i}=0$ for any $1< i< q$.
    
    At the end of this proof, we determine all possible $\iota$-isotropic subspaces. By our description of $W$, we see that all possible $\iota$-isotropic subspace would be represented by points in the following space: 
    $$
    \prod_{j=1}^q\mathbb{P}^{m_j-1}\times \prod_{j=1}^{q-1}\mathbb{G}_m.
    $$
    This space has dimension $\sum_{j=1}^qm_j-1$. Notice that we have $<v_1, \Phi v_2>_Q=<\Phi v_1, v_2>_Q$ for any $v_1$, $v_2$ in $Q$. So the isotropic condition on $W$ gives $2d_{1}+1+2d_{2}+2+ \cdots +2d_{q-1}+2+2d_{q}=\sum_{j=1}^qm_j-1$ many quadratic equations. Since $\chi_{\theta_B}(\lambda)$ is generic in $L\mathfrak{c}_{\bf{O}_B}$, we would have exactly $2^{\sum_{j=1}^qm_j-1}$ many $\iota$-isotropic subspaces.
\end{proof}

From the proof we can summarize the following useful corollaries which will be used later: 

\begin{corollary}\label{vector ponent is not zero}
    The component of $w_{j}$ in $Q^{\bf{T}}_{i}$ is not zero for any $j$ such that $e_{B,i}=r_j$. And the different choices of $W$ are given by the change of sign of the vector component of $w_{j}$.
\end{corollary}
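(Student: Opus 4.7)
The proof of this corollary is a direct harvest from the analysis already carried out in the proof of Proposition~\ref{description of W}, and my plan is to extract both assertions by a closer reading of the structural description obtained there.

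For the first assertion (non-vanishing of components), my plan is as follows. The vector $w_j$ is pinned down (up to scalar) by Lemma~\ref{solution of vector spaces} as the cyclic $\Phi$-generator producing $\overline{(W_{\geq r_j})_{=r_j}}$ (or $\overline{W_{=r_j}}$ in the extremal cases $j=1,q$). Under the genericity assumption on $\chi(\theta_B)$ made in Proposition~\ref{description of W}, the operator $\Phi$ acts on $F_{=r_j} = \bigoplus_{e_{B,i}=r_j} Q_i$ as a diagonal operator with pairwise distinct eigenvalues on the one-dimensional summands $Q_i$, and therefore the cyclic subspace $\langle \Phi^{k} w_j \rangle_{k\in\mathbb{Z}}$ has dimension equal exactly to the number of nonzero components of $w_j$. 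I would then feed in the dimension identities derived in Cases~I--III of Proposition~\ref{description of W} (namely $\dim \overline{W_{=r_j}} = d_j+2$ for interior $j$, and $d_j+1$ for $j=1,q$), together with the maximal-isotropy of $W_{=r_j}$ inside the cyclic orbit and the identity $d_j = \lfloor m_j/2\rfloor -1$ for interior $j$ (and the parallel statements for the extremal $j$), to deduce that this cyclic dimension must be the full $m_j$. Equivalently, any smaller cyclic orbit would force a dimension count incompatible with the requirement that $\theta_W(0)$ has partition $\mathbf{d}_B$.

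For the second assertion (parameterization by sign changes), I would proceed by direct counting. The proof of Proposition~\ref{description of W} identifies the parameter space of candidate data $\bigl((w_{i,j}),(a_{i,j},b_{i,j+1})\bigr)$ with $\prod_j \mathbb{P}^{m_j-1}\times \prod_j \mathbb{G}_m$ of dimension $\sum_j m_j - 1$, cut out by $\sum_j m_j - 1$ quadratic isotropy equations and producing exactly $2^{\sum_j m_j-1}$ solutions. My plan is to show, component-by-component in the orthogonal basis adapted to the $Q_i$-decomposition, that each such isotropy quadric has a discriminant controlled by differences of the (pairwise distinct, by genericity) eigenvalues of $\Phi$, and that its two solutions differ precisely by a sign flip of a single component of some $w_j$, while the rest of the data stays fixed. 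Combined with the observation that the overall scaling ambiguity of the tuple $(w_j)_j$ is just one-dimensional (the $w_j$'s being linked through the relations $a_{j}\Phi^{-d_j-1}w_{j}+b_{j+1}w_{j+1}\in W$, which absorb individual rescalings of $w_{j+1}$ into the ratio $a_j:b_{j+1}$), the $\sum_j m_j - 1$ free signs indeed parameterize the $2^{\sum_j m_j-1}$ $\iota$-isotropic subspaces.

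The hard part is disentangling the isotropy quadrics: they couple components of consecutive $w_j$'s through the linking coefficients $a_j,b_{j+1}$, so it is not immediate that they reduce to independent sign choices. My strategy would be to peel off the quadrics inductively from the boundary, starting with $j=1$ and $j=q_i$ (where the degenerate Cases~I and III of Proposition~\ref{description of W} give $w_j$ free of linking constraints), and then working inward. At each step, fixing the previously determined data turns the next isotropy condition into a single quadric in one new component of $w_j$ (or one ratio $b_{j+1}/a_j$), whose two roots, by the Vandermonde-type non-degeneracy from the distinct $\Phi$-eigenvalues, differ by a sign. Carrying out this induction cleanly, and checking that no additional degeneration occurs generically in $\mathbf{Char}_{\mathbf{O}_B}$, is the main technical task.
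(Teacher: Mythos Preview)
The paper gives no separate argument for this corollary: it is simply recorded as a summary of what was established inside the proof of Proposition~\ref{description of W}. So your proposal is not competing with a proof in the paper but rather filling in what the paper leaves implicit.

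Your plan for the first assertion is sound and is exactly the right extraction. If the $Q_i$-component of $w_j$ vanished, the whole chain $\overline{W_{=r_j}}=\langle w_j,\Phi^{-1}w_j,\ldots,\Phi^{-d_j-1}w_j\rangle$ would sit inside the $(m_j-1)$-dimensional non-degenerate subspace $\bigoplus_{i'\neq i}Q_{i'}$, and the inclusion $\overline{W_{=r_j}}\subset W_{=r_j}^{\perp}$ computed there would force $2d_j+(\text{number of extra vectors})\leq m_j-1$, one unit too small for the value of $d_j$ already fixed by the global dimension identity $\dim W=\tfrac12\sum m_j$. That is the intended contradiction.

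For the second assertion your inductive peeling would work, but it is considerably more involved than what the structure of the equations actually requires. The key observation (implicit in the last paragraph of the proof of Proposition~\ref{description of W}) is that the pairing on $Q$ is block-diagonal with respect to the decomposition $\bigoplus_i Q_i$ and $\Phi$ acts diagonally, so every isotropy equation $\langle \Phi^{-a}w_j,\Phi^{-b}w_j\rangle=0$ reads $\sum_i c_i\phi_i^{-a-b}w_{j,i}^2=0$, and the cross terms in the isotropy of the linking vectors $a_j\Phi^{-d_j-1}w_j+b_{j+1}w_{j+1}$ vanish because $F_{=r_j}\perp F_{=r_{j+1}}$. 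Thus the entire system is a system in the \emph{squares} $w_{j,i}^2$ (and $(a_j/b_{j+1})^2$), so flipping the sign of any component preserves the solution set. Together with the non-vanishing from part one, this immediately gives a free $(\mathbb Z/2)^{\sum m_j-1}$-action on the set of $\iota$-isotropic $W$'s, and the count $2^{\sum m_j-1}$ from the proposition shows it is simply transitive. No induction on the quadrics is needed.
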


\begin{corollary}\label{why factor though}
    If $\mathbf{T}_i=[d_a, d_{a+1}]$ has length $2$, then $W_i\subset Q_a\oplus Q_{a+1}$ is given by an isotropic line, so the choices of $W_i$ lies in $\OG(1, Q_a\oplus Q_{a+1})$.
\end{corollary}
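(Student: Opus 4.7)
The plan is to unpack the length-two condition on $\mathbf{T}_i$ combinatorially and then read off the conclusion from Proposition~\ref{description of W} together with Propositions~\ref{coker dimension}--\ref{isotropic} and Lemma~\ref{equation of W}. Under the standing assumption $\diamondsuit$ (no type~B1), a length-two block $\mathbf{T}_i=[d_a,d_{a+1}]$ must be of type~B2 with $k=0$; that is, $\mathbf{T}_i=[\alpha_1,\alpha_2]$ with $\alpha_1>\alpha_2$ both odd. Lemma~\ref{structure of special partition} then gives ${}^S\mathbf{T}_i=[\alpha_1-1,\alpha_2+1]$, so the two irreducible factors $f_{B,a},f_{B,a+1}$ of $\chi(\theta_B)$ associated with this block both have \emph{even} degree, namely $e_{B,a}=\alpha_1-1$ and $e_{B,a+1}=\alpha_2+1$.

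Next I would compute the ambient space $Q^{\mathbf{T}}_i=Q_a\oplus Q_{a+1}$. Since both $e_{B,a}$ and $e_{B,a+1}$ are even, Proposition~\ref{coker dimension} says that the restriction of $g_B$ to each of $K_{B,a},K_{B,a+1}$ is $1$-degenerate, so $\dim_{\Bbbk} Q_a=\dim_{\Bbbk} Q_{a+1}=1$, giving $\dim Q^{\mathbf{T}}_i=2$. Moreover, the pairing on $Q$ restricts to a nondegenerate symmetric form on $Q^{\mathbf{T}}_i$ (the block decomposition $Q=\bigoplus_j Q^{\mathbf{T}}_j$ is orthogonal by construction).

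Now I would pin down $\dim W_i$, where $W_i=W\cap Q^{\mathbf{T}}_i$. By Lemma~\ref{equation of W}, $W=\bigoplus_j W_j$. The dimension count at the end of the proof of Proposition~\ref{description of W} yields $\dim W_i=\tfrac{1}{2}\sum_j m_{i,j}$: in the subcase $\alpha_1>\alpha_2+2$ one has $q_i=2$, $m_{i,1}=m_{i,2}=1$, so $\dim W_i=1$; in the subcase $\alpha_1=\alpha_2+2$ one has $q_i=1$, $m_{i,1}=2$, so again $\dim W_i=1$. In either case, $W_i$ is a line in the two-dimensional quadratic space $Q_a\oplus Q_{a+1}$.

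Finally, since $W$ is isotropic by Proposition~\ref{isotropic} and the pairing splits orthogonally along the blocks $Q^{\mathbf{T}}_j$, the line $W_i$ is isotropic in $Q_a\oplus Q_{a+1}$, i.e.\ $W_i\in\OG(1,Q_a\oplus Q_{a+1})$, which is the claim. There is no genuine obstacle in this argument; the only point that must be handled carefully is treating the two subcases $\alpha_1=\alpha_2+2$ and $\alpha_1>\alpha_2+2$ uniformly in the dimension count, and ensuring that the block-orthogonality of the pairing on $Q$ used to transfer isotropy from $W$ to $W_i$ is already implicit in Corollary~\ref{rough decomposition of E}.
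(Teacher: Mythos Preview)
Your proposal is correct and follows the same route as the paper, which states this corollary without proof as an immediate consequence of Proposition~\ref{description of W} under the standing assumption $\diamondsuit$. You have simply spelled out the details: the length-two block is necessarily type~B2 with $k=0$, so ${}^S\mathbf{T}_i=[\alpha_1-1,\alpha_2+1]$ has two even parts, giving $\dim Q^{\mathbf{T}}_i=2$; the dimension formula $\dim W_i=\tfrac12\sum_j m_{i,j}=1$ then shows $W_i$ is a line, and isotropy comes from Proposition~\ref{isotropic} together with the orthogonal block decomposition implicit in Corollary~\ref{rough decomposition of E}.
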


\begin{corollary}\label{cor:E_W}
    With the same notations in Proposition \ref{description of W}. Denote one inverse image of $\Phi^{-1}\omega_{i,1}$ in $E_W$ by $u_{i,1}$ (If $m_{i,1}=1$ then we may take the inverse image of $a_{i,1}\omega_{i,1}+b_{i,2}\omega_{i,2}$) in $\oplus_{j=0}^{2k_i} K_{B,i}^\vee$, and denote one inverse image of $\Phi^{-d_{i, q_i}-1}\omega_{i,q_i}$ by $v_{i,q_i}$ (If $m_{i, q_i}=1$. Then we may take the inverse image of $a_{i, q_i-1}\Phi^{-d_{i, q_i-1}-2}\omega_{i, q_i-1}+b_{i, q_i}\Phi^{-1}\omega_{i, q_i}$) in $\oplus_{j=0}^{2k_i} K_{B,i}^\vee$. Then $u_{i,1}\in E_W$ with 
    $$ 
    \theta^{r_{i,1}}(u_{i,1})/t\notin E_W, \;\text{and } \  \theta^{r_{i,1}+1}(u_{i,1})/t \in E_W.
    $$ 
    For $v_{i,q_i}$, we see that $v_{i,q_i}\notin E_W$ and  
    $$ 
    \theta(v_{i,q_i})\in E_W,\; \text{and }\  \theta^{r_{i,q_i}}(v_{i,q_i})/t \in E_W.
    $$
\end{corollary}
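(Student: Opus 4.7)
The argument hinges on two compatibility statements between the action of $\theta = \bigoplus_j \theta_{B,j}^\vee$ on $\bigoplus_j K_{B,j}^\vee$ and the scalar operators $\Phi_j$ on the one-dimensional quotients $Q_j$; once those are established, every claim reduces to reading off the explicit list of generators of $W$ in Proposition \ref{description of W}, since by construction $E_W$ is the preimage of $W$ in $\bigoplus_j K_{B,j}^\vee$.

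First, for $u \in K_{B,j}^\vee$ lifting $\bar u \in Q_j$ with $e_{B,j} = r$, the Cayley--Hamilton identity $f_{B,j}(\theta_{B,j}^\vee)(u) = 0$ combined with the Eisenstein property of $f_{B,j}$ (Proposition \ref{type B generic}) forces $\theta^r(u) \in t K_{B,j}^\vee$, and a direct expansion of that relation, together with the definition $\Phi_j = (f_{B,j}(0)/t)|_{t=0}$, yields $\theta^r(u)/t \equiv -\Phi_j \bar u$ modulo $K_{B,j}$. Second, because $Q_j$ is one-dimensional over $\Bbbk$ while the reduction of $\theta$ modulo $t$ on $K_{B,j}^\vee$ is nilpotent (its characteristic polynomial reduces to $\lambda^{e_{B,j}}$), the induced scalar action of $\theta$ on each $Q_j$ must vanish. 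In particular $\theta$ carries $\bigoplus_j K_{B,j}^\vee$ into $\bigoplus_j K_{B,j} \subset E_W$.

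Combining these facts, $u_{i,1} \in E_W$ by construction, since $\Phi^{-1}\omega_{i,1}$ appears on the generator list of $W$. Applying the first compatibility on the $F_{=r_{i,1}}$-stratum, the image of $\theta^{r_{i,1}}(u_{i,1})/t$ in $Q$ equals $-\omega_{i,1}$, and $\omega_{i,1}$ itself never appears in the generators of $W$ (only $\Phi^{-k}\omega_{i,1}$ for $k \geq 1$ and a mixed combination with $\omega_{i,2}$ do), hence $\theta^{r_{i,1}}(u_{i,1})/t \notin E_W$. One further application of $\theta$ lands the result in $\bigoplus_j K_{B,j} \subset E_W$ by the second compatibility. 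The statements about $v_{i,q_i}$ follow by exactly the same mechanism: $\Phi^{-d_{i,q_i}-1}\omega_{i,q_i}$ is absent from the generator list, so $v_{i,q_i} \notin E_W$; the $Q$-image of $\theta(v_{i,q_i})$ vanishes, so $\theta(v_{i,q_i}) \in E_W$; and the $Q$-image of $\theta^{r_{i,q_i}}(v_{i,q_i})/t$ equals $-\Phi^{-d_{i,q_i}}\omega_{i,q_i}$, which \emph{is} one of the explicit generators of $W$.

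The technical point I expect to require extra care is the boundary situation $m_{i,1}=1$ or $m_{i,q_i}=1$, where the chosen representative is a mixed combination spread across two different strata $F_{=r_{i,j-1}}$ and $F_{=r_{i,j}}$. In this case the first compatibility must be applied component-wise, and one verifies by a short induction — using Eisenstein divisibility together with the vanishing of $\theta$ on each $Q_k$ — that the component coming from the strictly lower stratum contributes zero to the relevant $Q$-image, so that only the top-stratum term survives and the comparison with the generator list proceeds exactly as in the generic case.
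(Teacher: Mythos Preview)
Your proposal is correct and follows essentially the same route as the paper's own proof. Both arguments reduce membership in $E_W$ to membership of the $Q$-image in $W$, identify the effect of $\theta^{r}/t$ on the $r$-stratum with multiplication by $\Phi$ (up to sign), observe that a single application of $\theta$ kills each one-dimensional $Q_j$, and then simply read off membership from the explicit generator list of $W$ in Proposition~\ref{description of W}; you supply the Cayley--Hamilton and nilpotence justifications that the paper leaves implicit, and your treatment of the mixed-stratum boundary case is in fact a bit more careful than the paper's parenthetical computation.
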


\begin{proof}
    Since $\Phi^{-1}\omega_{i,1} \in W$(resp. $a_{i,1}\omega_{i,1}+b_{i,2}\omega_{i,2}\in W$), we see that $u_{i,1}\in E_W$. Now the image of $\theta^{r_{i,1}}(u_{i,1})/t$ in $Q$ is $\Phi(\Phi^{-1}(\omega_{i,1}))=\omega_{i,1}$(resp. $\Phi(a_{i,1}\omega_{i,1}+b_{i,2}\omega_{i,2})$). And $\theta^{r_{i,1}+1}(u_{i,1})/t\in \oplus_{i=0}^k K_{B,i}$ hence belongs to $E_W$. 
    
    On the other hand, the image of $v_{i,q_i}$ in $Q$ does not belong to $W$, hence $v_{i,q_i}\notin E_W$. Notice that $\theta(v_{i,q_i})\in \oplus_{i=0}^k K_{B,i}$ hence belongs to $E_W$, and the image of $\theta^{r_{i,q_i}}(v_{i,q_i})/t$ in $Q$ is $\Phi(\Phi^{-d_{i, q_i}-1}\omega_{i,q_i})=\Phi^{-d_{i, q_i}}\omega_{i,q_i}\in W$(resp. $\Phi(a_{i, q_i-1}\Phi^{-d_{i, q_i-1}-2}\omega_{i, q_i-1}+b_{i, q_i}\Phi^{-1}\omega_{i, q_i})=a_{i, q_i-1}\Phi^{-d_{i, q_i-1}-1}\omega_{i, q_i-1}+b_{i, q_i}\omega_{i, q_i}\in W$) and hence $\theta^{r_{i,q_i}}(v_{i,q_i})/t \in E_W$.
\end{proof}

\section{New perspective of Lusztig's canonical quotient}\label{Sec:seesaw_for_group_action}

In this section, we focus on the Richardson case and study the relations between affine Spaltenstein fibers and residually nilpotent local Higgs bundles. Finally, we prove Theorem~\ref{Thm:intro local level}.

\subsection{Preliminary on Spaltenstein fibers}\label{s.spal_fiber}

\begin{definition}\label{def:dual_pol}
    Two parabolic subgroups are called associated if they have conjugate Levi subgroups. Furthermore, $P_B<\SO_{2n+1}$ and $P_C<\Sp_{2n}$ are called dual if they are associated to Langlands dual parabolic subgroups.
\end{definition}

Suppose $\bf{O}_{B,R}$, $\bf{O}_{C,R}$ are Springer dual Richardson orbits with dual polarizations $(P_B$, $P_C)$, i.e., $P_B$ and $P_C$ are dual. We have the following relation between Springer dual and Langlands duality:

\begin{proposition}[Proposition 3.1 \cite{FRW24} and Corollary 3.6 \cite{FRW24}] \label{Prop:Seesaw_prop}
\begin{align}
\begin{split}\label{intro.diagram} 
\xymatrix{ 
 T^*( \SO_{2n+1}/P_B ) \ar[rd]  \ar[dd]_{\mu_{P_B}} & \stackrel{Langlands \  dual}{\leftrightsquigarrow} & T^*( \Sp_{2n} / {P_C} ) \ar[ld] \ar[dd]^{\mu_{P_{C}}}  \\   
  &  (Z_{P_B}, Z_{P_C}) \ar[ld] \ar[rd] &  \\
 \bar{ \bf{O}}_{B, R} &\stackrel{Springer  \ dual}{\leftrightsquigarrow}  & \bar{\bf{O}}_{C, R} 
} 
\end{split}.
\end{align}
The degrees of the maps satisfy the following seesaw property 
\begin{align*}
    \deg \mu_{P_B} \cdot \deg \mu_{P_C} = \#\bar{A}(\bf{O}_{B,R}) = \#\bar{A}(\bf{O}_{C,R}), \label{eq.intro_seesaw}
\end{align*}
where $\bar{A}(-)$ represents Lusztig's canonical quotient. Furthermore, the Stein factorization pair $(Z_{P_B}, Z_{P_C})$ is a mirror pair, i.e., they share the same stringy Hodge numbers.
\end{proposition}

For later use, we will give a concrete description of generic fibers of $\mu_{P_{B/C}}$. Let $\bf{d}_{B/C}=[d_1, \ldots, d_{r}]$ denote the partition of (any nilpotent orbit) $\bf{O}_{B/C}$. The partition $\bf{d}_{B/C}$ defines a Young tableau $\text{Y}(\bf{d}_{B/C}) \subset \mathbb{Z}^2_{>0}$, where $(i, j) \in \text{Y}(\bf{d}_{B/C}) $ if and only if $1 \leq j \leq r$ and $1 \leq i \leq d_j$.

Let $\nu = 1$ for type B and $\nu=0$ for type C. We choose a Jordan basis of $\mathbb{C}^{2n+ \nu}$, $\left\{ e(i,j) \right\}_{(i,j) \in Y(\bf{d}_{B/C})}$, for $X \in \bf{O}_{B/C}$ as follows
\begin{itemize}
    \item $X \cdot e(i,j) = e(i-1, j) $ for $i > 1$, and $X \cdot e(1,j) = 0 $.
    \item $\langle e(i,j), e(p,q)  \rangle_{\nu} \neq 0 $ if and only if $i+p=d_j+1$ and $q=\tau(j)$. Here $\langle -, - \rangle_{\nu}$ is the pairing on $\mathbb{C}^{2n+\nu}$ and $\tau$ is a permutation of $\{1, \ldots, r \}$ such that $\tau^2=\id$, $d_{\tau(j)}=d_j$, and $\tau(j) \neq j$ if $d_j \not\equiv \nu$. 
\end{itemize}
In the following, we choose a $\tau$ such that $\tau(j) = j$ if $d_j \equiv \nu$.

Let $P_{B/C}$ be one of the polarizations of $\bf{O}_{B/C, R}$ with Levi type $(p_1, \ldots, p_k; q)$, i.e., the Levi subalgebra of $\mathfrak{p}_{B/C} = \rm{Lie}(P_{B/C})$ is is isomorphic to $\gl_{p_1} \oplus \cdots \oplus \gl_{p_k} \oplus \mathfrak{g}'$, here $\frak{g}'=\frak{so}_q$ or $\frak{sp}_q$. 

Define
\begin{itemize}
    \item $\operatorname{ord}(p_1, \ldots, p_k; q) = [d_1, d_2, \ldots]$ where 
    \[
    d_i = \# \left\{ j \mid p_j \in \{p_0 :=q, p_1,\ldots, p_k, p_{k+1}:=p_{k}, \ldots, p_{2k}:=p_1 \}, \; p_j \geq i \right\},
    \]
    
    \item $I(P_B) :=\{j \in \mathbb{N} \mid j \equiv d_j \equiv 0, d_j \geq d_{j+1}+2 \}$,
    
    \item $I(P_C) := \{ j \in \mathbb{N} \mid j\equiv d_j \equiv 1, d_j \geq d_{j+1} +2 \}$,
    
    \item Use Jordan basis $\{e(i,j) \}_{(i,j) \in \text{Y}(\bf{d}_B)}$, let 
    \[
    V_{B,j} = \mathbb{C}e \left( \frac{ d_{B,j}+1}{2}, j \right) \oplus \mathbb{C} e\left( \frac{d_{B,j+1}+1}{2}, j+1 \right), \quad \text{for} \quad j \in I(P_B),
    \]
    
    \item Use Jordan basis $\{e(i,j) \}_{(i,j) \in \text{Y}(\bf{d}_C)}$, let 
    \[
    V_{C,j} = \mathbb{C}e \left(\frac{d_{C,j}}{2}, j \right) \oplus \mathbb{C} e \left(\frac{d_{C,j+1}}{2}, j+1 \right), \quad \text{for} \quad j \in I(P_C).
    \]
\end{itemize}

By \cite[Theorem 7.1]{He78}, we have
\begin{proposition}\label{p.Springer-fiber}
    Using the above notations, the generic fiber of the generalized Springer map
    \begin{align*}
        T^*(G/P)  \xrightarrow{\mu_{P_{B/C}}} \bar{\bf{O}}_{B/C, R}
    \end{align*}
    is isomorphic to 
    \begin{align*}
    \begin{cases}
        & \prod_{j \in I(P_B)} \OG(1, V_{B, j}), \quad \text{for} \quad G = \SO_{2n+1},\; P=P_B, \\
        & \prod_{j \in I(P_C)} \OG(1, V_{C, j}), \quad \text{for} \quad G = \Sp_{2n},\; P=P_C.
    \end{cases}    
    \end{align*}
    Thus, $\deg \mu_{P_{B/C}} = 2^{ \# I(P_{B/C})}$.
\end{proposition}

In the following subsections, we extend Proposition~\ref{Prop:Seesaw_prop} and \ref{p.Springer-fiber} to affine Spaltenstein fibers.

\subsection{Affine Spaltenstein fibers for types B and C}

Let $G=\Sp_{2n}$, and let $P_C < \Sp_{2n}$ be a parabolic subgroup with Levi type $(p_1, \ldots, p_r; q)$. This induces a filtration $\Fil^\bullet_{P_C}$ of $\Bbbk^{2n}$ given by
\begin{align*}
    \Bbbk^{2n} = F^0 \supset F^1 \supset F^2 \supset \ldots \supset F^{r} \supset (F^{r})^\perp \supset \ldots \supset (F^{1})^\perp \supset (F^{0})^\perp = 0,
\end{align*}
where the dimensions satisfy
\begin{align*}
    \dim F^r/(F^r)^\perp = q, \quad \dim F^{i-1}/F^i = p_i \quad \text{for} \quad i=1, \ldots, r.  
\end{align*}
Then Definition \ref{def:affine_Spaltenstein_fiber} can be interpreted as:
$$
{\mathbf{Spal}}_{\theta_C, P_C}=\left\{(E_C, \Fil^{\bullet}_{P_C}) \ \left| \
\begin{aligned}
    &E_C \subset \sK^{2n} \;\text{ is a rank 2n lattice}, \\
    &\text{on which the skew-symmetric}\\
    &\text{pairing} \ g_C \; \text{is perfect}, \\
    &g_C(\theta_C v,w)+g_C(v,\theta_C w)=0, \\
    &\Fil^{\bullet}_{P_C} \;\text{is a filtration as above}, \\
    &\theta_C(0)(F^i) \subset F^{i+1}.
\end{aligned}\right.\right\}.
$$
Similarly, we have ${\mathbf{Spal}}_{\theta_B, P_B}$. 

\begin{lemma}\label{lem:fiber_of_LHiggs_P_B/C} 
    Suppose $\theta_B$ and $\theta_C$ satisfy the relation~\eqref{theta_B/C}, and $\chi(\theta_B)$ is generic in $\mathbf{Char}(\bf{O}_B)$. Then, by the above definition, we have
    \begin{equation} \label{diagram_LHiggs}
        \begin{tikzcd}
            {\mathbf{Spal}}_{\theta_B, P_B}\ar[d, "\nu_{P_B}", swap] & {\mathbf{Spal}}_{\theta_C, P_C}\ar[d, "\nu_{P_C}"] \\
            {\mathbf{Gr}}_{\theta_B, \bf{O}_{B, R}} \ar[r, "l_{BC}"] \ar[ru] & {\mathbf{Gr}}_{\theta_C, \bf{O}_{C, R}}
        \end{tikzcd}.
    \end{equation}
    Moreover,
    \begin{enumerate}
        \item For a $(E_C, \theta_C) \in {\mathbf{Gr}}_{\theta_C, \bf{O}_{C, R}}$, we have 
    \begin{align*}
        {\mathbf{Spal}}_{\theta_C, P_C} \cong
        \{ 
        (\sL, I(P_C)_{\sL}) \mid \sL \in {\mathbf{Gr}}_{\theta_C, \bf{O}_{C, R}}
        \},
    \end{align*}
    i.e.,
    \begin{align*}
        \nu_{P_C}^{-1}((E_C, \theta_C)) & \cong \prod_{j \in I(P_C)} \OG(1, V_{C, j}).
    \end{align*}
    Here $V_{C, j}$'s are as in Proposition \ref{p.Springer-fiber}.
    \item For a $(E_B, \theta_B) \in {\mathbf{Gr}}_{ \theta_B, \bf{O}_{B, R}}$, we have
    \begin{align*}
        {\mathbf{Spal}}_{\theta_B, P_B} \cong
        \{ 
        ( \sL, W_\sL, I(P_B)_{W_\sL}) \mid (\sL, W_{\sL}) \in {\mathbf{Gr}}_{\theta_B,\bf{O}_{B, R}}
        \},
    \end{align*}
    i.e.,
    \begin{align*}
        \nu_{P_B}^{-1}((E_B, \theta_B) )  & \cong \prod_{j \in I(P_B)} \OG(1, V_{B, j}).
    \end{align*}
    Here $V_{B, j}$'s are as in Proposition \ref{p.Springer-fiber}.
    \end{enumerate}
\end{lemma}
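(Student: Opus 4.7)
The strategy is to reduce the fibers of $\nu_{P_B}$ and $\nu_{P_C}$ to the classical Spaltenstein fibers at the special point $t = 0$, and then invoke the results in the appendix. By Definition~\ref{def:affine_Spaltenstein_fiber}, a point of $\mathbf{Spal}_{P, \theta}$ consists of a coset $g\widehat{P} \in LG/\widehat{P}$; since $\widehat{P}$ is the parahoric subgroup lying over $P \subset G$ under the reduction $G(\sO) \twoheadrightarrow G$, such a coset encodes a lattice $E = g \cdot G(\sO)$ together with a $P$-flag on the special fiber $E(0)$, while the map $\nu_P$ forgets the flag. Consequently, for any fixed $(E, \theta) \in \mathbf{LH}_{\bf{O}_R, \theta}$, the fiber $\nu_P^{-1}((E, \theta))$ is canonically identified with the variety of $P$-compatible flags on $E(0)$ with respect to the nilpotent element $\theta(0) \in \bf{O}_R$. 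This is precisely the classical Spaltenstein fiber of $P$ over $\theta(0)$.

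Since $\bf{O}_{B, R}$ and $\bf{O}_{C, R}$ are the Richardson orbits whose polarizations include $P_B$ and $P_C$ respectively, the Spaltenstein fibers admit the product descriptions $\prod_{j \in I(P_C)} \OG(1, V_{C,j})$ and $\prod_{j \in I(P_B)} \OG(1, V_{B,j})$ by Proposition~\ref{p.Springer-fiber-typeC} and Proposition~\ref{p.Springer-fiber-typeB} of the appendix. This yields the stated isomorphisms on the fibers. To obtain the parametrizations of the full Spaltenstein sets, we combine this with the BNR-type identifications from Section~\ref{Sec:local_level}: by Theorem~\ref{Thm.type C decomposition}, a point of $\mathbf{LH}_{\bf{O}_{C,R}, \theta_C}$ corresponds to a rank-one $\bar{\sO}_f$-module $\sL$ with $\sigma^* \sL \cong \sL^\vee$, and adjoining the flag datum $I(P_C)_\sL$ gives the description of $\mathbf{Spal}_{P_C, \theta_C}$ as pairs $(\sL, I(P_C)_\sL)$. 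Likewise, by Corollary~\ref{cor:BNR_for_type_B}, a point of $\mathbf{LH}_{\bf{O}_{B,R}, \theta_B}$ corresponds to a pair $(\sL, W_\sL)$, and adjoining $I(P_B)_{W_\sL}$ gives the description of $\mathbf{Spal}_{P_B, \theta_B}$ as triples $(\sL, W_\sL, I(P_B)_{W_\sL})$.

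The main bookkeeping point is checking that the flag datum $\Fil^\bullet_P$ appearing in the definition of $\mathbf{Spal}_{P, \theta}$ really corresponds to a flag on $E(0)$ in a canonical way, independent of the choice of trivialization $E(0) \cong \Bbbk^{2n}$ or $\Bbbk^{2n+1}$. This follows from the fact that the parahoric $\widehat{P}$ is precisely the preimage of $P$ under the mod-$t$ reduction $G(\sO) \twoheadrightarrow G$, so distinct $g\widehat{P}$-cosets with the same lattice class correspond precisely to distinct $P$-flags on the special fiber. The commutativity of the diagram and the existence of the diagonal arrow $\mathbf{LH}_{\bf{O}_{B,R}, \theta_B} \to \mathbf{Spal}_{P_C, \theta_C}$ are not required for the fiber descriptions stated in the lemma; these will be supplied by the explicit canonical lift constructed from the $W_\sL$-data in Section~\ref{s.gp_action}, where the group-action perspective will also clarify the compatibility. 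Thus the hardest input is not the lemma itself but the appendix computation of the Spaltenstein fibers in types B and C with Richardson nilpotent, which is used as a black box here.
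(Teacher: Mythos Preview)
Your argument for the fiber descriptions in (1) and (2) is essentially the same as the paper's: reduce to the classical Spaltenstein fiber over $\theta(0)$ and invoke Propositions~\ref{p.Springer-fiber-typeB} and~\ref{p.Springer-fiber-typeC}, together with Theorem~\ref{Thm.type C decomposition} and Corollary~\ref{cor:BNR_for_type_B} to translate into the line-bundle parametrizations. That part is fine.

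There is, however, a genuine gap. You explicitly defer the construction of the diagonal arrow $\mathbf{LH}_{\bf{O}_{B,R},\theta_B}\to \mathbf{Spal}_{P_C,\theta_C}$ to Section~\ref{s.gp_action}, but that arrow is part of the lemma statement, and the results in Section~\ref{s.gp_action} (in particular Theorem~\ref{Thm.gp_action}) already \emph{use} the diagram~\eqref{diagram_LHiggs}, so your deferral would be circular. The paper supplies the missing step right here: in the Richardson case the blocks $\bf{T}_i$ relevant to the $W$-data have length two (Lemma~\ref{lem:structure_of_Richardson_partition}), so by Corollary~\ref{why factor though} each $W_i\subset Q_a\oplus Q_{a+1}$ is an isotropic line in a two-dimensional orthogonal space. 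Comparing with the definition of $V_{C,j}$ in Proposition~\ref{p.Springer-fiber-typeC}, this isotropic-line datum is exactly a point of $\OG(1,V_{C,j})$ for the corresponding $j\in I(P_C)$, hence a lift of $l_{BC}(E_B,\theta_B)$ to $\mathbf{Spal}_{P_C,\theta_C}$. That is the content of the diagonal arrow, and it should be argued at this point rather than postponed.
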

\begin{proof}
    Notice that if $\bf{O}_R$ is a Richardson orbit with polarization $P<G$, there is a surjective map $\bf{Spal}_{\theta, P} \rightarrow \bf{Gr}_{\theta, \bf{O}_R}$ with fibers isomorphic to the generic Spaltenstein fibers of $T^*(G/P) \rightarrow \overline{\bf{O}}_R$.
    
    For the type B, recall that the map $l_{BC}$ is governed by a set $W$ defined in Proposition~\ref{description of W}. In fact, we have:
     \[
    {\mathbf{Gr}}_{\theta_B, \bf{O}_{B}} \cong
        \{ 
        (\sL, W_{\sL}) \mid \sL \in {\mathbf{Gr}}_{\theta_C, \bf{O}_{C}}.
        \}
    \]
    Here we write $W_{\sL}$ to emphasize the dependence of $\sL$, defined in Definition \ref{def:iota_isotropic}.
    
    Moreover, in Richardson cases, by Lemma~\ref{lem:structure_of_Richardson_partition} $W$ is generated by $W_i$ defined in Corollary~\ref{why factor though}. Finally, by definition of $V_{C,j}$, the map $l_{BC}$ factors through $\bf{Spal}_{\theta_C, P_C}$.
\end{proof}

We now construct a cover 
$$
\widehat{\mathbf{Gr}}_{\theta_C, \bf{O}_{C, R},} \longrightarrow {\mathbf{Spal}}_{\theta_B, P_B}.
$$ 
which can be treated as the ``dual'' of ${\mathbf{Gr}}_{\theta_C, \bf{O}_{C, R}}$. We define the set $\widehat{I}$ as follows
\[
    \widehat{I} = 
    \left\{ j \equiv 0 \;\left|
    \begin{array}{c}
    d_{B, j} \in \bf{d}_{B} \;  \text{such that} \ d_{B, j} \equiv 1, \\
    \text{and} \; d_{B, j-1}>d_{B, j}  
    \end{array} \right.
    \right\}.
\]

\begin{lemma}
    With the above notations, we have $\widehat{I} \supset I(P_B)$ and $\# \widehat{I}(P_B) = c(\bf{d}_{B})$.  
\end{lemma}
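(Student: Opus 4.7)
The plan is to translate both claims into combinatorial statements about the special partition $\bf{d}_B$ and verify them by tracking parities of positions inside each block of the decomposition from Lemma \ref{structure of special partition}.

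First I would set up a position--parity dictionary. By Lemma \ref{structure of special partition} the B1 and B2 blocks all have even length, while the terminal B3 block is the only one of odd length, so every non-terminal block $\bf{T}_i$ begins at an odd position. Inside a B2 block $[\alpha_1, \beta_1^2, \ldots, \beta_m^2, \alpha_2]$ starting at odd position $p+1$, the trailing entry $\alpha_2$ sits at the even position $p+2+2m$, is odd (since $\alpha_2 \equiv 1$), and is strictly smaller than its left neighbour ($\beta_m$ if $m \geq 1$, $\alpha_1$ if $m = 0$), so $p+2+2m \in \widehat{I}$. Conversely, a B1 block $[\alpha,\alpha]$ produces no strict descent, a B3 block contributes only one odd entry at its opening odd position, and the first entry of any block lies at an odd position, so no other indices enter $\widehat{I}$. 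This gives a bijection between $\widehat{I}$ and the set of B2 blocks, hence $\#\widehat{I} = c(\bf{d}_B)$ by Definition \ref{def:c(d_B)}.

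For the inclusion $\widehat{I} \supset I(P_B)$, the plan is to combine the description of $I(P_B)$ coming from the Spaltenstein fiber structure in Proposition \ref{p.Springer-fiber-typeB} with the refined Richardson decomposition in Lemma \ref{lem:structure_of_Richardson_partition}. Under this decomposition the length-two blocks $\bf{T}_1, \ldots, \bf{T}_{l-1}$ and the terminal B3 block $\bf{T}_{k+1}$ force no free isotropic choice in the parabolic $P_B$ (the Levi data there being rigid), so any index contributed by $P_B$ must live inside a genuine B2 block $\bf{T}_l, \ldots, \bf{T}_k$, and more precisely at the position of that block's trailing entry $\alpha_2$, which is exactly the recipe describing $\widehat{I}$.

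The main obstacle is this last step: matching Spaltenstein's labelling of the quadric factors in $\prod_{j \in I(P_B)} \OG(1, V_{B,j})$ with the intrinsic block positions of $\bf{d}_B$ is purely combinatorial but bookkeeping-heavy, and has to be kept consistent with the parity conventions underlying the definition of $\eta_B$ in \eqref{delta_sing}. The counting claim reduces to elementary parity bookkeeping; essentially all the substantive content lies in identifying how the Levi datum of the polarization $P_B$ distributes across the B2 blocks of the Richardson partition.
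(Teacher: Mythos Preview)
Your counting argument is correct and essentially matches the paper's: once you identify $\widehat{I}$ with the set of last positions of the B2 blocks, the equality $\#\widehat{I}=c(\bf{d}_B)$ is immediate from Definition~\ref{def:c(d_B)}.

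Your plan for the inclusion, however, has a genuine gap. You assert that the early length-two blocks $\bf{T}_1,\ldots,\bf{T}_{l-1}$ contribute no index to $I(P_B)$, but this is false. Take $n=2$ and the polarization of $\SO_5$ with Levi type $(2;1)$: the induced partition is $\bf{d}=\operatorname{ord}(2;1)=[3,2]$, so $I(P_B)=\{2\}$ by \eqref{I(P_B)}, while after collapse $\bf{d}_B=[3,1,1]$ decomposes as $\bf{T}_1=[3,1]$ (a length-two B2 block) followed by $\bf{T}_2=[1]$ (type B3). There are no ``genuine'' B2 blocks $\bf{T}_l,\ldots,\bf{T}_k$ at all in the sense of Lemma~\ref{lem:structure_of_Richardson_partition}, yet $I(P_B)$ is nonempty and lands in $\bf{T}_1$. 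So the Spaltenstein-fiber heuristic you invoke does not localize $I(P_B)$ where you claim.

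The remedy is to drop the detour through Proposition~\ref{p.Springer-fiber-typeB} entirely and argue directly from the definitions, as the paper does. For $j\in I(P_B)$ one has $j$ even and $d_j$ even, and the collapse formula \eqref{d_B} gives $d_{B,j}=d_j-1$ odd; moreover $d_{B,j-1}\geq d_{j-1}\geq d_j>d_j-1=d_{B,j}$, irrespective of whether a further collapse occurred at $j-2$. These are exactly the membership conditions for $\widehat{I}$. This is a two-line verification and dissolves the ``bookkeeping-heavy'' obstacle you anticipated.
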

\begin{proof}
    The first statement comes from the definition of $\widehat{I}$ and $I(P_B)$. Notice that $\widehat{I}$ labels the last part in $\bf{T}_i$ of type B2. Then, by Lemma \ref{lem:Lusztig's_quotient}, we conclude.
\end{proof}
Let
    \begin{align*}
        I(P_B)^\complement = \widehat{I} \setminus I(P_B).
    \end{align*}
Then, by Proposition \ref{Prop:Seesaw_prop} and the above lemma, we have $\# I(P_B)^\complement = \# I(P_C) $.

Then we define $\widehat{\mathbf{Gr}}_{\theta_C, \bf{O}_{C, R},}$ to be
\begin{align}\label{tilde_LH}
    \{ 
        ( \sL, W_\sL, I(P_B)_{W_\sL}, I(P_B)^\complement_{{W_\sL}}) \mid ( \sL, W_\sL, I(P_B)_{W_\sL}) \in {\mathbf{Gr}}_{\SO_{2n+1},P_B, \theta_B}
    \},
\end{align}
such that the fiber of $\widehat{\mathbf{Gr}}_{\theta_C, \bf{O}_{C, R},} \stackrel{\nu_{P_C}^{\vee}}{\longrightarrow} {\mathbf{Spal}}_{\theta_B, P_B}$ is $\small  \prod_{j \in I(P_B)^\complement} \OG(1, \widehat{V}_{B, j})$. Here 
\begin{align*}
    \widehat{V}_{B, j} = \Bbbk e \left( \frac{ d_{B,j}+1}{2}, j \right) \oplus \Bbbk e\left( \frac{d_{B,j+1}+1}{2}, j+1 \right), \quad \text{for} \quad j \in I(P_B)^\complement,
\end{align*}
similar to ${V}_{B, j}$ in Proposition \ref{p.Springer-fiber}.

\begin{proposition} \label{prop:mirror_position}
    Then we have the following diagram
    \begin{equation}\label{local diagram}
    \begin{tikzcd}
        \widehat{\mathbf{Gr}}_{\theta_C,\bf{O}_{C, R}} \ar[d, "\nu_{P_C}^\vee"']  & \\
        {\mathbf{Spal}}_{\theta_B, P_B}\ar[d] & {\mathbf{Spal}}_{\theta_C, P_C}\ar[d, "\nu_{P_C}"] \\
        {\mathbf{Gr}_{\theta_B,\bf{O}_{B, R}}} \ar[r, "l_{BC}"] \ar[ru] & {\mathbf{Gr}_{\theta_C,\bf{O}_{C, R}}}
    \end{tikzcd}.
\end{equation}
    where $\widehat{\mathbf{Gr}}_{\theta_C, \bf{O}_{C, R}}$ and ${\mathbf{Gr}}_{\theta_C, \bf{O}_{C, R}}$ only depend on the nilpotent orbit $\bf{O}_{C,R}$. For different choices of dual polarizations $(P_B, P_C)$, we always have 
    \[
        \deg \nu_{P_C}^{\vee} = \deg \nu_{P_C}.
    \]
\end{proposition}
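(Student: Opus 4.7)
The strategy is to verify the diagram \eqref{local diagram} is well-defined, establish the intrinsic (polarization-independent) nature of the top and bottom objects, and then derive the degree equality by counting isotropic lines and invoking the seesaw identity of Proposition~\ref{Prop:Seesaw_prop}.

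I would begin by checking \eqref{local diagram}. The bottom triangle is essentially Lemma~\ref{lem:fiber_of_LHiggs_P_B/C}, which realizes both $\mathbf{Spal}_{P_B,\theta_B}$ and $\mathbf{Spal}_{P_C,\theta_C}$ as tuples extending a point of $\mathbf{LH}_{\mathbf{O}_{B,R},\theta_B}$ or $\mathbf{LH}_{\mathbf{O}_{C,R},\theta_C}$ by isotropic lines in the $2$-dimensional quadratic spaces $V_{B,j}$ or $V_{C,j}$; that $l_{BC}$ factors through $\mathbf{Spal}_{P_C,\theta_C}$ is precisely Corollary~\ref{why factor though} applied to the description of $W_{\sL}$ in Proposition~\ref{description of W}. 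The arrow $\nu_{P_C}^\vee$ is well-defined by construction~\eqref{tilde_LH}. For the independence claim, $\mathbf{LH}_{\mathbf{O}_{C,R},\theta_C}$ involves no polarization in Definition~\ref{def:res_nil_prin_G_Higgs}, while unpacking \eqref{tilde_LH} shows that a point of $\widehat{\mathbf{LH}}_{\mathbf{O}_{C,R},\theta_C}$ is a tuple $(\sL,W_{\sL},\{\ell_j\}_{j\in\widehat{I}})$ with $\ell_j\in\OG(1,\widehat{V}_{B,j})$; since $\widehat{I}$ depends only on $\mathbf{d}_B={}^S\mathbf{d}_C$, the auxiliary splitting $\widehat{I}=I(P_B)\sqcup I(P_B)^\complement$ is used only to encode the factorization $\nu_{P_C}^\vee$, not to define the object itself.

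For the degree equality, generically each factor $\OG(1,\cdot)$ consists of two points, so $\deg\nu_{P_C}=2^{\#I(P_C)}$ and, using $\#\widehat{I}=c(\mathbf{d}_B)$ from Lemma~\ref{lem:Lusztig's_quotient}, $\deg\nu_{P_C}^\vee=2^{c(\mathbf{d}_B)-\#I(P_B)}$. The claim therefore reduces to the cardinality identity $\#I(P_B)+\#I(P_C)=c(\mathbf{d}_B)$. I would extract this from Proposition~\ref{Prop:Seesaw_prop}: identifying the degrees of the generalized Springer maps as $\deg\mu_{P_B}=2^{\#I(P_B)}$ and $\deg\mu_{P_C}=2^{\#I(P_C)}$ using the generic Spaltenstein fiber descriptions in Propositions~\ref{p.Springer-fiber-typeC} and~\ref{p.Springer-fiber-typeB}, the seesaw $\deg\mu_{P_B}\cdot\deg\mu_{P_C}=\#\bar{A}(\mathbf{O}_{B,R})=2^{c(\mathbf{d}_B)}$ then yields the equality on the nose.

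The main subtlety is matching the local count $\#I(P)$ coming from our Spaltenstein fiber description with the degree of the global generalized Springer map $\mu_P$ appearing in the seesaw, ensuring there are no component-multiplicity or rationality issues over $\sK$; since Langlands duality $P_C={}^LP_B$ fixes only Levi types rather than a canonical bijection between the two indexing sets, this combinatorial bookkeeping is where I expect the argument to require the most care.
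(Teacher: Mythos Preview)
Your proposal is correct and follows essentially the same route as the paper: the independence of $\widehat{\mathbf{LH}}_{\mathbf{O}_{C,R},\theta_C}$ reduces to the polarization-independence of $\widehat{I}$, and the degree equality is exactly the cardinality identity $\#I(P_B)^\complement=\#I(P_C)$ already established (via Proposition~\ref{Prop:Seesaw_prop}) immediately before the construction~\eqref{tilde_LH}. The ``subtlety'' you flag at the end is not actually an issue: the identifications $\deg\mu_{P_B}=2^{\#I(P_B)}$ and $\deg\mu_{P_C}=2^{\#I(P_C)}$ are stated verbatim in Propositions~\ref{p.Springer-fiber-typeB} and~\ref{p.Springer-fiber-typeC}, so no additional matching is required.
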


\begin{proof}
    Notice that the set $\widehat{I}$ is not affected by the choice of parabolic subgroups. Thus, so is $\widehat{\mathbf{Gr}}_{\theta_C, \bf{O}_{C, R},}$. The degree of maps being equal results from the construction process.
\end{proof}

\subsection{Group action}\label{s.gp_action}

Let $G=\Sp_{2n}$, and $f \in L\gc_{\bf{O}_{C}}$ where $\bf{O}_C$ is a nilpotent orbit of type C and is not necessary special, and the partition of $\bf{O}_{C} $ is $\bf{d}_{C}$. Suppose $f(\lambda) = \prod_{i} f_i(\lambda)$ satisfies Assumption \ref{main assumption on char}. Then we have $\overline{\sO}_f$ and $\sK_{f}$ as in Definition~\ref{def:Obar}. The following proposition is basically an interpretation of Theorem \ref{Thm.type C decomposition}.

\begin{proposition}\label{p.Stab_C}
    Centralizer $Z_{\Sp_{2n}(\sK)}(\theta_{C})$ acts transitively on $\mathbf{Gr}_{\theta_C,\mathbf{O}_C}$ with the stabilizer $Z_{\Sp_{2n}(\sO)}(\theta_{C})$. 
\end{proposition}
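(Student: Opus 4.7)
The plan is to translate everything to the $\bar{\sO}_f$-module picture provided by Theorem~\ref{Thm.type C decomposition}, after which the statement becomes a norm-class computation in $\sK_f^\times$. Fix an identification $\sK^{2n}\cong \sK_f$ compatible with $\theta_C\leftrightarrow\lambda$ (such an identification exists, as noted in the proof of Theorem~\ref{Thm.type C decomposition}). Under this identification, the skew-symmetric form $g_C$ becomes a $\sigma$-hermitian form on $\sK_f$, where $\sigma:\lambda\mapsto-\lambda$ is the involution on $\sK_f$ provided by Proposition~\ref{type C generic}.

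First, I would identify the relevant centralizers. Since $\theta_C$ is a regular element of $\End_{\sK}(\sK_f)$ (its minimal polynomial equals its characteristic polynomial), its commutant inside $\GL_{2n}(\sK)$ is exactly $\sK_f^\times$, acting by multiplication. Inside this, the subgroup preserving the form $g_C$ is the unitary-type group
\[
Z_{\Sp_{2n}(\sK)}(\theta_C)=\{u\in\sK_f^\times\mid u\cdot\sigma(u)=1\},
\]
and intersecting with the stabilizer of a fixed lattice $E_C$ corresponding to $\sL=\bar{\sO}_f$ gives
\[
Z_{\Sp_{2n}(\sO)}(\theta_C)=\{u\in\bar{\sO}_f^\times\mid u\cdot\sigma(u)=1\}.
\]

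Next, I would prove transitivity. By Theorem~\ref{Thm.type C decomposition}, an element of $\bf{LH}_{\bf{O}_C,\theta_C}$ is a rank-$1$ free $\bar{\sO}_f$-submodule $\sL\subset \sK_f$ with $\sigma^*\sL\cong \sL^\vee$ (the dual being with respect to the trace pairing induced from $g_C$). Given two such lattices $\sL_1,\sL_2$, there is a unique $u\in\sK_f^\times$ with $\sL_2=u\sL_1$. The compatibility condition $\sigma^*\sL_i\cong\sL_i^\vee$ for $i=1,2$ forces $u\sigma(u)\in\bar{\sO}_f^\times$; then, component by component on $\bar{\sO}_f=\prod_i\bar{\sO}_{f,i}$, we can adjust $u$ by an element of $\bar{\sO}_f^\times$ so that $u\sigma(u)=1$. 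On the factors where $\sigma$ swaps two components (the odd-degree pairs in Proposition~\ref{type C generic}), this is automatic: norm-$1$ elements are just pairs $(a,a^{-1})$ and any unit can be rescaled. On the factors fixed by $\sigma$ (even-degree $f_i$), one invokes Hilbert~90 for the quadratic extension $\bar{\sO}_{f,i}/\bar{\sO}_{f,i}^{\sigma}$ to realize any unit norm class as an actual norm, trivializing the class of $u\sigma(u)$ by a unit. This produces the required element of $Z_{\Sp_{2n}(\sK)}(\theta_C)$ sending $\sL_1$ to $\sL_2$.

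Finally, the stabilizer computation is immediate: $u\in Z_{\Sp_{2n}(\sK)}(\theta_C)$ fixes $\sL$ iff $u\sL=\sL$, i.e.\ $u\in\bar{\sO}_f^\times$, which intersected with the norm-$1$ condition is exactly $Z_{\Sp_{2n}(\sO)}(\theta_C)$. The main difficulty will be the normalization step $u\sigma(u)\in\bar{\sO}_f^\times\Rightarrow u\sigma(u)=1$: one must track how the perfectness of $g_C$ on both $E_{C,1}$ and $E_{C,2}$ constrains the $t$-adic valuation of $u\sigma(u)$ on each $\bar{\sO}_{f,i}$ factor, and then apply Hilbert~90 over the discrete valuation rings $\bar{\sO}_{f,i}^{\sigma}$. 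Everything else is a routine application of the $\bar{\sO}_f$-module dictionary set up in Section~\ref{Sec:Type_C_case}.
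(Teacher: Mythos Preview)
Your proof is correct and follows the same route as the paper: pass through Theorem~\ref{Thm.type C decomposition} to the $\bar{\sO}_f$-module picture, identify the centralizer with $\{h\in\sK_f^\times\mid h\cdot\sigma^*h=1\}$, and then read off transitivity and the stabilizer. The paper's proof is in fact terser than yours---it simply asserts that the norm-$1$ group acts transitively on the set of $\sL$'s and that the stabilizer statement is ``easy to check using line bundles''---whereas you spell out the normalization step (adjusting $u$ with $u\sigma(u)\in\bar{\sO}_f^\times$ to one with $u\sigma(u)=1$) via Hilbert~90 on each $\sigma$-stable factor and the obvious argument on the swapped pairs.
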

\begin{proof}
    By the local parabolic Beauville--Narasimhan--Ramanan correspondence in Theorem \ref{Thm.type C decomposition}, we have the following bijection:
    \[
    \mathbf{Gr}_{\theta_C,\mathbf{O}_C} \cong     
       \{ (\sL\hookrightarrow \sK_f, \sigma^*\sL\cong\sL^{\vee}) \mid \sL \text{ is a  rank $1$ free module of} \;\bar{\sO}_f.
     \}
    \]
    where $\sigma$ is the involution $\lambda\mapsto -\lambda$. Now we know that $\{h\in\sK_f^{\times} \mid \sigma^*h\cdot h=1\}$ acts transitively on the right-hand side which actually isomorphic to $Z_{\Sp_{2n}(\sK)}(\theta)$.
    
    The statement for the stabilizer is easy to check using rank $1$ free modules.
\end{proof}
\begin{remark}
    The theorem also holds for $G=\GL_n,\SL_n$ due to the local parabolic BNR correspondence Proposition \ref{Thm.type A decomposition}.
\end{remark}

Notice that $Z_{\Sp_{2n}(\sO)}(\theta_C)$ is not connected, and the component group can be described as follows. Denote by $\bf{Even}$ the set of indexes of even parts in $\bf{d}_C$. Consider the elementary 2-group with a basis consisting of elements $b_j$, $j \in \N$. 

Let
\[
    \sA(\theta_C):=\{b = b_{j_1} + b_{j_2} + \cdots + b_{j_k} \mid j_i \in \bf{Even} \}.
\]

\begin{lemma} \label{lem:aff_comp_gp}
    Denote by  $ Z_{\Sp_{2n}(\sO)}(\theta_C)^{\circ}$ the connected component, then we have
    \begin{equation*}
         Z_{\Sp_{2n}(\sO)}(\theta_{C}) = Z_{\Sp_{2n}(\sO)}(\theta_{C})^\circ \times \sA(\theta_C).
    \end{equation*}
\end{lemma}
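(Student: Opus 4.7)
My plan is to pass through the local BNR correspondence and analyze the component group factor by factor along the irreducible decomposition of $f$. By the proof of Proposition~\ref{p.Stab_C}, the identification
\[
Z_{\Sp_{2n}(\sK)}(\theta_C)\;\cong\;\{h\in\sK_f^\times \mid \sigma^* h\cdot h=1\}
\]
restricts to $Z_{\Sp_{2n}(\sO)}(\theta_C)\cong\{h\in\bar{\sO}_f^\times \mid \sigma^* h\cdot h=1\}$, since preservation of the $\sO$-lattice corresponds precisely to integrality in $\bar{\sO}_f$.

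Next, I would decompose $\bar{\sO}_f^\times=\prod_i\bar{\sO}_{f,i}^\times$ and apply Proposition~\ref{type C generic} in the generic regime where $\bf{d}_C=\bf{deg}_C$: each factor is either self-dual of even degree, on which $\sigma$ restricts to an involution of $\bar{\sO}_{f,i}$, or belongs to a pair $(f_i,f_j)$ of odd-degree factors with $f_j(\lambda)=-f_i(-\lambda)$, on which $\sigma$ swaps the two components. For such a swapped pair, the constraint $h\sigma(h)=1$ determines $h_j=\sigma(h_i)^{-1}$, so the norm-$1$ subgroup is isomorphic to $\bar{\sO}_{f,i}^\times$; since $\bar{\sO}_{f,i}$ is a DVR with residue field $\Bbbk$ and Eisenstein uniformizer $\lambda$, its unit group is an extension of the connected $\Bbbk^\times$ by the pro-unipotent group $1+\lambda\bar{\sO}_{f,i}$, and is therefore connected. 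So swapped pairs contribute only to $Z_{\Sp_{2n}(\sO)}(\theta_C)^\circ$.

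For each self-dual even-degree $f_i$, reduction modulo the maximal ideal $\mathfrak{m}_i$ sends the norm-$1$ subgroup of $\bar{\sO}_{f,i}^\times$ onto $\mu_2=\{\pm1\}\subset\Bbbk^\times$ (since $\sigma$ fixes the residue field $\Bbbk$), with kernel the $1$-units of norm one, which is pro-unipotent and hence connected. The inclusion $\{\pm1\}\hookrightarrow\bar{\sO}_{f,i}^\times$ by scalar units lies in the norm-$1$ subgroup and canonically splits the quotient. Collecting one $\mathbb{Z}/2$ for each index in $\bf{Even}$ yields a subgroup isomorphic to $\cA(\theta_C)$. Since these sign sections take values in the central $\Bbbk^\times\subset\bar{\sO}_{f,i}^\times$, they commute with the identity component and with each other, so the extension splits as a direct product, yielding $Z_{\Sp_{2n}(\sO)}(\theta_C)=Z_{\Sp_{2n}(\sO)}(\theta_C)^\circ\times\cA(\theta_C)$. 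The only step worth pausing on is this centrality check, which is what distinguishes a genuine direct product from a mere semidirect decomposition.
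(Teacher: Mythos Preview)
Your proposal is correct and follows essentially the same route as the paper: both identify $Z_{\Sp_{2n}(\sO)}(\theta_C)$ with the norm-one units $\{h\in\bar{\sO}_f^\times\mid h\cdot\sigma^*h=1\}$, decompose along the irreducible factors $f_i$, and split off a $\{\pm1\}$ from each even-degree self-dual factor via the constant-term map to build $\cA(\theta_C)$. Your treatment is actually a bit more complete than the paper's, which does not spell out the odd-degree swapped pairs or the centrality remark; but the underlying argument is the same.
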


\begin{proof}
    We identify the positive loop group of centralizer $Z_{\Sp_{2n}(\sO)}(\theta_C)$ with the following simple group:
    \[
    \{h\in \bar{\sO}_{f}^{\times} \mid h\cdot \sigma^{*}h=1\}.
    \]
    If we write $h=\prod h_i$ where $h_i\in\bar{\sO}_{f,i}^{\times}$. By Proposition \ref{type C generic}, $\sigma$ preserves each $f_i(\lambda)$ when $e_i=\deg f_i$ is even. 

    Now we have an isomorphism $\bar{\sO}_{f,i}^{\times}\cong \Bbbk[\![\lambda]\!]^{\times}$. When $e_i=\deg f_i$ is even, the action of $\sigma$ on $\Bbbk[\![\lambda]\!]^{\times}$ is still $\lambda\mapsto -\lambda$. Hence it is easy to see that we have:
    \[
    \{h\in \bar{\sO}_{f,i}^{\times} \mid h\cdot \sigma^{*}h=1\}=\{\pm 1\}\times \{h\in \bar{\sO}_{f,i}^{\times} \mid h\cdot \sigma^{*}h=1, h(0)=1\}
    \]
    In particular, we have a natural surjection:
    \[
     Z_{\Sp_{2n}(\sO)}(\theta_C)\twoheadrightarrow \sA(\theta_C).
    \]
    Clearly, it splits.
\end{proof}

If $\mathbf{O}_B$ and $\mathbf{O}_C$ are special and under Springer dual, then we have a map $l_{BC}: \mathbf{Gr}_{\theta_B,\mathbf{O}_B } \rightarrow \mathbf{Gr}_{\theta_C,\mathbf{O}_C}$, as shown in Theorem \ref{Thm.type B decomposition}. By Lemma~\ref{structure of special partition}, we have
\[
    \bf{d}_{B} 
     = [d_{B,1}, d_{B, 2}, \ldots] 
     = [\bf{T}_{1},\bf{T}_{2}, \ldots, \bf{T}_{l-1}, \bf{T}_{l}, \ldots, \bf{T}_{k}, \bf{T}_{k+1}].
\]
Suppose $\bf{T}_{l_1}, \ldots, \bf{T}_{l_m}$ are of type B2. Let 
\[
   \vec{\bf{b}}_j = \{ b_i \mid d_{B, i} \in \bf{T}_{l_j}, \; j = 1, \ldots, m \}.
\]
Notice the Springer dual partition is given by ${}^S\bf{T}_B$ as in Lemma~\ref{structure of special partition}. Then we define a subgroup of $\sA(\theta_C)$ as follows
$$
    \sA(W):=\left\{b \in \sA(\theta_C) \; \left| \; 
    \begin{aligned}
        &\text{vector components consist only those in}\; \vec{\bf{b}}_j \\
        &\text{for}\; j=1,\ldots,m.\\
        &\text{Moreover, vector components in}\; \vec{\bf{b}}_j \\ 
        &\text{appear or vanish simultaneously in}\; b.
    \end{aligned} \right. \right\}.
$$
\begin{proposition} \label{p.Stab_B}
    Centralizer $Z_{\Sp_{2n}(\sK)}(\theta_{C})$ acts transitively on $\mathbf{Gr}_{\theta_B,\mathbf{O}_B}$ with the stabilizer $Z_{\Sp_{2n}(\sO)}(\theta_{C})^\circ \times \sA(W)$.
\end{proposition}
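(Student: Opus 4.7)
The plan is to combine Proposition \ref{p.Stab_C} with Theorem \ref{Thm.type B decomposition}. By Corollary \ref{cor:BNR_for_type_B}, the set $\mathbf{LH}_{\mathbf{O}_{B},\theta_{B}}$ is in bijection with pairs $(\sL, W_{\sL})$, where $\sL\in\mathbf{LH}_{\mathbf{O}_{C},\theta_{C}}$ and $W_{\sL}$ is an $\iota$-isotropic subspace of the auxiliary quotient $Q_{\sL}$ built from $\sL$ in Subsection \ref{subsubsec:C to B}. The construction of $Q_{\sL}$ is functorial in $\sL$, so the $Z_{\Sp_{2n}(\sK)}(\theta_{C})$-action on $\mathbf{LH}_{\mathbf{O}_{C},\theta_{C}}$ lifts compatibly to one on $\mathbf{LH}_{\mathbf{O}_{B},\theta_{B}}$ via the induced identifications $Q_{\sL}\cong Q_{g\sL}$ for $g\in Z_{\Sp_{2n}(\sK)}(\theta_{C})$. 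In view of Proposition \ref{p.Stab_C}, it then suffices to compute the stabilizer of a fixed pair $(\sL, W_{\sL})$ inside $Z_{\Sp_{2n}(\sO)}(\theta_{C})$; transitivity of the B-action will follow by matching the resulting orbit size against the cardinality $2^{e(\mathbf{d}_B)-c(\mathbf{d}_B)}$ of the fiber $l_{BC}^{-1}(\sL)$ recorded in Theorem \ref{Thm.type B decomposition}.

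First I would show that the connected component $Z_{\Sp_{2n}(\sO)}(\theta_{C})^{\circ}$ fixes every $W_{\sL}$. Under the isomorphism $Z_{\Sp_{2n}(\sO)}(\theta_{C})\cong\{h\in\bar{\sO}_{f}^{\times}\mid h\cdot\sigma^{*}h=1\}$ from the proof of Lemma \ref{lem:aff_comp_gp}, elements of the connected component satisfy $h_{i}(0)=1$ for every local factor, while the induced action on each $Q_{i}=\Coker(K_{B,i}\hookrightarrow K_{B,i}^{\vee})$ is multiplication by $h_{i}(0)$. Hence $Z_{\Sp_{2n}(\sO)}(\theta_{C})^{\circ}$ acts trivially on $Q_{\sL}$, and the residual action on $l_{BC}^{-1}(\sL)$ factors through the component group $\cA(\theta_{C})$, which by Lemma \ref{lem:aff_comp_gp} scales each $Q_{i}$ with $e_{B,i}$ even by an independent sign.

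The core of the argument is then to identify the stabilizer of $W_{\sL}$ inside $\cA(\theta_{C})$ with the subgroup $\cA(W)$. Using the block decomposition $W_{\sL}=\bigoplus_{j}(W_{\sL}\cap Q^{\mathbf{T}}_{j})$ of Lemma \ref{equation of W} together with the explicit generators from Proposition \ref{description of W}, the gluing relations $a_{i,k}\Phi^{-d_{i,k}-1}w_{i,k}+b_{i,k+1}w_{i,k+1}$ (with all coefficients nonzero) force any stabilizing sign-element to act with a common sign on every $Q_{p}$ indexed by a single B2 block $\mathbf{T}_{l_{j}}$; for the unique B3 block $\mathbf{T}_{k+1}$ the coupling to the $K_{B,0}$ factor rules out any nontrivial sign change. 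This recovers exactly $\cA(W)$, of order $2^{c(\mathbf{d}_B)}$, so the orbit has size $|\cA(\theta_{C})|/|\cA(W)|=2^{e(\mathbf{d}_B)-c(\mathbf{d}_B)}$, matching the fiber cardinality and simultaneously establishing transitivity and the claimed stabilizer description. The main difficulty lies in this last step: since a sign in $\cA(\theta_{C})$ acts on individual $Q_{p}$-components of the higher-dimensional spaces $F_{=r_{i,k}}$, one must carefully track how the projective vectors $w_{i,k}$—and thereby the isotropy parameters $(a_{i,k},b_{i,k+1})$—transform, and then solve the resulting fixed-point equations using Corollary \ref{vector ponent is not zero} and the isotropy relations from the proof of Proposition \ref{description of W}.
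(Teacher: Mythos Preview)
Your proposal is correct and follows essentially the same approach as the paper. The paper's proof is extremely terse---it notes that the $\cA(\theta_C)$-action on $K_{C,i}$ induces one on $K_{B,i}$ (trivially on $K_{B,0}$), passes to the quotient $Q$, and then invokes Corollaries \ref{why factor though} and \ref{cor:E_W}---while you spell out the same mechanism in more detail via Proposition \ref{description of W} and Corollary \ref{vector ponent is not zero}, including the explicit orbit-size count; these are the same ingredients packaged differently.
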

\begin{proof}
    By Proposition \ref{part 2.1}, $K_{B, i}$ is a submodule of $K_{C,i}$ for $i \neq 0$. Then it induces an action of $\sA(\theta_C)$ on $K_{B,i}$, here we require on $K_{B,0}$, the action is trivial. Recall that
    $$
    0\longrightarrow \oplus_{i= 0}^kK_{B,i}\longrightarrow \oplus_{i= 0}^kK_{B,i}^{\vee} \longrightarrow Q\longrightarrow 0.
    $$
    Then it follows from Corollary \ref{why factor though}, \ref{cor:E_W}.
\end{proof}

Now we come back to the Richardson case. Let $\bf{O}_{C, R}$, $\bf{O}_{B, R}$ be Springer dual Richardson orbits. Let $P_C$ be a polarization of $\bf{O}_{C, R}$. Denote by 
\begin{align}\label{A(P_C)}
    \sA(P_C):=\left\{b \in \sA(\theta_C) \; \left| \; 
    \begin{aligned}
        &b_j, b_{j+1} \; \text{appear or vanish}\\ 
        &\text{simultaneously in}  \; b, \\ 
        &\text{for} \; j \in I(P_C)
    \end{aligned} \right. \right\}
\end{align}
a subgroup of $\sA(\theta_C)$. And let $P_B$ be the polarization (Langlands dual to $P_C$) of $\bf{O}_{B, R}$. Denote by 
\begin{align}\label{A(P_B)}
    \sA(P_B):=\left\{b \in \sA(W) \; \left| \; 
    \begin{aligned}
        &b_j, b_{j+1} \; \text{appear or vanish }\\
        &\text{simultaneously in}\; b, \\ 
        &\text{for} \; j \in I(P_B)
    \end{aligned} \right. \right\}
\end{align}
a subgroup of $\sA(W)$. 

\begin{lemma}\label{lem:A(P_B)}
    $\sA(P_B)$ is generated by $b_j+b_{j+1}$ for $i \in I(P_C)$. 
\end{lemma}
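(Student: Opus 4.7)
The plan is to translate the defining constraints of $\cA(P_B)$ into linear conditions on coordinates over $\F_2$ and match them against those imposed by the proposed generators, invoking the Langlands duality $P_B={}^LP_C$ to relate the two index sets.

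First I would present $\cA(W)$ in coordinates: by construction it is the $\F_2$-subgroup of $\cA(\theta_C)$ spanned by the block indicators $e_j=\sum_{i:\,d_{B,i}\in\bf{T}_{l_j}}b_i$, one for each type-B2 block in the decomposition of $\bf{d}_B$ given by Lemma \ref{structure of special partition}. The extra condition \eqref{A(P_B)} defining $\cA(P_B)$ inside $\cA(W)$ is that the $b_i$- and $b_{i+1}$-coordinates coincide for every $i\in I(P_B)$. Such a condition is automatic whenever $i,i+1$ lie in the same $\vec{\bf b}_j$, and becomes restrictive exactly when the pair $(i,i+1)$ straddles a boundary between consecutive B2 blocks.

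Second I would invoke the Langlands duality $P_B={}^LP_C$ block by block. Inside a single B2 block $\bf{T}_{l_j}=[\alpha_1,\beta_1^2,\ldots,\beta_k^2,\alpha_2]$ whose Springer dual is ${}^S\bf{T}_{l_j}=[\alpha_1-1,\beta_1^2,\ldots,\beta_k^2,\alpha_2+1]$, the indices from $I(P_B)$ couple consecutive equal parts of $\bf{d}_B$ while those from $I(P_C)$ couple consecutive equal parts of $\bf{d}_C$. These two families of pairings interleave within the block and exhaust all the inner positions, giving a precise combinatorial bookkeeping of how $I(P_B)$ and $I(P_C)$ sit inside the set of indices contributing to $\cA(W)$.

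Third I would verify both containment and generation. For each $j\in I(P_C)$, the element $b_j+b_{j+1}$ has its support entirely inside one B2 block, and the interleaving of Step 2 shows that every $I(P_B)$-constraint either avoids the pair $\{j,j+1\}$ or couples it with further indices which are also zero in this element; hence $b_j+b_{j+1}\in\cA(P_B)$. Conversely, any $b\in\cA(P_B)$ decomposes as a sum of the $e_j$'s, and within each block the $I(P_B)$-collapses identify coordinates into equivalence classes whose representatives are exactly the adjacent pairs $(j,j+1)$ with $j\in I(P_C)$; linear independence of the $b_j+b_{j+1}$'s is immediate from their disjoint supports.

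The main obstacle will be making Step 2 precise: one must carefully track how the Langlands duality on Richardson polarizations translates into a concrete interleaving of $I(P_B)$ and $I(P_C)$ inside each B2 block, and in particular confirm that $b_j+b_{j+1}$ for $j\in I(P_C)$ actually lies in $\cA(W)$ (i.e.\ that its support is realizable by a combination of block indicators modulo the $I(P_B)$-identifications). Once this interlocking is pinned down, the remainder reduces to a dimension count, consistent with the seesaw identity \eqref{eq.intro_seesaw} via Lemma \ref{lem:Lusztig's_quotient}.
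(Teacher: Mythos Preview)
Your Step 1 is fine, but Step 2 rests on a wrong structural picture, and this is exactly the point you flag as the ``main obstacle.'' You claim that indices in $I(P_B)$ couple consecutive equal parts of $\bf{d}_B$ \emph{inside} a B2 block and that $I(P_B)$ and $I(P_C)$ interleave within each block. In fact, for every $i\in I(P_B)$ the collapse \eqref{d_B} makes both $d_{B,i}$ and $d_{B,i+1}$ odd; since odd parts occur only as the $\alpha$'s at the ends of blocks in Lemma~\ref{structure of special partition}, the pair $(i,i+1)$ always straddles a boundary between consecutive blocks (B2--B2 or B2--B3). So the $I(P_B)$ constraints never act inside a block: they only glue block indicators $e_j$ to one another, and glue the last B2 indicator to the (absent) B3 contribution, forcing certain $e_j$'s to vanish.

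The complementary fact you need, and do not have, is where the $I(P_C)$ pairs sit. For $j\in I(P_C)$ one has $j$ odd with $j+1\le q-1$ and $d_{B,j}>d_{B,j+1}$ both odd; in the Richardson setting (Lemma~\ref{lem:structure_of_Richardson_partition}) the first $l-1$ blocks are length-two, so $\{j,j+1\}$ is already an \emph{entire} short B2 block. Hence $b_j+b_{j+1}$ is itself a block indicator $e$, which immediately gives $b_j+b_{j+1}\in\cA(W)$ --- the containment you were worried about is automatic, not something to be extracted from an interleaving. With this corrected picture the proof becomes: head B2 indicators are exactly the $b_j+b_{j+1}$ with $j\in I(P_C)$, while the $I(P_B)$ gluing kills the tail B2 indicators; the paper closes the combinatorial bookkeeping by invoking \cite[Proposition~3.5]{FRW22}.
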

\begin{proof}
    By the definition of $I(P_B)$, it can be shown that for $i \in I(P_B)$, $b_i$ is the last part of certain $\bf{T}_{l_j}$ of type B2, and $b_{i+1}$ is the beginning part in the adjacent type B2 $\bf{T}_{l_j+1}$. Moreover, the beginning part in the only type B3 will be $b_{i+1}$ for some $i \in I(P_B)$ if $I(P_B) \neq \emptyset$. Then it follows from the definition of $\sA(W)$ and \cite[Proposition 3.5]{FRW24}.
\end{proof}

Then we have

\begin{theorem}\label{Thm.gp_action}
Centralizer $Z_{\Sp_{2n}(\sK)}(\theta_{C})$ acts transitively on each space in the following diagram
    \begin{equation*}
         \begin{tikzcd}
        \widehat{\mathbf{Gr}}_{\theta_C,\bf{O}_{C, R}} \ar[d, "\nu_{P_C}^\vee"']  & \\
        {\mathbf{Spal}}_{\theta_B, P_B}\ar[d] & {\mathbf{Spal}}_{\theta_C, P_C}\ar[d, "\nu_{P_C}"] \\
        {\mathbf{Gr}_{\theta_B,\bf{O}_{B, R}}} \ar[r, "l_{BC}"] \ar[ru] & {\mathbf{Gr}_{\theta_C,\bf{O}_{C, R}}}
    \end{tikzcd},
    \end{equation*}
and we have
\begin{itemize}
    \item[1.] The fiber of $\nu_{P_C}$ is a $\sA(\theta_C)/\sA(P_C)$ torsor,
    \item[2.] The fiber of $l_{BC}$ is a $\sA(\theta_C)/\sA(W)$ torsor,
    \item[3.] The fiber of $\nu_{P_B}$ is a $\sA(W)/\sA(P_B)$ torsor,
    \item[4.] The fiber of $\nu_{P_C}^\vee$ is a $\sA(P_B)$ torsor. 
\end{itemize}
Moreover, 
\[
    (\sA(W)/\sA(P_B)) \times (\sA(\theta_C)/\sA(P_C)) \cong \bar{A}(\bf{O}_{C,R}).
\]
Here $\bar{A}(\bf{O}_{C,R})$ is the Lusztig's canonical quotient.
\end{theorem}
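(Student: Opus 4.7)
The plan is to bootstrap from the bottom row of the diagram, where transitivity with prescribed stabilizers has already been established in Propositions \ref{p.Stab_C} and \ref{p.Stab_B}, up to the Spaltenstein rows using the fiber descriptions in Lemma \ref{lem:fiber_of_LHiggs_P_B/C}. Item (2) is immediate: comparing the stabilizers on $\bf{LH}_{\bf{O}_{B,R},\theta_B}$ and $\bf{LH}_{\bf{O}_{C,R},\theta_C}$, the fiber of $l_{BC}$ is
\[
\frac{Z_{\Sp_{2n}(\sO)}(\theta_C)}{Z_{\Sp_{2n}(\sO)}(\theta_C)^\circ \times \cA(W)} \cong \frac{\cA(\theta_C)}{\cA(W)}
\]
by Lemma \ref{lem:aff_comp_gp}. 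For items (1), (3), and (4) it then suffices to establish transitivity of the positive loop centralizer on each Spaltenstein fiber and to compute the stabilizer of a chosen flag.

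For $\nu_{P_C}$ the fiber is $\prod_{j \in I(P_C)} \OG(1, V_{C,j})$, where each $V_{C,j}$ is spanned by a pair of basis vectors lying in the special fibers $K_{C,j}(0)$ and $K_{C,j+1}(0)$. A generator $b_i \in \cA(\theta_C)$ acts by $-1$ on $K_{C,i}(0)$ and trivially elsewhere, so $b_j$ alone reverses the isotropic line in $V_{C,j}$ while the diagonal combination $b_j + b_{j+1}$ preserves it. Hence the stabilizer of a generic flag tuple is exactly $\cA(P_C)$ as in \eqref{A(P_C)}, yielding (1). For $\nu_{P_B}$ the argument is structurally identical using Corollary \ref{cor:E_W}: when $j \in I(P_B)$ the two basis vectors spanning $V_{B,j}$ lie in two \emph{adjacent} type-B2 blocks $\vec{\bf{b}}_\ell, \vec{\bf{b}}_{\ell+1}$, so invariance of the isotropic line under $b \in \cA(W)$ forces these two blocks to appear simultaneously in $b$; by Lemma \ref{lem:A(P_B)} this cuts $\cA(W)$ down to precisely $\cA(P_B)$, giving (3). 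For $\nu_{P_C}^\vee$ the extra indices $j \in I(P_B)^\complement$ satisfy the opposite property: both basis vectors of $\widehat{V}_{B,j}$ lie \emph{inside} a single type-B2 block, so every nontrivial element of $\cA(P_B)$ flips at least one such line, and the rank identity $\#I(P_B)^\complement = \#I(P_C)$ from Proposition \ref{prop:mirror_position} forces $\cA(P_B)$ to act simply transitively on the extra fiber, giving (4).

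For the final identification, both sides are elementary abelian 2-groups, so it suffices to produce a natural map of the correct order. By construction $\cA(W)/\cA(P_B)$ has one independent generator for each equivalence class of type-B2 blocks under the pairings in $I(P_B)$, and $\cA(\theta_C)/\cA(P_C)$ has one for each equivalence class of even-part indices under $I(P_C)$; Proposition \ref{Prop:Seesaw_prop} together with Lemma \ref{lem:Lusztig's_quotient} yields the numerical identity
\[
|\cA(W)/\cA(P_B)| \cdot |\cA(\theta_C)/\cA(P_C)| = \deg \mu_{P_B} \cdot \deg \mu_{P_C} = \#\bar A(\bf{O}_{C,R}),
\]
and an explicit bijection between these two sets of residual generators and the type-B2 block presentation of $\bar A(\bf{O}_{C,R})$ furnishes the isomorphism. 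The main obstacle is the dichotomy that separates (3) from (4): for each $j \in \widehat I$ one must correctly decide whether $j$ and $j+1$ straddle two distinct type-B2 blocks (the case $j \in I(P_B)$, giving the bridging condition that defines $\cA(P_B)$) or sit inside the same block (the case $j \in I(P_B)^\complement$, on which $\cA(P_B)$ acts freely). This combinatorial discrimination is forced by the Richardson presentation of $\bf{d}_B$ via Lemma \ref{structure of special partition}, and it is precisely this structural dichotomy that geometrizes Lusztig's canonical quotient.
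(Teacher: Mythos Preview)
Your approach is essentially the paper's own: bootstrap transitivity from Propositions \ref{p.Stab_C} and \ref{p.Stab_B}, compute stabilizers on the Spaltenstein layers through the $\OG(1,V)$ description of Lemma \ref{lem:fiber_of_LHiggs_P_B/C}, and close with the seesaw identity from Proposition \ref{Prop:Seesaw_prop}. Items (1)--(3) and the product formula are handled correctly and match the paper.

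There is, however, a genuine misstatement in your treatment of item (4). You assert that for $j\in I(P_B)^{\complement}$ ``both basis vectors of $\widehat{V}_{B,j}$ lie inside a single type-B2 block''. This is false. By the parity count implicit in Lemma \ref{lem:structure_of_Richardson_partition}, every block of $\bf{d}_B$ begins at an odd index, so every $j\in\widehat I$ is the \emph{last} index of a type-B2 block $\bf T_\ell$, and $j+1$ is always the \emph{first} index of the next block $\bf T_{\ell+1}$---never the same block. The actual dichotomy is different: when $j\in I(P_B)$ the neighbouring block $\bf T_{\ell+1}$ is again type B2 and the $\cA(P_B)$-condition forces $\bf T_\ell$ and $\bf T_{\ell+1}$ to flip together (so the line in $V_{B,j}$ is preserved), whereas for $j\in I(P_B)^{\complement}$ either $\bf T_{\ell+1}$ is not of type B2 (so $\cA(W)$ acts trivially on row $j+1$) or no $I(P_B)$-relation links the two blocks; in either case some element of $\cA(P_B)$ flips row $j$ relative to row $j+1$. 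Once this is stated correctly, your freeness-plus-cardinality argument (via $\#I(P_B)^{\complement}=\#I(P_C)$ and $|\cA(P_B)|=2^{c(\bf d_B)-\#I(P_B)}$) goes through and yields the torsor statement. The paper sidesteps this combinatorial verification by appealing directly to the construction in \eqref{tilde_LH} and to \cite[Proposition 3.5]{FRW22}.
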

\begin{proof}
The second follows from Proposition \ref{p.Stab_B}. By Diagram  \eqref{local diagram} and Proposition \ref{p.Stab_C}, we know that $Z_{\Sp_{2n}(\sO)(\theta_{C})}$ acts transitively on ${\mathbf{Spal}}_{\theta_C, P_C}$, ${\mathbf{Gr}}_{\theta_B,\bf{O}_{B, R}}$, ${\mathbf{Spal}}_{\theta_B, P_B}$, and $\widehat{\mathbf{Gr}}_{\theta_C,\bf{O}_{C, R}}$.  We only need to describe the stabilizers for each action. By Theorem \ref{Thm.type C decomposition}, $E_C = \oplus_{i=1}^k K_{C,i}$, where $E_{C,i}=\Ker f_{C,i}(\theta_C)$, and $\sA(\theta_C)$ acts on $K_{C,i}$. Then the stabilizer for the action on ${\mathbf{Spal}}_{\theta_C, P_C}$ (resp. ${\mathbf{Spal}}_{\theta_B, P_B}$) is $Z_{\Sp_{2n}(\sO)}(\theta_{C})^\circ \times \sA(P_C)$ (resp. $Z_{\Sp_{2n}(\sO)}(\theta_{C})^\circ \times \sA(P_B)$) by Lemma \ref{lem:fiber_of_LHiggs_P_B/C} . Hence we have the first and the third arguments. The fourth one and the product formula are due to the construction of $\widehat{\mathbf{Gr}}_{\theta_C,\bf{O}_{C, R}}$ in \eqref{tilde_LH}, Proposition \ref{Prop:Seesaw_prop} and \cite[Proposition 3.5]{FRW24}.
\end{proof}

\section{Moduli space associated with the nilpotent orbit closure} \label{S.JM_moduli_space}

\subsection{Moduli space of parabolic Higgs bundles}\label{subs:moduli of parabolic}
In this section, we shift our focus to the moduli space of parabolic Higgs bundles.

Let $\Sigma$ be a smooth projective algebraic curve of genus $g$, let $\{t_1, t_2, \ldots, t_l\}$ be a finite set of different points of $\Sigma$, and let $D = t_1 + t_2 + \ldots + t_l$ be the corresponding effective divisor. We always require that $2g-2+l>0$.

Let $G$ be a reductive group, and $P_i < G$, $i=1, \ldots, l$ be the parabolic subgroups. 
\begin{definition}
    A parabolic $G$-Higgs bundle over $\Sigma$ associated with $P_i$'s is a tuple $(\sE, \theta, \{\sE_{P_i}\}_{i=1}^l)$:
\begin{itemize}
    \item $\sE$ is a principal $G$-bundle over $\Sigma$,
    \item $\theta$ is a section of $\Ad(\sE) \otimes \omega_{\Sigma}(D):= \sE \times_{G, \Ad} \g \otimes \omega_{\Sigma}(D)$,
    \item $\sE_{P_i}$ is a $P_i$-reduction of $\sE$ at $t_i$,
\end{itemize}
such that $\Res_{t_i} \theta \in \sE_{P_i} \times_{P_i, \Ad} \gn(P_i)$, where $\gn(P_i)$ is the nilradical of the Lie algebra of $P_i$. 
\end{definition}

For simplicity, we consider the case where the divisor $D$ consists of a single point $x \in \Sigma$. This allows us to describe the parabolic Higgs bundle in terms of a single filtration.

If $G=\Sp_{2n}$, let $P_C < \Sp_{2n}$ be a parabolic subgroup with Levi type $(p_1, p_2, \ldots, p_k; q)$. The $\Sp_{2n}$-Higgs bundle associated with $P_C$ is equivalent to a quadruple $(E_C, g_C,\Fil^\bullet_{P_C}, \theta_C)$:

\begin{itemize}
    \item $E_C$ is a rank-$2n$ bundle over $\Sigma$,
    \item $\theta_C$ is a Higgs field that $\theta_C: E_C \rightarrow E_C \otimes \omega_\Sigma (x)$, 
    \item $g_C$ is a non-degenerate skew-symmetric pairing satisfying $g_C(\theta_C v,w)+g_C(v,\theta_C w)=0$,
    \item $\Fil^\bullet_{P_C}$ is a filtration of $E_C|_x$, defined as: $E_C|_x=F^0 \supset F^1 \supset \ldots \supset F^k \supset (F^k)^\perp \supset \ldots \supset (F^1)^\perp \supset (F^0)^\perp = 0 $, with $\dim F^{i-1}/F^i = p_i$ for $i=1, \ldots, k$, and $\dim (F^k)^\perp/F^k = q$,
    \item $\Res_x \theta_C$ strongly preserves the filtration, i.e., $\Res_x \theta_C (F^i) \subset F^{i+1}$.
\end{itemize}

The $\operatorname{par-\mu}$ stability is defined as follows. Let
\begin{align} \label{pardeg}
    \operatorname{par-deg} E_C := \deg E_C + \sum_{i=1}^{k} \alpha_{i} \cdot p_i + \alpha_{0} \cdot q, 
\end{align}
be the \emph{parabolic degree} for $\vec{\alpha}_C=(\alpha_{0}, \alpha_{1}, \ldots, \alpha_{k})$, $0\le \alpha_{0}<\alpha_{1}<\ldots< \alpha_{k}\le 1$. Hence, it is also called \emph{$\vec{\alpha}$-degree}. And the \emph{parabolic slop} or \emph{$\vec{\alpha}$-slope} is defined as follows
\begin{align*}
    \operatorname{par-\mu} E_C = \frac{\operatorname{par-deg} E_C}{\rk E_C}.
\end{align*}

We say that a parabolic $\Sp_{2n}$-Higgs bundle $(E_C, g_C,\Fil^\bullet_{P_C}, \theta_C)$ is \emph{stable} (resp. semi-stable) if, for any proper $\theta_C$-invariant isotropic subbundle $E'_C \subset E_C$, the following inequality holds
\[
    \operatorname{par-\mu} E'_C < \operatorname{par-\mu} E_C \quad (resp. \ \leq),
\]
the parabolic structure on $E'_C$ is inherited from $\Fil^\bullet_{P_C}$.

For simplicity, denote by $\bf{Higgs}_{P_C}$ the moduli space of stable Higgs bundles. Similarly, we have $\bf{Higgs}_{P_B}$, but since $\pi_1(\SO_{2n+1}) = \Z_2$, it has two connected components, which are denoted by $\bf{Higgs}_{P_B}^{+}$ and $\bf{Higgs}_{P_B}^{-}$. Please refer to Proposition \ref{p.h_P_B} to see how to distinguish these two components.

\subsection{Construction via Jacobson--Morozov resolution}\label{Sec:via_JM}
In this subsection, we construct the moduli space associated with any nilpotent orbit closure. Since such closures can be highly singular, we adopt the Jacobson–Morozov resolution for our construction:

Consider a nilpotent element $X \in \g $ ($\g = \mathrm{Lie}(G)$, where $G$ is any complex semisimple Lie group). There exists a standard triple $\{ X, H, Y \} \simeq \mathfrak{sl}_2$. The action of $\ad_H$ on $\g$ induces the decomposition
\begin{align*}
    \g = \oplus_{i \in \mathbb{Z}} \g_i.
\end{align*}
Define the parabolic subalgebra $\mathfrak{p}_{JM} = \oplus_{i \geq 0} \g_i$ and $\mathfrak{n}_2 = \oplus_{i \geq 2} \g_i$. Let $P_{JM} < G$ be the parabolic subgroup with Lie algebra $\mathfrak{p}_{JM}$. The map
\begin{align*}
    G \times_{P_{JM}} \mathfrak{n}_2 \longrightarrow \overline{\bf{O}}_X
\end{align*}
is known as the \emph{Jacobson--Morozov resolution}. 

The space $\{(\sE_P, \Res_x \theta )\}$ is isomorphic to $G \times_P \frak{n} = T^*(G/P)$. Taking $P=P_{JM}$, we have the inclusion
\[
    G\times_{P_{JM}} \frak{n_2} \hookrightarrow G \times_{P_{JM}} \frak{n}.
\]

\begin{definition}
    A $\overline{\bf{O}}$-Higgs bundle over $\Sigma$ is a triple $(\sE, \theta, \sE_{P_{JM}})$ as parabolic $G$-Higgs bundle associated with $P_{JM}$, but we require $\Res_{x}\theta \in \sE_{P_{JM}}\times_{P_{JM}, \Ad} \frak{n}_2$.
\end{definition}

Requiring the residue of  $\theta$ to lie in $\mathfrak{n}_2$ does not affect the stability of $(E_C,g_C, \theta_C,\Fil^{\bullet}_{P_{C,JM}})$. Therefore, we have the following result:

\begin{proposition}
    The moduli space of stable $\overline{\bf{O}}_C$-Higgs bundles ($\overline{\bf{O}}_B$-Higgs bundles) exists. It is denoted by  $\bf{Higgs}_{\overline{\bf{O}}_C} $ (resp.  $\bf{Higgs}_{\overline{\bf{O}}_B} $) and forms a closed subvariety of $\bf{Higgs}_{P_{C, JM}}$ (resp. $\bf{Higgs}_{P_{B, JM}}$). As before, $\bf{Higgs}_{\overline{\bf{O}}_B}$ has two connected components, $\bf{Higgs}_{\overline{\bf{O}}_B}^{+}$ and $\bf{Higgs}_{\overline{\bf{O}}_B}^{-}$, which can be distinguished using Proposition \ref{L_BC}.
\end{proposition}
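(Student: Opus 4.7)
The plan is to realize $\bf{Higgs}_{\overline{\bf{O}}_C}$ as a closed subvariety of the already-constructed moduli of parabolic Higgs bundles $\bf{Higgs}_{P_{C,JM}}$ by cutting out the residue condition ($\heartsuit$). The ambient moduli exists as a quasi-projective variety by the general theory recalled in Subsection~\ref{subs:moduli of parabolic}. The universal residue assigns to a parabolic Higgs bundle the conjugacy class of $\Res_x\theta$ in the nilradical $\mathfrak{n}_{P_{C,JM}}$ modulo the $P_{C,JM}$-action, and this assembles into a morphism of stacks (the fiber of the residue at each bundle is algebraic and varies algebraically in families). Since $\mathfrak{n}_2\subset\mathfrak{n}_{P_{C,JM}}$ is a $P_{C,JM}$-stable Zariski-closed subscheme, cut out by the linear equations $X(F^j)\subset F^{j+2}$ that define the Jacobson--Morozov grading, its preimage $[\mathfrak{n}_2/P_{C,JM}]\subset [\mathfrak{n}_{P_{C,JM}}/P_{C,JM}]$ is closed, and so is its preimage under the residue map.

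Next I would check that this closed subscheme has the right moduli interpretation. The key point, already flagged above, is that the parabolic stability condition only involves $\theta$-invariant isotropic subbundles and their $\vec\alpha$-slopes as in \eqref{pardeg}; none of these quantities see the residue. Consequently, a parabolic $P_{C,JM}$-Higgs bundle that satisfies ($\heartsuit$) is stable as an $\overline{\bf{O}}_C$-Higgs bundle if and only if it is stable as a $P_{C,JM}$-parabolic Higgs bundle. Therefore the stable locus of the closed subscheme constructed in the previous paragraph coincides with the functor of stable $\overline{\bf{O}}_C$-Higgs bundles, and taking its coarse moduli yields $\bf{Higgs}_{\overline{\bf{O}}_C}$ as a closed subvariety of $\bf{Higgs}_{P_{C,JM}}$. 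The construction of $\bf{Higgs}_{\overline{\bf{O}}_B}\subset\bf{Higgs}_{P_{B,JM}}$ is identical, using the type~B Jacobson--Morozov filtration in place of \eqref{eq:JM filtration}.

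For the two-component statement in type~B, the argument is purely topological: principal $\SO_{2n+1}$-bundles over the Riemann surface $\Sigma$ are classified up to continuous deformation by their second Stiefel--Whitney class $w_2\in H^2(\Sigma,\Z_2)\cong\Z_2$, reflecting $\pi_1(\SO_{2n+1})=\Z_2$. This discrete invariant is locally constant on the moduli and is not affected by the Higgs field, the parabolic reduction at $x$, or the residue condition ($\heartsuit$). Hence $\bf{Higgs}_{\overline{\bf{O}}_B}$ decomposes as a disjoint union along the value of $w_2$, giving the two components $\bf{Higgs}_{\overline{\bf{O}}_B}^{+}\sqcup\bf{Higgs}_{\overline{\bf{O}}_B}^{-}$, and both components are nonempty (they will be distinguished concretely in Proposition~\ref{L_BC}).

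The main obstacle is not the existence argument, which is standard, but the verification that the scheme-theoretic residue map is algebraic in families and that its preimage of $[\mathfrak{n}_2/P_{C,JM}]$ inherits the correct moduli structure rather than merely the correct set of closed points. I would address this by working directly with the universal parabolic Higgs bundle on $\bf{Higgs}_{P_{C,JM}}\times\Sigma$: restricting to $\{x\}$, taking the residue section of the associated $\mathfrak{n}_{P_{C,JM}}$-fiber bundle, and pulling back the closed embedding $\mathfrak{n}_2\hookrightarrow\mathfrak{n}_{P_{C,JM}}$ fiberwise. This reduces the question to a standard closed-condition-in-a-fiber-bundle argument, which is harmless once the universal residue has been constructed.
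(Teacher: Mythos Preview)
Your proposal is correct and follows the same line as the paper, which in fact offers only the one-sentence remark preceding the proposition: ``requiring the residue of $\theta$ lying in $\mathfrak{n}_2$ does not affect the stability of $(E_C,g_C,\theta_C,\Fil^{\bullet}_{C,JM})$.'' You have simply fleshed out that observation with the standard closed-condition and universal-family arguments, and added the topological explanation for the two components in type~B (which the paper defers entirely to Proposition~\ref{L_BC}).
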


Now, we calculate the dimensions of relevant moduli spaces. Here, we omit the notation indicating type B or type C for simplicity since the methods are the same. Following \cite[2.9.2]{BBAMY}\footnote{In \cite{BBAMY}, Moy-Prasad filtrations are considered which is more general than parabolic cases here. }, for a given $(E, \Fil^{\bullet}_{P_{JM}},\theta)\in \bf{Higgs}_{\overline{\bf{O}}}$, its infinitesimal deformation in $\bf{Higgs}_{P_{JM}}$ (resp. in $\bf{Higgs}_{\overline{\bf{O}}}$) is controlled by a complex $\sK_{P_{JM}}^{-1}\xrightarrow{\ad\theta}\sK_{P_{JM}}^{0}\otimes\omega(x)$ (resp. $\sK_{\overline{\bf{O}}}^{-1}\xrightarrow{\ad\theta}\sK_{\overline{\bf{O}}}^{0}\otimes\omega(x)$), which fits into the following diagrams:
\[
\small\begin{tikzcd}       
\sK_{P_{JM}}^{-1}\ar[d,"\ad(\theta)"]\ar[r, hook]&E(\mathfrak{g})\ar[d,"\ad(\theta)"]\ar[r, two heads]&(\mathfrak{g}/\mathfrak{p}_{JM})_x\\
\sK_{P_{JM}}^{0}\otimes\omega(x)\ar[r, hook]& E(\mathfrak{g})\otimes\omega(x)\ar[r, two heads] &(\mathfrak{g}/\mathfrak{n})_x
\end{tikzcd},
   \small \begin{tikzcd}       \sK_{\overline{\bf{O}}}^{-1}\ar[d,"\ad(\theta)"]\ar[r,"\cong"]&\sK_{P_{JM}}^{-1}\ar[d,"\ad(\theta)"]\\  \sK_{\overline{\bf{O}}}^{0}\ar[r,hook]&\sK_{P_{JM}}^{0}\ar[r]&(\mathfrak{n}/\mathfrak{n}_2)_x  
    \end{tikzcd}.
    \]
where $E(\mathfrak{g})$ is the adjoint bundle, and see \cite[\S 2.3]{Wang23} for more details. Suppose now that $(E, \Fil^{\bullet}_{P_{JM}},\theta)$ is a smooth point in $\bf{Higgs}_{P_{JM}}$, i.e., $(E, \Fil^{\bullet}_{P_{JM}},\theta)$ is stable, and the obstruction $\mathbb{H}^{1}(\sK_{P_{JM}}^{-1}\xrightarrow{\ad\theta}\sK_{P_{JM}}^{0})=0$.  Then, by the exact sequence between the tangent complexes, we know that
    \[
    \mathbb{H}^{1}(\sK_{\overline{\bf{O}}}^{-1}\xrightarrow{\ad\theta}\sK_{\overline{\bf{O}}}^{0})=0.
    \]
Hence $(E, \Fil^{\bullet}_{P_{JM}},\theta)$ when is also a smooth point of $\bf{Higgs}_{\overline{\bf{O}}}$. Now we can calculate the dimension of $\bf{Higgs}_{\overline{\bf{O}}}$. Notice that
    \[
    \dim \mathbb{H}^{1}(\sK_{P_{JM}}^0\xrightarrow{\ad\theta}\sK_{P_{JM}}^{1})=\dim \bf{Higgs}_{P_{JM}}
    \]
By the smoothness, we have
    \[
    \dim\bf{Higgs}_{P_{JM}}-\dim \bf{Higgs}_{\overline{\bf{O}}}=
    \dim \mathfrak{n}/\mathfrak{n}_2.
    \]
Since we have:
    \[
    \dim \bf{Higgs}_{P_{JM}}=(2g-2)\dim G+\dim T^{*}(G/P_{JM})
    \]
then 
    \begin{equation}\label{eq:dim of moduli}
    \dim \bf{Higgs}_{\overline{\bf{O}}}=(2g-2)\dim G+\dim\overline{\bf{O}},
    \end{equation}
because $\dim\overline{\bf{O}}=\dim G/P_{JM}+\dim\mathfrak{n}_2$.

\subsection{New geometric interpretation of Springer duality}
The Hitchin fibration provides a powerful tool to study the geometry of moduli spaces of Higgs bundles. By assigning to a Higgs field the coefficients of its characteristic polynomial, we obtain the Hitchin map for the moduli space of $\overline{\bf{O}}_C$-Higgs bundles:
\begin{align*}
    h_{\overline{\bf{O}}_C}: \bf{Higgs}_{ \overline{\bf{O}}_C} \longrightarrow {{\bf{H}_{\overline{\bf{O}}_C} }}
\end{align*}

\begin{lemma}
Let $\bf{d}_C =[d_{C,1}, \ldots, d_{C,r}]$ denote the partition corresponding to the nilpotent orbit $\bf{O}_C$. Define the sequence $\eta_C = \{\eta_{C, 2i}\}_{i=1}^n$ as follows
\begin{align} \label{delta_sing}
    \eta_{C,2i}=\min \left\{ j \mid \sum_{l=1}^j d_{C, l}\geq 2i\right\}.
\end{align}
Then the Hitchin base $\bf{H}_{\overline{\bf{O}}_C}$ is explicitly given by
\begin{align*}
    {{\bf{H}_{\overline{\bf{O}}_C} }} = \bigoplus_{i=1}^n \operatorname{H}^0(\Sigma, \omega_{\Sigma}^{2i} \otimes (2i - \eta_{C, 2i})x ).
\end{align*}
\end{lemma}
\begin{proof}
${{\bf{H}_{\overline{\bf{O}}_C} }}$ lies in $\bigoplus_{i=1}^n \operatorname{H}^0(\Sigma, \omega_{\Sigma}^{2i} \otimes (2i - \eta_{C, 2i})x )$ is guaranteed by Proposition~\ref{order of coe}. Since the Hitchin map is proper, by Proposition~\ref{Prop:fiber_JM_moduli}, we conclude.
\end{proof}

Similarly, we have moduli space of stable $\overline{\bf{O}}_B$-Higgs bundles, and   
\begin{align*}
    h_{\overline{\bf{O}}_B}: \bf{Higgs}_{\overline{\bf{O}}_B} \longrightarrow  \bf{H}_{\overline{\bf{O}}_B}  \subset \bigoplus_{i=1}^n \operatorname{H}^0(\Sigma, \omega_{\Sigma}^{2i} \otimes (2i - \eta_{B, 2i})x ),
\end{align*}
where $\eta_B =\{ \eta_{B, 2i} \}_{i=1}^n$ is defined similarly as \eqref{delta_sing}. However, the above may not be equal. 

\begin{lemma}\label{lem:B=C}
    If $\bf{O}_B$ is special, and let $\bf{O}_C={}^S \bf{O}_B$ be its Springer dual orbit. Then, $\eta_B=\eta_C$ and
    \[
        \bf{H}_{\overline{\bf{O}}_B}  = \bigoplus_{i=1}^n \operatorname{H}^0(\Sigma, \omega_{\Sigma}^{2i} \otimes (2i - \eta_{B, 2i})x ).
    \]
\end{lemma}
\begin{proof}
    If $\bf{O}_B = {}^S\bf{O}_C$, then, by Proposition~\ref{p.Springerdual}, we have $\bf{d}_C = {}^S\bf{d}_B = (\bf{d}_B^-)_C$. Then, by a little computation, we have $\eta_C=\eta_B$. Finally, by Proposition~\ref{L_BC}, we have $\bf{H}_{\overline{\bf{O}}_B}=\bf{H}_{\overline{\bf{O}}_C}$.
\end{proof}

Moreover, if $\bf{O}_B$ and $\bf{O}_C$ are special, we have the following theorem, which shows the importance of special orbits in the SYZ mirror symmetry. It gives a new geometric interpretation of Springer duality.

\begin{theorem}\label{Thm:why special}
    The following are equivalent:
    \begin{enumerate}
        \item The nilpotent orbits $\bf{O}_B$ and $\bf{O}_C$ are special and Springer dual.
        \item The Hitchin bases $ \bf{H}_{\overline{\bf{O}}_{B}}$ and $ \bf{H}_{\overline{\bf{O}}_{C}}$ are canonically isomorphic.
    \end{enumerate}
\end{theorem}

\begin{corollary}\label{half dimension}
    For a special nilpotent orbit $\bf{O}_{B/C}$ of type B/C, we have $$\dim \bf{H}_{\overline{\bf{O}}_{B/C}}=\dfrac{1}{2}\dim \bf{Higgs}_{ \overline{\bf{O}}_{B/C}}.$$ 
\end{corollary}

\begin{proof}
    By the general formula for the dimension of $\bf{Higgs}{\overline{\bf{O}}_{B/C}}$ (see \eqref{eq:dim of moduli}), we know
    \[
        \frac{1}{2}\dim \bf{Higgs}_{\overline{\bf{O}}_{B/C}} = (2n^2+n)(g-1)+\frac{1}{2}\dim \bf{O}_{B/C}. 
    \]
    On the other hand, by Riemann--Roch, the dimension of the Hitchin base is 
    \[
        \dim \bf{H}_{\overline{\bf{O}}_{B/C}} = (2n^2+n)g-n^2 - \sum_{i=1}^n \eta_{B/C,2i}.
    \]
    It is known that Springer duality preserves dimension and that $\eta_B = \eta_C$. it suffices to verify the equality for type C. Let $\bf{d}_C = [d_1, d_2, \ldots]$ denote the partition of $\bf{O}_C$, and let $\bf{d}_C^t = [s_1, s_2, \ldots]$ be its transpose. Using standard results (e.g., \cite[Corollary 6.1.4]{CM93}), we compute:
    \[
        \dim \bf{O}_C = 2n^2+n - \frac{1}{2}\sum_{i} s^2_i - \frac{1}{2} \sum_{i \;\text{odd}} r_i,
    \]
    where $r_i = \#\{j \mid d_j = i\}$.
    
    Finally, from \cite[Equation (3.3)]{SWW22}, we deduce
    \[
        \sum_{i=1}^n \eta_{C, 2i} = \frac{1}{4} \sum_j s_j(s_j + 1) + \frac{1}{4} \sum_{i \text{ odd}} r_i.
    \]
    Substituting this into the expressions for $\dim \bf{H}_{\overline{\bf{O}}_C}$ and $\dim \bf{Higgs}_{\overline{\bf{O}}_C}$ completes the proof.
\end{proof}

\begin{remark}
    The above dimension equation (Corollary~\ref{half dimension}) holds for all nilpotent orbits, not necessarily special. For type C, the proof follows directly using analogous arguments. However, for type B, the situation is more subtle, as the presence of Type B1* in the partitions introduces quadratic relations that affect the geometry of the Hitchin base. These non-special cases will be addressed in a separate paper.
\end{remark}

Before proving the above theorem, we need the following useful lemmas.
\begin{lemma}\label{lem:O_B special}
    The Hitchin base $\bf{H}_{\overline{\bf{O}}_{B}}$ is an affine space if and only if $\bf{O}_B$ is special.
\end{lemma}
\begin{proof}
    By Lemma \ref{structure of special partition}, $\bf{O}_B$ is special if and only if its corresponding partition does not contain any parts of type B1*. Equivalently, $\bf{O}_B$ is special if and only if there are no even parts in the sequence $\alpha_B$, where the Kazhdan--Lusztig label of $\bf{O}_B$ is denoted by $\KL_B(\bf{O}_B) = (\alpha_B, \beta_B)$. 
    
    By a similar argument as in Proposition 5.3 of \cite{BK18}, we see that if $\bf{O}_B$ is non-special, then there would be at least a homogeneous quadratic equation in the definition of Hitchin base. 
    
    Conversely, if $\bf{O}_B$ is special, Proposition \ref{Prop:fiber_JM_moduli} and Proposition \ref{L_BC} ensure that the Hitchin base equals 
    \[
        \bf{H}_{\overline{\bf{O}}_B} = \bigoplus_{i=1}^n \operatorname{H}^0(\Sigma, \omega_{\Sigma}^{2i} \otimes (2i - \eta_{B, 2i})x ),
    \] 
    which is an affine space. 
\end{proof}

\begin{lemma} \label{lem:O_C special}
Let $\bf{d}_{C}=[d_1, d_2, \ldots, d_k ]$ be a partition of type C. If there exists a partition $\bf{d}_{B}$ of type B such that $\eta_B=\eta_C$, then
\begin{enumerate}
	\item  $\bf{d}_{C}$ is special. 
	\item  Let $\bf{O}_B$ and $\bf{O}_C$ denote the nilpotent orbits corresponding to the partitions $\bf{d}_B$ and $\bf{d}_C$, respectively. Then, $\dim \bf{O}_{B}\leq \dim \bf{O}_{C}$, the equality holds only when $\bf{O}_{B}$ is Springer dual to $\bf{O}_{C}$.
\end{enumerate}	
\end{lemma}
\begin{proof}[Proof]

Here we figure out how to obtain a partition $\bf{d}_{B}=[d_1', d_2', \ldots, d_k']$ from $ \bf{d}_C$ such that $\eta_B=\eta_C$. Let $ \bf{d}_C = [\bf{d}_0, \bf{d}_1, \ldots, \bf{d}_{2l}] $, where $\bf{d}_{2i+1}$ consists of odd parts and $\bf{d}_{2i}$ consists of even parts. We allow $\bf{d}_0$ to be empty, and set $\bf{d}_{2l}$ to be $[ \star\ldots ,\star, 0, 0, 0, \ldots]$.

We start form $\bf{d}_0=[d_1, \ldots, d_{k_0}]$, all $d_i$ in $\bf{d}_0$ are even. For each even part $d_i$, according to the rules of $\eta_C$, we need to pick $d_i/2$ many $i$'s in the Young tableau of $\bf{d}_{0}$, especially we need to pick up the last one.
 By $\eta_B=\eta_C$, for $d_1$, we have the following two choices
\begin{enumerate}
	\item Set $d_1'=d_1$, then $d_1'$ is even. Since $\bf{d}_{B}$ is of type B, we have $d_2'=d_1'$. By $\eta_B=\eta_C$, we can set $d_1'=d_1$ only when $d_1=d_2$. And if we do so, $d_2'=d_2=d_1=d_1'$, especially, $d_2$ is even.
	\item Set $d_1'=d_1+1$. Now, if $d_2$ is odd, we have $d_3=d_2$ since $\bf{d}_{C}$ is of type C, which makes $\eta_B=\eta_C$ impossible. So $d_2$ should be even, $k_0\geq 2$. If $\bf{d}_C\neq \bf{d}_0$, then there exists an integer $r\leq k_0/2$, such that $d_{2r}>d_{2r+1}$, $d_{2s}=d_{2s+1}$, $0<s<r$. By $\eta_B=\eta_C$, $d_{2s}$ should be even and $d_{2r}'=d_{2r}-1$, $d_i'=d_i$, for $1<i<2s$. If $\bf{d}_C=\bf{d}_0$, such $r$ may not exist. This only happens when there exists $u\in \mathbb{N}$ such that $d_{2u}=0$, $d_{2s}=d_{2s+1}$, $0<s<u$. In this case, we set $d_i'=d_i$ for $i > 1$.
\end{enumerate}
In both choices above, even number $d_i'$s are determined, and the last $i$ is picked up in the $i$-th row of the Young tableau of $\bf{d}_B$. Suppose $d_{2r+1}$ is the first undetermined one. If $d_{2r+1}$ is even, then the discussion of $d_{2r+1}$ is exactly the same as the above $d_1$. It is easy to see the combinations of the above two elementary changes are all the ways to handle $\bf{d}_0$. Furthermore, if $\bf{d}_C\neq \bf{d}_0$, then $k_0$ is even for $\bf{d}_0=[d_1, \ldots, d_{k_0}]$. 

Now we consider $\bf{d}_1=[d_{k_0+1}, \ldots, d_{k_1}]$, all $d_i$s here are odd. Recall that $\eta_B=\eta_C$, and the last $k_0$ is picked up in the Young tableau of $\bf{d}_B$ and $\bf{d}_C$. We must set $d_i'=d_i$ for $k_0+1\leq i \leq k_1$. Furthermore, the last $k_1$ is picked up in the Young tableau of $\bf{d}_B$ and $\bf{d}_C$.

The discussion of $\bf{d}_{2i+1}$ is exactly the same as $\bf{d}_1$. The discussion of $\bf{d}_{2i}$ is exactly the same as $\bf{d}_0$. Especially, the number of parts in $\bf{d}_{2i}$ is even unless $i=l$. So $\bf{d}_C$ is special by the alternative description of special partition below Definition \ref{def:special nilpotent}. The first statement is proven.

The above argument shows two ways to change $\bf{d}_{2i}$. If we always choose the way (2), then we obtain canonical partition $\bf{d}_{B}^{(0)}$. It is straightforward to check that $\bf{d}_B^{(0)} = ( \bf{d}_C^+ )_B$, which is known to be special and Springer dual to $\bf{d}_C$, see \cite[Proposition 2.1]{FRW24}.

The set $\mathcal{P}(N)$ is partially ordered as follows: $\bf{d}=[d_1, \ldots, d_N]\geq \bf{f}=[f_1, \ldots, f_N]$ if and only if $\sum_{j=1}^k d_j\geq \sum_{j=1}^k f_j$, for all $1\leq k \leq N$. This induces a partial order on $\mathcal{P}_{\epsilon}(N)$, which coincides with the partial ordering on nilpotent orbits given by the inclusion of closures (cf. \cite{CM93})

If we choose way (1) somewhere, $\bf{d}_B$ we obtain will satisfies $\bf{d}_B < \bf{d}_B^{(0)}$, which confirms that $\dim \bf{O}_{B}< \dim \bf{O}_{d_B^{(0)}}$ by \cite[Theorem 6.2.5]{CM93}. Then, by the fact that Springer dual is dimension-preserving \cite{Spa06}, we have $\dim \bf{O}_{B}< \dim \bf{O}_{d_B^{(0)}}=\dim \bf{O}_{C}$. Then we conclude.
\end{proof} 

\begin{proof}[Proof of Theorem~\ref{Thm:why special}]\
\begin{itemize}
	\item[1.] $(1) \Rightarrow (2)$: If $\bf{O}_B$ and $\bf{O}_C$ are special and Springer dual, then by Lemma~\ref{lem:B=C}, $\eta_B=\eta_C$. Consequently, $\bf{H}_{\overline{\bf{O}}_B}=\bf{H}_{\overline{\bf{O}}_C}$, yielding a canonical isomorphism of Hitchin bases.
	\item[2.] $(2) \Rightarrow (1)$: If $\bf{H}_{\overline{\bf{O}}_B} \cong \bf{H}_{\overline{\bf{O}}_C}$, then by Corollary~\ref{half dimension}, $\bf{O}_B$ and $\bf{O}_C$ must have the same dimension. Lemma~\ref{lem:O_B special} and \ref{lem:O_C special} ensure that $\bf{O}_B$ and $\bf{O}_C$ are both special, and their dimensions coincide only if they are Springer dual.
\end{itemize}
\end{proof}

\section{Parabolic BNR correspondence and generic Hitchin fibers}\label{Sec:Par_BNR}

Let $\Tot \omega_{\Sigma}(D)$ be the total space of the line bundle $\omega_{\Sigma}(D)$ and denote by $\pi:\Tot \omega_{\Sigma}(D)\rightarrow \Sigma$. Let $\lambda \in \operatorname{H}^0(\Tot \omega_{\Sigma}(D), \pi^*\omega_{\Sigma}(D))$ be the tautological section. 

\begin{definition}
    Given $a= (a_{2}, a_{4}, \ldots, a_{2n}) \in \bf{H}_{\overline{\bf{O}}_C}$, we have a section
\begin{align*}
    \lambda^{2n} + a_{2} \lambda^{2n-2} + \ldots + a_{2n} \in \operatorname{H}^0(\Tot \omega_{\Sigma}(D), \pi^*\omega^{2n}_{\Sigma}(2nD)).
\end{align*}
We define the \emph{spectral curve} $\Sigma_a$ as its zero divisor  in $\Tot \omega_{\Sigma}(D)$. And we denote by $\pi_a:\Sigma_a\rightarrow\Sigma$ the projection and by $\sigma$ the involution on $\Sigma_a$ induced by $\lambda\rightarrow -\lambda$.
\end{definition}

Notice that, if $(E_C,\theta_C)$ is a $\Sp_{2n}$-Higgs bundle lying in the fiber over $a$, then its characteristic polynomial is:
\[
\det(\lambda-\pi^*\theta_C)=\lambda^{2n} + a_{2} \lambda^{2n-2} + \ldots + a_{2n}.
\]
Similarly, if $(E_B,\theta_B)$ is a $\SO_{2n+1}$-Higgs bundle lying in the fiber over $a$, then its characteristic polynomial is:
\[
\det (\lambda - \pi^*\theta_B) = \lambda(\lambda^{2n} + a_{2} \lambda^{2n-2} + \ldots + a_{2n}).
\]
\begin{remark}
    Spectral curves defined in this way are finite flat covers of the base curve $\Sigma$. 
    Moreover,
    \begin{itemize}
        \item Without parabolic structure, if $a$ is generic, then $\Sigma_{a}$ is smooth, which is important in the classical BNR correspondence.
        \item When (non-regular) nilpotent (parabolic) structures exist, $\Sigma_{a}$ is always singular at marked points. 
    \end{itemize}
    To prove the parabolic BNR correspondence, we need to consider the normalization of spectral curves. In type A cases, it is shown in \cite{SS95, She18, SWW22}.
\end{remark}

\subsection{Prym varieties for ramified double covers}\label{Ap:Prym_var}
In this subsection, we shall analyze the properties of Prym varieties and their duals. We can deal with this in general, i.e., without talking about spectral curves. To simplify the notation, we denote our curve by $S$ with an involution $\sigma:S \rightarrow S$. We denote the quotient map by $\pi:S\rightarrow S/\sigma$.  We use $s_0, s_1, \cdots, s_{2N-1}$ to denote the fixed points of $\sigma$, which are also ramification points of $\pi$, and we denote $z_i=\pi(s_i)$.

The Prym variety, defined as
\[
\Prym(S,S/\sigma):=\{\sL\in\Jac(S) \mid \sigma^*\sL\cong\sL^{\vee}\}.
\]
If no ambiguity is caused, we write $\Prym$. The dual of $\Prym$ (under the polarization inherited from $\Jac(S)$), denoted by $\Prym^\vee$, can be understood through the following commutative diagram of exact sequences:
\begin{equation}\label{eq:dual exact sequence}
    \begin{tikzcd}
    1\ar[r]&\Jac(S/\sigma)\ar[r,"\pi^*"]&\Jac(S)\ar[r]\ar[d,"="]&\Prym^{\vee}\ar[r]&1\\
    1\ar[r]&\Prym\ar[r]&\Jac(S)\ar[r,"\Nm"]&\Jac(S/\sigma)\ar[r]&1
\end{tikzcd}.
\end{equation}

It is well known that we have a factorization
\begin{equation}\label{eq:mult 2 map}
\begin{tikzcd}
   \Prym\ar[r]\ar[d, "{[2]}"']&\Prym^{\vee}\ar[dl]\\
   \Prym
\end{tikzcd},
\end{equation}
the map is induced by the natural polarization on $\Jac(S)$, and consider the map $[2]$ defined by $[2](\mathcal{L}) = \mathcal{L}^2$. Moreover, the kernel of $\Prym \rightarrow \Prym^{\vee}$ is $\Prym \cap \pi^*\Jac(S/\sigma)$. 

 We choose $s_0$ as the base point of the Abel-Jacobian map: 
 \[
     S\rightarrow \Jac(S),\quad x\mapsto \mathcal{O}_S(x-s_0).
 \]
 Let $\mathcal{P}$ denote the Poincar\'e line bundle on $S\times \Jac(S)$. Restricting $\mathcal{P}$ to $S\times \Prym$, which we still denote by $\mathcal{P}$, retains the symmetry induced by the involution $\sigma$, satisfying $\sigma^*\mathcal{P}\cong \mathcal{P}^\vee$. 
 
 For $\sigma$-fixed point $s_i$, $1\leq i \leq 2N-1$, let $\mathcal{P}_{s_i}$ denote the restriction of $\sP$ to $\{s_i\}\times \Prym\cong\Prym$, in particular, $\sP_{s_0}$ is trivial. Moreover, $\sigma^*\sP_{s_i}\cong \sP_{s_i}$, equivalently $\sP_{s_i} \cong \sP_{s_i}^{\vee}$, and hence $\sP_{s_i} \in \Prym^\vee[2]$.

 We also use $\sP_{s_i-s_j}$ to denote $\sP_{s_i}\otimes \sP_{s_j}^\vee$. Using the Abel-Jacobian map we chose before, we see that $\sP_{s_i}$ is the image of $\sO_S(s_i-s_0)\in \Jac(S)$ under the map $\Jac(S)\rightarrow \Prym^\vee$. 

 Furthermore, since $\sP_{s_i} \cong \sP_{s_i}^\vee$, $\sO\oplus \sP_{s_i}$ is a sheaf of algebra on $\Prym$. In another point of view, the isomorphism $\sP_{s_i} \cong \sP_{s_i}^\vee$ also induces a nondegenerate symmetric pairing on $\sO\oplus \sP_{s_i}$, where we choose the pairing on $\sO$ by a nonzero scalar. Then we have:

 \begin{lemma}\label{two kinds of double cover and factor through}
     We use $\operatorname{OG}(1, \mathcal{O}\oplus \mathcal{P}_{s_i})$ to denote the relative isotropic Grassmannian bundle as we regard $\mathcal{O}\oplus \mathcal{P}_{s_i}$ as an orthogonal bundle. Then we have natural isomorphism $\operatorname{OG}(1, \mathcal{O}\oplus \mathcal{P}_{s_i})\cong \Spec(\mathcal{O}\oplus \mathcal{P}_{s_i})$, which is a degree $2$ \'etale cover of $\Prym$. This cover is trivial if and only if $\sP_{s_i} \cong \sO$. Moreover, for any morphism $f:X \rightarrow \Prym$, $f$ can factor through $\Spec(\mathcal{O}\oplus \mathcal{P}_{s_i})$ if and only if $f^*\mathcal{P}_{s_i}$ is trivial on $X$.
 \end{lemma}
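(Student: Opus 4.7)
The plan is to establish the three statements in sequence: first the isomorphism $\mathrm{OG}(1,\mathcal{O}\oplus\mathcal{P}_{s_i})\cong\operatorname{Spec}(\mathcal{O}\oplus\mathcal{P}_{s_i})$ as $\Prym$-schemes, then the triviality criterion, and finally the universal factorization property.

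First I would identify $\operatorname{Spec}(\mathcal{O}\oplus\mathcal{P}_{s_i})$ as a finite \'etale degree-$2$ cover of $\Prym$. The $\mathcal{O}_{\Prym}$-algebra structure comes from the symmetric isomorphism $\mathcal{P}_{s_i}\otimes\mathcal{P}_{s_i}\cong\mathcal{O}$ induced by $\mathcal{P}_{s_i}\cong\mathcal{P}_{s_i}^{\vee}$; since $2$ is invertible, the trace form is perfect and the cover is \'etale. Locally, trivialize $\mathcal{P}_{s_i}$ by a section $e$ with $\langle e,e\rangle=c\in\mathcal{O}^{\times}$; then $\operatorname{Spec}(\mathcal{O}\oplus\mathcal{P}_{s_i})$ is given by $\operatorname{Spec} R[t]/(t^2-c)$. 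For the orthogonal Grassmannian side, isotropic lines in the rank-$2$ bundle $\mathcal{O}\oplus\mathcal{P}_{s_i}$ (equipped with the orthogonal direct sum of the pairings, with the sign on the $\mathcal{O}$-summand chosen compatibly) are described locally by $(a:b)\in\mathbb{P}^1$ with $a^2-cb^2=0$. Both are thus degree-$2$ \'etale covers and, more invariantly, both are classified by the same class $\mathcal{P}_{s_i}\in H^1_{\mathrm{\'et}}(\Prym,\mu_2)=\Prym^{\vee}[2]$. This identifies them canonically.

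Next I would deduce the triviality criterion. The cover $\operatorname{Spec}(\mathcal{O}\oplus\mathcal{P}_{s_i})\to\Prym$ is trivial precisely when the sheaf of algebras $\mathcal{O}\oplus\mathcal{P}_{s_i}$ admits a nontrivial idempotent, equivalently when there exists a global section $v\in H^0(\Prym,\mathcal{P}_{s_i})$ with $\langle v,v\rangle=1$. Since $\Prym$ is a connected proper variety, any line bundle $\mathcal{P}_{s_i}\in\Prym^{\vee}[2]$ either has no non-zero global sections or is isomorphic to $\mathcal{O}$. Thus the cover is trivial if and only if $\mathcal{P}_{s_i}\cong\mathcal{O}$.

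Finally, for the universal property: given $f\colon X\to\Prym$, base change gives
\[
X\times_{\Prym}\operatorname{Spec}(\mathcal{O}\oplus\mathcal{P}_{s_i})=\operatorname{Spec}(\mathcal{O}_X\oplus f^{*}\mathcal{P}_{s_i}).
\]
A factorization of $f$ through $\operatorname{Spec}(\mathcal{O}\oplus\mathcal{P}_{s_i})$ is exactly a section of this base-change cover over $X$, which by the previous step exists if and only if $f^{*}\mathcal{P}_{s_i}\cong\mathcal{O}_X$. The main obstacle is the bookkeeping in Step 1: concretely exhibiting the isomorphism between $\mathrm{OG}(1,\mathcal{O}\oplus\mathcal{P}_{s_i})$ and $\operatorname{Spec}(\mathcal{O}\oplus\mathcal{P}_{s_i})$ requires matching the sign of the quadratic form on the trivial summand with the multiplication $e^{2}=c$ in the algebra, but this is a computation rather than a conceptual difficulty, since both constructions are classified by the $2$-torsion line bundle $\mathcal{P}_{s_i}\in\Prym^{\vee}[2]$.
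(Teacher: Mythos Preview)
The paper does not supply a proof of this lemma; it is stated and then immediately followed by Definition~\ref{def:double cover}. Your proposal is correct and provides exactly the kind of standard verification the authors leave to the reader: both $\OG(1,\sO\oplus\sP_{s_i})$ and $\Spec(\sO\oplus\sP_{s_i})$ are the $\mu_2$-torsor classified by $\sP_{s_i}\in\Prym^{\vee}[2]$, the triviality criterion follows from connectedness, and the factorization statement is the observation that lifts of $f$ are sections of the pulled-back cover $\Spec(\sO_X\oplus f^*\sP_{s_i})$.

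One small remark on the last step: when you write ``by the previous step'' you are implicitly applying the triviality criterion to $X$ rather than to $\Prym$, but your Step~2 invoked properness of $\Prym$. In fact properness is not needed here: a section of $\Spec(\sO_X\oplus f^*\sP_{s_i})\to X$ is an algebra map $\sO_X\oplus f^*\sP_{s_i}\to\sO_X$, i.e.\ a global section $v$ of $f^*\sP_{s_i}$ with $v^2=1$ under the fixed isomorphism $(f^*\sP_{s_i})^{\otimes 2}\cong\sO_X$; such a $v$ is nowhere vanishing, hence trivializes $f^*\sP_{s_i}$. This works for arbitrary $X$, so the argument goes through as stated.
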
 

 Hence, we make the following definition.
\begin{definition}\label{def:double cover}
    For each element $e\in\Prym^{\vee}[2]$, there is a corresponding double cover of $\Prym$, denoted by $\Prym_{e} = \Spec(\mathcal{O} \oplus \mathcal{P}_{e})$, where $\sP_e$ is the line bundle corresponding to $e$. The natural involution on $\Prym_e$ is denoted by $\iota_e$.
 \end{definition}

 \begin{lemma}\label{lem:depends only on subspace}
     Let $\Prym^\vee[2]$ be viewed as a finite-dimensional $\mathbb{F}_2$-vector space. For any vector subspace $V \subset \Prym^\vee[2]$, let $\{e_i\}$ be a basis of $V$. Then the fiber product 
     \[
     \prod\Prym_{e_i}=\Prym_{e_1}\times_{\Prym}\Prym_{e_2}\times\ldots
     \]
     is independent of the choice of the basis.
 \end{lemma}
 \begin{proof}
     Let $\pi_{V}:\prod\Prym_{e_i}\rightarrow \Prym$ be the projection. The pushforward of the structure sheaf of the fiber product satisfies
     \[
     (\pi_{V})_*\sO\cong \bigotimes_{e_i \in V}(\sO\oplus\sP_{e_i}).
     \]
    Since the tensor product of line bundles depends only on the subspace $V$ and not the specific choice of basis, the result follows.
 \end{proof}
\begin{Notation}\label{not:fiber product over subspace}
    As before, we view $\Prym^\vee[2]$ as a $\mathbb{F}_2$-vector space.
    \begin{enumerate}
        \item For a subspace $V\subset \Prym^\vee[2]$, we denote by $\Prym_V$ the fiber product in the above lemma.
        \item For simplicity, if the choice of the basis is $\{\sP_{s_i}\}_{i\in I}$ for a subset $I\in\{1,2,\ldots,2N-1\}$, we may also denote the fiber product by $\Prym_I$.
    \end{enumerate}
\end{Notation}

In the following, we want to study the connectedness of fiber products of $\Prym_{s_i}$'s over $\Prym$.

\begin{lemma}\label{lem:criterion of connectedness}
    Let $\Prym_I$ be the fiber product of $\Prym_{s_i}$ for $s_i \in I \subseteq \{s_1, \cdots, s_{2N-1} \}$ over $\Prym$. Then, $\Prym_I$ is connected if and only if $\{\sP_{s_i}\}_{s_i\in I}$ is linearly independent in $\Prym^\vee[2]$. 
\end{lemma}
\begin{proof}
    From the definition of $\Prym_{I}$, the pushforward of the structure sheaf satisfies
    \[
    \pi_{I*}\sO_{\Prym_I}\cong \otimes_{s_i\in I}(\sO\oplus\sP_{s_i}).
    \]
    Hence, $\Prym_I$ is connected if and only if
    \[
    \dim \mathrm{H}^{0}(\Prym,\otimes_{s_i \in I}(\sO\oplus\sP_{s_i}))=1.
    \]
    In particular, this holds if and only if
    \[
    \otimes_{s_j\in J\subset I} \sP_{s_j}\ne\sO
    \]
    for any non-empty subset $J\subset I$. This amounts to saying that $\{\sP_{s_i}\}_{s_i\in I}$ are linearly independent in $\Prym^\vee[2]$.
\end{proof}

Consider the 2-torsion point of \eqref{eq:dual exact sequence}:
\begin{equation}\label{eq:2-torsion dual exact sequence}
    \begin{tikzcd}
    1\ar[r]&\Jac(S/\sigma)[2]\ar[r,"\pi^*"]\ar[d, "\iota"]&\Jac(S)[2]\ar[r]\ar[d,"="]&\Prym^{\vee}[2]\ar[r]\ar[d,"\beta"]&1\\
    1\ar[r]&\Prym[2]\ar[r]&\Jac(S)[2]\ar[r,"\Nm"]&\Jac(S/\sigma)[2]\ar[r]&1
\end{tikzcd},
\end{equation}
where $\iota$ is the natural inclusion of $\Jac(S/\sigma)[2]\hookrightarrow \Prym[2]$. Since $\Nm\circ\pi^*=[2]$ on $\Jac(S/\sigma)$, there is an induced map $\beta:\Prym^{\vee}[2]\rightarrow\Jac(S/\sigma)[2]$.
\begin{lemma}
    The kernel of the map $\Prym^{\vee}\rightarrow\Prym$ is canonically isomorphic  to $\ker\beta$.
\end{lemma}
\begin{proof}
    From \eqref{eq:dual exact sequence}, we know that:
    \[
    1\rightarrow\Jac(S/\sigma)[2]\rightarrow\Prym\rightarrow\Prym^{\vee}\rightarrow 1.
    \]
    From \eqref{eq:2-torsion dual exact sequence}, we know that:
    \[
    1\rightarrow\Jac(S/\sigma)[2]\rightarrow\Prym[2]\rightarrow\ker\beta\rightarrow 1.
    \]
    Since $\Prym\xrightarrow{[2]}\Prym$ factors through $\Prym^{\vee}$, we conclude.
\end{proof}

\begin{lemma}
    Each $\sP_{s_i}$ lies in $\ker\beta$, and  $\{\sP_{s_i}\}$ are linearly dependent in $\ker\beta$.
\end{lemma}
\begin{proof}
    Consider again the diagram \eqref{eq:2-torsion dual exact sequence}, we may choose a square root $\sO_{S/\sigma}(z_i-z_0)^{-1/2}$ of $\sO_{S/\sigma}(z_i-z_0)^{-1}$. Then $\sP_{s_i}$ is the image of $\sO_S(s_i-s_0)\otimes \pi^*\sO_{S/\sigma}(z_i-z_0)^{-1/2}$ and $\sO_S(s_i-s_0)\otimes \pi^*\sO_{S/\sigma}(z_i-z_0)^{-1/2}\in \Jac(S)[2]$. Notice that
    \[
    \Nm\big(\sO_S(s_i-s_0)\otimes \pi^*\sO_{S/\sigma}(z_i-z_0)^{-1/2}\big)\cong\sO_{S/\sigma}.
    \]
    Hence $\sP_{i}$ lies in $\ker\beta$.

Let $g(S/\sigma)$ be the genus of $S/\sigma$. Then
\[
\dim\Prym=g(S/\sigma)+N-1.
\]
Hence, as a $2$-torsion abelian group:
\[
\#\ker\beta=2^{2N-2}.
\]
As a result,  $\{\sP_{s_i}\}$ are linearly dependent in $\ker\beta$.
\end{proof}
We will see later in Proposition \ref{prop:non-trivial relation} that they sum to 0. Furthermore, this is the unique relation.

To check when two abelian varieties between $\Prym$ and $\Prym^{\vee}$, for example $\Prym_I$ and $\Prym_J$, are dual, we need to check how the polarization restricts to them. For example, it turns out to be related to the intersection number of homology if we work over the complex field. And in the general setting, we need to work with Weil pairing. Fortunately, we only need to work out Weil pairing for $2$-torsion points coming from the Abel-Jacobian map.

For a pair of dual abelian varieties $(A,A^{\vee})$, suppose $\ell$ is invertible in $\Bbbk$, then there is a perfect pairing
\[
e: A[\ell]\times A^{\vee}[\ell]\rightarrow \mu_\ell.
\]
It can be defined as follows, given an $\sL\in A^{\vee}[\ell]$, we know that $[\ell]^{*}\sL$ is trivial on $A$.\footnote{Here $[\ell]$ is the map $[\ell]: A \rightarrow A, \ \mathcal{L} \mapsto \mathcal{L}^{\ell}$. However, ``$[\ell]$'' in $A[\ell]$ means the $\ell$ torsion points in $A$.} Hence there is a character $\chi_{\sL}:A[\ell]\rightarrow\mathbb{G}_m$ such that
\[
\sL\cong A\times_{A[\ell],\chi_{\mathcal{L}}}\mathbb{A}^{1}.
\]
Then, the Weil pairing is defined as
$$e(\cdot,\mathcal{L}) = \chi_{\mathcal{L}}(\cdot).$$ 

\begin{remark} \label{rmk:trivial pairing value}
For any $K\subset A[\ell]$, if $e(K,\sL)=1$, then $\sL$ is trivial when pulled back to $A/K$ and
\[
\sL\cong A/K \times_{A[\ell]/K,\chi_{\mathcal{L}}}\mathbb{A}^{1}.
\]
\end{remark}

Back to our case when $A=\bf{Prym}$, recall \eqref{eq:2-torsion dual exact sequence}, then we have

\begin{proposition}
    The Weil pairing induces a perfect pairing
    \[
    e:\ker\beta\times\ker\beta\rightarrow\{\pm 1\}.
    \]
\end{proposition}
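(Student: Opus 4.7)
The plan is to realize the pairing on $\ker\beta \times \ker\beta$ as the descent of the Weil pairing $e: \Prym[2] \times \Prym^{\vee}[2] \to \{\pm 1\}$ through the polarization isogeny. Let $\phi: \Prym \to \Prym^{\vee}$ denote the isogeny induced by restricting the principal polarization of $\Jac(S)$ to the abelian subvariety $\Prym$; by the remark immediately after \eqref{eq:mult 2 map}, its kernel is
\[
K := \ker\phi = \Prym \cap \pi^{*}\Jac(S/\sigma) = \pi^{*}\Jac(S/\sigma)[2].
\]
The factorization $[2] = \phi' \circ \phi$ together with the preceding lemma identifies $\ker\beta$ with the kernel of the companion isogeny $\phi': \Prym^{\vee} \to \Prym$. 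Consequently $\phi$ restricts to a surjection $\Prym[2] \twoheadrightarrow \ker\beta$ with kernel $K$, giving a canonical isomorphism
\[
\bar{\phi}: \Prym[2]/K \xrightarrow{\ \sim\ } \ker\beta.
\]
A straightforward order count confirms this: $|\Prym[2]| = 4^{g(S/\sigma)+N-1}$, $|K| = |\Jac(S/\sigma)[2]| = 4^{g(S/\sigma)}$, and $|\ker\beta| = 4^{N-1}$.

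Next, I compute the left radical of the restriction of $e$ to $\Prym[2] \times \ker\beta$. Define the auxiliary pairing $e_{\phi}(x,z) := e(x, \phi(z))$ on $\Prym[2] \times \Prym[2]$. Its right radical equals $\{z : \phi(z) = 0\} = K$ by non-degeneracy of $e$. Since $e_{\phi}$ is alternating, being the Weil pairing on $2$-torsion attached to the polarization $\phi$, the left radical coincides with the right; hence $e(x, \phi(z)) = 1$ for every $x \in K$ and $z \in \Prym[2]$. Recalling that $\phi(\Prym[2]) = \ker\beta$, this means $K \subseteq (\ker\beta)^{\perp}$. Non-degeneracy of $e$ then forces
\[
|(\ker\beta)^{\perp}| = \frac{|\Prym[2]|}{|\ker\beta|} = 4^{g(S/\sigma)} = |K|,
\]
so the inclusion is an equality, $(\ker\beta)^{\perp} = K$.

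Therefore $e$ descends to a perfect pairing $\Prym[2]/K \times \ker\beta \to \{\pm 1\}$, which transports through $\bar{\phi}^{-1}$ on the first factor to the desired perfect pairing on $\ker\beta \times \ker\beta$. The only structural input beyond bookkeeping with $2$-torsion group orders is the alternating property of the Weil pairing attached to a polarization (equivalently, the compatibility $e(\,\cdot\,, \phi(\,\cdot\,)) = e_{\phi}$); this is the main nontrivial ingredient and the step most likely to require care, but it is a classical fact in the theory of abelian varieties.
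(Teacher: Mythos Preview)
Your proof is correct and follows essentially the same architecture as the paper's: identify $\ker\beta \cong \Prym[2]/K$ with $K = \iota(\Jac(S/\sigma)[2])$, show that $K$ is the exact annihilator of $\ker\beta$ under the Weil pairing $e:\Prym[2]\times\Prym^{\vee}[2]\to\mu_2$, and descend. The only difference lies in how the vanishing $e(K,\ker\beta)=1$ is obtained. The paper invokes functoriality of the Weil pairing with respect to dual morphisms (so that $e(\iota(a),b)=e(a,\beta(b))=1$ for $b\in\ker\beta$), whereas you use the skew-symmetry of the polarization pairing $e_\phi(x,z)=e(x,\phi(z))$ to conclude that its left and right radicals coincide with $K$. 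Both inputs are classical and the deductions are equally short; your version has the mild advantage of making the order computation and the identification $\phi(\Prym[2])=\ker\beta$ explicit, while the paper's functoriality argument avoids invoking the polarization pairing altogether.
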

\begin{proof}
    Considering the following diagram
    \[
    \begin{tikzcd}      1\ar[r]&\Jac(S/\sigma)\ar[r,"\pi^*"]&\Jac(S)\ar[r]\ar[d,"="]&\Prym^{\vee}\ar[r]&1\\
    1\ar[r]&\Prym\ar[r]&\Jac(S)\ar[r,"\Nm"]&\Jac(S/\sigma)\ar[r]&1 
    \end{tikzcd}.
    \]
    And for their 2-torsion points, we have
    \[
    \begin{tikzcd}
    1\ar[r]&\Jac(S/\sigma)[2]\ar[r,"\pi^*"]\ar[d,"\iota"]&\Jac(S)[2]\ar[r]\ar[d,"="]&\Prym^{\vee}[2]\ar[r]\ar[d,"\beta"]&1\\
    1\ar[r]&\Prym[2]\ar[r]&\Jac(S)[2]\ar[r,"\Nm"]&\Jac(S/\sigma)[2]\ar[r]&1
    \end{tikzcd}.
    \]
    By the functorial property of Weil pairing, we know that
    \begin{equation}\label{eq:perfect pairing on involution point}
      e(\iota(\Jac(S/\sigma)[2]),\ker\beta)=1.  
    \end{equation}
   Then by the isomorphism 
   \begin{equation}\label{eq:ker beta}
   \ker\beta\cong \Prym[2]/\iota(\Jac(S/\sigma)[2]),
   \end{equation}
   the Weil pairing provides a perfect pairing on $\ker\beta$.
\end{proof}
Consider the following sequence of abelian varieties
 \[
    \begin{tikzcd}
        & \Prym^{\vee}\ar[ld]\ar[rd]&\\
        A_1\ar[rd]&&A_2\ar[ld]\\
        & \Prym
    \end{tikzcd}.
    \]
We want to know when $A_1, A_2$ are dual abelian varieties under the polarization induced from that on $\Prym$. We denote the kernels of $\Prym^{\vee}\rightarrow A_i$ by $\Delta_i\subset \ker\beta$.
\begin{lemma}\label{lem:criterion of duality}
    $A_1$ and $A_2$ are dual if and only if the annihilator of $\Delta_1$ is $\Delta_2$ with respect to the perfect pairing $e:\ker\beta\times\ker\beta\rightarrow\{\pm 1\}$.
\end{lemma}
\begin{proof}
    We denote the dual of $A_1$ by $A_1^{\vee}$ and have the following commutative diagram
    \[
    \begin{tikzcd}
        & \Prym^{\vee}\ar[ld]\ar[rd]\ar[rrd]&\\
        A_1\ar[rd]&&A_1^{\vee}\ar[ld]&A_2\ar[lld]\\
        & \Prym
    \end{tikzcd}.
    \]
    We now show that the kernel $\Prym^{\vee}\rightarrow A_1^{\vee}$ is the annihilator of $\Delta_1$ in $\ker\beta$. 
    
    Now consider the following diagram
    \[
    \begin{tikzcd}
        \Prym^{\vee}[2]\ar[d]&\times&\Prym[2]\ar[r]&\mu_{2}\\
        A_1[2]&\times&A_1^{\vee}[2]\ar[u]\ar[ur]&
    \end{tikzcd},
    \]
    by the functorial property of Weil pairing, we know that the image of $A_1^{\vee}[2]$ in $\Prym[2]$ is exactly the annihilator of $\Delta_1$. By \eqref{eq:ker beta}, we have the following
    \[
    \Im (A_1^\vee[2]\rightarrow\Prym[2]\rightarrow\ker\beta\subset\Prym^{\vee}[2]\rightarrow A_1^{\vee}[2]),
    \]
    and the composition is a zero map. The Weil pairing on $\ker\beta$ is induced from that on $\Prym[2]\times\Prym^{\vee}[2]$, hence the kernel $\Prym^{\vee}\rightarrow A_1^{\vee}$ is the annihilator of $\Delta_1$, i.e., coincides with $\Delta_2$.
\end{proof}
To simplify notation, we use $\tilde{\sP}_{s_i}$ to refer to the 2-torsion line bundle $\sO_S(s_i-s_0)\otimes \pi^*\sO_{S/\sigma}(z_i-z_0)^{-1/2}$. In particular, $\tilde{\sP}_{s_i}\in \Jac(S)[2]$ and then we can consider Weil pairing for them. The following lemma is easy to check via the functorial property of Weil pairings.
\begin{lemma}\label{lem:simple weil pairing}
    $e(\sP_{s_i},\sP_{s_j})=e(\tilde{\sP}_{s_i},\tilde{\sP}_{s_j})$.
\end{lemma}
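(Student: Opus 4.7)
The strategy is to observe that $\tilde{\sP}_{s_i}$, a priori only an element of $\Jac(S)[2]$, actually lies in $\Prym[2]\subset\Jac(S)[2]$, and therefore provides a canonical lift of the class $\sP_{s_i}\in\ker\beta$ under the quotient isomorphism $\ker\beta\cong\Prym[2]/\iota(\Jac(S/\sigma)[2])$ coming from \eqref{eq:2-torsion dual exact sequence}. Once this is in place, the equality of the two pairings reduces to a direct application of the functoriality of the Weil pairing along the pair of mutually dual morphisms $i:\Prym\hookrightarrow\Jac(S)$ and $q:\Jac(S)\twoheadrightarrow\Prym^{\vee}$.

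First I would do the following divisor-level computation: since $s_i,s_0$ are ramification points of $\pi$, one has $\Nm(\sO_S(s_i-s_0))=\sO_{S/\sigma}(z_i-z_0)$, while $\Nm\circ\pi^*=[2]$ on $\Jac(S/\sigma)$ gives $\Nm(\pi^*\sO_{S/\sigma}(z_i-z_0)^{-1/2})=\sO_{S/\sigma}(z_i-z_0)^{-1}$. Multiplying, $\Nm(\tilde{\sP}_{s_i})=\sO_{S/\sigma}$, so $\tilde{\sP}_{s_i}\in\Prym[2]$. Next I would identify $q$ as the dual of $i$ under the principal polarization of $\Jac(S)$: dualizing the short exact sequence $0\to\Prym\to\Jac(S)\to\Jac(S/\sigma)\to 0$ and using the autoduality of the Jacobians produces precisely the sequence $0\to\Jac(S/\sigma)\xrightarrow{\pi^*}\Jac(S)\to\Prym^{\vee}\to 0$ in \eqref{eq:dual exact sequence}, realizing $q=i^{\vee}$.

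With these two facts, functoriality of the Weil pairing yields
\[
e_{\Prym}(\tilde{\sP}_{s_i},\,q(\tilde{\sP}_{s_j}))=e_{\Jac(S)}(i(\tilde{\sP}_{s_i}),\,\tilde{\sP}_{s_j})=e_{\Jac(S)}(\tilde{\sP}_{s_i},\tilde{\sP}_{s_j}),
\]
where $q(\tilde{\sP}_{s_j})=\sP_{s_j}$ by definition. On the other hand, the pairing on $\ker\beta$ constructed in the preceding proposition is precisely $e(\sP_{s_i},\sP_{s_j})=e_{\Prym}(\bar{\sP}_{s_i},\sP_{s_j})$ for any lift $\bar{\sP}_{s_i}\in\Prym[2]$; taking the canonical lift $\bar{\sP}_{s_i}=\tilde{\sP}_{s_i}$ matches the two expressions.

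The main obstacle I anticipate is the bookkeeping in the final step: one must verify that, under the snake-lemma isomorphism $\ker\beta\cong\Prym[2]/\iota(\Jac(S/\sigma)[2])$ extracted from \eqref{eq:2-torsion dual exact sequence}, the class of $\tilde{\sP}_{s_i}\in\Prym[2]$ really maps to $\sP_{s_i}\in\Prym^{\vee}[2]$. This is a diagram chase, made transparent once one notices that the composite $\Prym[2]\hookrightarrow\Jac(S)[2]\twoheadrightarrow\Prym^{\vee}[2]$ realizes the quotient $\ker\beta$ and sends $\tilde{\sP}_{s_i}\mapsto\sP_{s_i}$ by the very definition of $\sP_{s_i}$; that the pairing $e$ on $\ker\beta$ is independent of the lift was already observed via the vanishing \eqref{eq:perfect pairing on involution point}. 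All other steps are routine once the lift is identified.
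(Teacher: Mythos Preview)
Your proof is correct and follows exactly the approach the paper intends: the paper does not write out a proof but simply remarks that the lemma ``is easy to check via the functorial property of Weil pairings,'' and your argument is precisely that functoriality argument spelled out in full detail.
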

Before we determine the Weil pairing and the connected components of the fiber product $\Prym_{I}$ where $I$ runs all over line bundles $\{\sP_{i}\}$, we need the following results on canonical line bundles of $S$, or equivalently, of ramification divisors.
\begin{lemma}\label{lem:canonical is a pull back, sum is trivial}
    The canonical line bundle $\omega_{S}$ lies in $\pi^*\Pic(S/\sigma)$. To be more precise, $\omega_{S}\cong \pi^*(\omega_{S/\sigma}\otimes(\det\pi_*\sO_{S})^{-1})$. In particular, we have $\sum_{i=1}^{2N-1}\sP_{s_i}=0$  in $\ker\beta$.
\end{lemma}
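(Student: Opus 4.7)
The plan is to first establish the formula for $\omega_S$ and then use it to control the sum $\sum_{i} \tilde{\sP}_{s_i}$ modulo $\pi^*\Jac(S/\sigma)$.

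For the first assertion, I will combine Riemann--Hurwitz with the standard structure of a double cover. Since $\pi:S\rightarrow S/\sigma$ is a ramified double cover with ramification divisor $R=\sum_{i=0}^{2N-1}s_i$ and branch divisor $B=\sum_{i=0}^{2N-1}z_i$, Riemann--Hurwitz gives $\omega_S\cong \pi^*\omega_{S/\sigma}\otimes \sO_S(R)$. On the other hand, decomposing $\pi_*\sO_S=\sO_{S/\sigma}\oplus M^{-1}$ into $\sigma$-eigensheaves, the double cover structure yields $M^{\otimes 2}\cong \sO_{S/\sigma}(B)$, and one checks that $\pi^*M\cong \sO_S(R)$ canonically (both sides are square roots of $\sO_S(2R)$ that agree on the complement of the ramification). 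Since $M=(\det\pi_*\sO_S)^{-1}$, this gives $\omega_S\cong \pi^*\bigl(\omega_{S/\sigma}\otimes (\det\pi_*\sO_S)^{-1}\bigr)$, as required.

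For the second assertion, I will simply sum up the explicit lifts $\tilde{\sP}_{s_i}=\sO_S(s_i-s_0)\otimes \pi^*\sO_{S/\sigma}(z_i-z_0)^{-1/2}$ in $\Jac(S)[2]$ and show that the sum lies in $\pi^*\Jac(S/\sigma)[2]$, which is exactly the kernel of the projection $\Jac(S)\twoheadrightarrow\Prym^\vee$. Concretely, adding over $i=1,\ldots,2N-1$ yields
\[
\sum_{i=1}^{2N-1}\tilde{\sP}_{s_i}\;\cong\;\sO_S(R-2Ns_0)\otimes \pi^*\sO_{S/\sigma}(B-2Nz_0)^{-1/2}.
\]
Using that $s_0$ is a ramification point, $\sO_S(-2Ns_0)\cong \pi^*\sO_{S/\sigma}(-Nz_0)$; combined with the first part this rewrites $\sO_S(R-2Ns_0)$ as a pullback. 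The remaining factor $\pi^*\sO_{S/\sigma}(B-2Nz_0)^{-1/2}$ is already a pullback, so the whole sum lies in $\pi^*\Pic(S/\sigma)$. A short degree-$2$ check, squaring the expression and using $M^{\otimes 2}\cong \sO(B)$, confirms that the pulled-back class is $2$-torsion. Passing to $\Prym^\vee$ kills this contribution, and under the identification with $\ker\beta$ we obtain $\sum_{i=1}^{2N-1}\sP_{s_i}=0$.

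The only mildly subtle point is the canonical identification $\pi^*M\cong \sO_S(R)$ in the first step; all the other arithmetic is bookkeeping with divisors. I expect no genuine obstacle.
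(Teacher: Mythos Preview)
Your proposal is correct and follows essentially the same route as the paper: the paper also obtains $\omega_S\cong\pi^*(\omega_{S/\sigma}\otimes(\det\pi_*\sO_S)^{-1})$ by viewing $S$ as a spectral curve in the total space of $(\det\pi_*\sO_S)^{-1}$ (i.e., the adjunction/spectral-curve form of exactly your identity $\pi^*M\cong\sO_S(R)$), and then deduces $\sO_S(R)\in\pi^*\Pic(S/\sigma)$ from Riemann--Hurwitz to conclude $\sum_i\sP_{s_i}=0$. The only cosmetic difference is that the paper works directly with the images $\sP_{s_i}\in\Prym^\vee$ rather than the $2$-torsion lifts $\tilde{\sP}_{s_i}$, so your extra bookkeeping with the square roots is unnecessary (though harmless).
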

\begin{proof}
    We have $\pi_{*}\sO_{S}=\sO\oplus\det\pi_*\sO_{S}$ since $\pi$ is a double cover. In particular, we may view $S$ as a spectral curve over $S/\sigma$ contained in the total space of the line bundle $\det\pi_*\sO_{S}^{-1}$. Then by the usual way to determine the canonical line bundle of a spectral curve, we have $\omega_{S}\cong \pi^*(\omega_{S/\sigma}\otimes(\det\pi_*\sO_{S})^{-1})$.
    
    Now let $R = \sum_{i=0}^{2N-1} s_i$, then we have
    \[
    \omega_{S}=\pi^*\omega_{S/\sigma}\otimes\sO_{S}(R),
    \]
    then $\sO_{S}(R)\cong \pi^*(\det\pi_*\sO_{S})^{-1}$ and hence $\sum_{i=1}^{2N-1}\sP_{s_i}=0$  in $\ker\beta$.
\end{proof}

Now we can calculate the Weil pairing on $\ker\beta$, which will be used to obtain dual abelian varieties for various Hitchin systems.
\begin{proposition}\label{prop:weil pairing}
    If $i\ne k,\ell,j\ne k,\ell$, i.e., disjoint support, then $$e(\sO_{S}(s_i-s_j)\otimes\pi^*\sO_{S/\sigma}(z_i-z_j)^{-1/2},\sO_{S}(s_k-s_\ell)\otimes\pi^*\sO_{S/\sigma}(z_k-z_\ell)^{-1/2})=1.$$ In particular,  $e(\tilde{\sP}_{s_i},\tilde{\sP}_{s_j})=-1$ (hence, $e(\sP_{s_i},\sP_{s_j})=-1$ by Lemma \ref{lem:simple weil pairing}) if and only if $i\ne j$.
\end{proposition}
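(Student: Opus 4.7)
My plan is to reduce the Weil pairing $e$ on $\Jac(S)$ to classical Weil reciprocity on $S/\sigma$, so that both statements follow from one tame-symbol calculation downstairs; the $-1$ in the second claim will come geometrically from the common base point $z_0$.

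For each line bundle $L_{ab} := \sO_S(s_a-s_b) \otimes \pi^{*}\sO_{S/\sigma}(z_a-z_b)^{-1/2}$ I would choose a divisor representative $D_{ab} = s_a - s_b + \pi^{*}E_{ab}$, where $E_{ab}$ is a rational divisor on $S/\sigma$ satisfying $2E_{ab} + z_a - z_b = \divi(g_{ab})$ for some rational function $g_{ab}$ on $S/\sigma$. Then $f_{ab} := \pi^{*}g_{ab}$ trivializes $L_{ab}^{\otimes 2}$, i.e.\ $\divi(f_{ab}) = 2D_{ab}$. Using the moving lemma on $S/\sigma$, I can additionally arrange that the supports $\Supp(E_{ab})$ appearing in a single pairing calculation are pairwise disjoint and disjoint from $\{z_0,\ldots,z_{2N-1}\}$. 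Since $\ord_P(f_{ab}) = 2\ord_P(D_{ab})$ is always even, the tame-symbol signs on $S$ all vanish, so the Weil pairing is given by the sign-free formula
\[
e(D_1, D_2) \;=\; \prod_{P \in S} \Bigl(\frac{f_1^{\ord_P D_2}}{f_2^{\ord_P D_1}}\Bigr)(P),
\]
each local factor being a unit at $P$ because its total order vanishes.

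Substituting $f_i = \pi^{*}g_i$ and regrouping the points of $S$ by their image in $S/\sigma$ -- the two unramified preimages of a point $Q$ each contribute a factor $g(Q)^{\ord_Q E}$, while a ramified point $s_{*}$ over $z_{*}$ contributes a single copy of $g(z_{*})$ to a doubled order -- the product collapses to
\[
e(D_1, D_2) \;=\; \prod_{Q \in S/\sigma} \Bigl(\frac{g_1^{\ord_Q g_2}}{g_2^{\ord_Q g_1}}\Bigr)(Q).
\]
Classical Weil reciprocity on $S/\sigma$ states that $\prod_Q (g_1, g_2)_Q = 1$, where the tame symbol is $(g_1, g_2)_Q = (-1)^{\ord_Q g_1 \cdot \ord_Q g_2}(g_1^{\ord_Q g_2}/g_2^{\ord_Q g_1})(Q)$, so combining gives
\[
e(D_1, D_2) \;=\; \prod_{Q \in S/\sigma} (-1)^{\ord_Q g_1 \cdot \ord_Q g_2}.
\]
In other words, the Weil pairing upstairs is exactly the product of the tame-symbol signs of $(g_1, g_2)$ downstairs.

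Both claims then follow by inspection. For the first, the hypothesis $\{i,j\}\cap\{k,\ell\}=\emptyset$ together with the genericity of the $E$'s forces $\divi(g_{ij})$ and $\divi(g_{k\ell})$ to have disjoint support on $S/\sigma$, so every tame-symbol sign is $+1$ and $e(L_{ij}, L_{k\ell}) = 1$. For the second, $\tilde\sP_{s_i} = L_{i,0}$ and $\tilde\sP_{s_j} = L_{j,0}$ both satisfy $\ord_{z_0}(g_{\cdot,0}) = -1$, so exactly one tame-symbol sign is nontrivial, namely $(-1)^{(-1)(-1)} = -1$ at $z_0$; every other contribution vanishes by genericity, yielding $e(\tilde\sP_{s_i}, \tilde\sP_{s_j}) = -1$ whenever $i \ne j$, while the case $i = j$ is automatic since the Weil pairing is alternating on $2$-torsion. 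The main obstacle will be justifying the pushforward/regrouping identity cleanly at a ramification point $s_{*}$, where the local uniformizers are related by $\pi^{*}t = u^2$: one must verify that the local contribution is indeed a single evaluation $g(z_{*})$ (rather than a squared one) and that no hidden signs appear in passing from $S$ to $S/\sigma$.
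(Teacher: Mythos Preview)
Your argument is correct and, for the first statement, essentially coincides with the paper's: both push the Weil pairing down to $S/\sigma$ via $f_{ab}=\pi^{*}g_{ab}$ and invoke Weil reciprocity there. The paper quotes Howe's explicit formula for the pairing with disjoint supports and simplifies $\frac{\pi^{*}f_{ij}(\cdot)}{\pi^{*}f_{k\ell}(\cdot)}$ to $\frac{f_{ij}(z_k-z_\ell+D_{k\ell})}{f_{k\ell}(z_i-z_j+D_{ij})}=1$, which is the same Weil-reciprocity identity you reach after pushdown.

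The difference is in the second statement. The paper does not compute at the overlap; it notes $L_{ij}\cong\tilde\sP_{s_i}\otimes\tilde\sP_{s_j}^{-1}$ and then appeals (rather tersely) to the relation $\sum_{i}\sP_{s_i}=0$ from Lemma~\ref{lem:canonical is a pull back, sum is trivial}. Your route is more self-contained: the tame-symbol formulation yields the closed formula $e(D_1,D_2)=\prod_Q(-1)^{\ord_Q g_1\cdot\ord_Q g_2}$, and the single odd--odd overlap at $Q=z_0$ produces the $-1$ directly. This has the advantage of not depending on the sum relation (or on the perfectness of the pairing on $\ker\beta$), so there is no risk of circularity with Proposition~\ref{prop:non-trivial relation}.

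Your flagged ``main obstacle'' at a ramification point resolves cleanly. Writing $\pi^{*}t_Q=u\,t_P^{2}$ with $u$ a unit, the local factor $(f_1^{b}/f_2^{a})(s_*)$ picks up $u(s_*)^{\alpha b-\beta a}$, where $\alpha=\ord_{Q}g_1$, $\beta=\ord_{Q}g_2$ and $a=\ord_{s_*}D_1$, $b=\ord_{s_*}D_2$. Since $2D_i=\divi(\pi^{*}g_i)$ forces $2a=e_P\alpha$ and $2b=e_P\beta$, one has $\alpha b=\beta a$, so the spurious unit disappears and the factor equals the corresponding tame-symbol term $(g_1^{\ord_Q g_2}/g_2^{\ord_Q g_1})(Q)$ on $S/\sigma$, with no hidden sign.
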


\begin{proof}    
    First we can find two rational functions $f_{ij}$ and $f_{k\ell}$ on $S/\sigma$ such that
    \begin{align*}
        \Div(f_{ij})=z_i-z_j+D_{ij}, \\
        \Div(f_{k\ell})=z_k-z_\ell+D_{k\ell}.
    \end{align*}
    and $ D_{ij},D_{k\ell}$ have disjoint supports from each other and also from the divisor $\sum _{i=0}^{2N-1}y_i$.

    Notice that $D_{ij}$ (resp. $D_{k\ell}$) is equivalent to $\sO_{S/\sigma}(z_i-z_j)^{-1}$ (resp. $\sO_{S/\sigma}(z_k-y_\ell)^{-1}$). We can choose a divisor, denoted by $D_{ij}^{1/2}$ (resp. $D_{k\ell}^{1/2}$), such that $\sO_{S/\sigma}(D_{ij}^{1/2})=(\sO_{S/\sigma}(z_i-z_j))^{-1/2}$ (resp. $\sO_{S/\sigma}(D_{k\ell}^{1/2})=(\sO_{S/\sigma}(z_k-z_\ell))^{-1/2}$). 

    By \cite[Theorem 1]{Howe96}, the Weil pairing for Jacobians, which is what we need, can be calculated as follows 
    \begin{align*}
        &e(\sO_{S}(s_i-s_j)\otimes\pi^*\sO_{S/\sigma}(z_i-z_j)^{-1/2},\sO_{S}(s_k-s_\ell)\otimes\pi^*\sO_{S/\sigma}(z_k-z_\ell)^{-1/2})\\
        =& \frac{\pi^*f_{ij}(s_k-s_{\ell}+\pi^*D_{k\ell}^{1/2})}{\pi^*f_{k\ell}(s_i-s_j+\pi^*D_{ij}^{1/2})}\\
        =&\frac{f_{ij}(z_k-z_{\ell}+D_{k\ell})}{f_{k\ell}(z_i-z_j+D_{ij})}=1.
    \end{align*}
    Since we assume the supports are disjoint, the last equality is due to the Weil reciprocity. Here, we use the following notation for rational functions: let $D = \sum_{i} a_i s_i$, and let $f$ be a rational function on $S$, then we define 
    \[
     f(D) = \prod_{s_i \in D} f(s_i)^{a_i}
    \]

    Notice that $\sO_{S}(s_i-s_j)\otimes\pi^*\sO_{S/\sigma}(z_i-z_j)^{-1/2}$ can be treated as $\tilde{\sP}_{s_i}\otimes\tilde{\sP}_{s_j}^{-1}$. Hence, the last statement is due to that  
    \[ 
    \sum \sP_{s_i}=0.
    \]
    See Lemma \ref{lem:canonical is a pull back, sum is trivial}.
\end{proof}
Now, combined with Lemma \ref{lem:criterion of connectedness}, we arrive at the following proposition. 
\begin{proposition}\label{prop:non-trivial relation}
    There  is a unique nontrivial linear relation between $\{\sP_{s_i}\}_{i=1}^{2N-1}$ in $\Prym^\vee$, i.e.,
    \[
    \sum_{i=1}^{2N-1}\sP_{s_i}=0.
    \]
    Hence the fiber product $\Prym_{I}$ over $\Prym$ has two connected components if and only if $I=\{1,2,\ldots, 2N-1\}$, i.e., runs all over the involution-fixed points. 
\end{proposition}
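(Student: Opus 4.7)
The existence of the relation $\sum_{i=1}^{2N-1}\sP_{s_i}=0$ in $\ker\beta$ (hence in $\Prym^\vee$) has already been established in Lemma \ref{lem:canonical is a pull back, sum is trivial}, so the content of the first assertion is the \emph{uniqueness} of this relation. My plan is to exploit the computation of the Weil pairing in Proposition \ref{prop:weil pairing}: since $e(\sP_{s_i},\sP_{s_j})=-1$ for $i\neq j$ while $e(\sP_{s_i},\sP_{s_i})=1$, pairing any putative linear relation with a single $\sP_{s_k}$ produces a parity constraint on its support.

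Concretely, suppose $\sum_{i\in I}\sP_{s_i}=0$ for some $I\subseteq\{1,\dots,2N-1\}$. Pairing with $\sP_{s_k}$ for $k\in\{1,\dots,2N-1\}$ and using bilinearity of $e(\cdot,\cdot)$ together with Proposition \ref{prop:weil pairing}, I get:
\begin{itemize}
\item If $k\notin I$, then $\prod_{i\in I} e(\sP_{s_i},\sP_{s_k})=(-1)^{|I|}$, so $|I|$ must be even.
\item If $k\in I$, then the diagonal term contributes $+1$ and all others $-1$, giving $(-1)^{|I|-1}$, so $|I|$ must be odd.
\end{itemize}
These two conditions are incompatible unless $I=\emptyset$ or $I=\{1,\dots,2N-1\}$ (in the latter case only the second condition applies, and $|I|=2N-1$ is odd, consistent with the relation of Lemma \ref{lem:canonical is a pull back, sum is trivial}). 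This shows that the relation is unique.

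For the connectedness statement, Lemma \ref{lem:criterion of connectedness} says $\Prym_I$ is connected exactly when $\{\sP_{s_i}\}_{i\in I}$ is $\mathbb{F}_2$-linearly independent in $\Prym^\vee[2]$. By the uniqueness above, independence fails only when $I=\{1,\dots,2N-1\}$; for any proper subset the subfamily is independent, hence $\Prym_I$ is connected. When $I=\{1,\dots,2N-1\}$, the direct computation
\[
\pi_{I*}\sO_{\Prym_I}\cong\bigotimes_{i=1}^{2N-1}(\sO\oplus\sP_{s_i})
\]
shows that the only subsets $J\subseteq I$ with $\bigotimes_{i\in J}\sP_{s_i}\cong\sO$ are $J=\emptyset$ and $J=I$, so $\dim H^0(\Prym,\pi_{I*}\sO_{\Prym_I})=2$, giving exactly two connected components.

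This plan is essentially a direct assembly of the three already-proved ingredients (the sum-to-zero relation, the Weil-pairing computation, and the connectedness criterion), and I do not anticipate a real obstacle; the only subtlety is ensuring that the parity argument uses the perfection of the pairing only implicitly, via pairing with the already-available test vectors $\sP_{s_k}$, which is enough to rule out all proper nonempty subsets $I$.
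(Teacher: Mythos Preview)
Your proof is correct and follows essentially the same route as the paper: both use the sum-to-zero relation from Lemma \ref{lem:canonical is a pull back, sum is trivial} for existence, then pair a putative relation $\sum_{i\in I}\sP_{s_i}=0$ against each $\sP_{s_k}$ and invoke the Weil-pairing values of Proposition \ref{prop:weil pairing} to force the parity dichotomy, concluding $I=\emptyset$ or $I=\{1,\dots,2N-1\}$. Your treatment of the connectedness statement (via Lemma \ref{lem:criterion of connectedness} and the explicit $H^0$-count) is slightly more detailed than the paper's, but the argument is the same.
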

\begin{proof}
    By Lemma \ref{lem:canonical is a pull back, sum is trivial},
    \[
    \sum_{i=1}^{2N-1}\sP_{s_i}=0.
    \]
    in $\Prym^{\vee}[2]$. Now we show that this is unique.
    
    Suppose they have another relation:
    \[
    \sum_{i\in I} \sP_{s_i}=0.
    \]
    where $I\ne \{1,2,\ldots,2N-1\}$. Choose $j\in\{1,2,\ldots, 2N-1\}$. Then we have
    \[
    e(\sP_{s_j},\sum_{i\in I} \sP_{s_i})=0.
    \]
    If $j\notin I$, this implies that $\#I$ has to be even. But if $j\in I$, then $\#I$ has to be odd. Hence $I=\{1,2,\ldots,2N-1\}$. 
\end{proof}  

\begin{proposition}\label{prop:two connected components}
    The dual abelian variety $\Prym^{\vee}$ is isomorphic to a connected component of the fiber product $$\Prym_{\ker\beta}:=\Prym_{s_1}\times_{\Prym}\Prym_{s_2}\times_{\Prym}\cdots\times_{\Prym}\Prym_{s_{2N-1}}.$$
\end{proposition}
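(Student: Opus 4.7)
The plan is to construct a natural map $\Psi: \Prym^{\vee} \to \Prym_{\ker\beta}$ whose image is a single connected component, and show that it is an isomorphism onto that component by a degree count. The starting point is the isogeny $\psi: \Prym^{\vee} \to \Prym$ arising from the factorization $[2] = \psi \circ \phi$ in \eqref{eq:mult 2 map}: the first map $\phi: \Prym \to \Prym^{\vee}$ has kernel $\iota(\Jac(S/\sigma)[2])$ (the intersection $\Prym \cap \pi^{*}\Jac(S/\sigma)$), so $\psi$ identifies $\Prym^{\vee}$ with $\Prym/\iota(\Jac(S/\sigma)[2])$ and has kernel $\ker\beta$ of order $2^{2N-2}$.

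The first step is to verify that $\psi^{*}\sP_{s_i}$ is trivial on $\Prym^{\vee}$ for every $i=1,\ldots,2N-1$. Since each $\sP_{s_i}$ lies in $\ker\beta$, the vanishing $e(\iota(\Jac(S/\sigma)[2]), \ker\beta) = 1$ from \eqref{eq:perfect pairing on involution point}, combined with Remark \ref{rmk:trivial pairing value}, yields the required triviality. By Lemma \ref{two kinds of double cover and factor through}, for each $i$ this produces a factorization $\Prym^{\vee} \to \Prym_{s_i}$ of $\psi$, and the universal property of the fiber product assembles these into the desired morphism
$$\Psi: \Prym^{\vee} \longrightarrow \Prym_{s_1} \times_{\Prym} \cdots \times_{\Prym} \Prym_{s_{2N-1}} = \Prym_{\ker\beta}.$$

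Since $\Prym^{\vee}$ is connected and $\Psi$ is a morphism of finite étale covers of $\Prym$, the image lies in a single connected component $X_0$ of $\Prym_{\ker\beta}$. By Proposition \ref{prop:non-trivial relation}, $\Prym_{\ker\beta} \to \Prym$ has exactly two connected components, so $\deg(X_0/\Prym) = 2^{2N-1}/2 = 2^{2N-2} = \#\ker\beta = \deg(\psi)$. Hence $\Psi: \Prym^{\vee} \to X_0$ is a morphism between connected étale $\Prym$-covers of equal degree, and therefore an isomorphism. The main obstacle is the first step: carefully identifying $\Prym^{\vee}$ as the precise quotient $\Prym/\iota(\Jac(S/\sigma)[2])$ so that Remark \ref{rmk:trivial pairing value} is directly applicable to the line bundles $\sP_{s_i}$; once this identification is pinned down, the assembly via the fiber product and the degree comparison are essentially formal.
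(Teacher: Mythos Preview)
Your proposal is correct and follows essentially the same route as the paper: show that each $\sP_{s_i}$ pulls back trivially to $\Prym^{\vee}$ via Remark~\ref{rmk:trivial pairing value}, deduce that $\Prym^{\vee}\to\Prym$ factors through the fiber product, and invoke Proposition~\ref{prop:non-trivial relation} to conclude. The paper's proof is terser and leaves the degree comparison implicit, whereas you spell out the identification $\Prym^{\vee}\cong\Prym/\iota(\Jac(S/\sigma)[2])$ and the count $\deg(X_0/\Prym)=2^{2N-2}=\#\ker\beta$ explicitly; this is a welcome elaboration rather than a different argument.
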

\begin{proof}
    By Remark \ref{rmk:trivial pairing value}, we know that the pullback of $\sP_{s_i}$ to $\Prym^{\vee}$ is trivial. In particular, this implies that the natural map
    \[
    \Prym^{\vee}\rightarrow\Prym
    \]
    factors through the fiber product. Combined with Proposition \ref{prop:non-trivial relation}, $\Prym^{\vee}$ is isomorphic to a connected component.
\end{proof}

Recall from Definition \ref{def:double cover}, for each $s_i$, we denote the involution by $\iota_{s_i}$ on $\Prym_{s_i}$, which then acts on the fiber product $\Prym_{\ker\beta}$.
\begin{lemma}\label{lem:permute conn comp}
    For each $s_i$, the natural involution $\iota_{s_i}$ permutes the two connected components of the fiber product.
\end{lemma}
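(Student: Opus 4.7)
The plan is to produce a nowhere-vanishing regular function on $\Prym_{\ker\beta}$ whose pullback under $\iota_{s_i}$ equals its negative; since $\Prym_{\ker\beta}$ is proper, the function is constant on each connected component, and the sign flip forces $\iota_{s_i}$ to interchange the two components.

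The key observation is that each factor $\Prym_{s_j-s_0} = \Spec(\sO \oplus \sP_{s_j})$ carries a tautological nowhere-vanishing section $\sigma_j$ of the pulled-back line bundle $\pi_j^*\sP_{s_j}$, coming from the inclusion of the $\sP_{s_j}$-summand of the structure sheaf of the double cover. By the very definition of the involution $\iota_{s_j}$ as the automorphism of $\sO \oplus \sP_{s_j}$ acting by $+1$ on $\sO$ and $-1$ on $\sP_{s_j}$, we have $\iota_{s_j}^*\sigma_j = -\sigma_j$. After pulling everything back to the fiber product $\Prym_{\ker\beta}$, the involution $\iota_{s_i}$ acts by $-1$ on $\sigma_i$ and trivially on the remaining $\sigma_j$.

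Next, Lemma \ref{lem:canonical is a pull back, sum is trivial} supplies a trivialization $\tau : \bigotimes_{j=1}^{2N-1}\sP_{s_j} \xrightarrow{\sim} \sO_{\Prym}$, unique up to a global scalar. Pulling $\tau$ back to $\Prym_{\ker\beta}$ and comparing with the product $\sigma_1 \otimes \cdots \otimes \sigma_{2N-1}$, which is another trivialization of the same (pulled-back) line bundle, yields a nowhere-vanishing regular function
\[
f \;:=\; \frac{\sigma_1 \otimes \cdots \otimes \sigma_{2N-1}}{\tau} \;\in\; \mathbb{G}_m\bigl(\Prym_{\ker\beta}\bigr).
\]
Since $\Prym_{\ker\beta}$ is proper over $\Bbbk$ with two (irreducible) connected components, $f$ is a nonzero scalar $c_\pm$ on each of the two components. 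On the other hand, $\iota_{s_i}$ covers the identity on $\Prym$, so $\iota_{s_i}^*\tau = \tau$, while $\iota_{s_i}^*(\sigma_1\otimes\cdots\otimes\sigma_{2N-1}) = -\sigma_1\otimes\cdots\otimes\sigma_{2N-1}$. Hence $\iota_{s_i}^* f = -f$. If $\iota_{s_i}$ preserved a given component, the constant $c_\pm$ would satisfy $c_\pm = -c_\pm$, forcing $c_\pm = 0$, a contradiction. Therefore $\iota_{s_i}$ exchanges the two connected components.

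The only mild subtlety—and the one point worth checking carefully—is verifying the sign identity $\iota_{s_j}^*\sigma_j = -\sigma_j$ intrinsically from the construction of $\Prym_{s_j-s_0}$ as $\Spec(\sO \oplus \sP_{s_j})$; once this is in place, everything reduces to counting signs and invoking properness. No input beyond Lemma \ref{lem:canonical is a pull back, sum is trivial} (to know $\bigotimes\sP_{s_j}$ is canonically trivial) and the elementary fact that regular functions on a proper connected variety are constants is required.
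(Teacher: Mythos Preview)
Your proof is correct and takes a genuinely different route from the paper's.

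The paper argues in one line: the quotient of $\Prym_{\ker\beta}$ by $\iota_{s_i}$ is the fiber product $\prod_{j\neq i}\Prym_{s_j-s_0}$ over $\Prym$, which is connected by Lemma~\ref{lem:criterion of connectedness} together with Proposition~\ref{prop:non-trivial relation} (any proper subset of $\{\sP_{s_j}\}$ is linearly independent). A free $\mathbb{Z}/2$-action with connected quotient on a space with two components must swap them.

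Your argument instead exploits the \emph{specific} relation $\sum_j \sP_{s_j}=0$ directly: the tautological sections $\sigma_j$ and the trivialization $\tau$ from Lemma~\ref{lem:canonical is a pull back, sum is trivial} assemble into a global unit $f$ on which $\iota_{s_i}$ acts by $-1$, and properness forces the component swap. This is more explicit and self-contained---you never invoke the connectedness criterion for proper subsets---at the cost of setting up the sections $\sigma_j$ carefully. The paper's approach is shorter because it recycles the linear-independence machinery already in place; yours would work equally well in a setting where that machinery was not available but the single relation $\bigotimes_j\sP_{s_j}\cong\sO$ was known.
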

\begin{proof}
    It is easy to see that the quotient of 
    \[   \Prym_{s_1}\times_{\Prym}\Prym_{s_2}\times_{\Prym}\cdots\times_{\Prym}\Prym_{s_{2N-1}}
    \]
    by $\iota_{s_i}$ is connected. Hence $\iota_{s_i}$ permutes the 2 connected components of $\Prym_{\ker\beta}$.
\end{proof}
Later we will show that various generic fibers can be constructed via successive double covers of generic fibers of type C. Hence we can also discuss the duality between them.

\subsection{Generic fiber of type C}

We first define the ``Kazhdan--Lusztig'' open locus in the Hitchin base, over which singularities of corresponding spectral curves can be controlled.
\begin{definition}\label{Def:H^KL}
    Let $\bf{O}_C$ be a nilpotent orbit (not necessarily special) of type C. Define the open subset $\bf{H}^{\text{KL}}\subset \bf{H}_{\overline{\bf{O}}_C}$ consists of all $a\in \bf{H}_{\overline{\bf{O}}_C}$ satisfying:
    \begin{itemize}
        \item[(a)] The spectral curve $\pi_a:\Sigma_a\rightarrow \Sigma$ is smooth outside of marked points. 
        
        \item[(b)] Over the marked points, the local equation of $\Sigma_a$ around each marked point is generic in $\mathbf{Char}_{\operatorname{\mathbf{O}_C}}$, i.e., Assumption \ref{main assumption on char} and the assumption in Proposition \ref{description of W} hold.
    \end{itemize}
\end{definition}

Let $\bar{\Sigma}_a$ denote the normalization of the spectral curve $\Sigma_a$, which inherits the natural involution $\sigma$. The quotient of $\bar{\Sigma}_a$ by this involution is denoted $\bar{\Sigma}_a / \sigma$, and the map $\bar{\pi}_a: \bar{\Sigma}_a \to \Sigma$ represents the natural projection.

\begin{lemma}\label{the inverse image of $x$}
    If the partition of $\bf{O}_C$ is given by $[d_{C, 1}, \cdots , d_{C,r}]$, then $\bar{\pi}_a^{-1}(x)=\{x_1, \cdots , x_r\}$. The ramification index of $x_i$ is $d_{C, i}$.
\end{lemma}
\begin{proof}
     By Proposition \ref{type C generic}, we have
     \[
     \widehat{\sO}_{\overline{\Sigma}_a,\bar{\pi}_a^{-1}(x)}\cong \prod \Bbbk[\![t]\!][\lambda]/f_{C,i}.
     \]
     Then we conclude.
\end{proof}

\begin{definition}\label{def:Prym var ramification divisor}
Denote the Prym variety 
$$
    \Prym_a:=\Prym(\bar{\Sigma}_a,\bar{\Sigma}_a/\sigma)=\{\mathcal{L}\in \Jac(\bar{\Sigma}_a) \mid \sigma^*\mathcal{L}\cong \mathcal{L}^{\vee}\}.
$$
Let $\sR_a$ denote the ramification divisor of $\bar{\pi}_a$, given by $\omega_{\bar{\Sigma}_a}\otimes \bar{\pi}_a^*\omega_{\Sigma}^{\vee}$. 
\end{definition}
We start from Hitchin fiber $h_{\overline{\bf{O}}_C}^{-1}(a)$. 

\begin{proposition} \label{Prop:fiber_JM_moduli}
    The fiber $h_{\overline{\bf{O}}_C}^{-1}(a)$ is canonically isomorphic to
    \[
    \{\sL\in\Pic^{\frac{\deg \sR_a}{2}}(\bar{\Sigma}_a) \mid \sigma^*\sL\cong\sL^{\vee}(\sR_a)\}.
    \]
    It is a torsor over the Prym variety $\Prym_a$. Hence, the Hitchin base $\bf{H}_{\overline{\bf{O}}_C}$ is the image of Hitchin map $h_{\overline{\bf{O}}_C}$.
\end{proposition}

\begin{proof}
    We use $\Sigma^\circ$ to denote $\Sigma\setminus D$ and then we cover the curve $\Sigma$ with $\Sigma^\circ$ and formal neighborhoods $\Spec \widehat{\sO}_{x}$ for each $x\in D$. In particular, $\pi_a^{-1}(\Sigma^{\circ})\subset \Sigma_a$ is smooth, and  $\bar{\Sigma}_a$ is the gluing of $\pi_a^{-1}(\Sigma^{\circ})$ and normalizations of $\Spec \widehat{\sO}_{x}$'s. By the genericity of $a$, the characteristic polynomial locally around each $x$ satisfies the Assumption \ref{main assumption on char}. Then, by Theorem \ref{Thm.type C decomposition} and Beauville--Narasimhan--Ramanan correspondence, we conclude that, for $(E_C, g_C, \Fil^{\bullet}_{P_{C,JM}}, \theta_C) \in h_{\overline{\bf{O}}_C}^{-1}(a)$, there is a line bundle $\mathcal{L}$ on $\bar{\Sigma}_a$ such that $(\bar{\pi}_{a})_* \mathcal{L}\cong E_C$. By the Grothendieck--Serre duality, we have
    \[
    (\bar{\pi}_{a*}(\sL))^{\vee}\cong \bar{\pi}_{a*}(\sL^{\vee} \otimes \sO(\sR_a)).
    \]
    Hence $\bar{\pi}_{a*}\sL$ is a symplectic bundle if and only if
    \[
    \sigma^{*}\sL\cong\sL^{\vee}\otimes\sO(\sR_a).
    \]
    Here $\sigma^*$ is used to make sure we have skew-symmetric pairing.

    Conversely, given a line bundle $\sL$ on $\bar{\Sigma}_a$, satisfying $\sigma^{*}\sL\cong \sL^{\vee}\otimes \sO(\sR_a)$, then we have a natural isomorphism
	\[
	(\bar{\pi}_{a})_*\sigma^{*}\sL\cong (\bar{\pi}_{a})_*(\sL^{\vee}\otimes \sO(\sR_a)).
	\]
    Applying the Grothendieck--Serre duality to $\bar{\pi}_a: \bar{\Sigma}_a\rightarrow \Sigma$, we have $((\bar{\pi}_{a})_*\sL)^{\vee}\cong (\bar{\pi}_{a})_*(\sL^{\vee}(\sR_a))$.
    Since $(\bar{\pi}_{a})_*\sigma^{*}\sL=(\bar{\pi}_{a})_*\sL$, we have
    \[
	(\bar{\pi}_{a})_*\sL\cong ((\bar{\pi}_{a})_*\sL)^{\vee},
    \]
    i.e., we have a nondegenerate bilinear form on $(\bar{\pi}_{a})_*\sL$. The existence of $\sigma$ shows that the pairing is skew-symmetric.
    
    We now show that there is a canonical filtration on each $(\bar{\pi}_{a})_*\sL$ via ramifications of $\bar{\Sigma}_a$ over $\Sigma$. Recall that by Proposition \ref{type C generic} we have the decomposition locally around each marked point
    \[
    f=\prod f_{i}, \; \deg f_i=e_i=d_i.
    \]
    Then on each branch $\sO_{f,i}$ of $\bar{\Sigma}_a$ over a neighborhood of $x$, we have the filtration:
    \begin{equation}\label{eq:ram-fil}
         \sL\supset \mathfrak{m}_{f,i}\sL\supset \mathfrak{m}^{2}_{f,i}\sL\supset\cdots\supset \mathfrak{m}^{d_i-1}_{f,i}\sL\supset \mathfrak{m}_{f,i}^{d_i}\sL,
    \end{equation}
    which corresponds to a $\mathfrak{sl}_2$ representation of dimension $d_i$. Then we obtain the Jacobson--Morozov filtration.    

    Hence we have a canonical identification
    \[
    h_{\overline{\bf{O}}_C}^{-1}(a)\cong \{\sL\in\Pic(\bar{\Sigma}_a) \mid \sigma^*\sL\cong\sL^{\vee}(\sR_a)\}.
    \]
    In particular, this shows that $h_{\overline{\bf{O}}_C}^{-1}(a)$ is a torsor of $\Prym_a$.
\end{proof}

\begin{definition}\label{def:ram-fil}
    Let
    \[
    \Prym(\sR_a):=\{\sL\in\Pic(\bar{\Sigma}_a) \mid \sigma^*\sL\cong\sL^{\vee}(\sR_a)\}.
    \]
    We denote by $\sF^{\bullet}_{ram, a}$ the filtration, induced by \eqref{eq:ram-fil}, on the push forward of line bundles from normalized spectral curve $\bar{\Sigma}_a$. 
\end{definition}

We now describe the relative version of the above discussions. Over the open subvariety $\textbf{H}^{\text{KL}}$, we have a family of  spectral curves (resp. normalized spectral curves) $\Xi$ (resp. $\bar{\Xi}$):
\[
\begin{tikzcd}
    \bar{\Xi}\ar[r]\ar[rd,"\bar{\pi}", swap]&
    \Xi\ar[d,"\pi"]\\
    &\Sigma\times \textbf{H}^{\text{KL}}
\end{tikzcd}.
\]
Over $\textbf{H}^{\text{KL}}$, we denote by $\sR$ the ramification divisor of the natural quotient map $\bar{\Xi}\rightarrow\bar{\Xi}/\sigma$. Then we have the relative $\Prym(\sR)$, a torsor of the relative Prym variety $\Prym$. Moreover, we have a relative Poincar\'e line bundle $\sP$\footnote{Notice that this need not be unique, and this accounts for SYZ mirror symmetry.} (over the relative Prym variety $\Prym$) which fits into the following diagram:
\[
\begin{tikzcd}
   \sP\ar[r]\ar[d]&\Prym(\sR_a)\times\bar{\Xi}\ar[d,"\bar{\pi}"]\\
   ((\id\times \bar{\pi})_{*}\sP, \sF^{\bullet}_{ram})\ar[r]&\Sigma\times \textbf{H}^{\text{KL}}\ar[d]\\
   &\textbf{H}^{\text{KL}}
\end{tikzcd}.
\]
Now we conclude:
\begin{theorem}\label{family version of filtration}
    The moduli space $\bf{Higgs}_{\overline{\bf{O}}_C}$, restricted to $\textbf{H}^{\text{KL}}$, is a torsor over the relative Prym variety $\Prym$. Moreover, the restriction of the universal family $(\sE_{\overline{\bf{O}}_C},\theta_C, \mathcal{F}^{\bullet})$ is isomorphic to $(\bar{\pi}_{*}\sP, \mathcal{F}_{ram}^{\bullet})$ where the filtration $\mathcal{F}_{ram}^{\bullet}$ is completely determined by the ramification of normalized spectral curves. 
\end{theorem}
Later, we will show that $\Prym(\sR_a)$ is a \emph{trivial torsor} over the relative Prym variety $\Prym$, as predicted by SYZ mirror symmetry.

\subsection{Generic fiber of type B} 

To study $h_{\overline{\bf{O}}_B}^{-1}(a)$, we need to require that the nilpotent orbits $\overline{\bf{O}}_B$ and $\overline{\bf{O}}_C$ are special and under Springer duality. By Theorem \ref{Thm:why special}, $\bf{H}_{\overline{\bf{O}}_B} = \bf{H}_{\overline{\bf{O}}_C}$. Take $a \in \bf{H}^{\text{KL}}$, it corresponds to the following characteristic polynomial  
\[
    \lambda(\lambda^{2n} + a_{2} \lambda^{2n-2} + \ldots + a_{2n-2}\lambda^2 + a_{2n}).
\]

\begin{definition}
    For later use, we define $\delta = \lfloor \dfrac{\operatorname{ord}_{x}a_{2n}}{2}\rfloor$. 
\end{definition}

\begin{proposition}\label{L_BC}
    Fix a choice of square root of $\omega_{\Sigma}$, we have a morphism 
    \[
    L_{BC}: h_{\overline{\bf{O}}_B}^{-1}(a)\longrightarrow h_{\overline{\bf{O}}_C}^{-1}(a),
    \]
    with degree
    \[
    2^{2n(2g-2)+\beta(\bf{d}_B)-c(\bf{d}_B)-1}.
    \] 
    Moreover, $h_{\overline{\bf{O}}_B}^{-1}(a)$ has two connected components: $h_{\overline{\bf{O}}_B}^{-1}(a)^+$ and $h_{\overline{\bf{O}}_B}^{-1}(a)^-$, and each one is a torsor of $\Prym_{\overline{\bf{O}}_B, a}$, which is a finite cover of $\Prym_a$ (defined in Proposition \ref{type B JM and different components}). Over $h_{\overline{\bf{O}}_B}^{-1}(a)^+$ we have a canonical point.
\end{proposition}

This proposition is a global version of Theorem \ref{Thm.type B decomposition}. We will divide the proof into three parts:

\begin{proof}[Proof of Proposition \ref{L_BC} (first part: existence of $L_{BC}$)]
    Let $(E_B, g_B, \Fil^{\bullet}_{P_{B, JM}}, \theta_B) \in h_{\overline{\bf{O}}_B}^{-1}(a)$, we firstly modify $E_B$ as follows:
    $$
    0\longrightarrow E_B^\prime \longrightarrow E_B \longrightarrow R \longrightarrow 0
    $$
    where the morphism $E_B\rightarrow R$ is defined as in Proposition \ref{coker dimension} and thus $\det E_B^\prime\cong \mathcal{O}(-\lceil\dfrac{\beta(\bf{d}_B)}{2}\rceil x)$. Then the restriction of $\theta_B$ on $E_B^\prime$(which we also denote as $\theta_B$) has partition $[{}^S\bf{d}_B, 1]$. Then we consider the exact sequence:
    $$
    0\longrightarrow \Ker \theta_B \longrightarrow E_B^\prime \longrightarrow \bar{E} \longrightarrow 0.
    $$
    By a similar argument as in Section 4 in \cite{Hit07}, one can show that $\Ker \theta_B\cong \omega_{\Sigma}^{-n}((\delta-n)x)$\footnote{We can use Pfaffian to obtain a generator of the kernel, and the order at $x$ is determined then.} and hence $\det \bar{E}\cong \omega_{\Sigma}^n((n-\delta-\lceil\dfrac{\beta(\bf{d}_B)}{2}\rceil)x)$. We write the induced morphism of $\theta_B$ on $\bar{E}$ as $\theta_C$. Notice that by our construction of $E_B^\prime$, this exact sequence is split around $x$, and hence the partition of $\theta_C$ at $x$ is ${}^S\bf{d}_B=\bf{d}_C$.

    Now we define a skew-symmetric bilinear form on $\bar{E}$ as: $\bar{g}_C(\bar{u}, \bar{v})=g_B(\theta_B u,v)$, for any $\bar{u}, \bar{v}\in \bar{E}$ and $u, v \in E_B^\prime$ are preimages of $\bar{u}, \bar{v}$. Notice that $\bar{g}_C$ takes values in $\omega_{\Sigma}(x)$.

    Finally, we define $E_C^\prime$ to be the kernel of the following:
    $$
    \bar{E}\twoheadrightarrow \bar{E}|_x\twoheadrightarrow \bigoplus_{i\geq 1} K_{B,i}(0)/\Im \theta_{B,i}(0)^{k_i}
    $$ 
    where $K_{B,i}(0)/\Im \theta_{B,i}(0)^{k_i}$ is given in Proposition \ref{part 1.2}. We see that the restriction of $\bar{g}_C$ on $E_C^\prime$ takes values in $\omega_{\Sigma}\subset \omega_{\Sigma}(x)$. Moreover, we have $\det E_C^\prime\cong \omega_{\Sigma}^n$ since $n-\delta-\lceil\dfrac{\beta(\bf{d}_B)}{2}\rceil-\sum_{e_{B, i} \text{even}}k_i-\sum_{e_{B, i} \text{odd}}k_i=0$. Thus, the restriction of $\bar{g}_C$ on $E_C^\prime$ is nondegenerate. By tensoring $E_C^\prime$ with the inverse of the square root of $\omega_{\Sigma}$ we choose and since the Jacobson--Morozov resolution one to one on $\bf{O}_C$, we get $(E_{C}, g_C, \Fil^{\bullet}_{P_{C,JM}}, \theta_C)\in h_{\overline{\bf{O}}_C}^{-1}(a)$.
\end{proof}

\begin{proof}[Proof of Proposition~\ref{L_BC} (second part: construction of $h_{\overline{\bf{O}}_B}^{-1}(a)$)]

    We need to reverse the process in the first part and figure out how many choices we need to make in each step. 
    
    Let $(E_{C}, \theta_C, \dots)\in h_{\overline{\bf{O}}_C}^{-1}(a)$, we tensor $E_C$ with the square root of $\omega_{\Sigma}$ we have chosen to get $E_C^\prime$, then we have a nondegenerate skew-symmetric bilinear form $g_C:E_C^\prime\otimes E_C^\prime\rightarrow \omega_{\Sigma}$, $\theta_C: E_C\rightarrow E_C\otimes \omega_{\Sigma}(x)$ and $\det E_C^\prime \cong \omega_{\Sigma}^n$.

    Now we consider $\bar{E}$ as the kernel of the following:
    $$
    E_C^\prime(x)\twoheadrightarrow (E_C^\prime(x))|_{x}\twoheadrightarrow \bigoplus_{i\geq 1} (t^{-1}K_{C,i})(0)/\Ker \theta_C(0)^{k_i},
    $$
    where $(t^{-1}K_{C,i})(0)/\Ker \theta_C(0)^{k_i}$ is given in Proposition \ref{part 2.1}. Thus $\det \bar{E}\cong \omega_{\Sigma}^n((2n-\sum_{i\geq 1}k_i)x)$ and the restriction of $g_C$ on $\bar{E}$ takes values in $\omega_{\Sigma}(2x)$.

    Now we have a morphism $\bar{E}\otimes \omega_{\Sigma}(x)^\vee\rightarrow \bar{E}^\vee(x)$ induced by the pairing on $\bar{E}$. This is an isomorphism on $\Sigma\setminus \{x\}$. By the construction of $\bar{E}$, we have an induced mophism $\bar{E}^\vee(x)\otimes \sO_{\Sigma}(-2x)=\bar{E}^\vee(-x)\rightarrow \bar{E}\otimes \omega_{\Sigma}(x)^\vee$ which is also an isomorphism on $\Sigma\setminus \{x\}$. Compose with $\theta_C: \bar{E}\rightarrow \bar{E}\otimes \omega_{\Sigma}(x)$ and we denote this morphism as $\bar{g}: \bar{E}^\vee(-x)\rightarrow \bar{E}$. Notice that $\bar{g}$ gives a generically nondegenerate symmetric pairing on $\bar{E}^\vee$ over $\Sigma\setminus \{x\}$.

    We mention that the global section $a_{2n}$ gives a natural morphism $\times a_{2n}:\omega_{\Sigma}^{-n}((\delta-n)x)\rightarrow \omega_{\Sigma}^{n}((n-\delta)x)$. Hence we have 
    $$
    0\rightarrow \bar{E}^\vee(-x)\oplus \omega_{\Sigma}^{-n}((\delta-n)x)\xrightarrow{\times a_{2n}} \bar{E}\oplus \omega_{\Sigma}^{n}((n-\delta)x) \rightarrow Q_x\oplus \bigoplus_{y\in Y} Q_y \rightarrow 0.
    $$
    Now, each $Q_y$ is a direct sum of the cokernel of $\bar{g}$ and cokernel of $\times a_{2n}$ at $y$. Hence, it is a two-dimensional skyscraper sheaf supporting at $y$ with a nondegenerate pairing and has two maximal isotropic subspaces. We now choose a maximal isotropic subspace $\mathfrak{I}_y$  at each $y\in Y$, then we have $2^{2n(2g-2)-1}$ choices.

    We define $E_B^\prime$ to be the inverse image of $\Coker (\times a_{2n})|_x\oplus \oplus_{y\in Y}\mathfrak{I}_y$ in $\bar{E}\oplus \omega_{\Sigma}^{n}((n-\delta)x)$, and then $\det E_B^\prime \cong \mathcal{O}_{\Sigma}((n+\delta-\sum_{i\geq 1}k_i)x)$. We claim that there is a natural symmetric pairing on $E_B^\prime$, which is nondegenerate over $\Sigma \setminus \{x\}$.

    By Lemma \ref{isotropic}, there is a natural symmetric pairing on $E_B^\prime$ over $\Sigma \setminus \{x\}$ which is nondegenerate, we need to show that this pairing extends over $\Sigma$. Locally, this pairing is given by the restriction of $g_C\circ \theta_C^{-1}\oplus (\times a_{2n})^{-1}$, which is well defined when restricts to $E_B^\prime$ over $\Sigma \setminus \{x\}$, so we only need to show that $g_C\circ \theta_C^{-1}\oplus (\times a_{2n})^{-1}$ is well defined near $x$. Near $x$, $E_B^\prime$ is given by the direct sum of $\bar{E}$ and $\omega_{\Sigma}^{-n}((\delta-n)x)$ thus by Proposition \ref{part 2.1} we see that $g_C\circ \theta_C^{-1}$ is well defined and actually factor through $\sO_{\Sigma}\subset \sO_{\Sigma}(x)$ and hence we proved our claim.

    Finally, we consider the exact sequence 
    \begin{equation}\label{Q}
        0\longrightarrow E_{B}^\prime \longrightarrow (E_{B}^\prime)^\vee \longrightarrow Q \longrightarrow 0.
    \end{equation}
    Here, $Q$ is the same as in Definition \ref{def:iota_isotropic}. Hence, we can choose any $\iota$-isotropic subspace $W$ of $Q$ so that the inverse image of $W$ in $(E_{B}^\prime)^\vee$ together with the induced Higgs field $\theta_B$ is a $\overline{\bf{O}}_B$-Higgs bundle. The choice of  $\iota$-isotropic subspaces will be $2^{\beta(\bf{d}_B)-c(\bf{d}_B)}$ by Proposition \ref{description of W}.
\end{proof}

Before giving the third part of the proof of Proposition \ref{L_BC}, we need some preparations. Recall that, in Definition \ref{def:Prym var ramification divisor}, $\sR_a$ denote the ramification divisor of $\bar{\Sigma}_a\rightarrow \bar{\Sigma}_a/\sigma$ which is a reduced divisor and consists of $\sigma$ fixed points. We fix a base point $y_0$ of $\sR_a$ which is not lying over the marked point $x$. As mentioned in Section~\ref{Ap:Prym_var}, for each points $y_i\in \sR_a$, we can define a double cover $\Prym_{y_i,a}$ of $\Prym_a$. See Definition~\ref{def:double cover} for more details.
\begin{definition}\label{def:PrymY}
    Let $Y=\sR_a\setminus  \{\bar{\pi}_a^{-1}(x) \cup \{y_0\} \}$. We denote $\Prym_{Y,a}$ to be the fiber product of $\Prym_{y_i,a}$ over $\Prym_a$ for all $y_i\in Y$.
\end{definition}

According to the second part of the proof of Proposition \ref{L_BC}, for each $\sL \in \Prym_a$, we have a torsion sheaf $Q_{\sL}$ supporting at $x$ as in the exact sequence \ref{Q}.

\begin{lemma}\label{construction of Prym_W}
    We have a finite cover $\Prym_{W,a}$ of $\Prym_a$ of degree $2^{\beta(\bf{d}_B)-c(\bf{d}_B)}$, parametrizing tuples $(\sL, W_{\sL})$ where $\sL\in \Prym_a$ and $W_{\sL}\subset Q_{\sL}$ is an $\iota$-isotropic subspace (see Definition~\ref{def:iota_isotropic}).
\end{lemma}

\begin{proof}
Recall, by Lemma~\ref{structure of special partition}, the partition is given by $[d_{C,1}, \cdots, d_{C, r}]=[^S\bf{T}_1, \cdots, ^S\bf{T}_s]$, which is exactly the ramification index of all points in $\bar{\Sigma}_a$ lying over the puncture point $x$. By corollary \ref{rough decomposition of E}, we may only consider a type (B2) partition. Then for a sequence $^S\bf{T}=[e_1, \cdots , e_{2m}]=[r_1^{m_1}, \cdots , r_k^{m_k}]$ with all parts being even, we use $x^T_i$ to denote the fixed point of $\sigma$ associated with $e_i$. Firstly we consider the projective bundles $\mathbb{P}_j=\mathbb{P}(\oplus_{m_{j-1}+1\leq i \leq m_j}\mathcal{P}_{x^T_i})$ for $1\leq j \leq k$; then take $\prod \mathbb{P}=\prod_{{\Prym_a}}^{1\leq j \leq k} \mathbb{P}_j$ to be the fiber product of all $\mathbb{P}_j$ over $\Prym_a$. Over $\prod \mathbb{P}$ we have the (pulling back of) relative tautological line bundles $\mathcal{O}_{j}(-1):=\mathcal{O}_{\mathbb{P}_j}(-1)$, and hence we have: 
$$
\prod \prod \mathbb{P}= \prod_{\prod \mathbb{P}}\mathbb{P}(\mathcal{O}_{\mathbb{P}_j}(-1)\oplus \mathcal{O}_{\mathbb{P}_{j+1}}(-1))
$$ 
which is the fiber product over $\prod \mathbb{P}$ with $1\leq j \leq k-1$. We denote the (pulling back of) tautological line bundle on $\mathbb{P}(\mathcal{O}_{\mathbb{P}_j}\oplus \mathcal{O}_{\mathbb{P}_{j+1}})$ as $\mathcal{O}_{j, j+1}(-1)$. Then we have a line bundle on $\prod \prod \mathbb{P}$ 
$$
\mathcal{O}(2):=\bigotimes_{j=1}^k\mathcal{O}_j(2)\otimes \bigotimes_{j=1}^{k-1}\mathcal{O}_{j, j+1}(2).
$$ 
We will define ($\sum_{j=1}^km_j-1$) many global sections of $\mathcal{O}(2)$ and then consider the zero locus defined by these sections.

Recall that we have scalars $\Phi$ on $\mathcal{P}_{x^T_i}$, which we defined above Definition \ref{F geq leq i}, then we have several pairings on $\oplus_{m_{j-1}+1\leq i \leq m_j}\mathcal{P}_{x^T_i}$: $$\bigoplus_{m_{j-1}+1\leq i \leq m_j}\mathcal{P}_{x^T_i}\stackrel{\Phi^{-l}}{\longrightarrow} \bigoplus_{m_{j-1}+1\leq i \leq m_j}\mathcal{P}_{x^T_i}\stackrel{\sigma^*}{\longrightarrow}\bigoplus_{m_{j-1}+1\leq i \leq m_j}\mathcal{P}_{x^T_i}^{\vee}.$$ Notice that $\mathcal{O}_j(-1)$ is a sub-line bundle of $\oplus_{m_{j-1}+1\leq i \leq m_j}\mathcal{P}_{x^T_i}$ and so these parings gives morphisms $I_{j, l}: \mathcal{O}_j(-1)\rightarrow \mathcal{O}_j(1)$ and hence global sections of $\mathcal{O}_j(2)$. Here we only take $1\leq l \leq m_j -1$. These would give $\sum_{j=1}^km_j-k$ many sections of $\mathcal{O}(2)$.

To get the remaining sections, we consider $I_{j, \lfloor \frac{m_{j}}{2} \rfloor}: \mathcal{O}_j(-1)\rightarrow \mathcal{O}_j(1)$ and $I_{j+1, 0}: \mathcal{O}_{j+1}(-1)\rightarrow \mathcal{O}_{j+1}(1)$, which would give morphisms $I_{j, \lfloor \frac{m_{j}}{2} \rfloor}\otimes I_{j+1, 0}:\mathcal{O}_{j, j+1}(-1)\rightarrow \mathcal{O}_{j, j+1}(1)$ for $1\leq j \leq k-1$ and hence $k-1$ sections of $\mathcal{O}(2)$.

Let $\Prym_{\bf{T}, a}$ be the zero locus of $\sum_{j=1}^km_j-1$ many sections we defined above. Since $a\in \bf{H}^{\text{KL}}$, it is a degree $2^{\sum_{j=1}^km_j-1}$ finite cover of $\Prym_a$, i.e., $f_\bf{T}:\Prym_{\bf{T}, a}\rightarrow \Prym_a$. See the following Proposition~\ref{fix1} for an alternative description.

In general, consider all the $\bf{T}_i$'s and we define $\Prym_{W,a}$ to be the fiber product of $\Prym_{\bf{T}_i, a}$'s over $\Prym_a$. By Proposition \ref{description of W}, $\Prym_{W,a}$ parametrizes tuples $(\sL, W_{\sL})$ where $\sL\in \Prym_a$ and $W_{\sL}\subset Q_{\sL}$ is an $\iota$-isotropic subspace.
\end{proof}

\begin{proposition}\label{fix1}
    $\Prym_{\bf{T}, a}\cong \prod_{\Prym_a}^{1\leq i \leq 2m-1}\Prym_{x_i^T-x_{2m}^T, a}$.
\end{proposition}

\begin{proof}
    We firstly show that $f_{\bf{T}}^*\sP _{x_i^T-x_{2m}^T}$ is trivial on $\Prym_{\bf{T}, a}$. Consider the variety $\prod \prod \mathbb{P}$ we defined in Lemma \ref{construction of Prym_W}, the pull back of $\sP_{x_i^T-x_{m_j}^T}$ and $\sP_{x_{m_j}^T-x_{m_{j}+1}^T}$ on $\prod \prod \mathbb{P}$ both have a canonical section for $m_{j-1}+1\leq i \leq m_j$, $1\leq j \leq k$ . The zero locus of the canonical section over $\sL\in \Prym_a$ is given by tuple of vectors(up to scalar): $(v_j\in \oplus_{m_{j-1}+1\leq i \leq m_j}\sP_{x_i^T}|_{\sL}, a_jv_j+b_{j+i}v_{j+1})$, $1\leq j \leq k$ such that one of the vector components of $v_j$'s or $a_j$'s or $b_j$'s is zero. By Corollary \ref{vector ponent is not zero}, we see that this zero locus does not intersect with $\Prym_{\bf{T}, a}$, thus $f_{\bf{T}}^*\sP _{x_i^T-x_{2m}^T}$ is trivial on $\Prym_{\bf{T}, a}$ for $1\leq i \leq 2m-1$.

    By Lemma \ref{two kinds of double cover and factor through}, $f_{\bf{T}}$ factor through $\prod_{\Prym_a}^{1\leq i \leq 2m-1}\Prym_{x_i^T-x_{2m}^T, a}$. Notice that they have the same degree over $\Prym_a$, and hence they are isomorphic.
\end{proof}

\begin{proposition}\label{type B JM and different components}
    The fiber product of $\Prym_{W,a}$ and $\Prym_{Y, a}$ over $\Prym_a$ has two connected components.We fix a group structure on this fiber product and denote the identity component as $\Prym_{\overline{\bf{O}}_B, a}$.
\end{proposition}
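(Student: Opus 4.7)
The strategy is to view the fiber product $\Prym_{W,a}\times_{\Prym_a}\Prym_{Y,a}$ as a finite étale cover of the connected abelian variety $\Prym_a$, and to count its connected components by analyzing the associated character map of the fundamental group. First I would use Definition~\ref{def:double cover} from the appendix to identify each of the elementary double covers appearing as building blocks with a $2$-torsion character of $\pi_1(\Prym_a)$. Concretely: for each $y\in Y$, the cover $\Prym_{y,a}\to \Prym_a$ corresponds to the class of $\sO(y-y_0)\in \Prym_a[2]$ relative to a chosen base point $y_0\in R_a$; by Proposition~\ref{fix1}, the cover $\Prym_{\bf T,a}\to \Prym_a$ is the fiber product of covers labeled by the differences $x_i^T-x_{2m}^T$ of $\sigma$-fixed points within each type B2 block. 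Collecting all of these gives a family $\mathcal S$ of $|Y|+e(\bf d_B)-c(\bf d_B)$ elements of $\Prym_a[2]$, and the total fiber product is precisely the Galois cover associated to the character map $\pi_1(\Prym_a)\to (\Z/2)^{|\mathcal S|}$ determined by $\mathcal S$.

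Next I would compute the subgroup $\langle\mathcal S\rangle\subset\Prym_a[2]$. The expected outcome, forced by comparison with Proposition~\ref{L_BC}, is that $\langle\mathcal S\rangle$ has index $2$ in $(\Z/2)^{|\mathcal S|}$, i.e.~there is exactly one nontrivial linear relation among the chosen classes. The candidate relation comes from the global canonical identity $\sO_{\bar\Sigma_a}(R_a)\cong \omega_{\bar\Sigma_a}\otimes \bar\pi_a^{\ast}\omega_\Sigma^{\vee}$: summing all the classes $[s-s_0]$ for $s\in R_a$ produces the class of $R_a-|R_a|\cdot s_0$, and under the Prym involution this is forced to be the square root of $\omega_{\bar\Sigma_a/\Sigma}$ that is made available by the choice of square root of $\omega_\Sigma$ fixed in Proposition~\ref{L_BC}. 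Together with Proposition~\ref{fix1} (which tells us that the type B contributions really only see differences of fixed points), this gives exactly one relation and no more. Therefore the image of the character map has index $2$, so the fiber product has exactly $2$ connected components.

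Once the component count is established, the group structure is standard: the fiber product of commutative étale group schemes over $\Prym_a$ (each equipped with a group structure inherited from a chosen marked rational point of $\Prym_{s,a}$ lifting the identity) is itself a commutative étale group scheme over $\Prym_a$, and I define $\Prym_{\overline{\bf O}_B,a}$ as the identity component. A quick sanity check reconciles this with Proposition~\ref{L_BC}: the identity component has degree $\tfrac12\cdot 2^{|Y|}\cdot 2^{e(\bf d_B)-c(\bf d_B)}$ over $\Prym_a$, and once one verifies $|Y|=2n(2g-2)$ using the ramification of the spectral cover $\bar\pi_a$ on $\Sigma\setminus\{x\}$, this matches the predicted $2^{2n(2g-2)+e(\bf d_B)-c(\bf d_B)-2}$ arising from the two components of $h_{\overline{\bf O}_B}^{-1}(a)$ each being a $\Prym_{\overline{\bf O}_B,a}$-torsor.

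The main obstacle will be pinning down the single relation precisely. In particular I have to be careful that the class arising from $R_a$ really does lie in the span of $\mathcal S$ (as opposed to being some other class in $\Prym_a[2]$), and that no further hidden relations arise between the type B2 covers of Proposition~\ref{fix1} and the ramification covers $\Prym_{y,a}$ for $y\in Y$. This is ultimately a computation in $\Prym_a[2]$ controlled by the double cover dictionary of Appendix~\ref{Ap:Prym_var}, and the square-root-of-$\omega_\Sigma$ choice made in Proposition~\ref{L_BC} is precisely what rigidifies the comparison.
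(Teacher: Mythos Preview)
Your approach is essentially the paper's own: identify each constituent double cover with a $2$-torsion class and count the number of linear relations. The paper's one-line proof just cites Lemma~\ref{lem:criterion of connectedness} (the number of connected components of such a fiber product equals $2$ raised to the number of relations among the defining classes) and Proposition~\ref{prop:non-trivial relation} (the unique relation among all the $\sP_{s_i}$ is $\sum_i\sP_{s_i}=0$), which together encode exactly the strategy you outline.

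Two corrections are needed. First, the double covers are classified by elements of $\Prym_a^\vee[2]$, not $\Prym_a[2]$; you have the variance reversed. Second, and more substantively, the single relation does \emph{not} come from a choice of square root of $\omega_\Sigma$. It is the intrinsic identity $\sum_i\sP_{s_i}=0$ proved in Lemma~\ref{lem:canonical is a pull back, sum is trivial}, which follows from $\sO_{\bar\Sigma_a}(R_a)\cong\omega_{\bar\Sigma_a}\otimes\bar\pi_a^*\omega_\Sigma^{-1}$ being pulled back from $\bar\Sigma_a/\sigma$; no square root enters. The square root of $\omega_\Sigma$ in Proposition~\ref{L_BC} is used only to produce the canonical point on the ``$+$'' component, not for the component count. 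What makes the relation visible in your specific set $\mathcal S$ (differences within blocks from Proposition~\ref{fix1}, together with the $\sP_y$ for $y\in Y$) is the Weil pairing computation of Proposition~\ref{prop:weil pairing}: any relation forces all coefficients of the individual $\sP_s$ to coincide, and since each block contributes an even number of classes the all-ones combination is realized and is the only nontrivial one. With these adjustments your outline matches the paper.
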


\begin{proof}
    By Lemma \ref{lem:criterion of connectedness} and Proposition \ref{prop:non-trivial relation}, the fiber product of $\Prym_{W,a}$ and $\Prym_{Y,a}$ over $\Prym_a$ has two connected components.
\end{proof}

Now, we are ready to prove the third part of Proposition \ref{L_BC}.
\begin{proof}[Proof of Proposition \ref{L_BC} (third part: geometry of $h_{\overline{\bf{O}}_B}^{-1}(a)$)]

    We need to describe the geometric structure of $h_{\overline{\bf{O}}_B}^{-1}(a)$. This is parallel to the construction of $\Prym_{\overline{\bf{O}}_B, a}$, and hence we see that $h_{\overline{\bf{O}}_B}^{-1}(a)$ has two connected components and each one is a torsor of $\Prym_{\overline{\bf{O}}_B, a}$.
    
    Finally we need to determine the canonical point $\sE_0$ on $h_{\overline{\bf{O}}_B}^{-1}(a)$. Notice that there is a natural square root $\omega_{\bar{\Sigma}_a}^{1/2}$ of $\omega_{\bar{\Sigma}_a}$ by Proposition~\ref{prop:canonical bundle of normalized} and Lemma~\ref{lem:even coefficients of ramification}. Now we consider $(\bar{\pi}_a)_*\omega_{\bar{\Sigma}_a}^{1/2}$, which is a rank $2n$ bundle on $\Sigma$ with a nondegenerate skew-symmetric pairing taking value in $\omega_{\Sigma}$. By the process above, we can use $(\bar{\pi}_a)_*\omega_{\bar{\Sigma}_a}^{1/2}$ to construct $2^{2n(2g-2)+\beta(\bf{d}_B)-c(\bf{d}_B)-1}$ many $\bar{\bf{O}}_B$-Higgs bundles. By Corollary~\ref{vector ponent is not zero}, Lemma~\ref{two kinds of double cover and factor through} and the process above again, the different choices $\bar{\bf{O}}_B$-Higgs bundles are determined by a sign at each ramification points of $\bar{\Sigma}_a$ (relative to $\bar{\Sigma}_a/\sigma$), but this is given as follows. Notice that $\omega_{\bar{\Sigma}_a}$ is isomorphic to a line bundle pulled back from $\bar{\Sigma}_a/\sigma$ by Lemma~\ref{lem:canonical is a pull back, sum is trivial}, hence the action of $\sigma$ on each fiber of $\omega_{\bar{\Sigma}_a}$ is trivial, which means the action of $\sigma$ on the fiber of $\omega_{\bar{\Sigma}_a}^{1/2}$ at each ramification points is given by $\pm 1$. So we have a natural choice of sign at each ramification point and hence a natural point on $h_{\overline{\bf{O}}_B}^{-1}(a)$.
\end{proof}

We denote the connected component of $h_{\overline{\bf{O}}_B}^{-1}(a)$ containing the canonical point as $h_{\overline{\bf{O}}_B}^{-1}(a)^+$, and the other component as $h_{\overline{\bf{O}}_B}^{-1}(a)^-$.

\begin{corollary}\label{Cor.Not_dual}
    If $\overline{\bf{O}}_B$ and $\overline{\bf{O}}_C$ are special and related by Springer duality, and $c(\bf{d}_B)\neq 0$, then $h_{\overline{\bf{O}}_B}^{-1}(a)^\pm$ and $h_{\overline{\bf{O}}_C}^{-1}(a)$ are torsors of abelian varieties which are \emph{NOT} dual to each other. Consequently, SYZ mirror symmetry does not hold in this case.
\end{corollary}
\begin{proof}
     By Proposition~\ref{prop:two connected components}, the degree of $\Prym_a^\vee$ to $\Prym_a$ is $2^{2n(2g-2)+\beta(\bf{d}_B)-2}$. However, the degree of either $h_{\overline{\bf{O}}_B}^{-1}(a)^+$ or $h_{\overline{\bf{O}}_B}^{-1}(a)^-$ to $\Prym_a$ is $2^{2n(2g-2)+\beta(\bf{d}_B)-c(\bf{d}_B)-2}$. These degrees differ, indicating that the abelian varieties are not dual.
\end{proof}

Next, we construct a (non-canonical) $\overline{\bf{O}}_B$-Higgs bundle in $h_{\overline{\bf{O}}_B}^{-1}(a)^-$. Although this construction is not necessary for subsequent arguments, it is of independent interest.

Fix a point $y \in Y$, let $\sE_0$ denote the canonical point constructed in the third part of the proof of Proposition \ref{L_BC}. Let  $\sE_{1}$ denote the Higgs bundle obtained by changing the sign of $\sE_0$ at $y$. Then

\begin{proposition}\label{prop:const_E1}
    $\sE_1\in h_{\overline{\bf{O}}_B}^{-1}(a)^-$.
\end{proposition}
Before we give the proof, we need the following result of Mumford\cite{Mum71}.
\begin{lemma}\label{different components}
    Let $\mathbf{E}$ be a family of vector bundles together with a symmetric non-degenerate bilinear form taking value in $\omega_{\Sigma}$ on $\Sigma$ parameterized by a connected scheme $S$, then $h^0(\Sigma, \mathbf{E}_s)$ has the same parity for any $s\in S$.
 \end{lemma}
\begin{proof}[Proof of Proposition \ref{prop:const_E1}]
    We firstly tensor $\sE_i$ with $\omega_{\Sigma}^{1/2}$ to be $\tilde{\sE_i}$ and hence the pairings of $\tilde{\sE_i}$ lie in $\omega_{\Sigma}$. By Lemma \ref{different components}, we only need to show that $h^0(\Sigma, \tilde{\sE_i})$ have different parity.

    By the construction of $\tilde{\sE_i}$, we have the following exact sequences: 
     \begin{align*}
         0\longrightarrow \mathrm{H}^0(\Sigma, \tilde{\sE_i})\longrightarrow  \mathrm{H}^0(\Sigma, \bar{\pi}_{*}\omega_{\bar{\Sigma}_a}^{1/2})\oplus \mathrm{H}^0(\Sigma, (\Ker \theta_B)^\vee \otimes \omega_{\Sigma}^{1/2}) \stackrel{(f_i, g_i)}{\longrightarrow} \tilde{Q} 
     \end{align*}

     Hence $h^0(\Sigma, \tilde{\sE_i})$ are determined by images of $f_i$ and $g_i$ as
    $$
         h^0(\Sigma, \tilde{\sE_i})=\dim \Ker f_i +\dim \Ker g_i + \dim \Im f_i\cap \Im g_i.
     $$
     By our construction, we may assume that $f_0=f_1$ and for any $s\in \mathrm{H}^0(\Sigma, (\Ker \theta_B)^\vee \otimes \omega_{\Sigma}^{1/2})$, $g_0(s)=-g_1(s)$ at $y$ and $g_0(s)=g_1(s)$ at other points in $Y$. Then thus $\dim \Ker f_0=\dim \Ker f_1$ and $\dim \Ker g_0=\dim \Ker g_1=0$ since $g_i$ are induced by 
     $$
         0\longrightarrow \omega_{\Sigma}^{-n}((\delta-n)x)\otimes \omega_{\Sigma}^{1/2} \stackrel{\times a_{2n}} {\longrightarrow} (\Ker \theta_B)^\vee \otimes \omega_{\Sigma}^{1/2} \longrightarrow \tilde{Q}.
     $$
     Now we only need to compare $\dim \Im f_0\cap \Im g_0$ and $\dim \Im f_1\cap \Im g_1$. Since $f_0=f_1$, $g_0$ and $g_1$ are different only at $y$. Thus we may only compare the subspaces of $\dim \Im f_0\cap \Im g_0$ and $\dim \Im f_1\cap \Im g_1$ supporting at $y$. In this situation, we may assume that $\Im g_0=\Im f_0=\Im f_1$ by our construction, and hence we only need to compare $\Im g_0$ and $\Im g_0\cap \Im g_1$. Now $\Im g_0$ equals to the space of global sections of $(\Ker \theta_B)^\vee \otimes \omega_{\Sigma}^{1/2}$ and $\Im g_0\cap \Im g_1$ equals to the space of global sections of $(\Ker \theta_B)^\vee \otimes \omega_{\Sigma}^{1/2}$ which vanishes at $y$. Since $\deg (\Ker \theta_B)^\vee \otimes \omega_{\Sigma}^{1/2} >2g(\Sigma)-1 $ we see that $\dim \Im g_0\cap \Im g_1=\dim \Im g_0-1$ and hence $h^0(\Sigma, \tilde{\sE_i})$ have different parity.
\end{proof}

\subsection{Generic fibers in Richardson cases}

When $\bf{O}_{B, R}$ and $\bf{O}_{C, R}$ are Richardson and related by Springer duality, let $P_B$ and $P_C$ denote their respective polarizations. We investigate the generic Hitchin fibers of the following Hitchin systems:
\begin{equation*}
    \begin{tikzcd}
        \bf{Higgs}_{P_B} \ar[rd, "h_{P_B}"'] & & \bf{Higgs}_{P_C} \ar[ld, "h_{P_C}"] \\
        & \textbf{H} &
    \end{tikzcd},
\end{equation*}
focusing on $h_{P_B}^{-1}(a)$ and $h_{P_C}^{-1}(a)$ for $a \in \textbf{H}^{\text{KL}}\subset \textbf{H}$.

Let $\bf{d}_{C, R} = [d_{C,1}, \ldots, d_{C,r}]$, and let $I(P_C)$ be the index set defined in subsection~\ref{s.spal_fiber}. Additionally, let $\bar{\pi}_a^{-1}(x) = \{ x_1, x_2, \ldots \}$ denote the $\sigma$-fixed points over $x$.
\begin{definition}\label{def:Prym_P_C}
    Let $\Prym_{P_C, a}$ be a cover of $\Prym_a$ defined as follows
    \[
        \Prym_{P_C, a} = \prod_{\Prym_a}^{i\in I(P_C)} \Prym_{x_i-x_{i+1}, a}.
    \]
    We fix a group structure on $\Prym_{P_C, a}$.
\end{definition}

\begin{proposition}\label{p.h_P_C}
    The generic fiber $h^{-1}_{P_C}(a)$ is a torsor over $\Prym_{P_C, a}$. 
\end{proposition}
\begin{proof}
    Consider $(E_C, g_C,\Fil^\bullet_{P_{C, JM}}, \theta_C) \in h^{-1}_{\overline{\bf{O}}_{C, R}}(a)$ for $a \in \textbf{H}^{\text{KL}}$, we have $\Res_x(\theta_C) \in \bf{O}_{C, R}$. By the definition of Jacobson--Morozov resolution, we identify
    \[
        h^{-1}_{\overline{\bf{O}}_{C, R}}(a) = \{(E_C, g_C, \theta_C) \mid \mathrm{char}(\theta_C) = a \}.
    \]
    The natural map
    \[
        (E_C, g_C,\Fil^\bullet_{P_C}, \theta_C) \mapsto (E_C, g_C, \theta_C)
    \]
    relates $h^{-1}_{P_C}(a)$ and $ h^{-1}_{\bar{\bf{O}}_C}(a)$. From Proposition~\ref{p.Springer-fiber}, the fibers are governed by $\prod_{j \in I(P_C)} \OG(1, V_{C, j})$.

    More precisely, consider the universal line bundle $\sP$ over $\Prym_a \times \overline{\Sigma}_a$, which induces the vector bundle
    \[
        \sE_{C, a} = (\id \times \bar{\pi}_a)_* \sP
    \]
    over $\Prym_a \times \Sigma$. Restricting to $x$, Corollary \ref{Cor.sub_splitting} gives the splitting
    $$ 
        \sE_{C, a}|_{\Prym_a \times x}:= E_{C, a}\cong \oplus_{i=1}^{r} E_{C, a, i}.
    $$ 
    There is a morphism $\tilde{\theta}_{C,a}: E_{C, a}\rightarrow E_{C, a}$ with restriction $\tilde{\theta}_{C, a,i}: E_{C, a, i}\rightarrow E_{C, a, i}$. The family version of $V_{C, j}$, for $\in I(P_C)$ is given by  
    $$
        \sV_{C,j}=\sV_{d_{C,j}}  \oplus \sV_{d_{C,j+1}}, 
    $$
    where
    $$
        \sV_{d_{C,j}} = \frac{\Ker (\tilde{\theta}_{C, a, j})^{d_{C, j}/2}}{\Ker (\tilde{\theta}_{C, a, j})^{d_{C, j}/2-1}}, 
        \quad 
        \sV_{d_{C,j+1}} = \frac{\Ker (\tilde{\theta}_{C, a, j+1})^{d_{C, j+1}/2}}{\Ker (\tilde{\theta}_{C, a, j+1})^{d_{C, j+1}/2-1}}.  
    $$

    Finally, let $\D_x$ be the formal neighborhood of $x$, then $\bar{\pi}_a^{-1}(\D_x) = \sqcup_{i} \D_{x_i}$. Restricting $\sP$ to $\Prym_a \times \D_{x_i}$, denote by ${\sP}_{\D_{x_i}}$, the local computations yield 
    $$
    \sV_{d_{C,j}} \cong {\sP}_{\D_{x_i}}((-d_{C, j}/2+1)x_i)/ \sP_{\D_{x_i}}(-d_{C, j}/2 \cdot x_i) \cong \sP_{x_i}.
    $$
    Thus, we conclude.
\end{proof}

\begin{corollary}\label{Cor.factor through}
    The morphism $L_{BC}: h_{\overline{\bf{O}}_{B, R}}^{-1}(a)\rightarrow h^{-1}_{\overline{\bf{O}}_{C, R}}(a)$ in Proposition \ref{L_BC} factor though $h_{P_C}^{-1}(a)\rightarrow h^{-1}_{\overline{\bf{O}}_{C, R}}(a)$. Over $h^{-1}_{P_C}(a)$, we have a canonically defined point.
\end{corollary}

\begin{proof}
    This is indicated by Corollary \ref{why factor though}, Proposition \ref{L_BC}, and Proposition~\ref{p.h_P_C}.
\end{proof}

For type B, let $\bf{d}_{B, R}=[d_{B,1}, \ldots, d_{B, r'}]$, for $r' \equiv 1$. Since $\bf{d}_{C, R} = {}^S\bf{d}_{B, R}$, use $I(P_B)$ to index parts of $\bf{d}_{C, R}$.
\begin{definition}
Consider 
\[
     \Prym_{W,a}\times_{\Prym_a} \Prym_{Y,a} \times_{\Prym_a} \prod^{i \in I(P_B)}_{\Prym_a} \Prym_{x_i - x_{i+1}, a},  
\]
where $x_i-x_{i+1}= x_i$ if $x_{i+1}$ does not exist. We fix a group structure on this fiber product and denote the identity component as $\Prym_{P_B, a}$.
\end{definition}

Then we have

\begin{proposition}\label{p.h_P_B}
    The fiber $h^{-1}_{P_B}(a)$ has two connected components, denoted by $h^{-1}_{P_B}(a)^+$ and $h^{-1}_{P_B}(a)^-$. Each component is a torsor over $\Prym_{P_B, a}$. Furthermore, $h^{-1}_{P_B}(a)^+$ contains a canonical point.
\end{proposition}
\begin{proof}
    Similarly, we have a vector bundle $\sE_{B, a}$ over $\Prym_{P_B, a} \times \Sigma$. However, we only have a coarse splitting of $\sE_{B, a}$:
    \[
        \sE_{B, a}|_{\Prym_{P_B, a} \times x} := E_{B, a} \cong \oplus_{i=1}^s E_{B, a, \bf{T}_i},
    \]
    where $\bf{d}_{B, R}=\bf{T}_B:=[\bf{T}_1, \ldots, \bf{T}_s]$ is defined in Lemma~\ref{structure of special partition}. By Proposition \ref{p.Springer-fiber} and similar arguments as Proposition \ref{p.h_P_C}, we conclude.
\end{proof}

Concluding all the above, we have

\begin{theorem}\label{Thm:dual abelian}
     For $a\in\bf{H^{\text{KL}}}$, various Hitchin fibers fit into the following diagrams
     \begin{equation*}  
         \begin{tikzcd}
              h_{P_B}^{-1}(a)^{\pm} \ar[d, "\nu_{P_B}"] & h_{P_C}^{-1}(a)\ar[d,"\nu_{P_C}"] & \Prym_{P_B,a}\ar[d, "\nu_{P_B}"]& \Prym_{P_C,a}\ar[d,"\nu_{P_C}"]\\
              h_{\overline{\bf{O}}_{B, R}}^{-1}(a)^{\pm} \ar[ur]\ar[r,"L_{BC}"]& h_{\overline{\bf{O}}_{C, R}}^{-1}(a) & \Prym_{\overline{\bf{O}}_{B, R},a}\ar[ur]\ar[r]& \Prym_a
         \end{tikzcd}.  
     \end{equation*}
     The right-hand side represents abelian varieties, and the left-hand side fibers are torsors over the corresponding abelian varieties. Additionally:
     \begin{enumerate}
         \item $\deg \nu_{P_B} \cdot \deg \nu_{P_C} = 2^{c(\bf{d}_{B, R})}. $
         \item $\Prym_{P_B,a}$ and $\Prym_{P_C,a}$ are dual abelian varieties.
     \end{enumerate}  
\end{theorem}
\begin{proof}
     By Lemma \ref{lem:criterion of duality}, it suffices to show that the Weil pairing vanishes on $\sA(P_C)\times \sA(P_B)$, as defined in \eqref{A(P_C)} and \eqref{A(P_B)}. Then it follows from Lemma \ref{lem:A(P_B)}.
\end{proof}

\begin{remark}
    The fibers $h_{\overline{\bf{O}}_{C, R}}^{-1}(a)$ and $h^{-1}_{\overline{\bf{O}}_{B, R}}(a)$ depend only on the Richardson orbits $\bf{O}_{C, R}$ (or equivalently $\bf{O}_{B, R}$). However, $h_{P_B}^{-1}(a)$ and $h_{P_C}^{-1}(a)$ depend on the choice of dual polarizations $P_B$ and $P_C$. For instance, let $P'_C$ be another polarization of $\bf{O}_{C,R}$ such that $\deg \mu_{P'_C} > \deg \mu_{P_C}$. From \cite[Lemma 5.2]{FRW24}, we know $I(P'_C) \supset I(P_C)$. Consequently, from the proof of Proposition \ref{p.h_P_C}, $h^{-1}_{P'_C}(a)$ is a degree $\deg \mu_{P'_C}/\deg \mu_{P_C}$ cover of $h^{-1}_{P_C}(a)$.
\end{remark}

The following section sets the stage for analyzing various torsor structures, ultimately leading to a rigorous formulation of Strominger--Yau--Zaslow mirror symmetry.

\section{SYZ mirror symmetry for parabolic Hitchin systems}\label{Sec:SYZ}

In the previous section, we identified the generic fibers of various Hitchin systems as torsors over a family of Prym varieties and their finite covers.

One of the central aspects of SYZ mirror symmetry is the identification of certain torsors smoothly. In our setting, we will demonstrate that the generic fibers of $h_{\overline{\bf{O}}_{C, R}},h_{P_C}$ and the ``+" connected components of generic fibers of $h_{P_B}$ and $h_{\overline{\bf{O}}_{B, R}}$ are trivial torsors over Prym varieties. This identification is achieved by constructing canonical sections for these generic fibers.\footnote{In other words, they are all trivial fibers.} This construction reduces to finding rational points on generic fibers over the function field of the Hitchin base. Therefore, in this section, we allow $\Bbbk$ to be a general field.

Our strategy leverages theta characteristics of the base curve $\Sigma$ and the existence of rational points (over the function field of the Hitchin base) for relative Picard varieties of the families of normalized spectral curves. The existence of such rational points is guaranteed by analyzing successive blow-ups of singular spectral curves.

\subsection{Trivial Torsors: ``$+$'' component}
We begin by recalling the formula for the canonical line bundle of normalized spectral curves, following \cite{SWW22t}. For clarity, we state the result for type A, i.e., partitions $\bf{d}$ with conjugate partitions $\bf{d}^t=[\lambda_1, \lambda_2,\ldots]$.

\begin{proposition}[Proposition 2.3.4 \cite{SWW22t}]\label{prop:canonical bundle of normalized}
   Let $\omega_{\overline{\Sigma}_a}$ denote the canonical line bundle the of normalized spectral curve $\overline{\Sigma}_a$. Then:
     \[
    \omega_{\bar{\Sigma}_a}=\bar{\pi}_a^*(\omega_{\Sigma}^{n}\otimes \sO_{\Sigma}(nD))\otimes \sO_{\bar{\Sigma}_a}(\sum_{i=1}^{d_1}(-\sum_{j=1}^{i}\lambda_j)R_i),
     \]
     where $R_i$ is the divisor with ramification degree $i$ over the marked point $x\in\Sigma$. In particular, all $R_i$ are defined over $\Bbbk$, even if $\Bbbk$ is not algebraically closed.
\end{proposition}

Now assume $\bf{d}=\bf{d}_C$ is a special partition of type C, such that its conjugate partition is also special. Proposition~\ref{prop:canonical bundle of normalized} applies directly to type C. We then deduce the following:

\begin{lemma}\label{lem:even coefficients of ramification}
    If $\bf{d}=\bf{d}_C$ is a special partition of type C, then all the coefficient of $R_i$, i.e., $-\sum_{j=1}^{i}\lambda_j$,  are even.
\end{lemma}
\begin{proof}
    We can rewrite
    \begin{align*}
        \sum_{j=1}^{i}\lambda_j=\sum_{j=1}^{i-1} j(\lambda_j-\lambda_{j+1})+i\lambda_i.
    \end{align*}
    Observe that $\lambda_i-\lambda_{i+1}=\#\{\ell \mid d_{\ell}=i\}$. Then the lemma follows from the combinatorial description of special partitions.
\end{proof}

\begin{proposition}\label{prop:generic fiber is trivial torsor}
    Suppose $\omega_{\Sigma}^{1/2}$, the square root of $\omega_{\Sigma}$, exists over $\Bbbk$. Then, there is a canonical identification between $\Prym(\sR)$ with $\Prym$ over $\mathbf{H}^{\text{KL}}$. In particular, generic fibers of $h_{\overline{\bf{O}}_C}$ and the ``$+$'' components of generic fibers of $h_{\overline{\bf{O}}_B}$ are trivial torsors over corresponding abelian varieties.
\end{proposition}
\begin{proof}
    Proposition \ref{prop:canonical bundle of normalized} holds over general fields, including the function field of the Hitchin base. Let $a$ denote the generic point of the Hitchin base. Consequently, $\sO_{\Sigma}(2nD)$ and $\sO_{\bar{\Sigma}_a}(\sum_{i=1}^{d_1}(-\sum_{j=1}^{i}\lambda_j)R_i)$ admit square roots defined over the function field. Thus, we can choose a square root of the ramification divisor $\sR_a^{1/2}$ (see Definition~\ref{def:Prym var ramification divisor} and Proposition~\ref{Prop:fiber_JM_moduli}) defined over the function field. By Proposition~\ref{Prop:fiber_JM_moduli}, the fiber $h_{\overline{\bf{O}}_C}^{-1}(a)$ is canonically isomorphic to
    \[
    \{\sL\in\Pic^{\frac{\deg \sR_a}{2}}(\bar{\Sigma}_a) \mid \sigma^*\sL\cong\sL^{\vee}(\sR_a)\}.
    \]
    Since $\sR_a^{1/2}$ lies in this set and is a rational point over the function field, the generic fiber of $h_{\overline{\bf{O}}_C}$ is a trivial torsor over the relative Prym variety. In particular, there is a section over $\bf{H^{\text{KL}}}$.

    Since the section $a_{2n}$ is defined over the function field of the Hitchin base, and the description of $W$'s in Proposition \ref{description of W} also works over a general field(see Remark \ref{W is defined over general field}), hence $\Prym_{Y,a}$ and $\Prym_{W,a}$ are constructed over the function field of the Hitchin base. As a consequence, the construction of $h_{\overline{\bf{O}}_B}^{-1}(a)^+$ in Proposition \ref{L_BC} also works over the function field of the Hitchin base. Now, carrying out the construction  from  the section of $h_{\overline{\bf{O}}_C}$ over $\bf{H^{\text{KL}}}$, we obtain sections of $h_{\overline{\bf{O}}_B}, h_{P_B}$ over $\bf{H^{\text{KL}}}$ which lie in the ``$+$'' component. In particular, $h_{\overline{\bf{O}}_B}^{-1}(a)^+$ and $h_{P_B}^{-1}(a)^+$ are trivial torsors over $\bf{H^{\text{KL}}}$.
\end{proof}

\subsection{Nontrivial Torsors: ``$-$'' component}\label{sec:-component}

As in Section \ref{Ap:Prym_var}, fix a point $y_0\in \sR_a$. We can construct a fiber product
\[
\prod_{y\in \sR_a\setminus \{y_0\}}\Prym_{y,a},
\]
where each $\Prym_{y,a}$ is a double cover of $\Prym_a$ associated with the 2-torsion line bundle $\sO_{\bar{\Sigma}_a}(y-y_0)\in \Jac(\bar{\Sigma}_a)$. By Proposition \ref{prop:two connected components}, this fiber product has two connected components: a ``$+$" component and a ``$-$" component. By Proposition \ref{prop:const_E1}, we know that $\Prym_{a}^{\vee}$ corresponds to the ``$+$" component. 

To analyze this further, consider the following commutative diagram:
\[
\begin{tikzcd}
    &\Pic^0(\bar{\Sigma}_{a})\ar[rd,"\phi^0"]\\
    &\Pic^1(\bar{\Sigma}_{a})\ar[r,"\phi^1"]\ar[u,"\otimes\sO(-y_0)"]& \Prym_{a} \subset \Pic^0(\bar{\Sigma}_a)
\end{tikzcd}.
\]
Both the maps $\phi^0,\phi^1$ from $\Pic^0,\Pic^1$ to $\Pic^0$ are given by
\[
\sL\mapsto \sigma^*\sL^{\vee}\otimes\sL,
\]
whose images lie in $\Prym_a$. Restricting to $\Prym_{a}$, this induces
\[
\Prym_a \xrightarrow{[2]} \Prym_a
\]
as in \eqref{eq:mult 2 map}, factoring through $\Prym_{a}^{\vee}$. Actually, $\phi^0$ and $\phi^1$ factor through $\Prym_{y,a}$, but the factorization is not unique, which depends on the trivialization of the pullback of those 2-torsion line bundle on $\Prym_a$.

For all $z\in\sR_a\setminus\{y_0\}$, the following diagram commutes
\[
\begin{tikzcd}
   &\Pic^0(\bar{\Sigma}_{a})\ar[rrd, "\phi^0" near start]\ar[rr,dotted,"\phi^0_{y}" description]& &\Prym_{y,a}\ar[d]\ar[loop right,"\iota_{y}"]\\
    &\Pic^1(\bar{\Sigma}_{a})\ar[rr,"\phi^1"]\ar[u,"\otimes\sO(-z)"]\ar[urr,dotted]& &\Prym_a\subset \Pic^0(\bar{\Sigma}_a) 
\end{tikzcd}.
\]
Here, $\iota_{y}$ is the natural involution on the double cover $\Prym_{y-y_0,a}$, which exchanges its two sheets. 

\begin{proposition}
    The map
    \[
    \iota_{z}\circ \prod\phi^0_{y}\circ (\otimes\sO(-z)): \Pic^1(\bar{\Sigma}_{a})\rightarrow\prod\Prym_{y,a}
    \]
    is independent of the choice of $z\in\sR_a$. Furthermore, the image lies in a different connected component from the image of $\Pic^0(\bar{\Sigma}_{a})$.
\end{proposition}
\begin{proof}
    The first argument follows from
    \[
    \phi_{y}^{0}(\sO(z-y_0))\ne 0, \forall z\ne y.
    \]
    The second argument follows from Lemma \ref{lem:permute conn comp}.
\end{proof}

\begin{proposition}\label{prop:deg 1 and non-spin}
    The image of $\Pic^1(\bar{\Sigma}_a)$ in $\prod_{y\in \sR_a\setminus \{y_0\}}\Prym_{y,a}$ under the map $\prod \phi^1_{y}$ corresponds to the ``$-$" component.
\end{proposition}
\begin{proof}
    Let $\sE_0$ denote the canonical $\SO_{2n+1}$-Higgs bundle constructed in Proposition~\ref{L_BC}. This bundle is the image of $0 \in \Pic^0(\bar{\Sigma}a)$, serving as the identity element in the fiber product $\prod_{y\in\sR_a\setminus \{y_0\}}\Prym_{y,a}$. It then determine all $\{\phi^0_{y}\}_{y\in\sR_a}$.

    First, note that $\phi^0(\sO(y-y_0))=0$ for all $y\in\sR_a\setminus\{y_0\}$. By Proposition \ref{prop:weil pairing}, we deduce
    \[
    \phi^0_{y}(y)=0,\quad \phi^0_{y}(z)\ne 0 \;\quad \forall z\in\sR_a\setminus\{y_0,y\}.
    \]
    By combining this observation with the construction in Proposition~\ref{type B JM and different components} and the choice of $-1$ at $y$ for some $y \in \sR_a$, we conclude that the ``$-$'' component corresponds to the image of $\Pic^1(\bar{\Sigma}a)$.
\end{proof}

\subsection{ Strominger--Yau--Zaslow mirror symmetry}

Following the strategies in \cite{DP12} and \cite{GWZ20}, we first introduce $\mu_2$-gerbes on the various Higgs moduli spaces. Since $\SO_{2n+1}$ is an adjoint group, following \cite{DP12}, we choose the trivial $\mu_2$-gerbe $\alpha_B$ on $\bf{Higgs}_{P_B}$\footnote{It is a simplified statement of "trivial equivariant $\mu_2$-gerbes" on generic Galois cover moduli of parabolic (twisted) Spin higgs bundles.}. On the other hand, we define the $\mu_2$-gerbe $\alpha_C$ as the lifting gerbe of the universal (parabolic) $\PSp_{2n}$-Higgs bundle over $\bf{Higgs}_{P_C}$. 

\begin{lemma}
    The $\mu_2$-gerbe $\alpha_C$ is an arithmetic gerbe,\footnote{See \cite[\S 6.1]{GWZ20} for the definition of arithmetic gerbes.} i.e., it splits over a finite cover of $\bf{H}^{\text{KL}}$.
\end{lemma}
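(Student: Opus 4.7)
The plan is to exhibit a finite étale cover $S \to \bf{H}^{\text{KL}}$ on which $\alpha_C$ becomes trivial, following closely the template for $\SL_n/\PGL_n$ in \cite[Section 6]{GWZ20}. By construction $\alpha_C$ is the lifting gerbe for the central extension $1 \to \mu_2 \to \Sp_{2n} \to \PSp_{2n} \to 1$, so its class in $H^2(\bf{Higgs}_{P_C}|_{\bf{H}^{\text{KL}}}, \mu_2)$ is precisely the obstruction to lifting the universal parabolic $\PSp_{2n}$-Higgs bundle to a parabolic $\Sp_{2n}$-Higgs bundle. I would therefore reduce the question to a statement about the Hitchin base via the Leray spectral sequence for $h_{P_C}$.

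First I would exploit Theorem \ref{Thm:dual abelian} and Proposition \ref{prop:generic fiber is trivial torsor}: over $\bf{H}^{\text{KL}}$, the Hitchin map $h_{P_C}$ is a torsor under the relative Prym scheme $\Prym_{P_C}$, and after pulling back to a fixed double cover of $\bf{H}^{\text{KL}}$ where $\omega_{\Sigma}^{1/2}$ is rationalized, this torsor becomes trivial. Consequently, the restriction of $\alpha_C$ to a generic fiber is controlled by $H^2(\Prym_{P_C,a}, \mu_2)$, whose fibrewise component is governed by a $2$-torsion section of the dual Prym, while the remaining part is pulled back from the base.

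Next, to kill the fibrewise part I would base-change along the multiplication-by-two isogeny $[2] : \Prym_{P_C} \to \Prym_{P_C}$, which is a finite étale cover of group schemes over $\bf{H}^{\text{KL}}$; the universal $2$-torsion section becomes canonically trivial after this pullback. Composing this with the double cover that splits $\omega_{\Sigma}^{1/2}$ gives a finite étale cover $S \to \bf{H}^{\text{KL}}$ over which both the fibrewise obstruction and the ambiguity in choosing a rational section of $h_{P_C}$ vanish simultaneously. A descent argument then identifies $\alpha_C|_S$ with the trivial $\mu_2$-gerbe.

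The main obstacle will be controlling the effect of the parabolic structure and the residually nilpotent constraint at the marked point, since these enlarge the relevant automorphism group compared to the non-parabolic setting of \cite{GWZ20}. I expect to handle this via the local analysis of Section \ref{Sec:local_level}: Theorem \ref{Thm.gp_action} shows that the extra local automorphisms coming from $\cA(\theta_C), \cA(P_C)$ are $2$-torsion groups already encoded in $\Prym_{P_C}[2]$, so they contribute no obstruction beyond what is killed by the $[2]$-isogeny cover. With this verification in place, the resulting $S$ splits $\alpha_C$ and establishes the arithmeticity.
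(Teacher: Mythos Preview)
Your approach diverges substantially from the paper's, and there is a genuine gap in the key step.

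The paper's argument is direct and geometric. Over $\bf{H}^{\text{KL}}$ there is a finite map $\bf{Higgs}_{P_C}\to\bf{Higgs}_{\overline{\bf{O}}_C}$ induced by $T^*(\Sp_{2n}/P_C)\to\overline{\bf{O}}_C$, and $\alpha_C$ is pulled back along it from the analogous lifting gerbe on $\bf{Higgs}_{\overline{\bf{O}}_C}$. By Theorem~\ref{family version of filtration} the latter moduli space is identified, over $\bf{H}^{\text{KL}}$, with line bundles on the normalized spectral curves; the pushforward of these line bundles to $\Sigma$ furnishes an honest rank-$2n$ symplectic vector bundle, i.e.\ an actual $\Sp_{2n}$-lift of the universal $\PSp_{2n}$-bundle. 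Thus $\alpha_C$ is trivial along fibres and hence is pulled back from the base, possibly after a finite cover of $\bf{H}^{\text{KL}}$. No Leray spectral sequence and no isogeny cover are needed.

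The gap in your proposal is the step ``base-change along the multiplication-by-two isogeny $[2]:\Prym_{P_C}\to\Prym_{P_C}$.'' This is a finite \'etale cover of the \emph{total space}, not of the base $\bf{H}^{\text{KL}}$; composing it with anything does not yield a finite cover $S\to\bf{H}^{\text{KL}}$ as you claim in the next sentence. Arithmetic in the sense of \cite{GWZ20} requires the gerbe to become trivial after pullback along a cover of the base, so killing a fibrewise class by modifying the fibre is not admissible. What you actually need is that $\alpha_C$ is already trivial on each fibre, and this is exactly what the BNR picture provides for free: the spectral line bundle pushes forward to a genuine vector bundle, so the obstruction to lifting from $\PSp_{2n}$ to $\Sp_{2n}$ vanishes fibrewise without any isogeny. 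Your discussion of $\cA(\theta_C),\cA(P_C)$ is then unnecessary; these groups parametrize components of affine Spaltenstein fibres, not automorphisms obstructing the lift.
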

\begin{proof}
    Over $\bf{H^{\text{KL}}}$, we have a natural finite map
    \[    \bf{Higgs}_{P_C}|_{\bf{H}^{\text{KL}}}\rightarrow\bf{Higgs}_{\overline{\bf{O}}_C}|_{\bf{H}^{\text{KL}}},
    \]
    which originates from
    \[
    T^*(\Sp_{2n}/P_C)|_{\bf{O}_C}\rightarrow \bf{O}_{C}.
    \]
    Thus, over $\bf{H}^{\text{KL}}$, the lifting gerbe $\alpha_C$ is the pullback of a lifting gerbe on $\bf{Higgs}_{\overline{\bf{O}}_C}$, which we also denote by $\alpha_C$ (to avoid ambiguity). By Theorem~\ref{family version of filtration}, over $\bf{H^{\text{KL}}}$, the moduli space $\bf{Higgs}_{\overline{\bf{O}}_C}$ can be identified with some line bundles over normalized spectral curves. Therefore, over some \'etale cover of $\bf{H}^{\text{KL}}$ (if necessary), the push forward of these line bundles provides the lifting of the universal $\PSp_{2n}$-bundles. As a result, $\alpha_{C}$ splits, i.e., it's a pullback of a gerbe from the base. 
\end{proof}

The relative splitting of $\alpha_{C}$ with respect to $h_{P_C}$ over $\bf{H}^{\text{KL}}$ defines a $(\Prym_{P_C})^{\vee}[2]$-torsor, denoted by $\Split(\bf{Higgs}_{P_C},\alpha_C)$. Following \cite[Definition 6.4]{GWZ20}, we have a $(\Prym_{P_C})^{\vee}$-torsor:
\[
\Split'(\bf{Higgs}_{P_C}|_{\bf{H}^{\text{KL}}},\alpha_C|_{\bf{H}^{\text{KL}}}):=\Split(\bf{Higgs}_{P_C},\alpha_C)\times_{(\Prym_{P_C})^{\vee}[2]} (\Prym_{P_C})^{\vee}
\] 
By Theorem \ref{Thm:dual abelian}, it is a torsor over $\Prym_{P_B}$.

\begin{theorem}\label{thm:syz mirror}
    The SYZ mirror symmetry holds for
    \[
    \begin{tikzcd}
        (\bf{Higgs}_{P_C},\alpha_C)\ar[rd]&&(\bf{Higgs}_{P_B},\alpha_B)\ar[ld]\\
        &\mathbf{H}
    \end{tikzcd}
    \]
    More specifically:
    \begin{enumerate}
        \item $\bf{Higgs}_{P_C}|_{\bf{H}^{\text{KL}}}$ and $\bf{Higgs}_{P_B}^{+}|_{\bf{H}^{\text{KL}}}$ are trivial torsors over their corresponding abelian schemes, which are family (over $\bf{H}^{\text{KL}}$) of the abelian varieties.
        \item $ \Split'(\bf{Higgs}_{P_C}|_{\bf{H}^{\text{KL}}},\alpha_C|_{\bf{H}^{\text{KL}}})\cong \bf{Higgs}_{P_B}^{-}|_{\bf{H}^{\text{KL}}}$.
        \item $\Split'(\bf{Higgs}^{\pm}_{P_B}|_{\bf{H}^{\text{KL}}},\alpha_B|_{\bf{H}^{\text{KL}}})\cong \bf{Higgs}_{P_C}|_{\bf{H}^{\text{KL}}}$
    \end{enumerate}
\end{theorem}
\begin{proof}
    By Proposition \ref{prop:generic fiber is trivial torsor}, the generic fibers of $h_{\overline{\bf{O}}_C}$ are trivial torsors over the relative Prym variety. Therefore, the generic fibers of $h_{P_C}$ are also trivial torsors over $\Prym_{P_C}$. In particular, there is a canonical isomorphism
    \[   \Split'(\bf{Higgs}_{P_B}|_{\bf{H}^{\text{KL}}},\alpha_B|_{\bf{H}^{\text{KL}}})\cong \bf{Higgs}_{P_C}|_{\bf{H}^{\text{KL}}}.
    \]
    Next, consider the fibers of $h_{P_B}^{-1}(a)$, described as
    \[
        \Prym_{W,a}\times_{\Prym_a} \Prym_{Y,a} \times_{\Prym_a} \prod^{i \in I(P_B)}_{\Prym_a} \Prym_{x_i - x_{i+1}}.
    \]
    where $\prod_{i=1}^{2N-1} \Prym_{x_i - x_0}$ surjects onto $h_{P_B}^{-1}(a)$, as shown in Lemma~\ref{lem:depends only on subspace}. This map induces a bijection between the connected components of the fibers.

    Since $\alpha_C$ is the lifting gerbe of projective symplectic universal family, by a similar argument to \cite[Proposition 3.2]{HT03}, combined with  Proposition~\ref{prop:deg 1 and non-spin}, which relates the degree-1 Picard variety to the ``$-$" component, we obtain the canonical isomorphism
    \[
    \Split'(\bf{Higgs}_{P_C},\alpha_C)\cong \text{Image}(\Pic^1(\bar{\Sigma}_a)\mapsto \prod^{2N-1}_{i=1}\Prym_{x_i-x_0}).
    \]
    Thus, $\Split'(\bf{Higgs}_{P_C},\alpha_C)$ corresponds to the ``$-$'' component.

    Since $\alpha_B$ is a trivial gerbe, and over the complex field, we can always find a square root $\omega_{\Sigma}^{1/2}$, hence the last statement follows from Proposition \ref{prop:generic fiber is trivial torsor}.
\end{proof}

\section{Topological mirror symmetry for parabolic Hitchin systems} \label{Sec:TMS}

In Theorem \ref{thm:syz mirror}, we establish the SYZ mirror symmetry between $(\bf{Higgs}_{P_C},\alpha_C)$ and $(\bf{Higgs}_{P_B},\alpha_B)$, where $\alpha_B$ is trivial. In this section, we demonstrate that topological mirror symmetry also holds by leveraging the $p$-adic integration technique described in \cite[\S 6]{GWZ20}. To apply $p$-adic integration, we first verify the smoothness of the relevant moduli spaces (or stacks) and the properness of the associated Hitchin maps.

Since $\SO_{2n+1}$ is an adjoint group, the moduli spaces on the type B side should be treated as orbifolds. These have finite schematic covers by moduli spaces corresponding to the simply connected group $\Spin_{2n+1}$. Specifically, the diagram below illustrates this relationship:
\[\small
\begin{tikzcd}[column sep=-0.15em]
    \bf{Higgs}_{\Spin_{2n+1},P_B'}\ar[loop right]\ar[rdd] & &\Pic^0(\Sigma)[2]&&\bf{Higgs}_{\Spin_{2n+1},\overline{\bf{O}}_B}\ar[ldd]\ar[loop left]\\
   \\
    &\bf{Higgs}_{\SO_{2n+1}, P_B} && \bf{Higgs}_{\SO_{2n+1},\overline{\bf{O}}_B}
\end{tikzcd}.
\]
Here, $P_B'$ denotes the preimage of $P_B$ in $\Spin_{2n+1}$. Thus, we need to ensure that $\bf{Higgs}_{P_C}$ and $\bf{Higgs}_{\Spin_{2n+1},P'_B}$ are smooth varieties and that the corresponding Hitchin maps are proper. This can be achieved by finding generic weights such that the semistability condition coincides with the stability condition.

Let $P_C$ be a parabolic subgroup of $\Sp_{2n}$ with Levi type $(p_1, \ldots, p_k; q)$. If $P_B$ and $P_C$ are dual, then the Levi type of $P_B$ is $(p'_1, \ldots, p'_k; q+1)$, where $(p'_1, \ldots, p'_k)$ is a permutation of $(p_1, \ldots, p_k)$. Motivated by \cite[Lemma 4.3]{She24}, we assume the following condition:
\begin{condition}\label{cond:coprime}
    $\gcd(p_1,\ldots,p_{k}, 2q)=\gcd(p_1,\ldots,p_{k}, 2q+1)=1$.
\end{condition}
This condition ensures that semistability and stability coincide for generic weights. Consequently, the moduli spaces $\bf{Higgs}_{P_C}$ and $\bf{Higgs}_{\Spin_{2n+1}, P_B'}$ of stable objects are smooth varieties, and their corresponding Hitchin maps are proper.
\subsection{Self-dual Isogeny}\label{sec:self dual}

Recall the following commutative diagrams over the $\bf{H}^{\KL}$:
  \begin{equation*}  
         \begin{tikzcd}
              \bf{Higgs}^{\KL,\pm}_{P_B}\ar[r,"p_{BC}"] \ar[d, "\nu_{P_B}"'] &  \bf{Higgs}^{\KL}_{P_C}\ar[d,"\nu_{P_C}"] & \Prym_{P_B}\ar[d]& \Prym_{P_C}\ar[d]\\         \bf{Higgs}^{\KL,\pm}_{\overline{\bf{O}}_{B, R}} \ar[ur]\ar[r,"L_{BC}"]&  \bf{Higgs}^{\KL}_{\overline{\bf{O}}_{C, R}} & \Prym_{\overline{\bf{O}}_{B, R}}\ar[ur]\ar[r]& \Prym
         \end{tikzcd}.  
     \end{equation*}
   We use $\bullet$ to denote $B$ or $C$. Here $\bf{Higgs}_{P_{\bullet}}^{\KL}$ is the preimage of $h_{P_{\bullet}}$ over $\bf{H}^{\KL}$. Hence $\bf{Higgs}_{P_{\bullet}}^{\KL}\rightarrow \bf{H}^{\KL}$ is a fibration of abelian torsors. Notice that for any $a\in \bf{H}^{\KL}$, the fiber $h_{P_{\bullet}}^{-1}(a)$ is isogeny to a torsor of the Prym variety of the normalized spectral curve $\bar{\Sigma}_a$.
   Here $p_{BC}$ is a morphism between torsors induced by isogeny of the family of Prym varieties.

   Notice that the isogeny between $\Prym_{P_{B}}\rightarrow \Prym_{P_{C}}$ is induced from the isogeny $\Prym^{\vee}\rightarrow\Prym$ where the polarization are restriction of the natural polarizations of relative Jacobians.

   Hence we can conclude that the isogeny is self-dual as in Condition (b) \cite[Definition 6.]{GWZ20}

\subsection{Rational Points and Splitting of Gerbes}
 We first choose a square root $\omega_{\Sigma}^{1/2}$ of $\omega_{\Sigma}$ over the complex field $\mathbb{C}$. Then we can choose a finitely generated $\mathbb{Z}$-algebra contained in $\mathbb{C}$ such that $\omega_{\Sigma}^{1/2}$ can be defined over $R$. 
 
\begin{lemma}\label{lem:trivial torsor for para B}
    Under Condition \ref{cond:coprime},  for any ring homomorphism $R\rightarrow \sO_F$, $\bf{Higgs}_{P_B}^{-}$ is a trivial torsor over $\Prym_{P_B}$.
\end{lemma}
\begin{proof}
    By Condition \ref{cond:coprime}, we can construct a $F$-rational point on $\Pic^{1}(\overline{\Sigma}_a)$ for the function field of $\bf{H}^{\KL}$, see \cite{SWW22t}. The lemma follows from Proposition \ref{prop:deg 1 and non-spin}.   
\end{proof}
\begin{corollary}
     For any ring homomorphism $R\rightarrow \sO_{F}$, the gerbe $\alpha_C|_{h^{-1}_{P_C}(a)}$ splits over $F$ for $a\in \bf{H}(\sO_F)\cap \bf{H}^{\KL}(F)$.
\end{corollary}
\begin{proof}
    By the proof of Theorem \ref{thm:syz mirror}, we have:
     \[
    \Split'(\bf{Higgs}_{P_C},\alpha_C)\cong \text{Image}\left(\Pic^1(\bar{\Sigma}_a)\mapsto \prod^{2N-1}_{i=1}\Prym_{x_i-x_0} \right).
    \]
    Hence the gerbe $\alpha_C$ is trivial along the fiber over $F$. Since $a\in \bf{H}(\sO_F)\cap \bf{H}^{\KL}(F)$, and $\Br(\sO_F)$ is trivial. Hence $\alpha_C|_{h_{P_C}^{-1}(a)}$ splits over $F$.
\end{proof}
\begin{proposition}
      For any ring homomorphism $R\rightarrow \sO_F$, $\alpha_B,\alpha_C$ both split and Hitchin fibers $h_{\bullet}^{-1 }(a)^{\pm}(F)\ne \emptyset$, for $a\in \bf{H}(\sO_F)\cap \bf{H}^{\KL}(F)$.
\end{proposition}
\begin{proof}

    By Lemma \ref{lem:trivial torsor for para B}, we only need to show that for any ring homomorphism $R\rightarrow \sO_F$, Hitchin fibers $h_{\bullet}^{-1}(a)(F)\ne \emptyset$. Since we choose $R$ such that  $\omega_{\Sigma}^{1/2}$ can be defined over $R$, then by Proposition \ref{prop:generic fiber is trivial torsor}, we have that $h_{P_B}^{-1}(a)^{+}(F)$ is nonempty. Since we have natural  morphism $h_{P_B}^{-1}(a)^{+}\rightarrow h^{-1}_{P_C}(a)$, then $h_{P_C}^{-1}(a)(F)\ne\emptyset$.
\end{proof}

\subsection{Gauge Forms and Orbifold Measure}
Notice that under Condition \ref{cond:coprime}, semistability and stability conditions coincide. In particular, the moduli stack of semistable parabolic $\SO_{2n+1}$-Higgs bundles is an admissible finite quotient stack in the sense of \cite[Definition 4.10]{GWZ20}.  In this subsection, we first construct natural symplectic forms on both sides. On B-side, we should understand this as an equivariant symplectic forms on  moduli spaces of corresponding parabolic (twsited) $\Spin_{2n+1}$ Higgs bundles at the beginning of the section. 

By \cite[Remark 4.13]{GWZ20}, see also \cite[Lemma 2.5]{She24}, determinant of these symplectic forms define gauge forms on $\bf{Higgs}_{P_C}, \bf{Higgs}_{P_B}$\footnote{On B side, due to the orbifold nature, it is actually a power of the determinant. Here for the ease of notation and expression, we omit it.  This will not affect the p-adic integration which only integrate over $\bf{H}^{\KL}$}. And when restricted to fibers over $\bf{H^{\text{KL}}}$, they are translation-invariant volume forms on those torsors. Then, we compare the gauge forms on both sides.  The main difficulty lies in that $\codim (\bf{H}\backslash\bf{H}^{\KL})=1$.  The strategy is to construct gauge forms over $\bf{H}^{\KL}$ by Serre duality between the tangent bundle of $\bf{H}^{\KL}$ and relative tangent bundle of the Hitchin map. And then relate the gauge forms constructed from the symplectic structure on moduli spaces. In conclusion, Langlands dual parabolic Hitchin systems considered here fit into the ``weak abstract dual Hitchin systems" introduced by Shen\cite{She24}. For Langlands dual parabolic $\SL_n/\PGL_n$-Hitchin systems, Shen \cite{She24} constructed natural gauge forms for general line bundles as coefficients of Higgs fields rather than $\omega_{\Sigma}(D)$ used in this paper which makes the construction of global gauge forms much harder.

As before, we assume $D=x$ for simplicity. For notation ease, in this subsection, we sometimes use $\bullet$ to denote $B$ or $C$. First recall the tangent complex of parabolic Higgs bundles, for more details see \cite{BKV18}, or more general setting \cite{BBAMY}.
The tangent complex at a parabolic Higgs bundle $\bf{E}:=(E,\theta,\ldots) \in \bf{Higgs}_{P_{\bullet}}$ is given by:
\[
    \mathscr{F}^{\bullet}_{\bf{E}}=\left[ \ad^{par}(E)\xrightarrow{\ad_{\theta}}\ad^{spar}(E)\otimes \omega_{\Sigma}(x) \right].
\]
Notice that here we choose a Killing form on Lie algebras and we have $\ad^{par}(E)^\vee \cong \ad^{spar}(E)\otimes \sO_{\Sigma}(x)$.

Due to the Condition \ref{cond:coprime}, semistablity coincide with stability. Hence the tangent space of $\bf{Higgs}_{P_{\bullet}}$ at $(E, \theta)$ is isomorphic to $\mathbb{H}^1(\mathscr{F}^{\bullet}_{\bf{E}})$, which fits into the following exact sequence:
\[
\begin{tikzcd}
    & \operatorname{H}^{0}(\Sigma,\ad^{spar}(E)) \rar & \operatorname{H}^{0}(\Sigma,\ad^{spar}(E) \otimes \omega_{\Sigma}(x)) \rar
             \ar[draw=none]{d}[name=X, anchor=center]{}
    & \mathbb{H}^1(\mathscr{F}^{\bullet}_{\bf{E}}) \ar[rounded corners,
            to path={ -- ([xshift=2ex]\tikztostart.east)
                      |- (X.center) \tikztonodes
                      -| ([xshift=-2ex]\tikztotarget.west)
                      -- (\tikztotarget)}]{dll}[at end]{\ } \\      
    & \operatorname{H}^1(\Sigma,\ad^{par}(E)) \rar &\operatorname{H}^{1}(\Sigma,\ad^{spar}(E)\otimes\omega_{\Sigma}(x)) 
\end{tikzcd}
\]

Since we have $\ad^{par}(E)^\vee \cong \ad^{spar}(E)\otimes \sO_{\Sigma}(x)$, Serre duality induces a skew-symmetric, non-degenerate pairing on $\mathbb{H}^1(\mathscr{F}^{\bullet}_{\bf{E}})$. Determinant of this pairing defines a natural gauge form on the moduli space $\mathbf{Higgs}_{P_{\bullet}}$, which we denote by $\omega_{\bullet}$.

We now restrict our attention to the “$\KL$” locus of the moduli space. Consider the following commutative diagram:
\[
\begin{tikzcd}
   \bf{Higgs}^{\KL}_{P_B}\ar[rd,"h_{P_B}"']\ar[rr, "p_{BC}"]&&\bf{Higgs}^{\KL}_{P_C}\ar[ld,"h_{P_C}"]\\
   &\bf{H}^{\KL}
\end{tikzcd}.
\]

The fiber $h_{P_{\bullet}}^{-1}(a)$ is a torsor of a finite cover of the Prym variety. Thus, for each $a\in\bf{H}^{\KL}$, we denote elements in $h_{P_{\bullet}}^{-1}(a)$ by a pair $(\sL, \bullet_{\sL})$, where $\sL \in \Prym(\sR_a)$ and $\bullet_{\sL}$ records a point of the fiber of $h_{P_{\bullet}}^{-1}(a)\rightarrow \Prym(\sR_a)$ at $\sL$.

Our goal is to describe the tangent space to $h_{P_{\bullet}}^{-1}(a)$ at the point $(\sL, \bullet_{\sL})$.

\begin{lemma}\label{structure sheaf of the normalized spectral curve}
    There exists a subsheaf $\overline{\sF} \subset \overline{\pi}_{a*}\sO_{\overline{\Sigma}_a}$ such that the tangent space to $h_{P_{\bullet}}^{-1}(a)$ at $(\mathcal{L}, \bullet_{\sL})$ is isomorphic to $\operatorname{H}^1(\Sigma, \overline{\sF})$, and moreover:
    \[
    \overline{\sF}^\vee \cong \bigoplus_{i=1}^n \omega_{\Sigma}^{\otimes 2i-1}((2i - \eta_{2i})x).
    \]
\end{lemma}
\begin{proof}
    Notice that we can identify tangent spaces of Hitchin fibers with that of $\Prym_{a}$ which is defined as:
    \[
    1\rightarrow \Prym_a\rightarrow \Jac(\overline{\Sigma}_a)\xrightarrow{\Nm}\Jac(\overline{\Sigma}_a/\sigma)\rightarrow 1
    \]
    where $\sigma$ is the natural involution on the normalized spectral curve $\overline{\Sigma}_a$. We first put the following commutative diagram:
    \[
    \begin{tikzcd}
    \overline{\Sigma}_a \arrow[rr,"\overline{p}"] \arrow[d] \arrow[rdd, dashed, "\bar{\pi}_a" near start] &    & \overline{\Sigma}_a/\sigma\arrow[d,"N"] \arrow[ldd, dashed,"\bar{\pi}'_a"' near start] \\
    \Sigma_{a} \arrow[rd,"\pi_a",swap] \arrow[rr,"p"]                    &    & \Sigma_a/\sigma \arrow[ld,"\pi'_a"]                    \\
    & \Sigma &      
    \end{tikzcd}
    \]

    We define $\overline{\sF}$ fitting into the following exact sequence:
    \[
    \begin{tikzcd}       0\ar[r]&\overline{\sF}\ar[r]&\overline{\pi}_{a*}\sO_{\overline{\Sigma}_a}\ar[r]&\bar{\pi}'_{a*}\sO_{\overline{\Sigma}_a/\sigma}\ar[r]&0\\
    0\ar[r]&\sF\ar[r]\ar[u,hook]&\pi_{a*}\sO_{\Sigma_a}\ar[r]\ar[u,hook]&\pi'_{a*}\sO_{\Sigma_a/\sigma}\ar[u,hook]\ar[r]&0\\
    \end{tikzcd}
    \]
    In particular, we have:
    \[
    \operatorname{H}^1(\Sigma,\overline{\sF})\cong T_e\Prym_a.
    \]
    Applying the functor $\sH om(-, \omega_\Sigma)$ and using Grothendieck–-Serre duality, we obtain:

    \[
     \begin{tikzcd}       0\ar[r]&\bar{\pi}'_{a*}\omega_{\overline{\Sigma}_a/\sigma}\ar[r]\ar[d,hook]&\overline{\pi}_{a*}\omega_{\overline{\Sigma}_a}\ar[r]\ar[d,hook]&\overline{\sF}^{\vee}\otimes\omega_{\Sigma}\ar[d,hook]\ar[r]&0\\
    0\ar[r]&\oplus_{i=0}^{n-1}\omega_{\Sigma}^{2i+1}(2ix)\ar[r]&\oplus_{i=1}^{2n}\omega_{\Sigma}^{i}((i-1)x)\ar[r]&\sF^{\vee}\otimes\omega_{\Sigma}\ar[r]&0\\
    \end{tikzcd}
    \]

    Our goal is to show that:
    \[
    \overline{\sF}^{\vee}\otimes\omega_{\Sigma}=\oplus_{i=1}^{n}\omega_{\Sigma}^{2i}((2i-\eta_{2i})x).
    \]
     Notice that we have the natural perfect pairing as follows:
    \begin{equation}\label{eq:local GS duality}
    \begin{tikzcd}       \overline{\pi}_{a*}\omega_{\overline{\Sigma}_a}\ar[d,hook]&\times& \overline{\pi}_{a*}\sO_{\overline{\Sigma}_a}\\       \oplus_{i=1}^{2n}\omega_{\Sigma}^{i}((i-1)x)&\times& \oplus_{i=0}^{2n-1}\omega_{\Sigma}^{-i}(-ix)\ar[r,"\Tr"]\ar[u,hook]&\omega_{\Sigma}
    \end{tikzcd}
    \end{equation}
    As a result, it suffices to analyze locally around the marked point $x\in\Sigma$. We claim that the image of $\overline{\pi}_{a*}\omega_{\overline{\Sigma}_a}$ is contained in $\oplus_{i=1}^{2n}\omega_{\Sigma}^{i}((i-\eta_i)x)$. Then we have  $\overline{\sF}^{\vee}\otimes\omega_{\Sigma}\hookrightarrow\oplus_{i=1}^{n}\omega_{\Sigma}^{2i}((2i-\eta_{2i})x)$. By Corollary \ref{half dimension}, the inclusion is an isomorphism. By the perfectness of horizaontal pairings, it suffices to show that the embedding $ \pi_{a*}\sO_{\Sigma_a}=\oplus_{i=0}^{2n-1}\omega^{-i}_{\Sigma}(-ix)\hookrightarrow\overline{\pi}_{a*}\sO_{\overline{\Sigma}_a}$ factor through $ \oplus_{i=0}^{2n-1}\omega^{-i}_{\Sigma}(-(i+\eta_{i+1}-1)x)\hookrightarrow\overline{\pi}_{a*}\sO_{\overline{\Sigma}_a}$

    If we fix a choice of local generators of $\omega_{\Sigma}(x)$, the embedding $ \pi_{a*}\sO_{\Sigma_a}\hookrightarrow\overline{\pi}_{a*}\sO_{\overline{\Sigma}_a}$ is
    \[
    \sO[\lambda]/f(\lambda)\hookrightarrow \oplus_{i=1}^{|\bf{d}|} \sO[\lambda_i]/f_i(\lambda_i),\quad \lambda\mapsto (\lambda_1,\ldots,\lambda_{|\bf{d}|}).
    \]
    Here $\lambda$ (reps. $\lambda_i, 1\le i\le |\bf{d}|$) is treated as an $\sO$-linear map on the free module $\sO[\lambda]/f(\lambda)$ (resp $\sO[\lambda_i]/f_i(\lambda_i)$).
    A direct calculation shows that $\frac{\lambda^{j}}{t^{\eta_{j+1}-1}}$ is a well defined $\sO$-linear map on free modules $\sO[\lambda_i]/f_i(\lambda_i),1\le i\le |\bf{d}|$. 

    Hence the image of $\overline{\pi}_{a*}\omega_{\overline{\Sigma}_a}$ is contained in $\oplus_{i=1}^{2n}\omega_{\Sigma}^{i}((i-\eta_i)x)$. And our conclusion follows.
\end{proof}

By the smoothness of Hitchin maps on $\bf{H}^{\KL}$, we have:
\[
0\rightarrow h_{P_B}^*\Omega^1_{\bf{H}^{\KL}}\rightarrow\Omega^1_{\bf{Higgs}^{\KL}_{P_B}}\rightarrow \Omega^1_{\bf{Higgs}^{\KL}_{P_B}/\bf{H}^{\KL}}\rightarrow 0
\]
and similarly on C-side. 

\begin{proposition}\label{prop:comp w KL}
    There exist translation-invariant symplectic forms $\omega^{\KL}_{\bullet}$ on $\bf{Higgs}_{P_\bullet}^{\KL}$. Moreover, these satisfy the compatibility relation $p_{BC}^*\omega_C^{\KL}=\omega_B^{\KL}$.
\end{proposition}
\begin{proof}
    The existence follows from a relative verison of Lemma \ref{structure sheaf of the normalized spectral curve} over $\bf{H}^{\KL}$ and Serre duality over $\Sigma$. To be more precise,  we have the isomorphism:
    \[                          h_{P_{\bullet}}^*h_{P_{\bullet}*}\Omega^1_{\bf{Higgs}^{\KL}_{P_{\bullet}}/\bf{H}^{\KL}}\cong \Omega^1_{\bf{Higgs}^{\KL}_{P_{\bullet}}/\bf{H}^{\KL}}.
    \]
    As in Proposition \ref{prop:generic fiber is trivial torsor} and Lemma \ref{lem:trivial torsor for para B}, we show that both are trivial torsors.  By Lemma \ref{structure sheaf of the normalized spectral curve}, there is a natural section of $\Omega^1_{\bf{H}}\wedge h_{P_{\bullet}}^*h_{P_{\bullet}*}\Omega^1_{\bf{Higgs}^{\KL}_{P_{\bullet}}/\bf{H}^{\KL}}$ up to constant. 
    
    We denote the resulting 2-forms by $\omega_B^{\KL}$ and $\omega_C^{\KL}$, respectively. These forms are translation-invariant by construction, and the compatibility under $p_{BC}^*$ follows immediately.
\end{proof}   

We now compare $\omega^{\KL}_\bullet$ with the restriction of the symplectic form $\omega_\bullet$ to $\mathbf{Higgs}^{\KL}_{P_\bullet}$.

Fix a point $a\in \bf{H}^{\KL}$, and let $\bf{E}_0=(E_0, \theta_0, \cdots)\cong \bar{\pi}_{a*}((\sL_0, \bullet_{\sL_0}))\in h_{P_\bullet}^{-1}(a)$. As discussed previously, the tangent space to $\bf{Higgs}_{P_{\bullet}}$ at $\bf{E}_0$ is given by $\mathbb{H}^1(\mathscr{F}^{\bullet}_{\bf{E}_0})$. Meanwhile, viewing $\bf{E}_0\cong \bar{\pi}_{a*}((\sL, \bullet_{\sL}))$ as a point in the abelian torsor fibration $\bf{Higgs}_{P_{\bullet}}^{\KL}\rightarrow \bf{H}^{\KL}$, then we have the following exact sequence: 
\[
 \begin{tikzcd}
     & \operatorname{H}^1(\Sigma, \overline{\sF}) \ar[d]\ar[rd, "\Psi"] & \\
     \operatorname{H}^0(\Sigma,\ad^{spar}(E_0)\otimes\omega_{\Sigma}(x)) \ar[r] \ar[rd, "\Phi"] &\mathbb{H}^1(\mathscr{F}^{\bullet}_{\bf{E}_0}) \ar[r] \ar[d] & \operatorname{H}^{1}(\Sigma, \ad^{par}(E_0))\\
      & \bf{H}& 
 \end{tikzcd}.
\] 

We identify $\mathbf{H}$ with the tangent space $T_a \mathbf{H}$ at the point $a \in \mathbf{H}^{\KL}$. As discussed earlier, the horizontal exact sequence defines a nondegenerate pairing on the hypercohomology group $\mathbb{H}^1(\mathscr{F}^{\bullet}_{\bf{E}_0})$.

On the other hand, by Lemma~\ref{structure sheaf of the normalized spectral curve}, we have an isomorphism $\operatorname{H}^1(\Sigma, \overline{\sF})^\vee \cong \bf{H}$, so the vertical exact sequence also induces a nondegenerate pairing on $\mathbb{H}^1(\mathscr{F}^{\bullet}_{\bf{E}_0})$.

Our goal is to compare these two pairings—one arising from the horizontal sequence and the other from the vertical sequence (via the spectral construction)—and to show that they coincide.

\begin{proposition}\label{commutative diagram for gauge forms}
    There exists a linear isomorphism $\Delta: \mathbf{H} \rightarrow \mathbf{H}$, independent of the choice of $a \in \mathbf{H}^{\KL}$, such that the following diagram commutes:
    \[
    \begin{tikzcd}
        \operatorname{H}^0(\Sigma, \ad^{spar}(E_0) \otimes \omega_{\Sigma}(x)) \ar[r, "\Delta \circ \Phi_{\bullet}"] \ar[d] & \mathbf{H} \ar[d] \\
        \operatorname{H}^1(\Sigma, \ad^{par}(E_0))^\vee \ar[r, "\Psi_{\bullet}^\vee"] & \operatorname{H}^1(\Sigma, \overline{\mathcal{F}})^\vee
    \end{tikzcd}
    \]
    where the vertical morphisms are given by Serre duality.
\end{proposition}

\begin{proof}
    We first consider the morphism $\Psi_{\bullet}: \operatorname{H}^{1}(\Sigma, \overline{\sF})\rightarrow \operatorname{H}^{1}(\Sigma, \ad^{par}(E_0))$. Since $(E_0, \theta_0)\cong \bar{\pi}_{a*}((\sL_0, \bullet_{\sL_0}))$, this map is given by sending an infinitesimal deformation of $(\sL_0, \bullet_{\sL_0})$ to one of $(E_0, \theta_0)$. We claim that $\Psi$ can be induced by a sheaf homomorphism $\tilde{\Psi}_{\bullet}: \overline{\sF}\rightarrow \ad^{par}(E_0)$. 
    
    For the case $\bullet=C$, we have $\bar{\pi}_{a*}\sL_0=E_0$, hence $E_0$ admits a natural $\bar{\pi}_{a*}\sO_{\overline{\Sigma}_a}$ module structure, this gives the morphism $\tilde{\Psi}_C: \overline{\sF}\rightarrow \ad^{par}(E_0)$. For the case $\bullet=B$, we have $\bar{\pi}_{a*}\sL_0\neq E_0$. As in the second part of the proof of Proposition \ref{L_BC}, we have a line bundle $\ker \theta_0\cong \omega_{\Sigma}^{-n}((\delta-n)x)$, so that $\bar{\pi}_{a*}\sL_0\oplus \ker \theta_0$ is a subsheaf of $E_0$ and the quotient of $E_0$ by $\bar{\pi}_{a*}\sL_0\oplus \ker \theta_0$ supports only on $x$. Notice that we take $(\sL_0, B_{\sL_0})\in (h_{P_B}^{-1}(a))$, the choice of $B_{\sL_0}$ ensures that we can lift the $\bar{\pi}_{a*}\sO_{\overline{\Sigma}_a}$ module structure on $\bar{\pi}_{a*}\sL_0$ to $E_0$, which is also pass to a sheaf homomorphism $\tilde{\Psi}_B: \overline{\sF}\rightarrow \ad^{par}(E_0)$ since sections in $\ad^{par}(E_0)$ are traceless. In both cases, if we use $\lambda$ to denote a local generator of $\overline{\sF}$, then the morphism $\tilde{\Psi}_{\bullet}$ sends $\lambda^{2i-1}$ to $\theta_0^{2i-1}$. 

    On the other hand, $\Phi_{\bullet}$ is the tangent map of $\operatorname{H}^{0}(\Sigma, \ad^{spar}(E_0)\otimes \omega_{\Sigma}(x))\rightarrow \bf{H}$ that sends $\theta$ to $(\operatorname{Tr}(\wedge^{2i}\theta))_{1\leq i \leq n}$ at $\theta_0$. We consider a homomorphism $$N_{\bullet}:\ad^{spar}(E_0)\otimes \omega_{\Sigma}(x)\longrightarrow \oplus_{i=1}^n\omega_{\Sigma}^{\otimes 2i}((2i-\eta_{2i}))x)$$ sending $\theta$ to $(2i\operatorname{Tr}(\theta_0^{2i-1}\theta))_{1\leq i \leq n}$.Then $\operatorname{H}^{0}(N_{\bullet})$ is the tangent map of $\operatorname{H}^{0}(\Sigma, \ad^{spar}(E_0)\otimes \omega_{\Sigma}(x))\rightarrow \bf{H}$ which sending $\theta$ to $(\operatorname{Tr}(\theta^{2i}))_{1\leq i \leq n}$ at $\theta_0$. By the relations between $(\operatorname{Tr}(\theta^{2i}))_{1\leq i \leq n}$ and $(\operatorname{Tr}(\bigwedge^{2i}\theta))_{1\leq i \leq n}$, there is an automorphism of $\bf{H}$, see \cite{Lew94} or \cite{KK92}. Consider the differential map of this automorphism at $a$, we have a linear isomorphism $\Delta_{1}:\bf{H}\rightarrow \bf{H}$ so that $\Delta_{1} \circ\operatorname{H}^{0}(N_{\bullet})=\Phi_{\bullet}$. 

    Now we take dual of $N_{\bullet}$ and then tensor with $\omega_{\Sigma}$, then use the pairing between $\ad^{par}(E_0)$ and $\ad^{spar}(E_0)\otimes \sO_{\Sigma}(x)$ we get $$N_{\bullet}^{\vee}: \oplus_{i=1}^n\omega_{\Sigma}^{\otimes 1-2i}((\eta_{2i}-2i)x)\longrightarrow \ad^{par}(E_0).$$  Now by Lemma \ref{structure sheaf of the normalized spectral curve}, we see that we have a morphism $$(\times2i)_{1\leq i \leq n}:\bigoplus_{i=1}^n\omega_{\Sigma}^{\otimes 1-2i}((\eta_{2i}-2i)x)\longrightarrow \bigoplus_{i=1}^n\omega_{\Sigma}^{\otimes 1-2i}((\eta_{2i}-2i)x)$$ so that  $N_{\bullet}^{\vee}\circ (\times2i)_{1\leq i \leq n}=\tilde{\Psi}_{\bullet}$. We denote $\Delta=\operatorname{H}^{0}((\times2i)_{1\leq i \leq n})^{-1}\circ \Delta_{1}^{-1}$, then if we apply Serre duality to $\overline{\sF}$ and $\ad^{par}(E_0)$, by the functoriality of the Serre duality, we have the commutative diagram.   
\end{proof}

Together with Propositions~\ref{prop:comp w KL} and~\ref{commutative diagram for gauge forms}, we obtain the following:

\begin{corollary}\label{cor:caomparison of symp forms}
    The restriction of the gauge-theoretic symplectic form $\omega_B$ to the $\KL$-locus satisfies
    \[
    \omega_{B}\big|_{\mathbf{Higgs}_{P_{B}}^{\KL}} = p_{BC}^*\left( \omega_{C} \big|_{\mathbf{Higgs}_{P_{C}}^{\KL}} \right).
    \]
\end{corollary}

\subsection{Relative Setting}

To apply the $p$-adic machinery developed by Groechenig--Wyss--Ziegler \cite{GWZ20}, we need to work over a finitely generated $\mathbb{Z}$-algebra $R \subset \mathbb{C}$. 

All isogenies and the constructions of gauge forms are defined algebraically—for instance, the resolution of planar singularities via Kazhdan--Lusztig maps, the construction of $\theta$-direct summands, the polarization of relative Jacobians, and Serre duality. 

In addition, to obtain splittings and rational points, we require the existence of a square root of the canonical bundle $\omega_\Sigma$ defined over $R$. This condition depends only on the base curve $\Sigma$ and can be satisfied within the chosen framework.

Therefore, all relevant structures and constructions can be organized into a family defined over a finitely generated $\mathbb{Z}$-algebra $R \subset \mathbb{C}$.

\subsection{Proof of the TMS}Using the $p$-adic integration technique from \cite[\S 6]{GWZ20}, we deduce the following equalities of stringy E-polynomials twisted by gerbes. See \cite[\S 2]{GWZ20} for details about their definitions and derivations.

\begin{theorem}\label{Thm:TMS}
    Under Condition~\ref{cond:coprime}, topological mirror symmetry holds for the Langlands dual parabolic Hitchin systems:
	\begin{equation*}
		E^{\alpha_C}(\bf{Higgs}_{P_C};u,v)=E(\bf{Higgs}_{P_C};u,v)=E_{\text{st}}(\bf{Higgs}_{P_B}^{-};u,v)=E_{\text{st}}(\bf{Higgs}_{P_B}^{+};u,v).       
	\end{equation*}  
\end{theorem}
\begin{proof}
We verify that the Hitchin systems satisfy the conditions of ``weak abstract dual Hitchin systems'' by Shen\cite[\S 3.1]{She24} which is modified for Langlands dual parabolic Hitchin systems from the ``abstract dual Hitchin systems" by Groechenig--Wyss--Ziegler \cite[\S 6]{GWZ20}.
\begin{enumerate}
	\item A pair of Hitchin systems:
	\[\small
	\begin{tikzcd}[column sep= 0.0 em]
		\bf{Higgs}_{P_C}\ar[rd,"h_{P_C}", swap]\supset \bf{Higgs}_{P_C}|_{\bf{H^{\text{KL}}}}&& \bf{Higgs}_{P_B}\ar[ld,"h_{P_B}"]\supset \bf{Higgs}_{P_B}|_{\bf{H^{\text{KL}}}}\\
		&\bf{H}\supset\bf{H^{\text{KL}}}
	\end{tikzcd}.
	\]
	\item Arithmetic duality, see Theorem \ref{thm:syz mirror}. And the self-dual isogeny between generic fibers has been explained in Section \ref{sec:self dual}.
	\item The relation between ``the existence of rational points over p-adic fields'' and ``trivialization of gerbes'' as in \cite{GWZ20} follows from Proposition \ref{prop:generic fiber is trivial torsor}.
	\item Via the symplectic forms $\omega_B,\omega_C$, we have $\det\omega_B,\det\omega_C$ as gauge forms. 
    As our gerbes $\alpha_C,\alpha_B$ always split by the existence of rational points over $F$, we only need to check the equality:
    \[
    \int_{\bf{Higgs}_{P_B}(\sO_F)\cap \bf{Higgs}_{P_B}^{\KL}(F)}g_B= \int_{\bf{Higgs}_{P_C}(\sO_F)\cap \bf{Higgs}_{P_C}^{\KL}(F)}g_C
    \]
    This follows from $\omega_{B}|_{\bf{Higgs}_{P_B}^{\KL}}=p_{BC}^*\omega_{C}|_{\bf{Higgs}_{P_B}^{\KL}}$ in Corollary \ref{cor:caomparison of symp forms} since gauge forms $g_B,g_C$ are determinant of the chosen symplectic forms $\omega_B,\omega_C$.
\end{enumerate} 
\end{proof}

\bibliographystyle{alpha}
\bibliography{ref}
\end{document}